\documentclass[10pt, reqno]{amsart}
\usepackage{amsmath}
\usepackage{cases}
\usepackage{mathrsfs}
\usepackage{cite}
\usepackage{bbm}
\usepackage{amscd}
\usepackage{amsfonts,latexsym,amsmath,amsthm,amsxtra,mathdots,amssymb,latexsym,mathabx}
\usepackage[all,cmtip]{xy}
\RequirePackage{amsmath} \RequirePackage{amssymb}
\usepackage{color}
\usepackage{colordvi}
\usepackage{multicol}
\usepackage{hyperref}
\usepackage{mathtools}
\usepackage{soul} 
\usepackage[margin=1.1in]{geometry}
\usepackage{xcolor}
\usepackage{tikz}

\hypersetup{
    colorlinks,
    linkcolor={red!50!black},
    citecolor={blue!50!black},
    urlcolor={blue!80!black}
}


\tolerance=1
\emergencystretch=\maxdimen
\hyphenpenalty=10000
\hbadness=10000


 
    \newcommand{\BC}{{\mathbb {C}}}

     \newcommand{\BR}{{\mathbb {R}}}

     \newcommand{\BZ}{{\mathbb {Z}}}

     \newcommand{\CF}{{\mathcal {F}}}
     
    \newcommand{\CI}{{\mathcal {I}}}

\newcommand{\R}{\mbox{$\mathbb R$}}	

\def\-{^{-1}}
\def\sumx{\ \sideset{}{^\star}\sum}

\newtheorem{Theorem}{Theorem}[section]
\newtheorem{Lemma}[Theorem]{Lemma}

\newtheorem{Remark}[Theorem]{Remark}
\newtheorem{Definition}[Theorem]{Definition}
\newcommand{\sumstar}{\ \sideset{}{^\star}\sum}

\let\underbrace\LaTeXunderbrace

\begin{document}

\title{Short second moment bound and Subconvexity for $\rm GL(3)$ $L$-functions}
 \author{Keshav Aggarwal, Wing Hong Leung and Ritabrata Munshi}
\date{}
\address{Department of Mathematics, Indian Institute of Technology Bombay\\ Mumbai 400076, India}
\email{keshav@math.iitb.ac.in}

\address{Department of Mathematics, Rutgers University\\ 110 Frelinghuysen Road\\
		Piscataway, New Jersey 08854, USA}
	\email{joseph.leung@rutgers.edu}
	
\address{Stat-Math Unit, Indian Statistical Institute, 203 B.T. Road, Kolkata 700108, India
}
\email{ritabratamunshi@gmail.com}

\begin{abstract}
Let $\pi$ be a Hecke cusp form for $\rm SL_3(\BZ)$. We bound the second moment average of $L(s,\pi)$ over a short interval to obtain the subconvexity estimate
$$ L(1/2+it, \pi) \ll_{\pi, \varepsilon} (1+|t|)^{3/4-1/8+\varepsilon}. $$
This is the first time a short second moment has been used to obtain a subconvexity bound in a higher rank group.
\end{abstract}
\subjclass[2010]{11F66, 11M41}
\keywords{Subconvexity, Delta method, Short second moment, Voronoi summation}
\maketitle

\tableofcontents

\section{Introduction and statement of result}

The problem of bounding automorphic $L$-functions on the central line $\text{Re}(s)=1/2$, has enthralled researchers for over a century. The first subconvex bound was announced by Littlewood in a proceedings of the London Mathematical Society in 1921. Adopting a method of Weyl for bounding exponential sums, Hardy and Littlewood proved that
\begin{align*}
    \zeta(\tfrac{1}{2}+it)\ll_\varepsilon (1+|t|)^{\frac{1}{6}+\varepsilon},
\end{align*}
for any $\varepsilon>0$. In contrast the trivial (convexity) bound, a simple consequence of the functional equation of the zeta function, yields the exponent $1/4$ in place of $1/6$. Complete details of the proof of Hardy-Littlewood's result appeared only later in a paper of Landau \cite{landau}. The exponent $1/6$, which is $2/3$-rd of the convexity bound, is `fundamental', and can be achieved using other methods, e.g. short moment computation of Iwaniec \cite{Iwaniec1979} and high moment computation of Heath-Brown \cite{HB-twelfth-moment}. To go beyond $1/6$, the so called Weyl bound, one needs deeper understanding of short exponential sums, as was done systematically by van der Corput and others \cite{graham_kolesnik_1991}. Accordingly the Weyl bound has been breached, albeit mildly, dozens of times in the last hundred years, and the present record is due to Bourgain \cite{bourgain} which states that
\begin{align*}
    \zeta(\tfrac{1}{2}+it)\ll_\varepsilon (1+|t|)^{\frac{1}{6}-\frac{1}{84}+\varepsilon}.
\end{align*}
The interest in such improvements lies not in the bound itself, but in the method that leads to it. However the strength of the bound documents the strength of the method, and as such is an important marker. \\

The classical result of Hardy-Littlewood-Weyl was first extended to degree two $L$-functions by Good \cite{Good} in 1980's. For any holomorphic form $f$ of full level, Good established the bound
\begin{align*}
    L(\tfrac{1}{2}+it,f)\ll_{f,\varepsilon} (1+|t|)^{\frac{1}{3}+\varepsilon}.
\end{align*}
Since this $L$-function can be compared with the square of the Riemann zeta function, one notices that the above exponent matches with the Weyl exponent. Later the same bound was established for the Maass forms by Meurmann \cite{Meurman}. Around the same time Jutila \cite{Jut87} proved the same result using more elementary means based on his study of exponential sums twisted by $GL(2)$ Fourier coefficients. In a recent work, Booker et al \cite{Bo-Mi-Ng} have established Weyl bound for $L$-functions of holomorphic cusp forms for $\Gamma_1(N)$ for any level $N$. This was extended to degree two $L$-functions, of any level, nebentypus and spectral parameter by the first author \cite{Agg2018published}. However any improvement over this bound is yet to be achieved. The recent work \cite{Ho-Mu-Qi-2021arxiv} of the third author together with Holowinsky and Qi, makes some progress towards this important problem by establishing cancellation in very short sums (beyond the Weyl range). Unfortunately their work does not yield anything new in the generic length (square root of the conductor), and hence one does not get a sub-Weyl bound for the $L$-function.\\

The $t$-aspect subconvexity problem for degree three $L$-functions remained an important open problem for a long time, till the breakthrough work of Li \cite{Li1}, where she adopted the method of Conrey and Iwaniec \cite{Conrey-Iwaniec} to establish subconvex bound for symmetric square (self-dual) $L$-functions
\begin{align*}
    L(\tfrac{1}{2}+it, \mathrm{Sym}^2 f)\ll_{f,\varepsilon} (1+|t|)^{\frac{11}{16}+\varepsilon}.
\end{align*}
Since non-negativity of central $L$-values play a crucial role in the Conrey-Iwaniec approach, it seems that Li's method can not be extended beyond self-dual forms. However the bound has been improved substantially. First McKee, Sun and Ye \cite{McKee-Sun-Ye} established the exponent $2/3$, using the techniques of Li, but applying finer tools to deal with the exponential integrals. This was further improved by Nunes \cite{Nunes2017} to $5/8$, which doubles the initial saving given by Li. Qi \cite{Qi20} extended the result to arbitrary number fields with the same exponent of $5/8$. A recent preprint of Lin, Nunes and Qi \cite{Lin-Nunes-Qi-arxiv} establishes the much stronger exponent $3/5$ over the rationals, which is most likely the limit of the method in this set-up, but still falls short of the Weyl exponent $1/2$. Hybrid bounds have been obtained by Young \cite{Young-second-moment} and Khan-Young \cite{khan_young_2021} by estimating the second moment of $\rm GL_3\times GL_2$ $L$-functions. In the case of general non self-dual degree three $L$-functions $L(s,\pi)$, the first subconvex bound was established by the third author \cite{Mun3} using the delta method -
\begin{align*}
    L(\tfrac{1}{2}+it,\pi)\ll_{\pi,\varepsilon} (1+|t|)^{\frac{11}{16}+\varepsilon}.
\end{align*}
This was improved by the first author \cite{Agg-IJNT}, who established the exponent $27/40$ by applying delta method with more careful analysis of the integral transforms. His exponent is the limit of the delta method recipe in this context. There has been progress in proving $t$-aspect subconvexity bound for Rankin--Selberg convolutions of $\rm GL(2)$ and $\rm GL(3)$ $L$-functions. The third author \cite{Mun-JEMS2022} used a delta method to prove a subconvexity bound for $\rm GL(3)\times GL(2)$ $L$-functions. Recently, Blomer--Jana--Nelson \cite{Blomer-Jana-Nelson} proved a $t$-aspect Weyl-type bound for $\rm GL(2)\times GL(2)$ $L$-functions by modifying the ideas of Bernstein--Reznikov \cite{Bernstein-Reznikov} and using Kuznetsov trace formula. \\

In a recent pre-print, Nelson \cite{Nelson21} has announced a solution of the $t$-aspect subconvexity problem for standard $L$-functions, regardless of its degree. This breakthrough work is based on period approach introduced in earlier landmark papers of Bernstein-Reznikov \cite{Bernstein-Reznikov} and Michel-Venkatesh \cite{Michel-Venkatesh}. For low degree $L$-functions, Nelson's bound is worse than what is already known. It is however understood that the method has the potential to produce strong bounds even in the case of low degrees.\\

\subsection{Statement of Results and Methodology}

We now state our results and discuss the main ideas. Let $\pi$ be a Hecke cusp form of type $(\nu_1,\nu_2)$ for $\rm SL_3(\BZ)$. Let the normalized Fourier coefficients be given by $\lambda(m_1,m_2)$ (so that $\lambda(1,1)=1$). The $L$-series associated with $\pi$ is given by
$$L(s,\pi) = \sum_{n\geq1}\lambda(1,n)n^{-s}, \quad \text{ for } Re(s)>1.$$

Our main result is given as the following theorem.

\begin{Theorem}\label{SecondMomentThm}
    Let $\pi$ be a Hecke-Maass cusp form for $SL_3(\BZ)$. 
    Then for any $\varepsilon>0$ and $t^{1/2} < M < t^{1-\varepsilon}$, we have \begin{align*}
        \int_{t-M}^{t+M} \left|L\left(\frac{1}{2}+iv,\pi\right)\right|^2 dv \ll_{\pi,\varepsilon}t^\varepsilon\left(\frac{t^{9/4}}{M^{3/2}}+\frac{M^3}{t^{21/20}}+M^{7/4}t^{3/40}+M^{15/14}t^{15/28}\right).
    \end{align*}
    In particular, when $M=t^{2/3}$, we have \begin{align*}
        \int_{t-t^{2/3}}^{t+t^{2/3}} \left|L\left(\frac{1}{2}+iv,\pi\right)\right|^2dv \ll_{\pi,\varepsilon} t^{5/4+\varepsilon}.
    \end{align*}
\end{Theorem}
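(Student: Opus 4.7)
My plan is to carry out a second-moment version of Munshi's delta-method treatment of $GL(3)$ subconvexity. First, I would apply the approximate functional equation to express, for $v$ in the integration range,
\[
L\!\left(\tfrac12+iv,\pi\right) \;\approx\; \sum_{n\lesssim t^{3/2}} \frac{\lambda(1,n)}{n^{1/2+iv}}\, W\!\left(\frac{n}{t^{3/2}}\right) \;+\; (\text{dual term in }\tilde\pi),
\]
with $N:=t^{3/2}$. Expanding $|L|^2$, swapping orders, and executing the $v$-integral $\int_{t-M}^{t+M}(n_2/n_1)^{iv}\,dv$ localizes the pair to $|n_1-n_2|\ll N/M\cdot t^\varepsilon$. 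Setting $n_2=n$, $n_1=n+h$ and extracting the slow phase $(n_2/n_1)^{it}\approx e(-th/(2\pi n))$, the problem reduces to estimating
\[
\mathcal{S} \;=\; \sum_{0<|h|\le H}\;\sum_{n\asymp N} \frac{\lambda(1,n+h)\,\overline{\lambda(1,n)}}{n}\, e\!\left(-\frac{th}{2\pi n}\right) W_2(h,n),
\]
where $H\asymp N/M\cdot t^\varepsilon$. The diagonal $h=0$ contributes only $O(Mt^\varepsilon)$ via Rankin--Selberg, which is absorbed by the first term $t^{9/4}/M^{3/2}$ in the claimed bound.

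To handle $\mathcal{S}$, I would detect the relation $n_1-n_2-h=0$ by the Duke--Friedlander--Iwaniec delta symbol
\[
\delta(r) \;=\; \frac{1}{Q}\int_0^1\sum_{q\le Q}\frac{1}{q}\sideset{}{^\ast}\sum_{a\bmod q} e\!\left(\frac{ar}{q}\right)e\!\left(\frac{rx}{qQ}\right)dx,
\]
with the parameter $Q$ to be optimized later. This separates the three variables cleanly. I would then apply $GL(3)$ Voronoi summation to each of the sums over $n_1$ and $n_2$, dualizing them into short sums of effective length $\asymp q^3/N^2$ once the twists $e(\pm a n/q)\,e(\pm xn/(qQ))$ and the slow phase $e(-th/(2\pi n))$ are reconciled with the Bessel kernel via stationary phase; and Poisson summation to the $h$-sum, whose dual length is governed by the combined frequency $t/n + x/(qQ)$.

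After these transformations one is left with a multiple sum of dualized $GL(3)$ Hecke coefficients against Kloosterman-type sums modulo $q$. At this stage I would apply Cauchy--Schwarz to detach the Fourier coefficients on one side of the shifted convolution, bound the freed square via the Rankin--Selberg estimate $\sum_{n\le X}|\lambda(1,n)|^2\ll X^{1+\varepsilon}$, and estimate the remaining character/oscillatory sum either trivially or by a further Poisson dualization, whichever dual is shorter. Each of the four terms in the stated bound should correspond to a distinct regime in the three-fold optimization over (i)~the delta-method modulus $Q$, (ii)~the effective length of the post-Voronoi sum, and (iii)~the Poisson-dual length of $h$.

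The hardest part will be the post-Voronoi off-diagonal analysis: two simultaneously dualized $GL(3)$ sums together with the large phase $e(-th/n)$ demand a delicate stationary-phase computation of the resulting integral transforms, and the final Cauchy--Schwarz must be arranged so that the surviving dual sum admits a genuinely non-trivial estimate rather than collapsing to the trivial bound. The unusual exponents $3/40$ and $15/28$, together with the sharp constraints $t^{1/2}<M<t^{1-\varepsilon}$, indicate that this optimization is authentically multi-parameter and that each of the four regimes just saturates the savings extractable from this flavour of the delta method; in particular, it is the $M^{15/14}t^{15/28}$ term that binds at the balanced point $M\asymp t^{2/3}$ and ultimately dictates the subconvex exponent $3/4-1/8$ in the corollary.
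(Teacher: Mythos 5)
Your outline reproduces the standard delta-method recipe (approximate functional equation, shifted sum with $e(th/2\pi n)$, DFI delta symbol, Voronoi on both dualized variables, Poisson on $h$, one Cauchy--Schwarz detaching the $\mathrm{GL}(3)$ coefficients, then a final Poisson/Weil estimate), but that recipe alone cannot reach the bound you are asked to prove. As the paper itself points out, this route has a hard limit at the exponent $27/40$ (Aggarwal), and even the strongest single post-Cauchy Poisson step with Weil--Deligne bounds for the resulting Kloosterman sums yields only the term $M^{9/8}t^{9/16}$ in the second moment, i.e.\ the exponent $9/14$ --- not the claimed $M^{15/14}t^{15/28}$ and $3/4-1/8$. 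Two ideas that are essential to the actual proof are missing from your plan. First, after the first Cauchy--Schwarz the paper invokes the duality principle to flip the roles of the variables: the long dualized $m$- and $n$-sums are moved \emph{outside} the square (their coefficients removed by Rankin--Selberg), while the short $q$- and $h$-sums, now carrying arbitrary unit-norm coefficients $\beta(q,h,u)$, are placed \emph{inside}. Your plan keeps the Fourier coefficients inside and detaches them directly, which leads back to the $27/40$ barrier. Second, because the dual variables $q_1,q_2$ inside the square are weighted by the arbitrary $\beta$'s, one cannot simply Poisson-sum in them; the paper removes $\beta(q_1)$ by a further Cauchy--Schwarz, Poisson-sums in $q_1$, and then \emph{iterates} this Cauchy--Schwarz/Poisson step $\nu$ times, each iteration contributing a saving $(N^2/Qt^2)^{1/2^{\nu+1}}$ so that the total saving converges to $(N^2/Qt^2)^{1/2}$. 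The character sums produced after two or more iterations are no longer Kloosterman sums, and the Weil bound does not suffice; the paper bounds them by the Adolphson--Sperber Newton-polyhedron method (square-root cancellation in a $(2^{\nu+2}-1)$-variable complete sum). Without the duality step, the iteration scheme, and these exponential-sum estimates, the multi-parameter optimization you describe cannot produce the exponents $3/40$ and $15/28$ in the statement.

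A smaller but real slip: after $\mathrm{GL}(3)$ Voronoi the dualized sums have effective length about $N^2/q^3$ (so $\ll N^2/Q^3$ up to $t^\varepsilon$), not $q^3/N^2$ as you wrote; and your choice $Q$ must be taken of size roughly $\sqrt{MN}/\sqrt{t}$ (in the paper, $Q=\sqrt{M}N^{2/3}t^{-3/4}$ after the refined analysis), which is where the $h$-dual length $\sim qt/N$ and the $x$-integral truncation come from. These lengths drive the entire optimization, so the inversion would derail the bookkeeping even in the weaker $9/14$ regime.
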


The second moment average can be used to bound the $L$-function by modifying Good's arguments \cite{Good} (Lemma \ref{ShortMomentLemma}). We obtain that for any $\log t<M<t^{1-\varepsilon}$, 
\begin{align}\label{second-moment}
    \left|L\left(\frac{1}{2}+it,\pi\right)\right|^2\ll&\log t\left(1+\int_\BR U\left(\frac{v}{M}\right)\left|L\left(\frac{1}{2}+it+iv,\pi\right)\right|^2dv\right),
\end{align}

\noindent where $U(x)\in C_c^\infty([1,2])$ is an appropriate bump function. This yields the following subconvexity bound.

\begin{Theorem}\label{main theorem gl3} 
Let $\pi$ be a Hecke-Maass cusp form for $SL_3(\BZ)$. Then for any $\varepsilon>0$,
\begin{equation*}
L\left(1/2+it,\pi\right)\ll_{\pi, \varepsilon} t^{3/4-1/8+\varepsilon}.
\end{equation*}
\end{Theorem}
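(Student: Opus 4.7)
\textbf{Proof proposal for Theorem \ref{main theorem gl3}.}

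The plan is to deduce Theorem~\ref{main theorem gl3} from Theorem~\ref{SecondMomentThm} via the Good-type conversion inequality \eqref{second-moment}, which turns a short second moment into a pointwise bound. Fix a parameter $M$ satisfying $t^{1/2} < M < t^{1-\varepsilon}$, to be optimized later. Applying \eqref{second-moment} gives
\[
|L(\tfrac12+it,\pi)|^2 \ll \log t \Bigl(1 + \int_{\mathbb R} U(v/M)\,\bigl|L(\tfrac12 + i(t+v),\pi)\bigr|^2\, dv\Bigr).
\]
Since $U \in C_c^\infty([1,2])$, the substitution $s=t+v$ turns the integral into a second moment of $|L(\tfrac12+is,\pi)|^2$ over the window $s\in [t+M,\, t+2M]$, which has length $M$ and sits at height $\asymp t$. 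This is precisely of the form estimated by Theorem~\ref{SecondMomentThm}, applied with the shifted parameters $t'=t+3M/2$ and $M'=M/2$, which still satisfy $(t')^{1/2}<M'<(t')^{1-\varepsilon}$ in the relevant range. The result is
\[
|L(\tfrac12+it,\pi)|^2 \ll t^\varepsilon\Bigl(\frac{t^{9/4}}{M^{3/2}}+\frac{M^3}{t^{21/20}}+M^{7/4}t^{3/40}+M^{15/14}t^{15/28}\Bigr).
\]

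Next I would optimize over $M$. The first term decreases in $M$ while the other three increase, so one searches for the balance that minimizes the maximum. The binding constraint is between the first and fourth terms; equating $t^{9/4}/M^{3/2} = M^{15/14}t^{15/28}$ yields $M^{18/7}=t^{12/7}$, i.e.\ $M=t^{2/3}$. At this value, the four terms evaluate respectively to $t^{5/4},\ t^{19/20},\ t^{149/120},\ t^{5/4}$, so the second and third are strictly smaller than the first and fourth. Hence
\[
|L(\tfrac12+it,\pi)|^2 \ll t^{5/4+\varepsilon},
\]
and extracting the square root gives $|L(\tfrac12+it,\pi)| \ll t^{5/8+\varepsilon} = t^{3/4-1/8+\varepsilon}$, exactly the claim.

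The hard part of this programme is not the deduction above but Theorem~\ref{SecondMomentThm} itself; the conversion \eqref{second-moment} is a classical Good-type argument, and the optimization reduces to balancing four monomial terms in $M$. It is worth noting that the numerical saving $1/8$ is dictated precisely by the shape of Theorem~\ref{SecondMomentThm}: the exponent $5/4$ in the second-moment bound at $M=t^{2/3}$ is controlled simultaneously by the first term (which reflects the convexity input from the length of the window) and by the fourth term (the deepest term arising from the delta/Voronoi analysis behind Theorem~\ref{SecondMomentThm}). Any future improvement to the latter would feed directly into an improved exponent in Theorem~\ref{main theorem gl3}.
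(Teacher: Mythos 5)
Your deduction is correct and is essentially the paper's own argument: Theorem \ref{main theorem gl3} is obtained there in exactly this way, by combining the Good-type conversion of Lemma \ref{ShortMomentLemma} (i.e.\ \eqref{second-moment}) with the second-moment bound of Theorem \ref{SecondMomentThm} and choosing $M=t^{2/3}$, which balances the first and fourth terms and yields $|L(1/2+it,\pi)|^2\ll t^{5/4+\varepsilon}$. Your optimization and the resulting exponent $t^{5/8+\varepsilon}=t^{3/4-1/8+\varepsilon}$ match the paper.
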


Studying moments of $L$-functions is an important theme in modern analytic number theory. The bound in Theorem \ref{SecondMomentThm} compares with the best known unconditional estimate for the sixth moment of the Riemann zeta function, which follows from the work of Ivic \cite{Ivic2015}

$$ \int_T^{2T} |\zeta(1/2+it)|^6\, dt \ll T^{5/4 + \epsilon}. $$

Various results have been obtained by estimating the first and second moment of $L$-functions in the last twenty years. This is the first time a short second moment has been used to obtain a subconvex bound in higher rank. 

Li \cite{Li1} and subsequent works improving her subconvexity exponent consider the short first moment of a family of $\rm GL_3\times GL_2$ $L$-functions
\begin{equation*}
\sum_{|t_j-T|<M} L(1/2, \pi\times f_j) + \int_{T-M}^{T+M} |L(1/2+it, \pi)|^2 dt.
\end{equation*}
Here the $\rm GL_3$ form $\pi$ is self-dual and fixed, and the average is taken over a family of $\rm GL_2$ forms $\{f_j\}$ with eigenvalues $t_j$. In comparison, we consider a short second moment of the $L$-function of a fixed $\rm GL_3$ Hecke-Maass cusp form. The clear advantage is our ability to take $\pi$ not necessarily self-dual.\\

We briefly discuss the main ideas to highlight the novelty of our proof. We start with using an approximate functional equation in \eqref{second-moment} and execute the $v$-integral to obtain a shifted sum twisted with an additive character
\begin{align*}
    S_{M,H}(N)\sim M\sum_{h\sim H}\sum_{n\sim N}\lambda(1,n)\overline{\lambda(1,n+h)}e\left(\frac{th}{2\pi n}\right),
\end{align*}
with $H$ going up to $N^{1+\varepsilon}/M$. Next, we apply Duke-Friedlander-Iwaniec's delta method (Lemma \ref{DFILemma}) to separate the oscillations of the Fourier coefficients \begin{align*}
    S_{M,H}(N)\sim M\sum_{h\sim H}\sum_{n\sim N}\lambda(1,n)e\left(\frac{th}{2\pi n}\right)\sum_{m\sim N}\overline{\lambda(1,m)}\delta(m=n+h).
\end{align*}
The application of the delta method contributes $Q^2$-many additive frequencies of size $1\leq q\leq Q$, where $Q$ is a parameter to be chosen later. This is followed by applying dual summation formulas to the $n, m$ and $h$ sums. However, at this point, we diverge from the `routine' and apply a combination of Cauchy-Schwarz inequality and the Duality principle. This allows us to change the lengths of summations inside the absolute-value squared obtained after Cauchy-Schwarz inequality and may be considered the first novelty of our proof.

The next step is to apply dual summation formulas to the new $m$ and $n$ sums followed by stationary phase analysis of the various integral transforms. Although not novel, one needs extreme care in this analysis. This brings us close to the convexity barrier.

The previous application of Cauchy-Schwarz and Duality principle had created two copies of the $h$ and $q$-sums. The final steps involve applying Poisson summation formula to the new $h_1, h_2$-sum and the $q_1, q_2$-sum. However, we are faced with a unique issue due to the Duality principle. The $q_1, q_2$-sum has a structure of the form
\begin{align*}
    \sup_{\|\beta\|_2=1}\mathop{\sum\sum}_{q_1,q_2\sim Q}\beta(q_1)\overline{\beta(q_2)}e\left(\frac{aq_2}{q_1}+\frac{bq_1}{q_2}\right)G(q_1,q_2),
\end{align*}
where $a,b$ are some integers, $G$ is some function and $\beta$ is a sequence of complex numbers appearing from the Duality principle. While we would like to extract savings from the $q_1,q_2$-sums, the presence of $\beta$ is an obstacle. To overcome this hurdle, we apply Cauchy-Schwarz inequality to take out the $q_1$-sum. This allows us to get rid of $\beta(q_1)$ and hence perform Poisson summation in the $q_1$-sum to obtain cancellations. If we stop the analysis here, the character sum present at this stage is a Kloosterman sum. Application of the Weil-Deligne bound yields
\begin{align*}
        \int_{t-M}^{t+M}\left|L\left(\frac{1}{2}+it,\pi\right)\right|^2dt \ll_{\pi,\varepsilon}t^\varepsilon\left(\frac{t^{9/4}}{M^{3/2}}+\frac{M^3}{t^{21/20}}+M^{7/4}t^{3/40}+M^{9/8}t^{9/16}\right).
    \end{align*}
The optimal choice of the length of the second moment is $M=t^{27/42}$, giving the subconvexity bound
\begin{align*}
    L\left(\frac{1}{2}+it,\pi\right)\ll_{\pi,\varepsilon} t^{9/14+\varepsilon}.
\end{align*}

This already improves upon the best known exponent of $27/40$ for a not-necessarily self-dual form obtained in \cite{Agg-IJNT}, but we can do better. Since the above process did not make use of the $q_2$-sum, the previous Cauchy-Schwarz inequality created two copies of it, say $q_2$ and $q_3$ sums. The same analysis is done to the $q_2$-sum without using the $q_3$-sum. We can thus iterate the above process ad infinitum to get further savings. This may be considered the second novelty of our proof.

We highlight that doing so results in increasingly complicated character sums. We bound these exponential sums by using the methods due to Adolphson and Sperber \cite{Adolphson-Sperber} obtained by their extension of Dwork's cohomology theory from smooth projective hypersurfaces in characteristic $p$ to a general class of exponential sums. The associated Newton polyhedron after the first such application is depicted below.

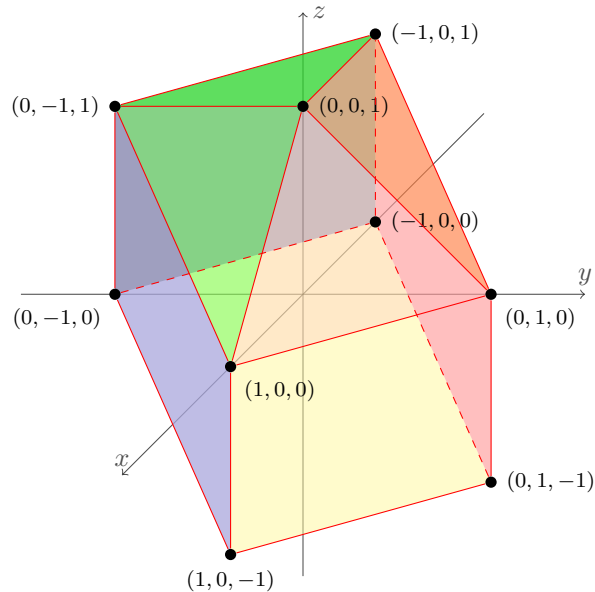
\begin{figure}[h]\label{fig1}
    \centering
    \begin{tikzpicture}[scale=2.5]
\draw[->,opacity=.7] (xyz cs:x=-1.5) -- (xyz cs:x=1.5) node[above] {$y$};
\draw[->,opacity=.7] (xyz cs:y=-1.5) -- (xyz cs:y=1.5) node[right] {$z$};
\draw[->,opacity=.7] (xyz cs:z=-2.5) -- (xyz cs:z=2.5) node[above] {$x$};

\fill[yellow!50,opacity=.5] (0,0,-1)--(1,-1,0)--(0,-1,1)--(-1,0,0)--cycle; 
\fill[blue!50,opacity=.5] (-1,1,0)--(0,0,1)--(0,-1,1)--(-1,0,0)--cycle; 
\fill[red!50,opacity=.5] (0,0,-1)--(1,-1,0)--(1,0,0)--(0,1,-1)--cycle; 
\fill[gray,opacity=.5] (-1,1,0)--(0,1,-1)--(0,0,-1)--(-1,0,0)--cycle; 
\fill[green,opacity=.5] (-1,1,0)--(0,1,0)--(0,1,-1)--cycle; 
\fill[green,opacity=.3] (-1,1,0)--(0,1,0)--(0,0,1)--cycle; 
\fill[pink,opacity=.3] (0,0,1)--(0,1,0)--(1,0,0)--cycle; 
\fill[orange,opacity=.3] (1,0,0)--(0,1,0)--(0,1,-1)--cycle; 

\node[fill,circle,inner sep=1.5pt,label={right:\footnotesize$(0,0,1)$}] (a) at (0,1,0) {};
\node[fill,circle,inner sep=1.5pt,label={below right:\footnotesize$(1,0,0)$}] (g) at (0,0,1) {};
\node[fill,circle,inner sep=1.5pt,label={right:\footnotesize$(-1,0,0)$}] (c) at (0,0,-1) {};
\node[fill,circle,inner sep=1.5pt,label={below right:\footnotesize$(0,1,0)$}] (d) at (1,0,0) {};
\node[fill,circle,inner sep=1.5pt,label={below left:\footnotesize$(0,-1,0)$}] (h) at (-1,0,0) {};
\node[fill,circle,inner sep=1.5pt,label={below:\footnotesize$(1,0,-1)$}] (f) at (0,-1,1) {};
\node[fill,circle,inner sep=1.5pt,label={right:\footnotesize$(0,1,-1)$}] (e) at (1,-1,0) {};
\node[fill,circle,inner sep=1.5pt,label={right:\footnotesize$(-1,0,1)$}] (b) at (0,1,-1) {};
\node[fill,circle,inner sep=1.5pt,label={left:\footnotesize$(0,-1,1)$}] (i) at (-1,1,0) {};

\draw [red] (b)-- (a) -- (i) -- (b) -- (d) -- (e) -- (f) -- (g) -- (d) -- (a) -- (g) --(i) --(h) -- (f);
\draw [red,dashed] (h)-- (c) -- (b);
\draw [red,dashed] (e)-- (c);
\end{tikzpicture}
     \caption{Newton polyhedron for the character sum in the base case}
    \label{fig:my_label}
\end{figure}

We also highlight that we need a careful application of the sophisticated tools developed by Adolphson-Sperber because the size of the Newton polyhedron grows with each application of the Cauchy-Schwarz inequality.

\subsection*{Acknowledgement}
RM is supported by J.C. Bose Fellowship JCB/2021/000018 from SERB, Government of India. We thank Roman Holowinsky for his encouragement and support. We also thank the anonymous referee for careful reading and for their suggestions, which greatly improved the exposition.

\subsection*{Notations} In the rest of the paper, we use the notation $e(x)=e^{2\pi ix}$. For any real numbers $a$ and $b$, we denote $a\sim b$ to mean $k_1<|a/b|<k_2$ for some absolute constants $k_1,k_2>0$. For $j\geq0$, $a\sim_j b$ means $k_1<a/b<k_2$ but $k_1, k_2$ may depend on $j$. We use $\varepsilon$ to denote an arbitrarily small positive constant that can change depending on the context. We must add that for brevity of notation, we assume $t>0$. Indeed, the same analysis holds for $t<0$ by replacing $t$ with $-t$ appropriately.

We will denote by $U(x)$ and $\varphi(x)$ two compactly supported smooth weight functions whose definitions can be revised based on the context. For $\alpha>1$, $C\in\BR$ and $k\in\BZ$, we will refer to the intervals $[\alpha^k C, \alpha^{k+1}C]$ as dyadic intervals even though $\alpha$ may not be equal to $2$.

We say a function $f$ is $Z$-inert if it satisfies the bound $x^{-j}\frac{d^j}{dx^j}f(x)\ll_j Z^j$ for any $j\geq0$. We say $f$ is flat if it is $1$-inert.

\section{Sketch of Proof}

We expand upon the above discussion via a sketch of proof for Theorem \ref{SecondMomentThm}. Let $\log t<M<t^{1-\varepsilon}$ be a parameter and $U\in C_c^\infty([-2,2])$ such that $U(x)=1$ for $-1\leq x\leq 1$. 

\subsection{Short second moment average and approximate functional equation} By the approximate functional equation (\ref{AFE}), it suffices to bound 
\begin{align*}
    S(N):=\int_\BR U\left(\frac{v}{M}\right) \left|\sum_{n=1}^{\infty}\lambda(1,n)\,n^{-i(t+v)}V\left(\frac{n}{N}\right)\right|^2dv,
\end{align*}
where $N\ll t^{3/2+\varepsilon}$ and $V\in C_c^\infty([1,2])$ may depend on $t$ and satisfies $V^{(j)}(x)\ll_j1$ for $j\geq0$. Opening the square and integrating yields a shifted sum (Lemma \ref{SMHLemma}),
\begin{align*}
S(N) \ll Mt^\varepsilon+\sup_{H\ll\frac{Nt^\varepsilon}{M}}\frac{S_{M,H}(N)}{N},
\end{align*}
where we get the shifted sum
\begin{align*}
    S_{M,H}(N)\sim M\sum_{h\sim H}\sum_{n\sim N}\lambda(1,n)\overline{\lambda(1,n+h)}e\left(\frac{th}{2\pi n}\right).
\end{align*}
For sketch, we focus on the generic cases $H=\frac{N}{M}$, i.e. we focus on $S_{M,N/M}(N)$.

We note that Lemma \ref{Ram bound} gives us the trivial bound \begin{align}\label{sketch.TrivialBound}
    S_{M,N/M}(N)\ll N^2t^\varepsilon.
\end{align}

\subsection{Delta method} Next we apply the DFI delta method in Lemma \ref{DFILemma} to detect $m=n+h$, \begin{align*}
    S_{M,N/M}(N)\sim& \frac{M}{Q}\sum_{1\leq q\leq Q}\frac{1}{q}\sum_{h\sim \frac{N}{M}}\sum_{n\sim N}\lambda(1,n)e\left(\frac{th}{2\pi n}\right)\sum_{m\sim N}\overline{\lambda(1,m)}\nonumber\\
    &\times\sumx_{\alpha\bmod q}e\left(\frac{\alpha(n+h-m)}{q}\right)\int_\BR g(q,x)e\left(\frac{(n+h-m)x}{qQ} \right)dx.
\end{align*}
For the purpose of this sketch, we focus on the generic case $x\sim 1$.

\subsection{Dual summation formulas} After separating the variables with the delta method, we apply dual summation formulas. Applying Poisson summation to the $h$-sum, and Voronoi summation (Lemma \ref{gl3voronoi}) to the $m$ and $n$ sums, we roughly get 
\begin{align*}
    S_{M,N/M}(N)\sim&\frac{MN^{4/3}}{Q^2t^{1-\varepsilon}}\sum_{q\sim Q} \sum_{h\sim\frac{qt}{N}}\sum_{m\ll\frac{N^2}{Q^3}}\frac{\overline{\lambda(m)}}{m^{1/3}}\sum_{n\ll \frac{N^2}{Q^3}+\frac{Q^3t^3}{M^3N}}\frac{\lambda(n)}{n^{1/3}}\\
    &\times S\left(\overline{h},m;q\right)S\left(\overline{h},n;q\right)\int_{|u|\ll1}e\left(f_1(m,h,q,u)+f_2(n,h,q,u)\right)du,
\end{align*}
with the analytic oscillations being of size $f_1(m,h,q,u)\sim \frac{N}{qQ}$ and $f_2(n,h,q,u)\sim\frac{(nN)^{1/3}}{q}$. The variable $u$ is a secondary oscillatory terms that is created in the Voronoi summation to the $n$-sum. It does not have any significance in the generic case when $N\sim t^{3/2}$, but we get a little bit extra saving when $N$ is smaller and the main oscillatory terms cancel out in the diagonal case. In the rigorous proof, we do not utilize the cancellations in the integrals until later stages due to the fact that $x$ can be smaller than 1. We choose $Q\ll\sqrt{\frac{MN}{t}}$. Indeed to match the oscillation, we want to choose $Q=\sqrt{\frac{MN}{t}}$, but we will choose $Q$ smaller when $N$ is small. Focusing on the generic case $m\sim\frac{N^2}{Q^3}$ and $n\sim\frac{N^2}{Q^3}+\frac{Q^3t^3}{M^3N}\sim \frac{N^2}{Q^3}$, we roughly have \begin{align*}
    S_{M,N/M}(N)\sim&\frac{M}{t^{1-\varepsilon}}\sum_{q\sim Q} \sum_{h\sim\frac{qt}{N}}\sum_{m\sim\frac{N^2}{Q^3}}\overline{\lambda(m)}\sum_{n\sim \frac{N^2}{Q^3}}\lambda(n)\\
    &\times S\left(\overline{h},m;q\right)S\left(\overline{h},n;q\right)\int_{|u|\ll1}e\left(f_1(m,h,q,u)+f_2(n,h,q,u)\right)du,
\end{align*}
with the analytic oscillations being of size $f_1(m,h,q), f_2(n,h,q)\sim \frac{N}{qQ}$.

\subsection{Cauchy Schwarz inequality} Next we apply Cauchy-Schwarz inequality to take out the $q, h$ sums and the $u$-integral, giving us \begin{align*}
    S_{M,N/M}(N)\ll \sqrt{S_1S_2},
\end{align*}
where roughly \begin{align*}
    S_j\sim \frac{M}{t^{1-\varepsilon}}\sum_{q\sim Q} \sum_{h\sim\frac{qt}{N}}\int_{|u|\ll1}\left|\sum_{m\sim\frac{N^2}{Q^3}}\lambda(m) S\left(\overline{h},m;q\right)e\left(f_j(m,h,q,u)\right)\right|^2du.
\end{align*}

\begin{Remark}\label{CS+Duality}
One should note that the length of the $q$ and $h$ sums are shorter than the $n$ and $m$ sums. Therefore it would have been beneficial to instead take out the $n$ and $m$ sums out of the absolute value squared in the above step. However, our approach benefits by allowing us to separate the $n$ and $m$ sums.
\end{Remark}

\subsection{Duality principle} We apply the Duality principle (Lemma \ref{dualitylemma}) together with Lemma \ref{Ram bound} to take the $n$ and $m$ sums outside the absolute value squared and obtain 
\begin{align*}
    S_j\ll \frac{MN^2}{Q^3t^{1-\varepsilon}}\sup_{\|\beta\|_2=1}\sum_{m\sim\frac{N^2}{Q^3}}\left|\sum_{q\sim Q} \sum_{h\sim\frac{qt}{N}} S\left(\overline{h},m;q\right)\int_{|u|\ll1} \beta(q,h,u)e\left(f_j(m,h,q,u)\right)du\right|^2.
\end{align*}
This also rids us of the $\rm GL_3$ Fourier coefficients, decreasing the complexity of the problem at hand. 

\subsection{Poisson summation formula} Opening the square and applying Poisson summation on the $m$-sum gives us roughly \begin{align*}
    S_j\ll &\frac{MN^4}{Q^8t^{1-\varepsilon}}\sup_{\|\beta\|_2=1}\mathop{\sum\sum}_{q_1,q_2\sim Q} \mathop{\sum\sum}_{h_i\sim\frac{q_i t}{N}}\sum_{|m|\ll\frac{Q^2\sqrt{q_1q_2}}{N}}\sum_{\gamma\bmod q_1q_2}S\left(\overline{h_1},\gamma;q_1\right)S\left(\overline{h_2},\gamma;q_2\right)e\left(\frac{m\gamma}{q_1q_2}\right)\\
    &\times \int_{|u_1|\ll1}\int_{|u_2|\ll1}\beta(q_1,h_1,u_1)\overline{\beta(q_2,h_2,u_2)}\\
    &\times \int_\BR V(y) e\left(f\left(\frac{N^2}{Q^3}y,h_1,q_1,u_1\right)-f\left(\frac{N^2}{Q^3}y,h_2,q_2,u_2\right)-\frac{mN^2y}{q_1q_2Q^3}\right)dydu_1du_2.
\end{align*}

Now we have a spectrum of cases depending on the size of $m$. For the sketch, we shall focus on the two extreme cases, the diagonal corresponding to $m=0$, and the off-diagonal where $m\sim\frac{Q^3}{N}$ is as large as possible. The other cases will yield bounds in between these two cases.

\subsection{Diagonal terms} For the diagonal $m=0$, we essentially have $q_1=q_2(=q), h_1=h_2$ and $|u_1-u_2|\ll\frac{q^2t}{MN}$. In such a case, one can evaluate the $\gamma$-sum and obtain the bound \begin{align*}
    \frac{MN^4}{Q^8t^{1-\varepsilon}}Q^3\frac{Q^2t}{MN}\ll\frac{N^3t^\varepsilon}{Q^3}.
\end{align*}
Here is the only place where we extract an extra saving coming from the secondary oscillatory terms having $u_1,u_2$. This is possible as the main oscillatory terms cancel out when $q_1=q_2,h_1=h_2$. Moreover, we'll see that $\frac{Q^2t}{MN}$ is less than $1$ only when $N$ is smaller than the generic size $t^{3/2}$ by our choice of $Q$.

\subsection{Off-diagonal terms} For the off-diagonal where $m\sim\frac{Q^3}{N}$, evaluating the $\gamma$-sum and apply stationary phase analysis on the $y$-integral gives us roughly \begin{align*}
    &\frac{MN^{7/2}}{Q^5t^{1-\varepsilon}}\sup_{\|\beta\|_2=1}\sum_{m\sim\frac{Q^3}{N}}\mathop{\sum\sum}_{q_1,q_2\sim Q} \mathop{\sum\sum}_{h_1,h_2\sim\frac{qt}{N}}\beta(q_1,h_1)\overline{\beta(q_2,h_2)}\\
    &\times e\left(-\frac{\overline{mh_1}q_2}{q_1}-\frac{\overline{mh_2}q_1}{q_2}\right)e\left(F(m,h_1,h_2,q_1,q_2)\right),
\end{align*}
with $F(m,h_1,h_2,q_1,q_2)\sim \frac{N}{Q^2}$. Here we dropped the $u_1,u_2$-integral in the sketch as we will just bound it trivially with $\|\beta\|_2=1$. The bound at this point is \begin{align*}
    \frac{MN^{7/2}}{Q^5t^{1-\varepsilon}}\frac{Q^3}{N}Q\frac{Qt}{N}\ll MN^{3/2}t^\varepsilon.
\end{align*}
We want to get saving from the $q_j$ sums. Assuming we are in the generic case $(q_1,q_2)=1$ in sketch, reciprocity gives us  \begin{align*}
    &\frac{MN^{7/2}}{Q^5t^{1-\varepsilon}}\sup_{\|\beta\|_2=1}\sum_{m\sim\frac{Q^3}{N}}\mathop{\sum\sum}_{q_1,q_2\sim Q} \mathop{\sum\sum}_{h_1,h_2\sim\frac{q_it}{N}}\beta(q_1,h_1)\overline{\beta(q_2,h_2)}\\
    &\times e\left(\frac{\overline{q_1}q_2}{mh_1}+\frac{\overline{q_2}q_1}{mh_2}\right)e\left(-\frac{q_2}{mh_1q_1}-\frac{q_1}{mh_2q_2}+F(m,h_1,h_2,q_1,q_2)\right).
\end{align*}

\subsection{Infinite Cauchy-Schwarz and Poisson summation} We want to apply Poisson summation to the $q_1,q_2$ sums, but the $\beta$ coefficients obtained from the Duality principle prevent us from doing so. To deal with the $\beta$ coefficients, we have to apply Cauchy-Schwarz inequality to take out the $q_1$-sum while leaving the $q_2$-sum inside the square. Applying Poisson summation afterwards on the $q_1$-sum would give us a saving of \begin{align*}
    \left(\sqrt{\frac{\text{old length}}{\text{new length}}}\right)^{1/2}\sim \left(\frac{N^2}{Qt^2}\right)^{1/4}
\end{align*}
in $S_1(N)$. The process of Cauchy Schwarz creates two copies of $q_2$-sum, say $q_2,q_3$ sums. We then repeat the process of taking out the $q_2$-sum, leaving $q_3$-sum inside the square and applying Poisson summation on the $q_2$-sum. This gives us a saving of \begin{align*}
    \left(\frac{N^2}{Qt^2}\right)^{1/8}
\end{align*}

\begin{Remark}
The above saving is a result of careful analysis of complicated exponential sums, and we need to apply the Weil-Deligne bound via the result of Adolphson-Sperber obtained by their extension of Dwork's cohomology theory from smooth projective hypersurfaces in characteristic $p$ to a general class of exponential sums. 
\end{Remark}

Iterating this process $\nu$ times for $\nu$ sufficiently large yields a total saving of \begin{align*}
    \left(\frac{N^2}{Qt^2}\right)^{1/4}\times\left(\frac{N^2}{Qt^2}\right)^{1/8}\times\cdots \times \left(\frac{N^2}{Qt^2}\right)^{1/2^\nu}\sim \left(\frac{N^2}{Qt^2}\right)^{1/2}t^{-\varepsilon}
\end{align*}
and hence we get the contribution of $m\sim\frac{Q^3}{N}$ is bounded by \begin{align*}
    MN^{3/2}t^{\varepsilon}\left(\frac{Qt^2}{N^2}\right)^{1/2}\ll M\sqrt{QN}t^{1+\varepsilon}.
\end{align*}

\subsection{Final calculations} As a result, assuming all other cases lie between the two cases in sketch, we have for $Q\ll\sqrt{\frac{MN}{t}}$, \begin{align*}
    S_j\ll \frac{N^3t^\varepsilon}{Q^3}+M\sqrt{QN}t^{1+\varepsilon},
\end{align*}
and hence \begin{align*}
    \frac{S_{M,N/M}(N)}{N}\ll \frac{N^2t^\varepsilon}{Q^3}+\frac{\sqrt{Q}Mt^{1+\varepsilon}}{\sqrt{N}}
\end{align*}
We choose $Q=\frac{\sqrt{M}N^{2/3}}{t^{3/4}}\ll\sqrt{\frac{MN}{t}}$, giving us \begin{align*}
    \frac{S_{M,N/M}(N)}{N}\ll \frac{t^{9/4+\varepsilon}}{M^{3/2}}+\frac{M^{5/4}t^{5/8+\varepsilon}}{N^{1/6}}.
\end{align*}
Together with the trivial bound in (\ref{sketch.TrivialBound}), we have \begin{align*}
    \frac{S_{M,N/M}(N)}{N}\ll \frac{t^{9/4+\varepsilon}}{M^{3/2}}+\min\left\{Nt^\varepsilon,\frac{M^{5/4}t^{5/8+\varepsilon}}{N^{1/6}}\right\}\ll \frac{t^{9/4+\varepsilon}}{M^{3/2}}+M^{15/14}t^{15/28+\varepsilon}.
\end{align*}
The optimal choice is $M=t^{2/3}$, which gives us the bound \begin{align*}
    \frac{S_{M,N/M}(N)}{N}\ll t^{5/4+\varepsilon},
\end{align*}
and \begin{align*}
    \left|L\left(\frac{1}{2}+it,\pi\right)\right|^2\ll&\log t\left(1+\int_\BR U\left(\frac{v}{t^{2/3}}\right)\left|L\left(\frac{1}{2}+it+iv,\pi\right)\right|^2dv\right)\ll_{\pi,\varepsilon} t^{5/4+\varepsilon}.
\end{align*}

\section{Preliminaries on automorphic forms}

Let $\pi$ be a Maass form for $\rm SL_3(\BZ)$, which is an eigenfunction for all the Hecke operators. Let the Fourier coefficients be $\lambda(n_1,n_2)$, normalized so that $\lambda(1,1)=1$. The Langlands parameter $(\alpha_1,\alpha_2, \alpha_3)$ associated with $\pi$ satisfies $\alpha_1+\alpha_2+\alpha_3=0$. The dual cusp form $\tilde{\pi}$ has Langlands parameters $(-\alpha_3, -\alpha_2, -\alpha_1)$. The $L$-function $L(s,\pi)$ satisfies a functional equation 
$$\gamma(s,\pi)L(s,\pi) = \gamma(1-s,\pi)L(1-s,\tilde{\pi}), $$
where $\gamma(s,\pi)$ and $\gamma(s,\tilde{\pi})$ are the associated gamma factors. We refer the reader to Goldfeld's book on automorphic forms for $\rm GL_n(\BZ)$ \cite{goldfeldbook} for the theory of automorphic forms on higher rank groups.

\subsection{Approximate functional equation and Voronoi summation formula}
We are interested in bounding $L(s,\pi)$ on the critical line, $Re(s)=1/2$. For that, we approximate $L(1/2+it, \pi)$ by a smoothed sum of length $t^{3/2+\varepsilon}$. This is known as the approximate functional equation and is proved by applying Mellin transformation to $f$ followed by using the above functional equation.

\begin{Lemma}[{\cite[Theorem 5.3]{Iw-Ko}}]\label{AFE}
Let $G(u)$ be an even, holomorphic function bounded in the strip $-4\leq Re(u)\leq 4$ and normalized by $G(0)=1$. Then for $s$ in the strip $0\leq \sigma\leq 1$ 
$$ L(s,\pi) = \sum_{n\geq1} \lambda(1,n)n^{-s}V_s(n) +  \frac{\gamma(1-s,\tilde{\pi})}{\gamma(s,\pi)} \sum_{n\geq1} \overline{\lambda(1,n)}n^{-(1-s)}V_{1-s}(n),$$
where
$$V_s(y) = \frac{1}{2\pi i}\int_{(3)}y^{-u}G(u)\frac{\gamma(s+u,\tilde{\pi})}{\gamma(s,\pi)}\frac{du}{u}, $$
and $\gamma(s,\pi)$ is a product of certain $\Gamma$-functions appearing in the functional equation of $L(s,\pi)$.
\end{Lemma}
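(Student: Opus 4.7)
The plan is the classical Mellin-and-contour-shift argument underlying Iwaniec--Kowalski Theorem 5.3. I would introduce the auxiliary integral
\begin{align*}
I(s) := \frac{1}{2\pi i}\int_{(3)} L(s+u,\pi)\,\frac{\gamma(s+u,\tilde{\pi})}{\gamma(s,\pi)}\,G(u)\,\frac{du}{u}.
\end{align*}
On the line $\mathrm{Re}(u)=3$ we have $\mathrm{Re}(s+u)>1$, so $L(s+u,\pi)$ is given by its absolutely convergent Dirichlet series. The super-polynomial decay of $G$ in vertical strips, together with Stirling-type estimates on the gamma factors, justifies interchanging sum and integral, which collapses $I(s)$ into the first sum $\sum_{n\geq 1}\lambda(1,n)\,n^{-s}\,V_s(n)$ appearing on the right-hand side of the stated AFE.

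Next I would shift the contour from $\mathrm{Re}(u)=3$ down to $\mathrm{Re}(u)=-3$. The integrand is meromorphic between the two lines, with a single simple pole at $u=0$ coming from the factor $1/u$; since $G(0)=1$, the residue equals $L(s,\pi)\,\gamma(s,\tilde{\pi})/\gamma(s,\pi)$. The contour shift is legitimate because the gamma factors decay according to Stirling on the horizontal pieces at height $\pm T$ as $T\to\infty$, while $L(s+u,\pi)$ has polynomial growth in vertical strips (extended to the left of $\mathrm{Re}(s)=0$ via the functional equation) and $G$ decays super-polynomially.

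On the shifted contour I would apply the functional equation $\gamma(s+u,\pi)L(s+u,\pi)=\gamma(s+u,\tilde{\pi})L(1-s-u,\tilde{\pi})$ to trade $L(s+u,\pi)$ for $L(1-s-u,\tilde{\pi})$, and then perform the change of variables $u\mapsto -u$, using that $G$ is even, to bring the contour back to $\mathrm{Re}(u)=3$. On this line $\mathrm{Re}(1-s+u)>1$, so $L(1-s+u,\tilde{\pi})$ admits its Dirichlet series expansion with coefficients $\overline{\lambda(1,n)}$. Swapping sum and integral once more, and simplifying the resulting product of gamma factors using the functional equation together with $\tilde{\tilde{\pi}}=\pi$, reassembles the integrand into precisely the kernel defining $V_{1-s}$, producing the second sum with its prefactor $\gamma(s,\tilde{\pi})/\gamma(s,\pi)$. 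Equating the two evaluations of $I(s)$ then yields the claimed identity.

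The main technical point, and the step I would flag as the principal obstacle, is the gamma-factor bookkeeping at the end: after the change of variables one obtains the ratio $\gamma(s-u,\tilde{\pi})^2/(\gamma(s-u,\pi)\gamma(s,\pi))$, and one must verify that this collapses to $(\gamma(s,\tilde{\pi})/\gamma(s,\pi))\cdot\gamma(1-s+u,\pi)/\gamma(1-s,\tilde{\pi})$, the exact kernel of $V_{1-s}$. This is a routine computation once one invokes the functional equation of the gamma factors themselves, but it requires careful tracking. All interchanges of summation and integration, and all contour manipulations, are validated uniformly by Stirling estimates and the rapid decay of $G$ in vertical strips.
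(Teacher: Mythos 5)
Your route --- the auxiliary Mellin integral, the contour shift picking up the pole at $u=0$, the functional equation followed by $u\mapsto -u$, and re-expansion of the dual Dirichlet series --- is exactly the classical argument behind \cite[Theorem 5.3]{Iw-Ko}, which is all the paper itself invokes (it gives no independent proof). However, as written your argument does not close, and the gap sits precisely at the step you flagged. With your integrand, whose kernel is $\gamma(s+u,\tilde{\pi})/\gamma(s,\pi)$, the residue at $u=0$ is $L(s,\pi)\,\gamma(s,\tilde{\pi})/\gamma(s,\pi)$, as you say; but then equating the two evaluations of $I(s)$ yields an approximate functional equation for $\bigl(\gamma(s,\tilde{\pi})/\gamma(s,\pi)\bigr)L(s,\pi)$, not for $L(s,\pi)$, so the stated identity does not follow without dividing through, which changes the weight in the first sum. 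Moreover the gamma-factor ``collapse'' you defer to is not a consequence of the functional equation: you would need $\gamma(s-u,\tilde{\pi})^2/\bigl(\gamma(s-u,\pi)\gamma(s,\pi)\bigr)$ to equal $\bigl(\gamma(s,\tilde{\pi})/\gamma(s,\pi)\bigr)$ times the kernel of $V_{1-s}$, and the target you wrote, $\gamma(1-s+u,\pi)/\gamma(1-s,\tilde{\pi})$, is not even the kernel of $V_{1-s}$ as defined in the lemma (that would be $\gamma(1-s+u,\tilde{\pi})/\gamma(1-s,\pi)$); at $u=0$ the required identity reduces to the symmetry $\gamma(s,\pi)\gamma(1-s,\tilde{\pi})=\gamma(s,\tilde{\pi})\gamma(1-s,\pi)$, but for general $u$ it does not follow from the functional equation, so this ``routine bookkeeping'' cannot be completed in the setup you chose.

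The correct execution is the one in Iwaniec--Kowalski: take the kernel $\gamma(s+u,\pi)/\gamma(s,\pi)$ (the displayed $V_s$ in the lemma should be read this way; the $\tilde{\pi}$ in its numerator is a transcription slip from \cite{Iw-Ko}). Then the residue is exactly $L(s,\pi)$, and after the functional equation in the form $\gamma(s-u,\pi)L(s-u,\pi)=\gamma(1-s+u,\tilde{\pi})L(1-s+u,\tilde{\pi})$ and the substitution $u\mapsto -u$ (which also flips the sign of $du/u$ and produces the plus sign in the statement) the dual side carries the factor $\gamma(1-s+u,\tilde{\pi})/\gamma(s,\pi)=\bigl(\gamma(1-s,\tilde{\pi})/\gamma(s,\pi)\bigr)\cdot\gamma(1-s+u,\tilde{\pi})/\gamma(1-s,\tilde{\pi})$, i.e.\ the $\varepsilon$-type prefactor times the $V$-weight attached to $\tilde{\pi}$ at $1-s$; no further gamma identities are needed. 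Two smaller points: the hypotheses only make $G$ holomorphic and bounded on the strip, so the decay justifying the interchanges and the vanishing of the horizontal segments must come from Stirling applied to the gamma factors (together with polynomial growth of $L$ in vertical strips), not from any ``super-polynomial decay of $G$''; and the sign bookkeeping in the $u\mapsto -u$ step deserves to be written out, since it is what turns the shifted integral into $+\,$(prefactor)$\times$(dual sum).
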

\begin{Remark}
On the critical line, Stirling's approximation to $\gamma(s,\pi)$ followed by integration by parts to the integral representation of $V_{1/2\pm it}(n)$ gives arbitrary savings for $n\gg (1+|t|)^{3/2+\varepsilon}$.
\end{Remark}

One of the main tools in our proof is a Voronoi type formula for $\rm GL_3(\BZ)$. Let $h$ be a compactly supported smooth function on  $(0, \infty)$, and let $\tilde{h}(s)=\int_0^\infty h(x)x^{s-1}\ dx$ be its Mellin transform. For $\sigma>-1+\max\{-Re(\alpha_1), -Re(\alpha_2), -Re(\alpha_3)\}$ and $a=0, 1$, define
\begin{equation*}
\gamma_a(s) = \frac{\pi^{-3s-3/2}}{2}\prod_{i=1}^3\frac{\Gamma(\frac{1+s+\alpha_i+a}{2})}{\Gamma(\frac{-s-\alpha_i+a}{2})}.
\end{equation*}
Further set $\gamma_\pm(s) = \gamma_0(s)\mp i\gamma_1(s)$ and let
\begin{equation}\label{H_pm}
H_\pm(y) = \frac{1}{2\pi i}\int_{(\sigma)} y^{-s}\gamma_\pm(s)\tilde{h}(-s)\ ds. 
\end{equation}
We need the following Voronoi type formula (See \cite{Blo12, Li1, Miller-Schmid}).

\begin{Lemma}\label{gl3voronoi}
Let $h$ be a compactly supported smooth function on $(0, \infty)$. We have
\begin{equation*}
\sum_{n=1}^\infty \lambda(1,n) e(an/q)h(n) = q\sum_\pm \sum_{n_0|q}\sum_{n=1}^\infty \frac{\lambda(n,n_0)}{nn_0} S(\overline{a}, \pm n; q/n_0) H_\pm (n_0^2n/q^3),
\end{equation*}
where $(a,q)=1$ and $a\overline{a}\equiv 1\bmod q$.
\end{Lemma}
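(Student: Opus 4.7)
The strategy is the classical Mellin-transform approach of Miller--Schmid, as reworked by Goldfeld--Li and Blomer. First, I would apply Mellin inversion to the test function $h$: for $\sigma$ sufficiently large,
$$h(n) = \frac{1}{2\pi i}\int_{(\sigma)} \tilde h(s)\, n^{-s}\, ds,$$
and interchange the sum and the contour integral. This is legitimate in the region of absolute convergence, using a standard Rankin--Selberg type average bound on $|\lambda(1,n)|$. The left-hand side thus becomes
$$\frac{1}{2\pi i}\int_{(\sigma)} \tilde h(s)\, L(s,\pi;a/q)\, ds, \quad \text{where } L(s,\pi;a/q) := \sum_{n\ge 1}\lambda(1,n)\, e(an/q)\, n^{-s}.$$

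The main input is the functional equation of the additively twisted $L$-series $L(s,\pi;a/q)$. To derive it, for each divisor $n_0 \mid q$ one expands the additive character $e(an/q)$ in terms of primitive Dirichlet characters $\chi$ of conductor dividing $q/n_0$ via Gauss sums, then applies the standard $\mathrm{GL}(3)$ functional equation for each $L(s,\pi\otimes\chi)$ (which is entire by cuspidality of $\pi$), and finally reassembles the sum over $\chi$ into a Kloosterman sum $S(\overline a,\pm n;q/n_0)$ keyed to the divisor $n_0$. The resulting identity relates $L(s,\pi;a/q)$ to a sum over divisors $n_0 \mid q$ of series of the form $\sum_n \lambda(n,n_0)\, S(\overline a,\pm n;q/n_0)\, n^{-(1-s)}$, with archimedean transition kernel $\gamma_\pm(s)$. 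The Miller--Schmid automorphic-distribution approach produces the same identity intrinsically, without the Gauss-sum manipulations.

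Finally, shift the contour from $\mathrm{Re}(s)=\sigma$ to $\mathrm{Re}(s)=-\sigma'$ with $\sigma'$ as in the definition of $H_\pm$. Cuspidality makes $L(s,\pi;a/q)$ entire, so no residues are encountered. Substituting the functional equation on the shifted line, changing variables $s\mapsto -s$, and re-interchanging sum and integral (justified by the rapid decay of both $\tilde h$ and $\gamma_\pm$ on vertical lines), the remaining contour integral matches
$$\frac{1}{2\pi i}\int_{(\sigma')} y^{-s}\gamma_\pm(s)\,\tilde h(-s)\, ds = H_\pm(y)$$
with $y = n_0^2 n/q^3$, while the powers of $q$ collected through the substitution yield the prefactor $q$ displayed in the statement. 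The principal obstacle is the bookkeeping of archimedean gamma factors across the functional equation: one must verify that they assemble cleanly into $\gamma_\pm(s) = \gamma_0(s) \mp i\gamma_1(s)$, which is carried out via Stirling and the duplication/reflection identities for $\Gamma$ in the cited references \cite{Blo12, Li1, Miller-Schmid}.
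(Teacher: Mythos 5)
The paper does not prove this lemma at all; it is quoted directly from \cite{Blo12, Li1, Miller-Schmid}, and your outline is exactly the standard argument of those references: Mellin inversion, the functional equation of the additively twisted $L$-series, a contour shift (with entirety from cuspidality), and identification of the resulting kernel with $H_\pm(n_0^2n/q^3)$ together with the bookkeeping of the $q$-powers. One caution: the step where you expand $e(an/q)$ into primitive Dirichlet characters and then reassemble the twisted functional equations into the Kloosterman sums $S(\overline{a},\pm n;q/n_0)$ is only routine for prime $q$ --- for composite $q$ the non-primitive characters and the terms with $(n,q)>1$ make that reassembly a genuinely delicate combinatorial identity --- which is precisely why the complete proofs in the cited literature proceed via automorphic distributions (Miller--Schmid) or the automorphic-form computation of Goldfeld--Li rather than Gauss-sum manipulations, a route you correctly flag as the intrinsic alternative.
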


Stirling approximation of $\gamma_{\pm}(s)$ gives $\gamma_\pm(\sigma+i\tau)\ll 1+|\tau|^{3\sigma+3/2}$. Moreover on $Re(s)=-1/2$
\begin{equation*}
\gamma_\pm(-1/2+i\tau) = (|\tau|/e\pi)^{3i\tau}\Phi_\pm(\tau), \qquad \text{ where } \Phi_\pm'(\tau)\ll |\tau|^{-1}.
\end{equation*}

\begin{Lemma}[{\cite[Lemma 6]{Blo12}}]\label{voronoi}
Let $h$ be a compactly supported smooth function on $[a, b]\subset(0, \infty)$ and $H_\pm$ be defined as in \eqref{H_pm}. Then there exist constants $\gamma_\ell$ depending only on the Langlands parameters $(\alpha_1, \alpha_2, \alpha_3)$ such that 
\begin{equation*}
H_\pm(x) = x\int_0^\infty h(y) \sum_{\ell=1}^L \frac{\gamma_\ell}{(xy)^{\ell/3}}e(\pm 3(xy)^{1/3})\, dy + R(x),
\end{equation*}
where
\begin{equation*}
x^k\frac{d^k}{dx^k}R(x) \ll ||h||_\infty x^{1+(k-L)/3}.
\end{equation*}
The implied constant above depends on $a, b, \alpha_1, \alpha_2, \alpha_3, k$ and $L$.
\end{Lemma}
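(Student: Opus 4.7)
The plan is to reduce the problem to an asymptotic expansion of the inverse Mellin transform of $\gamma_\pm$ and then apply stationary phase. Writing $\tilde h(-s) = \int_0^\infty h(y) y^{-s-1}\, dy$ and interchanging the order of integration (justified by the rapid decay of $\gamma_\pm(\sigma + i\tau)$ combined with the compact support of $h$), one obtains
\begin{equation*}
H_\pm(x) = \int_0^\infty \frac{h(y)}{y} K_\pm(xy)\, dy, \qquad K_\pm(u) := \frac{1}{2\pi i}\int_{(\sigma)} u^{-s}\gamma_\pm(s)\, ds.
\end{equation*}
It then suffices to establish an asymptotic of the form $K_\pm(u) = u\sum_{\ell=1}^L \gamma_\ell u^{-\ell/3} e(\pm 3 u^{1/3}) + R_0(u)$ as $u\to\infty$, with good control on $R_0$ and its derivatives, after which the lemma follows by substitution.

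To produce the expansion, I would shift the contour to $\mathrm{Re}(s) = -1$, which is permissible since the hypothesis $\sigma > -1 + \max_i\{-\mathrm{Re}(\alpha_i)\}$ places the original contour to the right of all poles of $\gamma_\pm$. This shift extracts the factor of $u$ out front via $u^{-s} = u \cdot u^{-i\tau}$. Applying Stirling's asymptotic termwise to each Gamma ratio yields an expansion $\gamma_\pm(-1+i\tau) = (|\tau|/(2 e \pi))^{3i\tau}|\tau|^{-3/2}\sum_{j=0}^{L-1} c_j^\pm |\tau|^{-j} + O_L(|\tau|^{-3/2-L})$, with constants $c_j^\pm$ depending only on the Langlands parameters (the leading factor is consistent with the identity stated just before the lemma, up to the routine rescaling of $\tau$). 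The oscillatory phase $\psi(\tau) = 3\tau \log(|\tau|/(2 e \pi)) - \tau\log u$ has unique stationary points $\tau_0 = \pm 2\pi u^{1/3}$ where $\psi(\tau_0) = \mp 6\pi u^{1/3}$ and $\psi''(\tau_0) = 3/|\tau_0|$; stationary phase applied to each term of the Stirling expansion then contributes a series in $u^{-1/3}$ with oscillation $e(\pm 3 u^{1/3})$, yielding the desired coefficients $\gamma_\ell$.

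Substituting back via $H_\pm(x) = \int h(y) y^{-1} K_\pm(xy)\, dy$ produces the stated main term, while the remainder $R(x)$ comes from combining the Stirling-truncation error with the stationary-phase remainder. The contribution away from the stationary points is handled by repeated integration by parts in $\tau$, and the tail of the Stirling expansion saves $u^{-L/3}$ beyond the last kept term; compactness of $\mathrm{supp}(h)$ makes the $y$-integral harmless. For the derivative bound, each application of $x \, d/dx$ brings down a factor of $s$ inside the Mellin integral, equivalently a factor of $\tau$ on the stationary-phase side, effectively shortening the useful range of the Stirling expansion by one term; tracking this carefully gives the stated $k$- and $L$-dependence. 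The main obstacle is maintaining uniformity of all error estimates in $u$ in the presence of these $s^k$ factors: to avoid losing powers, one must extend the Stirling expansion further to compensate, and control the stationary-phase remainder using the explicit derivative bound $\Phi_\pm'(\tau) \ll |\tau|^{-1}$ supplied just before the lemma.
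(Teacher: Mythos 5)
You are proposing to prove a result the paper itself only quotes (it is Lemma 6 of \cite{Blo12}; the proof there, like Li's Lemma 2.1, is exactly the contour--shift/Stirling/stationary--phase argument you sketch), so your overall strategy matches the standard one. But two steps, as written, would fail. First, the interchange of integrals: $\gamma_\pm(\sigma+i\tau)$ does \emph{not} decay rapidly --- by the Stirling bound recorded just before the lemma it grows like $|\tau|^{3\sigma+3/2}$; the convergence of the $s$-integral defining $H_\pm$ comes entirely from the rapid decay of $\tilde h(-s)$ on vertical lines (smoothness and compact support of $h$). Consequently your separated kernel $K_\pm(u)=\frac{1}{2\pi i}\int_{(\sigma)}u^{-s}\gamma_\pm(s)\,ds$ is not absolutely convergent on the original line (one needs $3\sigma+3/2<-1$, i.e.\ $\sigma<-5/6$), so reducing the lemma to an asymptotic for $K_\pm$ alone is not legitimate without extra regularization. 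The standard proofs avoid this by keeping $\tilde h(-s)$ inside and doing the stationary phase in $\tau$ with $y$ fixed.

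Second, and more seriously, the shift to $\mathrm{Re}(s)=-1$ is not pole-free. The numerator factors $\Gamma\bigl(\frac{1+s+\alpha_i+a}{2}\bigr)$ give $\gamma_\pm$ poles at $s=-1-a-\alpha_i-2k$, whose rightmost members have real part $-1-\mathrm{Re}(\alpha_i)$; starting to the right of all poles does not allow you to pass them. If $\mathrm{Re}(\alpha_i)<0$ you cross a pole and pick up a residue term of size $x^{1+\mathrm{Re}(\alpha_i)}$, which is not oscillatory and can exceed the claimed main term $\asymp x^{2/3}$, and in the tempered case $\mathrm{Re}(\alpha_i)=0$ (generic Hecke--Maass forms, and in particular the self-dual symmetric-square case) the poles lie exactly on the line $\mathrm{Re}(s)=-1$, so the contour cannot be placed there at all. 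The correct choice is $\mathrm{Re}(s)=-1/2$, which under $|\mathrm{Re}(\alpha_i)|<1/2$ crosses nothing --- this is precisely why the paper records $\gamma_\pm(-1/2+i\tau)=(|\tau|/e\pi)^{3i\tau}\Phi_\pm(\tau)$. With the contour at $-1/2$ and $\tilde h$ kept inside, your saddle-point computation (stationary point $|\tau_0|\asymp(xy)^{1/3}$, phase value $\mp 6\pi (xy)^{1/3}$, second derivative $\asymp|\tau_0|^{-1}$) does go through and yields the stated expansion, the prefactor $x$ emerging after the stationary phase rather than from $x^{-s}$ on $\mathrm{Re}(s)=-1$; for the remainder and the $x^k\frac{d^k}{dx^k}R(x)$ bounds you would also need the full asymptotic expansion of $\Phi_\pm$ in powers of $|\tau|^{-1}$, not merely the bound $\Phi_\pm'(\tau)\ll|\tau|^{-1}$.
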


\subsection{Short second moment average} We bound the square of the $L$-function by a second moment average over a short interval by following Good's arguments \cite{Good}.

\begin{Lemma}\label{ShortMomentLemma}
Let $T/2\leq t\leq T$, then we have \begin{align*}
    \left|L\left(\frac{1}{2}+it,\pi\right)\right|^2\ll \log t\left(1+\int_{-\log T}^{\log T}\left|L\left(\frac{1}{2}+it+iv,\pi\right)\right|^2e^{-v^2/2}\, dv\right).
\end{align*}
\end{Lemma}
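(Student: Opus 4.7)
The plan is to follow Good's arguments via contour integration. Fix $s_0 = 1/2+it$ and introduce the auxiliary function
\[
\Psi(w) := L(s_0+w,\pi)\,L(\bar{s}_0 - w, \tilde{\pi}),
\]
where $\tilde{\pi}$ denotes the contragredient of $\pi$. Because the Fourier coefficients of $\tilde{\pi}$ are the complex conjugates of those of $\pi$, the identity $\overline{L(s,\pi)} = L(\bar{s},\tilde{\pi})$ yields $\Psi(0) = |L(s_0,\pi)|^2$ and, critically, $\Psi(iv) = |L(1/2+it+iv,\pi)|^2$ for every real $v$. As $\pi$ is cuspidal, $L(s,\pi)$ is entire and hence so is $\Psi$. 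The strategy is to recover $\Psi(0)$ via a Cauchy contour integral on a rectangle close to the imaginary axis, which naturally exhibits $|L|^2$ on the critical line on the right-hand side.

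By the residue theorem applied to $\Psi(w)\,e^{w^2/2}/w$,
\[
|L(s_0,\pi)|^2 = \frac{1}{2\pi i}\oint_{C} \Psi(w)\, e^{w^2/2}\,\frac{dw}{w},
\]
where I take $C$ to be the positively oriented rectangle with vertical sides $\mathrm{Re}(w) = \pm\delta$ for $\delta := 1/\log T$ and horizontal sides $|\mathrm{Im}(w)| = V := \log T$. On the horizontal sides, $|e^{w^2/2}| \leq e^{-V^2/2} = T^{-(\log T)/2}$ overwhelms the convexity estimate $|\Psi(w)| \ll T^{3/2+\varepsilon}$, so their total contribution is $O(T^{-A})$ for any $A>0$. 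On the left vertical side, the functional equation $L(s,\pi) = \chi_\pi(s)\,L(1-s,\tilde{\pi})$ applied to the first factor of $\Psi$ produces a gamma ratio of size $|\chi_\pi(1/2-\delta+it+iv)| \asymp |t+v|^{3\delta} \ll T^{3/\log T} = O(1)$, reducing that contribution to the same shape as the right side, namely an integral of $|L(1/2+\delta+it+iv,\pi)|^2$.

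On the right vertical side the integrand is bounded pointwise by $|L(1/2+\delta+it+iv,\pi)|^2\,e^{-v^2/2}/|\delta+iv|$, and the crude bound $1/|\delta+iv| \leq 1/\delta = \log T$ yields
\[
|L(s_0,\pi)|^2 \ll \log T \int_{-V}^{V} |L(1/2+\delta+it+iv,\pi)|^2\, e^{-v^2/2}\,dv + O(T^{-A}).
\]
The decisive remaining step is to transfer from the shifted line $\mathrm{Re}(s) = 1/2+\delta$ back to the critical line $\mathrm{Re}(s) = 1/2$. For this I would invoke the Hadamard three-lines theorem applied to the weighted second moment
\[
\sigma \longmapsto \int_{\mathbb{R}} |L(\sigma+it+iv,\pi)|^2\, e^{-v^2/2}\,dv,
\]
which is log-convex in $\sigma$ on the strip $1/2 \leq \sigma \leq 3/2$. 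Since this quantity is $O(1)$ on $\mathrm{Re}(s)=3/2$ by absolute convergence of the Dirichlet series, log-convexity with $\delta = 1/\log T$ gives
\[
\int |L(1/2+\delta+it+iv,\pi)|^2\, e^{-v^2/2}\,dv \ll \int |L(1/2+it+iv,\pi)|^2\, e^{-v^2/2}\,dv
\]
with an absolute implied constant.

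Finally, the tail $|v|>V$ of the critical-line integral contributes at most $T^{3/2+\varepsilon}\cdot e^{-V^2/2} \ll 1$ by the convexity bound together with the Gaussian decay, which accounts for the additive ``$1+$'' in the statement. Combining the three displays above produces the claimed inequality. The principal technical hurdle is the log-convexity step for the Gaussian-weighted $L^2$-norm on vertical lines; this follows in a standard way from Phragm\'en--Lindel\"of applied to an appropriate sesquilinear form in two complex variables, but care is required to keep every constant absolute so that the only surviving logarithm on the right-hand side is the $\log T$ coming from the crude bound $1/|\delta+iv| \leq 1/\delta$.
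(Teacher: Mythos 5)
Your first half coincides with the paper's proof (both follow Good): a contour integral against a Gaussian kernel divided by $w$, negligible horizontal edges at height $\log T$, and the functional equation with $\delta=1/\log T$ so that the gamma-factor ratio $(1+|t+v|)^{3\delta}=O(1)$ converts data on $\mathrm{Re}=1/2-\delta$ into data on $\mathrm{Re}=1/2+\delta$. (The paper works with $L^{2}(1/2+it+s,\pi)$ and the kernel $e^{s^{2}}/s$ rather than your product $\Psi(w)$; this is only cosmetic.) One slip to note: on the right vertical edge $|\Psi(\delta+iv)|=|L(1/2+\delta+i(t+v),\pi)|\,|L(1/2-\delta+i(t+v),\pi)|$ is a product of values on the two lines $\mathrm{Re}=1/2\pm\delta$, not $|L(1/2+\delta+i(t+v),\pi)|^{2}$, so the functional-equation step (or AM--GM) is needed on this edge too; this is harmless but should be said.

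Where you genuinely diverge is the transfer from $\mathrm{Re}=1/2+\delta$ back to the critical line. The paper does this with a second application of the same contour-shift trick: it writes $L^{2}(1/2+\delta+i\tau_{1},\pi)$ as an integral over $\mathrm{Re}(s)=1$ (where the argument has real part $3/2+\delta$, the Dirichlet series converges absolutely, and the contribution is $O(1)$) minus an integral over $\mathrm{Re}(s)=-\delta$, which lands exactly on the critical line, and then bounds the kernel trivially by $1/\delta=\log T$. You instead invoke log-convexity of the Gaussian-weighted mean $M(\sigma)=\int|L(\sigma+it+iv,\pi)|^{2}e^{-v^{2}/2}\,dv$. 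This route can be made rigorous, but as stated it is not quite a citation of the three-lines theorem: with the $\sigma$-independent weight, what one actually gets (by completing $L(w+it,\pi)$ with a factor like $e^{(w-1)^{2}/4}$ and applying Hardy-type convexity of vertical $L^{2}$-means) is log-convexity of $e^{c(\sigma-1)^{2}}M(\sigma)$, hence $M(1/2+\delta)\ll M(1/2)^{1-\delta}M(3/2)^{\delta}$ only up to an absolute constant; one then needs polynomial boundedness of $M(1/2)$ and the additive $1$ to absorb $M(1/2)^{-\delta}$ and the case $M(1/2)<1$. So your approach requires proving an auxiliary weighted-convexity lemma with absolute constants, whereas the paper's second contour shift buys the same transfer using nothing beyond the residue theorem and absolute convergence at $\mathrm{Re}(s)=3/2$, which is why it is the more economical argument; your version, once the lemma is written out, is equally valid.
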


\begin{proof}
Let $c = 1/\log t$ for $t\geq 10$. By the residue theorem,
\begin{equation*}
L^2(1/2+it,\pi) = \frac{1}{2\pi i}\int_{(c)} L^2(1/2+it+s, \pi)\frac{e^{s^2}}{s}\, ds - \frac{1}{2\pi i}\int_{(-c)} L^2(1/2+it+s, \pi)\frac{e^{s^2}}{s}\, ds.
\end{equation*}
Functional equation for $L(s,\pi)$ and Stirling's approximation give $$L(1/2-c+i(t+v))\ll (1+|t+v|^{3c})|L(1/2+c+i(t+v))|.$$
Therefore
\begin{equation}\label{square-estimate}
|L(1/2+it, \pi)|^2 \ll \int_{-\infty}^\infty |L(1/2+c+i(t+v))|^2(1+|t+v|^{6c})\frac{e^{-v^2}}{(v^2+c^2)^{1/2}}\, dv.
\end{equation}

Similarly, for $1/2<\sigma<1$,
\begin{equation*}
\begin{split}
L^2(\sigma + i\tau_1, \pi) &= \frac{1}{2\pi i} \int_{(1)}L^2(\sigma + i\tau_1+ s, \pi)\frac{e^{s^2}}{s}\, ds - \frac{1}{2\pi i} \int_{(1/2-\sigma)}L^2(\sigma + i\tau_1+ s, \pi)\frac{e^{s^2}}{s}\, ds \\ 
&\ll 1 + \int_{-\infty}^\infty |L(1/2+i(\tau_1+\tau_2), \pi)|^2 \frac{e^{-\tau_2^2}}{(\tau_2^2+(1/2-\sigma)^2)^{1/2}}\, d\tau_2. 
\end{split}
\end{equation*}
Putting $\sigma=1/2+c$, $\tau_1 =  t+v$, substituting the above estimate into \eqref{square-estimate} and trivially estimating the $\tau_2$-integral, we obtain
\begin{align*}
    \left|L\left(\frac{1}{2}+it,\pi\right)\right|^2\ll \log t \left(1+\int_{-\infty}^{\infty}\left|L\left(\frac{1}{2}+it+iv,\pi\right)\right|^2e^{-v^2/2}\, dv\right).
\end{align*}
Since $L(1/2+it+iv, \pi)$ is polynomially bounded, the integral can be cutoff at $[-\log T, \log T]$ for a negligible error term.

\end{proof}

We will also use Ramanujan bound on average which follows from the Rankin--Selberg theory.
\begin{Lemma}[Ramanujan bound on average] \label{Ram bound}
We have
\begin{equation*}
    \underset{n_1^2n_2\leq x}{\sum\sum}|\lambda(n_2,n_1)|^2 \ll_{\pi,\varepsilon} x^{1+\varepsilon}.
\end{equation*}
\end{Lemma}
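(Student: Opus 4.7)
The plan is to deduce this from the theory of Rankin-Selberg $L$-functions for $\pi \times \tilde\pi$. Writing the Rankin-Selberg convolution as a Dirichlet series, one has the standard identity
$$L(s, \pi \times \tilde\pi) \;=\; \zeta(3s) \sum_{n_1, n_2 \geq 1} \frac{|\lambda(n_2, n_1)|^2}{(n_1^2 n_2)^s},$$
valid for $\mathrm{Re}(s) > 1$. This identity follows from the Hecke multiplicativity relations for $\mathrm{GL}_3$ Fourier coefficients together with the local computation at each (unramified) prime, matching the Euler factors on both sides.

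The key input from automorphic theory is the result of Jacquet-Piatetski-Shapiro-Shalika (with self-duality of the Rankin-Selberg convolution) which guarantees that $L(s, \pi \times \tilde\pi)$ admits meromorphic continuation to $\mathbb{C}$ with a unique simple pole at $s = 1$, and that the Dirichlet series on the right converges absolutely in the half-plane $\mathrm{Re}(s) > 1$. First I would invoke this to conclude that, for each fixed $\varepsilon > 0$, the absolute series
$$D(1+\varepsilon) := \sum_{n_1, n_2 \geq 1} \frac{|\lambda(n_2, n_1)|^2}{(n_1^2 n_2)^{1+\varepsilon}}$$
is finite, with a value depending only on $\pi$ and $\varepsilon$.

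The partial-sum bound then follows by a one-line positivity argument. Since $n_1^2 n_2 \leq x$ implies $(n_1^2 n_2)^{1+\varepsilon} \leq x^{1+\varepsilon}$, we have
$$\sum_{n_1^2 n_2 \leq x} |\lambda(n_2, n_1)|^2 \;\leq\; x^{1+\varepsilon} \sum_{n_1^2 n_2 \leq x} \frac{|\lambda(n_2, n_1)|^2}{(n_1^2 n_2)^{1+\varepsilon}} \;\leq\; x^{1+\varepsilon}\, D(1+\varepsilon) \;\ll_{\pi, \varepsilon}\; x^{1+\varepsilon},$$
which is exactly the claim.

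There is essentially no analytic obstacle in this argument: the substantive content lives entirely in the (well-established) Rankin-Selberg theory, and the bookkeeping step converting absolute convergence of the Dirichlet series to a partial-sum estimate is trivial by positivity. The only minor care needed is to verify the above Dirichlet series identity with the index conventions used in the paper (i.e.\ tracking which variable plays the role of $m^2 n$ in the Rankin-Selberg $L$-series), but this is a routine bookkeeping check against the standard references.
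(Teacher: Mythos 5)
Your argument is correct and is exactly the route the paper has in mind: the paper states the lemma without proof, remarking only that it ``follows from the Rankin--Selberg theory,'' and your write-up simply fleshes out that standard argument (the identity $L(s,\pi\times\tilde\pi)=\zeta(3s)\sum_{n_1,n_2}|\lambda(n_1,n_2)|^2(n_1^2n_2)^{-s}$ at full level, absolute convergence for $\mathrm{Re}(s)>1$ from Jacquet--Shalika/JPSS together with nonnegativity, then the trivial positivity step). The only bookkeeping point, which you flag yourself, is the index convention; it is harmless since $|\lambda(n_1,n_2)|=|\lambda(n_2,n_1)|$.
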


\section{Preliminary lemmas}

\begin{Lemma}[DFI delta method]\label{DFILemma}
Let $Q>1$. Then we have
\begin{equation*}
\delta(n=0) = \frac{1}{Q}\sum_{1\leq q\leq Q} \frac{1}{q}\sumx_{a\bmod q}e\bigg(\frac{an}{q}\bigg) \int_{\BR} g(q,x) e\bigg(\frac{nx}{qQ}\bigg)\, dx,
\end{equation*}

where $g(q,x)$ is a smooth function satisfying

\begin{align}\label{gqx}
g(q,x) = 1 + O_A\bigg(\frac{Q}{q}\bigg(\frac{q}{Q}+|x|\bigg)^{A}\bigg), \quad g(q,x)\ll |x|^{-A}, \quad \text{ for any } A\geq1,
\end{align}

and
$$ \frac{\partial^j}{\partial x^j}g(q,x)\ll |x|^{-j}\min(|x|^{-1}, Q/q)\log Q, \quad \text{ for } j\geq 1.$$

Moreover, $g(q,x)$ satisfies
\begin{align}\label{gqxBound}
    g(q,x)\ll Q^\varepsilon.
\end{align}
\end{Lemma}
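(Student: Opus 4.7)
The plan is to establish this as a smooth refinement of the Farey dissection of $[0,1]$, following the approach of Duke, Friedlander and Iwaniec. Starting from the identity $\delta(n=0) = \int_0^1 e(n\alpha)\,d\alpha$, my goal is to rewrite the integral as a weighted sum over Farey fractions $a/q$ with $(a,q)=1$ and $q \leq Q$, in such a way that the irregular lengths of the Farey arcs are absorbed into a single smooth weight $g(q,x)$ producing the uniform normalisation $1/(qQ)$ in front of each term.

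The main construction goes as follows. On the Farey arc around $a/q$, I would substitute $\alpha = a/q + x/(qQ)$; this produces the phase $e(an/q)$ multiplying an oscillatory integral $\int e(nx/qQ)\,\omega_{a,q}(x)\,dx$, where $\omega_{a,q}$ encodes the local structure of the arc and therefore depends on the neighbouring Farey fractions. To eliminate this unwanted $(a,q)$-dependence, one introduces an auxiliary smooth cutoff $w \in C_c^\infty((0,2))$ with $w \equiv 1$ near $1$ and averages the identity over a one-parameter family of partitions built from $w$. After interchanging summation and integration, the neighbour-dependence telescopes away, leaving a single smooth weight $g(q,x)$ depending only on $(q,x)$, expressible as an explicit integral transform of $w$.

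The four estimates then follow from the properties of $w$. The normalisation $g(q,x) = 1 + O_A((Q/q)(q/Q+|x|)^A)$ reflects the consistency of the delta identity at $n=0$ together with repeated integration by parts against $e(nx/qQ)$ using the smoothness of $w$; the decay $g(q,x) \ll |x|^{-A}$ and the pointwise bound $g(q,x) \ll Q^\varepsilon$ come out similarly, and the derivative bounds follow by differentiating under the integral using the support properties of $w$. The logarithmic loss in the derivative estimate is a consequence of the normalisation $\sum_{q \leq Q} 1/q \sim \log Q$ built into the construction of $g$. The main obstacle is purely technical: producing a single $w$ for which all four estimates hold simultaneously with constants independent of $(q,Q)$. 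The cleanest route is to import the explicit construction from the DFI paper (or its exposition in \cite{Iw-Ko}), after which each bound reduces to a mechanical integration by parts.
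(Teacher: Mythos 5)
Your write-up has two parts, and they fare differently against the paper. For the identity itself and the three displayed estimates, the paper gives no proof at all: it simply cites \cite[Section 20.5]{Iw-Ko} and \cite{Huang1}. Your concluding remark that ``the cleanest route is to import the explicit construction from the DFI paper (or \cite{Iw-Ko})'' therefore matches what is actually done. However, the construction you sketch before that is not a proof and is not how the delta method is built: the DFI identity does not come from a smoothed Farey dissection in which one ``averages over a one-parameter family of partitions'' so that the dependence on neighbouring Farey fractions ``telescopes away'' --- you never exhibit such a family, and removing the neighbour-dependence is exactly the obstruction that the Farey/Kloosterman picture does not resolve. The actual construction is arithmetic: one takes a smooth $w$ supported in $[Q,2Q]$ normalized by $\sum_{r\ge1}w(r)=1$, uses the divisor involution $d\mapsto |n|/d$ to get $\delta(n=0)=\sum_{q}\sum_{a\bmod q}^{*}e(an/q)\Delta_q(n)$ with $\Delta_q(u)=\sum_{r\ge1}(qr)^{-1}\bigl(w(qr)-w(|u|/qr)\bigr)$, inserts a redundant factor $f$ with $f(0)=1$, and then defines $g(q,x)$ by Fourier inversion, $g(q,x)=qQ\int_{\BR}\Delta_q(u)f(u)e(-ux/qQ)\,du$ up to normalization; no Farey arcs enter, and the bounds \eqref{gqx} and the derivative estimate are then proved from this explicit formula (with the corrections in \cite{Huang1}). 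So either cite the construction outright, as the paper does, or carry it out in this form; the telescoping heuristic should not stand in for it.

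The one statement the paper does prove, and which your proposal passes over with ``comes out similarly,'' is the bound \eqref{gqxBound}, $g(q,x)\ll Q^{\varepsilon}$. This needs no return to the construction: it follows from the two bounds already recorded in \eqref{gqx} by a short case split. If $q>Q^{1-\varepsilon}$ and $|x|\ll1$, take $A=1$ in the first bound to get $g(q,x)=1+O\bigl(Q^{\varepsilon}(1+|x|)\bigr)\ll Q^{\varepsilon}$; if $|x|\gg Q^{\varepsilon}$ (indeed $|x|\gg1$ suffices), the second bound with $A=1$ gives $g(q,x)\ll1$; and if $q\le Q^{1-\varepsilon}$ and $|x|$ is small, take $A$ sufficiently large in the first bound so that $\frac{Q}{q}\bigl(\frac{q}{Q}+|x|\bigr)^{A}\ll1$, giving $g(q,x)\ll1$. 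Since this three-line argument is the only content of the lemma not contained in the citations, your proposal should include it explicitly rather than assert it.
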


\begin{proof}
See \cite[Section 20.5]{Iw-Ko}, and \cite[Lemma 15]{Huang1} for minor corrections in the original estimates of the derivatives of $g(q,x)$. To prove \eqref{gqxBound}, we observe that when $q>Q^{1-\varepsilon}$, the first property in \eqref{gqx} gives us $g(q,x)\ll Q^\varepsilon$ by taking $A=1$. In the case $q\leq Q^{1-\varepsilon}$ and $x>Q^\varepsilon$, the second property in \eqref{gqx} gives us $g(q,x)\ll Q^\varepsilon$ by taking $A=1$. In the remaining case where $q,x\leq Q^{1-\varepsilon}$, then the first property gives us $g(q,x)\ll1$ by taking $A$ sufficiently large. Combining all the cases, we have the result.

\end{proof}

Note that the properties $g(q,x)$ stated in Lemma \ref{DFILemma} gives us \begin{align}\label{gDerProp}
    \begin{cases}
        g(q,x)=1+O\left(t^{-A}\right) \text{ for any } A>0 & \text{ if } x<t^{-\varepsilon/10} \text{ and } q<Qt^{-\varepsilon/10}\\
        g(q,x) \text{ is } t^{\varepsilon/10} \text{-inert w.r.t. } x & \text{ otherwise.}
    \end{cases}
\end{align}

We will also use the Duality principle (see \cite[Chapter 7]{Iw-Ko}).

\begin{Lemma}[Duality principle]\label{dualitylemma}
    Let $\phi:\BZ^2\rightarrow\BC$. For any complex numbers $a_m$, \begin{align*}
        \sum_n\left|\sum_m a_m\phi(m,n)\right|^2\ll \left(\sum_m |a_m|^2\right)\sup_{\|\beta\|_2=1}\sum_m\left|\sum_n\beta(n)\phi(m,n)\right|^2,
    \end{align*}
    where the supremum is taken over all sequences of complex numbers $\beta(n)$ such that $$\|\beta\|_2=\sqrt{\sum_n|\beta(n)|^2}=1.$$
\end{Lemma}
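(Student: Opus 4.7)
The plan is to prove this by a direct duality argument: interpret the double sum as a bilinear pairing, pick the extremal test sequence on the $n$-side that linearizes the left-hand side, and then apply Cauchy--Schwarz in the $m$-variable.

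First I would set $S := \sum_n \bigl|\sum_m a_m \phi(m,n)\bigr|^2$. If $S = 0$ the inequality is trivial, so I may assume $S > 0$ and define the normalized dual weights
\[
\beta(n) := S^{-1/2}\,\overline{\sum_m a_m \phi(m,n)},
\]
which satisfy $\|\beta\|_2 = 1$ by construction. Unfolding the definition of $S$ as a pairing with $\beta$ and swapping the order of summation yields
\[
S^{1/2} \;=\; \sum_n \beta(n) \sum_m a_m \phi(m,n) \;=\; \sum_m a_m \Bigl(\sum_n \beta(n)\,\phi(m,n)\Bigr).
\]

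Cauchy--Schwarz in the outer $m$-sum then gives
\[
S \;\le\; \Bigl(\sum_m |a_m|^2\Bigr) \Bigl(\sum_m \bigl|\sum_n \beta(n)\phi(m,n)\bigr|^2\Bigr) \;\le\; \Bigl(\sum_m |a_m|^2\Bigr) \sup_{\|\beta'\|_2 = 1}\sum_m \bigl|\sum_n \beta'(n)\phi(m,n)\bigr|^2,
\]
since our particular $\beta$ is admissible in the supremum on the right. This is exactly the stated bound. I do not anticipate any real obstacle: the only technical point is absolute convergence in order to justify the interchange of summation, which is automatic in every intended application because $\phi$ will be supported on a finite rectangle after the dyadic and smooth cutoffs used elsewhere in the paper. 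An equivalent, more conceptual way to phrase the same argument is to observe that the left-hand side is $\|Ta\|_2^2$ for the finite-rank operator $(Ta)(n) := \sum_m a_m \phi(m,n)$, while the supremum on the right equals $\|T^*\|^2$, and the identity $\|T\| = \|T^*\|$ yields the claim; I prefer the Cauchy--Schwarz presentation because it makes the optimal choice of $\beta$ fully explicit.
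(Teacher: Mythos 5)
Your proof is correct and is essentially the standard duality argument (explicit extremal choice of $\beta$ followed by Cauchy--Schwarz), which is exactly the approach behind the reference \cite[Chapter 7]{Iw-Ko} that the paper cites in lieu of a proof. The convergence caveat you note is handled correctly, since in all applications $\phi$ has finite support.
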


\section{Stationary phase analysis}

We need to use stationary phase analysis for oscillatory integrals. Let $\CI$ be an integral of the form
\begin{equation}\label{eintegral}
\CI = \int_{a}^b w(t)e^{i \phi(t)}\, dt,
\end{equation}
where $w$ and $\phi$ are  real valued smooth functions on  $\BR$. The fundamental estimate for integrals of the form \eqref{eintegral} is the $r^{th}$-derivative test 
\begin{equation*}
\CI\ll \bigg(\underset{[a,b]}{Var}\, w(t)\bigg)\bigg/\bigg(\min_{[a,b]}|\phi^{(r)}(t)|^{1/r}\bigg).
\end{equation*}
We will however need sharper estimates and will use the stationary phase analysis of Blomer--Khan--Young \cite{BKY} to analyze $\CI$ and state these in the language of inert functions as given by Kiral--Petrow--Young \cite{Kiral-Petrow-Young}. 

\begin{Definition}
Let $\CF$ be an indexing set and $X : \CF \rightarrow [1,\infty)$ be a function of  $T\in\CF$, so that $X_T\in[1, \infty)$. A family $\{w_T\}_{T\in\CF}$ of smooth functions supported on a product of dyadic intervals in $\BR^d_{>0}$ is called $X$-inert if for each $j=(j_1,...,j_d)\in\BZ^d_{\geq0}$, we have
\begin{equation*}
\sup_{T\in\CF} \,\, \sup_{(x_1,...,x_d)\,\in\,\BR^d_{>0}} X_T^{-(j_1+...+j_d)}\big|x_1^{j_1}...\, x_d^{j_d}\, w_T^{(j_1,...,j_d)}(x_1,...,x_d) \big| \ll_{j_1,...j_d} 1.
\end{equation*}
\end{Definition}

\begin{Lemma}\label{statlemma}
Suppose that $w = w_T(t)$ is a family of $X$-inert functions with compact support of $[Z, 2Z]$, so that $w^{(j)}(t)\ll (X/Z)^j$. Also suppose that $\phi$ is smooth and satisfies $\phi^j(t)\ll Y/Z^j$ for some $Y/X^2\geq R\geq 1$ for all $t$ in the support of $w$. Let
$$\CI = \int_{-\infty}^\infty w(t)e^{i \phi(t)}\, dt. $$
\begin{enumerate}
\item If $|\phi'(t)|\gg Y/Z$ in the support of $w$, then $\CI\ll_A ZR^{-A}$ for arbitrarily large $A$. Moreover, the statement also holds under the weaker condition $Y/X\geq R$.

\item If $\phi'(t)\gg Y/Z^2$ in the support of $w$, and there exists some not necessarily unique $t_0\in\BR$ such that $\phi'(t_0) = 0$, then
\begin{equation*}
\CI = \frac{e^{i\phi(t_0)}}{\sqrt{\phi''(t_0)}}F_T(t_0) + O_A(ZR^{-A})
\end{equation*}
for any $A\geq0$, where $F_T$ is a family of $X$-inert functions (possibly depending on $A$) supported on $t_0\sim Z$.

\item Same setting as (2). Let $U\gg R^\varepsilon(\phi''(t_0))^{-1/2}$ and let $W_0\in C_c^\infty([-2,2])$ such that $W_0(x)=1$ for $|x|\leq 1$. Then we have \begin{align*}
    \CI = \int_{-\infty}^\infty w(t)W_0\left(\frac{t-t_0}{U}\right)e^{i \phi(t)}\, dt+O_A\left(R^{-A}\right)
\end{align*}
for any $A\geq0$.
\end{enumerate}
\end{Lemma}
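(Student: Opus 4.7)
\textbf{Proof plan for Lemma \ref{statlemma}.} Parts (1) and (2) are, up to cosmetic reformulation in the language of $X$-inert functions, the main stationary phase results of Blomer--Khan--Young \cite{BKY} and Kiral--Petrow--Young \cite{Kiral-Petrow-Young}, so the plan is to carry out the standard arguments while keeping careful track of how the inertness is preserved. For Part (1), the idea is repeated integration by parts against $\phi'$. Writing $e^{i\phi(t)} = (i\phi'(t))^{-1} \tfrac{d}{dt} e^{i\phi(t)}$ and integrating by parts $k$ times produces an integral of the form $\int w_k(t) e^{i\phi(t)}\,dt$, where $w_k$ is again a compactly supported smooth function. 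The key bookkeeping is that every time a derivative hits $w$ we gain a factor $X/Z$, every time it hits $(\phi')^{-1}$ or a lower copy of $\phi^{(j)}/(\phi')^j$ we gain a factor $Z/Y \cdot (Y/Z) = 1$ times a new negative power of $\phi'$, so schematically each IBP costs $(X/Y)$ in the amplitude (using the upper bound $\phi^{(j)} \ll Y/Z^j$ and the lower bound $|\phi'| \gg Y/Z$). Hence after $k$ steps the integral is bounded by $Z (X/Y)^k$. Since $Y/X \geq R$ by hypothesis (this is the weaker condition that the statement flags), taking $k = \lceil A/\log R \rceil$ yields $\mathcal{I} \ll_A Z R^{-A}$. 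Under the stronger condition $|\phi'| \gg Y/Z$ the same argument goes through with the usual $Y/X^2 \geq R$.

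For Part (2), I would localize to a shrinking neighborhood of the stationary point and Taylor expand. Choose a smooth partition of unity that splits $w$ into a piece $w_1$ supported in $|t-t_0| \leq C (\phi''(t_0))^{-1/2} R^{\varepsilon/2}$ and a complementary piece $w_2$. On the support of $w_2$ one has $|\phi'(t)| \asymp \phi''(t_0) |t-t_0| \gg (Y/Z^2)^{1/2} R^{\varepsilon/2}$, and the derivative-test/IBP argument of Part (1), applied with this new lower bound on $\phi'$ (and with the inertness scale adjusted to the shrunk window), shows the $w_2$-contribution is $O_A(ZR^{-A})$. On the support of $w_1$ one changes variables $s = (t-t_0)\sqrt{\phi''(t_0)}$, writes $\phi(t) = \phi(t_0) + \tfrac12 s^2 + \psi(s)$ where $\psi$ is a smooth cubic-and-higher remainder whose derivatives are controlled by the hypotheses, and evaluates $\int F(s) e^{is^2/2} e^{i\psi(s)}\,ds$ by comparison with the Gaussian. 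The standard asymptotic expansion then yields the main term $e^{i\phi(t_0)}/\sqrt{\phi''(t_0)} \cdot F_T(t_0)$, and one verifies directly that $F_T$ inherits $X$-inertness from $w_T$ because the change of variables and the smooth amplitude manipulations only involve derivatives of size compatible with the inert scale.

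For Part (3), I would simply insert the cutoff $W_0((t-t_0)/U)$ and estimate the complementary integral
\begin{equation*}
\mathcal{E} = \int_{-\infty}^\infty w(t)\Bigl(1 - W_0\Bigl(\tfrac{t-t_0}{U}\Bigr)\Bigr) e^{i\phi(t)}\,dt.
\end{equation*}
On the support of the new amplitude, $|t-t_0| \gg U \gg R^\varepsilon (\phi''(t_0))^{-1/2}$, so Taylor expansion around $t_0$ gives $|\phi'(t)| \gg \phi''(t_0) \cdot U \gg R^\varepsilon (\phi''(t_0))^{1/2}$. Since $\phi''(t_0) \asymp Y/Z^2$, this is a lower bound of size $R^\varepsilon \sqrt{Y}/Z$ on $\phi'$, and the amplitude $w(t)(1 - W_0((t-t_0)/U))$ is still inert on its support (now at scale $\max(X, Z/U)$). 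Applying the same IBP scheme as in Part (1) with this improved lower bound produces arbitrary savings in $R^\varepsilon$, and hence in $R$, giving $\mathcal{E} \ll_A R^{-A}$.

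The main technical obstacle, and the step I would spend the most care on, is the bookkeeping in Part (2): verifying that the function $F_T$ extracted from the change of variables really is $X$-inert uniformly in the family parameter $T$, and that the cubic remainder $\psi$ does not destroy this when it is absorbed into the amplitude via $e^{i\psi}$. This is essentially the content of the Main Theorem in \cite{BKY}, with the inert-function packaging of \cite{Kiral-Petrow-Young}; the plan is to follow those references and invoke their derivative bounds at each stage.
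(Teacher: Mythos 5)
The paper offers no proof of this lemma at all --- it is quoted from the stationary-phase results of Blomer--Khan--Young \cite{BKY} in the inert-function language of Kiral--Petrow--Young \cite{Kiral-Petrow-Young} --- and your proposal reconstructs exactly those standard arguments (repeated integration by parts against $\phi'$ for (1) and for the complementary range in (3), localization near $t_0$ plus Gaussian comparison for (2)), so it is essentially the same approach and is correct. One trivial correction: in Part (1) each integration by parts saves a factor $X/Y\le R^{-1}$, so you should take roughly $k=\lceil A\rceil$ steps rather than $k=\lceil A/\log R\rceil$, which only yields a saving of $e^{-A}$ instead of the claimed $R^{-A}$.
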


\section{Main Calculations}

\subsection{A Short Second Moment}

We first apply Lemma \ref{ShortMomentLemma} to bound the square of $L(1/2+it,\pi)$ by a short second moment. Let $T/2\leq t\leq T$, then we have \begin{align}\label{ApplyingShortMomentLemma}
    \left|L\left(\frac{1}{2}+it,\pi\right)\right|^2\ll \log t\left(1+\int_{-\log T}^{\log T} \left|L\left(\frac{1}{2}+it+iv,\pi\right)\right|^2e^{-v^2/2}\, dv\right).
\end{align}
Let $\varepsilon>0$ and let $\log t<M<t^{1-\varepsilon}$ be a parameter. Let $U\in C_c^\infty(\R)$ be a fixed function supported in $[-2, 2]$ and satisfying $U(x)=1$ for $-1\leq x\leq 1$, then (\ref{ApplyingShortMomentLemma}) gives us 
\begin{align*}
    \left|L\left(\frac{1}{2}+it,\pi\right)\right|^2\ll\log t\left(1+\int_\BR U\left(\frac{v}{M}\right)\left|L\left(\frac{1}{2}+it+iv,\pi\right)\right|^2dv\right).
\end{align*}
Applying Lemma \ref{AFE}, we get \begin{align*}
    \left|L\left(\frac{1}{2}+it,\pi\right)\right|^2\ll&\log t\left(1+\int_\BR U\left(\frac{v}{M}\right)\left|L\left(\frac{1}{2}+it+iv,\pi\right)\right|^2dv\right)\nonumber\\
    \ll& \log t\left(1+\sup_{1\leq N\ll t^{3/2+\varepsilon}}\frac{S(N)}{N}\right),
\end{align*}
where 
\begin{align*}
    S(N):=\int_\BR U\left(\frac{v}{M}\right) \left|\sum_{n=1}^{\infty}\lambda(1,n)\,n^{-i(t+v)}V\left(\frac{n}{N}\right)\right|^2dv,
\end{align*}
with $V\in C_c^\infty([1,2])$ depending on $t$ satisfying $V^{(j)}(x)\ll_j1$ for any $j\geq0$ ({See e.g. \cite[Proposition 5.4]{Iw-Ko}}). Opening the square yields 
\begin{align*}
    S(N)=\sum_n\lambda(1,n)V\left(\frac{n}{N}\right)\sum_m\overline{\lambda(1,m)}\overline{V\left(\frac{m}{N}\right)}\left(\frac{m}{n}\right)^{it}\int_\BR U\left(\frac{v}{M}\right) \left(\frac{m}{n}\right)^{iv} dv.
\end{align*}
By repeated integration by parts on the $v$-integral, we obtain arbitrary savings unless \begin{align*}
    |m-n|\ll\frac{Nt^\varepsilon}{M}.
\end{align*}
Hence we have 
\begin{align*}
    S(N)=&M\sum_{|h|\ll\frac{Nt^\varepsilon}{M}}\sum_n\lambda(1,n)\overline{\lambda(1,n+h)}V\left(\frac{n}{N}\right)\overline{V\left(\frac{n+h}{N}\right)}\left(\frac{n+h}{n}\right)^{it}\nonumber\\
    &\times\int_\BR U\left(v\right) \left(\frac{n+h}{n}\right)^{iMv} dv + O(t^{-999}).
\end{align*}
For $h=0$, (\ref{Ram bound}) on the $n$-sum gives us \begin{align*}
    M\sum_n|\lambda(1,n)|^2\left|V\left(\frac{n}{N}\right)\right|^2\int_\BR U\left(v\right) dv\ll MNt^\varepsilon.
\end{align*}
Now performing a smooth dyadic subdivision of $h$-sum for $h\neq0$ yields \begin{align*}
    S(N)\ll&\sup_{H\ll\frac{Nt^\varepsilon}{M}}\sum_\pm M\sum_h\sum_n\lambda(1,n)\overline{\lambda(1,n+h)} V\left(\frac{n}{N}\right)\overline{V\left(\frac{n+h}{N} \right)}\varphi_\pm\left(\frac{h}{H}\right)\left(\frac{n+h}{n}\right)^{it}\nonumber\\
    &\times\int_\BR U\left(v\right) \left(\frac{n+h}{n}\right)^{iMv} dv+MNt^\varepsilon,
\end{align*}
where $\varphi_\pm(y)=\varphi(\pm y)$ for some fixed function $\varphi\in C_c^\infty([1/2,2])$. Finally, we rewrite \begin{align*}
    \left(\frac{n+h}{n} \right)^{it} = e\left(\frac{t}{2\pi}\log\left(1+\frac{h}{n}\right)\right) = e\left(\frac{th}{2\pi n}\right)e\left(\frac{t}{2\pi}\left(\log\left(1+\frac{h}{n}\right)  - \frac{h}{n}\right)\right).
\end{align*}
Notice that when multiplied by the weight function $V(n/N)\varphi_\pm(h/H)$, the integral $\int_\BR U\left(v\right) \left(\frac{n+h}{n}\right)^{iMv}dv$ is $t^\varepsilon$-inert as a function of $n$ and $h$, while $e\left(\frac{t}{2\pi}\left(\log\left(1+\frac{h}{n}\right)-\frac{h}{n}\right)\right)$ is $t^\varepsilon$-inert as a function of $n$ and $h$ when $M\gg t^{1/2+\varepsilon}$. For brevity of notation, we define
\begin{equation*}
W_\pm(x,y) = V(x)\overline{V\left(x+\frac{Hy}{N}\right)}\varphi_\pm(y)e\left(\frac{t}{2\pi}\left(\log\left(1+\frac{Hy}{Nx}\right)- \frac{Hy}{Nx}\right)\right)\int_\BR U\left(v\right) \left(\frac{Nx+Hy}{Nx}\right)^{iMv} dv.
\end{equation*}
As a result, we obtain the following lemma.

\begin{Lemma}\label{SMHLemma}
    Let $\varepsilon, A>0$ and let $t^{1/2+\varepsilon}\ll M\ll t^{1-\varepsilon}$. Then there exist $t^\varepsilon$-inert functions $W_\pm\in C_c^\infty([1,2]\times[\pm1,\pm2])$ such that \begin{align*}
        \left|L\left(\frac{1}{2}+it,\pi\right)\right|^2\ll&\log t\left(1+\int_\BR U\left(\frac{v}{M}\right)\left|L\left(\frac{1}{2}+it+iv,\pi\right)\right|^2dv\right)\nonumber\\
        \ll& \log t\left(Mt^\varepsilon+\sup_{H\ll\frac{Nt^\varepsilon}{M}}\sup_{1\leq N\ll t^{3/2+\varepsilon}}\sum_\pm\frac{S_{M,H}^\pm(N)}{N}\right),
    \end{align*}
    where \begin{align*}
        S_{M,H}^\pm(N):=M\sum_h\sum_n\lambda(1,n)\overline{\lambda(1,n+h)}W_\pm\left(\frac{n}{N},\frac{h}{H}\right)e\left(\frac{th}{2\pi n}\right).
    \end{align*}
\end{Lemma}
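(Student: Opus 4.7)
The plan is to chain together the manipulations already outlined in subsection 5.1: the short-moment bound of Lemma \ref{ShortMomentLemma}, a smooth $v$-cutoff, the approximate functional equation, a dyadic decomposition in $N$, opening the square and executing the $v$-integral, isolating the diagonal, a dyadic decomposition in $h$, and finally the factorization of the main phase.

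First I would apply Lemma \ref{ShortMomentLemma}. Since $|L(1/2+it+iv,\pi)|^2$ is polynomially bounded in $|v|$ and since $M\leq t^{1-\varepsilon}$ gives $\log T\leq M$, the hard cutoff $\mathbf{1}_{[-\log T,\log T]}(v)\,e^{-v^2/2}$ can be freely majorized by the smooth cutoff $U(v/M)$, yielding the first inequality in the lemma. Next I would insert Lemma \ref{AFE} to represent $L(1/2+it+iv,\pi)$ as a smooth sum of length $\ll t^{3/2+\varepsilon}$, together with a dual sum treated identically. A smooth dyadic partition of unity in $n$ (obtained from the Mellin representation of $V_s$) reduces matters to bounding $S(N)/N$ uniformly for $1\leq N\ll t^{3/2+\varepsilon}$, where $S(N)$ is exactly the expression defined in the subsection.

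Opening the square in $S(N)$ gives a double sum over $n,m$ twisted by $(m/n)^{it}$, and the resulting $v$-integral is essentially $M\widehat{U}\bigl(\frac{M}{2\pi}\log(m/n)\bigr)$. Repeated integration by parts in $v$ restricts the sum to $|m-n|\ll Nt^\varepsilon/M$ up to a negligible error. The diagonal $h:=m-n=0$ contributes $\ll MNt^\varepsilon$ by Lemma \ref{Ram bound}, which after dividing by $N$ is absorbed into the $Mt^\varepsilon$ term. For the off-diagonal I take a smooth dyadic decomposition in $h$ at scale $H\ll Nt^\varepsilon/M$, and factor the phase as
\[
\left(\frac{n+h}{n}\right)^{it} = e\!\left(\frac{th}{2\pi n}\right)\, e\!\left(\frac{t}{2\pi}\Bigl(\log\bigl(1+\tfrac{h}{n}\bigr)-\tfrac{h}{n}\Bigr)\right),
\]
so that the first factor becomes the explicit phase in $S_{M,H}^\pm(N)$, and the second factor together with $V(n/N)\overline{V((n+h)/N)}\varphi_\pm(h/H)$ and the remaining $v$-integral is packaged into the weight $W_\pm(n/N,h/H)$.

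The main obstacle is verifying the inertness bound $x^{j_1}y^{j_2}W_\pm^{(j_1,j_2)}(x,y)\ll_{j_1,j_2} 1$. Differentiating the secondary oscillator produces factors of size $\ll t(H/N)^2 \ll t^{1+2\varepsilon}/M^2$, which is $\ll t^{-\varepsilon}$ precisely because $M\gg t^{1/2+\varepsilon}$; this is the one place where the lower bound on $M$ is used. Likewise the residual $v$-integral $\int U(v)\bigl((Nx+Hy)/(Nx)\bigr)^{iMv}dv$ is $t^\varepsilon$-inert in $(x,y)$ since $M\log\bigl((Nx+Hy)/(Nx)\bigr)=O(MH/N)=O(t^\varepsilon)$, so each derivative only costs $t^\varepsilon$. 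Combined with the trivial inertness of $V$ and $\varphi_\pm$, this gives the claimed bounds on all derivatives of $W_\pm$ and completes the reduction.
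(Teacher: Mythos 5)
Your proposal is correct and follows the paper's own proof essentially step for step: Lemma \ref{ShortMomentLemma}, majorizing the truncated Gaussian by $U(v/M)$, the approximate functional equation with a dyadic reduction to $S(N)/N$, integration by parts in $v$ to truncate to $|m-n|\ll Nt^\varepsilon/M$, the diagonal via Lemma \ref{Ram bound}, a dyadic $h$-decomposition, the phase factorization $\left(\tfrac{n+h}{n}\right)^{it}=e\left(\tfrac{th}{2\pi n}\right)e\left(\tfrac{t}{2\pi}\bigl(\log(1+\tfrac hn)-\tfrac hn\bigr)\right)$, and the inertness check for $W_\pm$ using $M\gg t^{1/2+\varepsilon}$. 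One small slip: the domination of the cutoff $\mathbf{1}_{[-\log T,\log T]}e^{-v^2/2}$ by $U(v/M)$ rests on the lower bound $M\gg t^{1/2+\varepsilon}\gg\log T$ (not on $M\le t^{1-\varepsilon}$), but this does not affect the argument.
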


\subsection{Trivial Bound}

Applying Cauchy-Schwarz inequality together with Lemma \ref{Ram bound}, we have the trivial bound \begin{align}\label{trivialBound}
    S_{M,H}^\pm(N)\ll M\sum_{|h|\ll H}\left(\sum_{n\ll N}|\lambda(1,n)|^2\right)^{1/2}\left(\sum_{n\ll N}|\lambda(1,n+h)|^2\right)^{1/2}\ll MHNt^\varepsilon.
\end{align}
We will apply this bound for the cases with $H\ll\frac{N}{t^{1-\varepsilon}}$. Hence we will restrict our attention to the case $H\gg\frac{N}{t^{1-\varepsilon}}$ in the rest of the paper.

\subsection{Application of the Delta Method}

Now we focus on the case $H\gg\frac{N}{t^{1-\varepsilon}}$. To bound $S_{M,H}^\pm(N)$, we start by applying the DFI delta method (Lemma \ref{DFILemma}) to separate the oscillations. Let $U\in C_c^\infty([1/2,5/2])$ be fixed such that $U(x)=1$ for $3/4\leq x\leq 9/4$. Then \begin{align*}
    S_{M,H}^\pm(N)=& M\sum_h\sum_n \lambda(1,n)W_\pm\left(\frac{n}{N},\frac{h}{H}\right)e\left(\frac{th}{2\pi n}\right)\sum_m\overline{\lambda(1,m)}U\left(\frac{m}{N}\right) \delta(m = n+h) \nonumber\\
    =&\frac{M}{Q}\sum_{1\leq q\leq Q}\frac{1}{q}\sum_h\sum_n\lambda(1,n)W_\pm\left(\frac{n}{N},\frac{h}{H}\right)e\left(\frac{th}{2\pi n}\right)\nonumber\\
    &\times\sum_m\overline{\lambda(1,m)}U\left(\frac{m}{N}\right)\sumx_{\alpha\bmod q}e\left(\frac{\alpha(n+h-m)}{q}\right)\int_\BR g(q,x)e\left(\frac{(n+h-m)x}{qQ} \right)dx.
\end{align*}
We divide the $q$-sum into segments $C\leq q<(1+10^{-10})C$ to get
\begin{align}\label{qDyadicSub}
    S_{M,H}^\pm(N) \ll t^\varepsilon\sup_{1\leq C\ll Q} S_{M,H,C}^\pm(N),
\end{align}
where \begin{align*}
    S_{M,H,C}^\pm(N)=&\frac{M}{Q}\sum_{C\leq q< (1+10^{-10})C}\frac{1}{q}\sum_h\sum_n\lambda(1,n)W_\pm\left(\frac{n}{N},\frac{h}{H}\right)e\left(\frac{th}{2\pi n}\right)\nonumber\\
    &\times\sum_m\overline{\lambda(1,m)}U\left(\frac{m}{N}\right)\sumx_{\alpha\bmod q}e\left(\frac{\alpha(n+h-m)}{q}\right)\int_\BR g(q,x)e\left(\frac{(n+h-m)x}{qQ} \right)dx.
\end{align*}

\subsection{Dual summation formulas}
We shall now apply dual summation formulas to the $h$, $m$ and $n$ sums.

\subsubsection{Analysis of the $h$-sum}
We start with an application of the Poisson summation formula to the $h$-sum $\bmod\, q$, giving us \begin{align*}
    &\sum_h W_\pm\left(\frac{n}{N},\frac{h}{H}\right)e\left(\frac{\alpha h}{q}+\frac{th}{2\pi n}+\frac{hx}{qQ}\right)\\
    =&\sum_h\frac{1}{q}\sum_{\gamma\bmod q}e\left(\frac{(\alpha+h)\gamma}{q}\right)\int_\BR W_\pm\left(\frac{n}{N},\frac{y}{H}\right)e\left(\frac{ty}{2\pi n} +\frac{xy}{qQ}-\frac{hy}{q}\right)dy\\
    =&H\sum_h\delta\left(\alpha\equiv - h\bmod q\right)\int_\BR W_\pm\left(\frac{n}{N},y\right)e\left(\frac{tHy}{2\pi n}+\frac{Hxy}{qQ}-\frac{hHy}{q}\right)dy.
\end{align*}
The above congruence condition and $(\alpha,q)=1$ imply $(h,q)=1$, so that when $h=0$, the terms with $q>1$ vanish. Since $H\gg\frac{N}{t^{1-\varepsilon}}$, choosing $Q\gg \frac{N}{t^{1-\varepsilon}}$ and repeated integration by parts gives us arbitrary savings unless \begin{align*}
    h\neq 0 \text{ and } h\sim\frac{qt}{N}\sim \frac{Ct}{N}.
\end{align*}
This gives arbitrary savings unless $C\gg\frac{N}{t^{1+\varepsilon}}$ and yields 
\begin{align*}
    S_{M,H,C}^\pm(N)=&\frac{MH}{Q}\sum_{\substack{H'\sim \frac{Ct}{N}\\ dyadic}}\sum_{C\leq q<(1+10^{-10})C}\frac{1}{q}\sum_{\substack{H'\leq h<(1+10^{-10})H'\\(h,q)=1}}\sum_n\lambda(1,n)\sum_m\overline{\lambda(1,m)}U\left(\frac{m}{N}\right)e\left(\frac{h(m-n)}{q}\right)\nonumber\\
    &\times  \int_\BR\int_\BR g(q,x)W_\pm\left(\frac{n}{N},y\right)e\left(\frac{(n + Hy-m)x}{qQ}+\frac{tHy}{2\pi n}-\frac{hHy}{q}\right)dxdy+O\left(t^{-999}\right).
\end{align*}

\subsubsection{Analysis of the $m$-sum}
Next we apply the Voronoi summation formula (Lemma \ref{gl3voronoi}) to the $m$-sum,
\begin{align*}
    &\sum_m\overline{\lambda(1,m)}U\left(\frac{m}{N}\right)e\left(\frac{hm}{q}-\frac{mx}{qQ}\right)\nonumber\\
    =&q\sum_{\eta_1=\pm}\sum_{m_0|q}\sum_m\frac{\overline{\lambda(m,m_0)}}{m_0m}S\left( \overline{h},\eta_1 m;\frac{q}{m_0}\right)\tilde{U}_{\eta_1}\left(\frac{m_0^2m}{q^3},q,x\right)+O\left(t^{-999}\right),
\end{align*}
where 
\begin{align*}
    \tilde{U}_{\eta_1}\left(a,b,c\right)=a^\frac{2}{3}\int_0^\infty U\left(\frac{w}{N}\right)\overline{U}_0\left(a,w\right)w^{-\frac{1}{3}}e\left(-\frac{cw}{bQ}+\eta_13\left(aw\right)^\frac{1}{3}\right)dw,
\end{align*}
and 
\begin{align}\label{Voronoi-weight-fn}
U_0(u,w) = \sum_{\ell=1}^L \frac{\gamma_\ell}{(uw)^{\frac{\ell-1}{3}}},
\end{align}
for a fixed large $L$, and $\gamma_\ell$ as given in Lemma \ref{voronoi}. Note that $U_0(u,w)$ is a fixed flat function. By a change of variables, we obtain 
\begin{align*}
    \tilde{U}_{\eta_1}\left(\frac{m_0^2m}{q^3},q,x\right)=\left(\frac{m_0^2mN}{q^3}\right)^\frac{2}{3}\int_0^\infty U\left(w\right)\overline{U}_0\left(\frac{m_0^2m}{q^3},Nw\right)w^{-\frac{1}{3}}e\left(f_1(w,x)\right)dw,
\end{align*}
where the phase function is 
\begin{align*}
    f_1(w,x)=\eta_13\left(\frac{m_0^2mNw}{q^3}\right)^\frac{1}{3} - \frac{Nwx}{qQ}.
\end{align*}
Repeated integration by parts gives us arbitrary savings unless 
\begin{align*}
    m_0^2m\ll \frac{N^2t^\varepsilon}{Q^3}.
\end{align*}
Writing \begin{align}\label{U1Def}
    U_{\eta_1}\left(\frac{m_0^2m}{q^3},q,x\right)=\int_0^\infty U\left(w\right)\overline{U}_0\left(\frac{m_0^2m}{q^3},Nw\right)w^{-\frac{1}{3}}e\left(f_1(w,x)\right)dw
\end{align}
we obtain 
\begin{align*}
    S_{M,H,C}^\pm(N)=&\frac{MHN^{2/3}t^\epsilon}{Q}\sum_{\eta_1=\pm}\sup_{H'\sim \frac{Ct}{N}}\sum_{C\leq q<(1+10^{-10})C}\frac{1}{q^2} \sum_{\substack{H'\leq h<(1+10^{-10})H'\\(h,q)=1}}\sum_n\lambda(1,n)\nonumber\\
    &\times \sum_{m_0|q}\sum_{m_0^2m\ll\frac{N^2t^\varepsilon}{Q^3}}\overline{\lambda(m,m_0)}\left(\frac{m_0}{m}\right)^{1/3} S\left(\overline{h},\eta_1 m;\frac{q}{m_0}\right)e\left(\frac{-hn}{q}\right)\nonumber\\
    &\times\int_\BR\int_\BR g(q,x)W_\pm\left(\frac{n}{N},y\right)U_{\eta_1}\left(\frac{m_0^2m}{q^3},q,x\right)\nonumber\\
    &\times e\left(\frac{(n+ Hy)x}{qQ}+\frac{tHy}{2\pi n}-\frac{hHy}{q}\right)dxdy+O\left(t^{-99}\right).
\end{align*}

\subsubsection{Analysis of the $n$-sum} 
Finally, we apply Voronoi formula (Lemma \ref{gl3voronoi}) to the $n$-sum. Applying similar analysis as above yields
\begin{align*}
    &\sum_n\lambda(1,n)W_\pm\left(\frac{n}{N},y\right)e\left(\frac{-hn}{q}+\frac{nx}{qQ}+\frac{tHy}{2\pi n}\right)\\
    =&\frac{N^{2/3}}{q}\sum_{\eta_2=\pm}\sum_{n_0|q}\sum_n\lambda(n,n_0)\left(\frac{n_0}{n}\right)^{1/3}S\left(-\overline{h},\eta_2 n;\frac{q}{n_0}\right)W_{0,-\eta_2}^\pm(n_0^2n,q,x,y)+O\left(t^{-999}\right),
\end{align*}
where 
\begin{align}\label{W0Def}
    W_{0,\eta_2}^\pm(n_0^2n,q,x,y)=&\int_0^\infty W_\pm\left(w,y\right)U_{0}\left(\frac{n_0^2n}{q^3},Nw\right)w^{-\frac{1}{3}}\nonumber\\
    &\times e\left(\frac{Nwx}{qQ}+\frac{tHy}{2\pi Nw}-\eta_23\left(\frac{n_0^2nNw}{q^3}\right)^\frac{1}{3}\right)dw,
\end{align}
where $U_{0}$ is given in \eqref{Voronoi-weight-fn}. Since $H\gg\frac{N}{t^{1-\varepsilon}}$, repeated integration by parts gives us arbitrary savings unless \begin{align*}
    n_0^2n\ll \left(\frac{N^2}{Q^3}+\frac{(CHt)^3}{N^4}\right)t^\varepsilon.
\end{align*}
By choosing $Q^2\ll NM/t^{1-\varepsilon}$, the first term on the right side dominates, so that $n_0^2n\ll N^2t^\varepsilon/Q^3$. Changing $\eta_2$ to $-\eta_2$ for ease of notation, this yields 
\begin{align}\label{SMHAfterAllDual}
    S_{M,H,C}^\pm(N)=&\frac{MHN^{4/3}t^\epsilon}{Q}\sum_{\eta_1,\eta_2=\pm}\sup_{H'\sim \frac{Ct}{N}}\,\sum_{C\leq q<(1+10^{-10})C} \frac{1}{q^3}\sum_{\substack{H'\leq h<(1+10^{-10})H'\\(h,q)=1}}\sum_{n_0|q}\,\sum_{n_0^2n\ll \frac{N^2t^\varepsilon}{Q^3}}\lambda(n,n_0)\left(\frac{n_0}{n}\right)^{1/3}\nonumber\\
    &\times \sum_{m_0|q}\sum_{m_0^2m\ll\frac{N^2t^\varepsilon}{Q^3}}\overline{\lambda(m,m_0)}\left(\frac{m_0}{m}\right)^{1/3}S\left(\overline{h},\eta_1 m;\frac{q}{m_0}\right)S\left(\overline{h},\eta_2 n;\frac{q}{n_0}\right)\nonumber\\
    &\times \int_\BR\int_\BR g(q,x)W_{0,\eta_2}^\pm(n_0^2n,q,x,y)U_{\eta_1}\left(\frac{m_0^2m}{q^3},q,x\right)e\left(\frac{Hxy}{qQ}-\frac{hHy}{q}\right)dydx+O\left(t^{-99}\right).
\end{align}

\subsubsection{Analysis of integral transforms obtained by dual summations} 

We now consider the $y,w$-integral. Then (\ref{W0Def}) and a change of variable gives us \begin{align*}
    &\int_\BR W_{0,\eta_2}^\pm(n_0^2n,q,x,y)e\left(\frac{Hxy}{qQ}-\frac{hHy}{q}\right)dy\\
    =&\int_0^\infty U_0\left(\frac{n_0^2n}{q^3},Nw\right)w^{-\frac{1}{3}}e\left(\frac{Nwx}{qQ}-\eta_23\left(\frac{n_0^2nNw}{q^3}\right)^\frac{1}{3}\right)\\
    &\times\int_\BR W_\pm(w,y)e\left(\frac{tHy}{2\pi Nw}+\frac{Hxy}{qQ}-\frac{hHy}{q}\right)dydw\\
    =&e\left(\frac{tx}{2\pi hQ}\right)\int_\BR U_0\left(\frac{n_0^2n}{q^3},\frac{qt}{2\pi h}+Nu\right)\left(\frac{qt}{2\pi hN}+u\right)^{-\frac{1}{3}}\\
    &\times e\left(\frac{Nxu}{qQ}-\eta_23\left(\frac{n_0^2nN}{q^3}\left(\frac{qt}{2\pi hN}+u\right)\right)^\frac{1}{3}\right)\\
    &\times \int_\BR W_\pm\left(\frac{qt}{2\pi hN}+u,y\right)e\left(\frac{tHy}{2\pi N}\left(\frac{qt}{2\pi hN}+u\right)^{-1}+\frac{Hxy}{qQ}-\frac{hHy}{q}\right)dydu.
\end{align*}
Since $C\gg\frac{N}{t^{1+\varepsilon}}$ and $H\ll\frac{Nt^\varepsilon}{M}$, choosing $MQ>t^{1+\varepsilon}$ gives us $\frac{Hxy}{qQ}\ll\frac{t^{1+\varepsilon}}{MQ}\ll1$. Hence repeated integration by parts on the $y$-integral gives us arbitrary savings unless \begin{align*}
    \left|\frac{tH}{2\pi N}\left(\frac{qt}{2\pi hN}+u\right)^{-1}-\frac{hH}{q}\right|\ll t^{\varepsilon/2} & \Longleftrightarrow  \left|\frac{1}{1+\frac{2\pi hNu}{qt}}-1\right|\ll\frac{qt^\varepsilon}{hH}\ll\frac{N}{Ht^{1-\varepsilon/2}}\\
    &\Longleftrightarrow |u|\ll\frac{N}{Ht^{1-\varepsilon/2}}.
\end{align*}
Take a fixed function $U\in C_c([-2,-2])$ such that $U(x)=1$ for $-1\leq x\leq 1$, and let \begin{align*}
    \phi_{1}\left(a,b,c,d\right)=&\left(\frac{bt}{2\pi cN}+\frac{Na}{Ht^{1-\varepsilon}}\right)^{-\frac{1}{3}}\\
    &\times\int_\BR W_\pm\left(\frac{bt}{2\pi cN}+\frac{Na}{Ht^{1-\varepsilon}},y\right)e\left(\frac{tHy}{2\pi N}\left(\frac{bt}{2\pi cN}+\frac{Na}{Ht^{1-\varepsilon}}\right)^{-1}+\frac{dHy}{bQ}-\frac{cHy}{b}\right)dy.
\end{align*}
Then $\phi_{1}$ is $t^\varepsilon$-inert and the $y,w$-integral becomes \begin{align*}
    &e\left(\frac{tx}{2\pi hQ}\right)\int_\BR U\left(\frac{uHt^{1-\varepsilon}}{N}\right)\phi_{1}\left(\frac{uHt^{1-\varepsilon}}{N},q,h,x\right)U_{0}\left(\frac{n_0^2n}{q^3},\frac{qt}{2\pi h}+Nu\right)\nonumber\\
    &\times e\left(\frac{Nxu}{qQ}-\eta_23\left(\frac{n_0^2nN}{q^3}\left(\frac{qt}{2\pi hN}+u\right)\right)^\frac{1}{3}\right)du+O\left(t^{-999}\right)\nonumber\\
    =&\frac{N}{Ht^{1-\varepsilon}}e\left(\frac{tx}{2\pi hQ}\right)\int_\BR U(u) \phi_{1}\left(u,q,h,x\right)U_{0}\left(\frac{n_0^2n}{q^3},\frac{qt}{2\pi h}+\frac{N^2u}{Ht^{1-\varepsilon}}\right)\nonumber\\
    &\times e\left(\frac{N^2xu}{qQHt^{1-\varepsilon}}-\eta_23\left(\frac{n_0^2nN}{q^3}\left(\frac{qt}{2\pi hN}+\frac{Nu}{Ht^{1-\varepsilon}}\right)\right)^\frac{1}{3}\right)du+O\left(t^{-999}\right).
\end{align*}
Defining \begin{align}\label{W1Def}
    W_{\eta_2}^\pm(n_0^2n,q,h,u):=&U_{0}\left(\frac{n_0^2n}{q^3},\frac{qt}{2\pi h}+\frac{N^2u}{Ht^{1-\varepsilon}}\right)e\left(-\eta_23\left(\frac{n_0^2nN}{q^3}\left(\frac{qt}{2\pi hN}+\frac{Nu}{Ht^{1-\varepsilon}}\right)\right)^\frac{1}{3}\right),
\end{align}
we have \begin{align*}
    S_{M,H,C}^\pm(N)=&\frac{MN^{7/3}t^\epsilon}{Qt^{1-\varepsilon}}\sum_{\eta_1,\eta_2=\pm}\sup_{H'\sim \frac{Ct}{N}}\,\sum_{C\leq q<(1+10^{-10})C} \frac{1}{q^3}\sum_{\substack{H'\leq h<(1+10^{-10})H'\\(h,q)=1}}\sum_{n_0|q}\sum_{n_0^2n\ll \left(\frac{N^2}{Q^3}+\frac{(CHt)^3}{N^4}\right)t^\varepsilon}\nonumber\\
    &\times \lambda(n,n_0)\left(\frac{n_0}{n}\right)^{1/3}\sum_{m_0|q}\sum_{m_0^2m\ll\frac{N^2t^\varepsilon}{Q^3}}\overline{\lambda(m,m_0)}\left(\frac{m_0}{m}\right)^{1/3}S\left(\overline{h},\eta_1 m;\frac{q}{m_0}\right)S\left(\overline{h},\eta_2 n;\frac{q}{n_0}\right)\nonumber\\
    &\times \int_{\BR}U(u)W_{\eta_2}^\pm(n_0^2n,q,h,u)\int_\BR g(q,x)\phi_{1}\left(u,q,h,x\right)\nonumber\\
    &\times U_{\eta_1}\left(\frac{m_0^2m}{q^3},q,x\right)e\left(\frac{N^2xu}{qQHt^{1-\varepsilon}}+\frac{tx}{2\pi hQ}\right)dxdu+O\left(t^{-99}\right).
\end{align*}
Applying smooth dyadic subdivisions on the $m_0^2m,n_0^2n$ sums, we get 
\begin{align*}
    S_{M,H,C}^\pm(N)\ll& t^\varepsilon\sup_{\eta_1,\eta_2=\pm}\sup_{N_1\ll\frac{N^2t^\varepsilon}{Q^3}}\sup_{N_2\ll\frac{N^2t^\varepsilon}{Q^3}}\sup_{H'\sim\frac{Ct}{N}}\frac{MN^{7/3}}{Qt}\sum_{C\leq q<(1+10^{-10})C}\frac{1}{q^3}\sum_{\substack{H'\leq h\leq (1+10^{-10})H'\\(h,q)=1}}\nonumber\\
    &\times \sum_{m_0|q}\sum_m\overline{\lambda(m,m_0)}\left(\frac{m_0}{m}\right)^{1/3}\varphi\left(\frac{m_0^2m}{N_1}\right)\sum_{n_0|q}\sum_n \lambda(n,n_0)\left(\frac{n_0}{n}\right)^{1/3}\varphi\left(\frac{n_0^2n}{N_2}\right)\\ &\times S\left(\overline{h},\eta_1 m;\frac{q}{m_0}\right) S\left(\overline{h},\eta_2 n;\frac{q}{n_0}\right)\int_\BR U(u) W_{\eta_2}^\pm(n_0^2n,q,h,u)\nonumber\\
    &\times \int_\BR g(q,x)\phi_{1}\left(u,q,h,x\right)U_{\eta_1}\left(\frac{m_0^2m}{q^3},q,x\right)e\left(\frac{N^2xu}{qQHt^{1-\varepsilon}}+\frac{tx}{2\pi hQ}\right)dxdu+t^{-99}
\end{align*}
for some fixed $\varphi\in C_c^\infty([1/2,2])$. 

\subsubsection{$x$-integral} We get arbitrary savings for $|x|\gg t^\epsilon$ due to bounds on $g(q,x)$ as given in \eqref{gqx}. So we can restrict $|x|\ll t^\epsilon$. Moreover, we expect to obtain cancellations in the $x$-integral when $|x|\gg CQt^\epsilon/N$. Therefore we use a smooth partition of unity to split the $x$-integral into dyadic segments and a small `remainder' part of size $CQt^\epsilon/N$. 

Let $\varphi:(0,\infty)\rightarrow[0,1]$ be a smooth compactly supported function satisfying 
\begin{equation}\label{PhiXDef}
supp(\varphi) \subset [1/2, 2] \quad \text{ and } \quad \sum_{j\in\BZ}\varphi\bigg(\frac{y}{2^j}\bigg) = 1 \, \text{ for } \, y\in(0,\infty).
\end{equation}
Define $\varphi_0(0)=1$, and for $y\in \BR\backslash\{0\}$
\begin{equation*}
\varphi_0(y) := \sum_{i\geq1} \varphi\bigg(\frac{2^{i}y}{CQt^\epsilon/N}\bigg) + \varphi\bigg(\frac{-2^{i}y}{CQt^\epsilon/N}\bigg) \quad \text{ and } \quad \varphi_X(y) := \varphi\bigg(\frac{y}{X}\bigg) \, \text{ for } |X|\geq \frac{2^{j-1}CQt^\epsilon}{N}\, \text{ and }\, j\geq1.
\end{equation*}
Then $\varphi_0(y)$ is smooth and supported in $\displaystyle \bigg(\frac{-CQt^\epsilon}{N}, \frac{CQt^\epsilon}{N}\bigg)$ with $\varphi_0(y)=1$ for $\displaystyle y\in\bigg[\frac{-CQt^\epsilon}{2N}, \frac{CQt^\epsilon}{2N}\bigg]$.

Hence we have 
\begin{align*}
    S_{M,H,C}^\pm(N)\ll& t^\varepsilon\sup_{\substack{\frac{CQt^\varepsilon}{N}\leq |X| \leq t^\varepsilon\\ \text{or } X=0}}\sup_{N_1\ll\frac{N^2t^\varepsilon}{Q^3}}\sup_{N_2\ll\frac{N^2t^\varepsilon}{Q^3}}\sup_{H'\sim\frac{Ct}{N}}\sup_{\eta_1,\eta_2=\pm}\frac{MN^{7/3}}{Qt}\Bigg|\sum_{C\leq q<(1+10^{-10})C}\frac{1}{q^3}\nonumber\\
    &\times \sum_{\substack{H'\leq h<(1+10^{-10})H'\\(h,q)=1}}\sum_{m_0|q}\sum_m\overline{\lambda(m,m_0)}\left(\frac{m_0}{m}\right)^{1/3}\varphi\left(\frac{m_0^2m}{N_1}\right)\sum_{n_0|q}\sum_n\lambda(n,n_0)\left(\frac{n_0}{n}\right)^{1/3}\varphi\left(\frac{n_0^2n}{N_2}\right)\nonumber\\
    &\times S\left(\overline{h},\eta_1 m;\frac{q}{m_0}\right) S\left(\overline{h},\eta_2 n;\frac{q}{n_0}\right)\int_\BR U(u) W_{\eta_2}^\pm(n_0^2n,q,h,u)\nonumber\\
    &\times \int_\BR \varphi_X(x)g(q,x)\phi_{1}\left(u,q,h,x\right)U_{\eta_1}\left(\frac{m_0^2m}{q^3},q,x\right)e\left(\frac{N^2xu}{qQHt^{1-\varepsilon}}+\frac{tx}{2\pi hQ}\right)dxdu\Bigg|+t^{-99}.
\end{align*}

\subsection{Cauchy-Schwarz inequality and the Duality principle}

We now apply Cauchy-Schwarz inequality to take out everything in (\ref{SMHAfterAllDual}) except the $m,m_0,n,n_0$-sums and the $x$-integral, giving us 
\begin{align*}
    S_{M,H,C}^\pm(N)\ll& t^\varepsilon\sup_{\eta_1,\eta_2=\pm}\sup_{\substack{\frac{CQt^\varepsilon}{N}\leq |X| \leq t^\varepsilon\\ \text{or } X=0}}\sup_{N_1\ll\frac{N^2t^\varepsilon}{Q^3}}\sup_{N_2\ll\frac{N^2t^\varepsilon}{Q^3}}\sup_{H'\sim\frac{Ct}{N}}\left(\tilde{S}_{1,X}(N)\tilde{S}_{2}(N)\right)^{1/2}+t^{-99},
\end{align*}
where 
\begin{align*}
    \tilde{S}_{1,X}(N):=&\tilde{S}_{1,M,H,H',C,X}^{\pm,\eta_1}(N,N_1)\nonumber\\
    =&\frac{MN^{7/3}}{Qt}\sum_{C\leq q<(1+10^{-10})C} \frac{1}{q^3} \sum_{\substack{H'\leq h<(1+10^{-10})H'\\(h,q)=1}}\int_\BR U(u)\nonumber\\
    &\times \left|\sum_{m_0|q}\sum_m\overline{\lambda(m,m_0)}\left(\frac{m_0}{m}\right)^{1/3}\varphi\left(\frac{m_0^2m}{N_1}\right)S\left(\overline{h},\eta_1 m;\frac{q}{m_0}\right)\right.\nonumber\\
    &\times \left.\int_\BR \varphi_X(x)g(q,x)\phi_{1}\left(u,q,h,x\right)U_{\eta_1}\left(\frac{m_0^2m}{q^3},q,x\right)e\left(\frac{N^2xu}{qQHt^{1-\varepsilon}}+\frac{tx}{2\pi hQ}\right)dx\right|^2du
\end{align*}
and 
\begin{align*}
    \tilde{S}_{2}(N):=&\tilde{S}_{2,M,H,H',C}^{\pm,\eta_2}(N,N_2)\nonumber\\
    =&\frac{MN^{7/3}}{Qt}\sum_{C\leq q<(1+10^{-10})C} \frac{1}{q^3} \sum_{\substack{H'\leq h<(1+10^{-10})H'\\(h,q)=1}}\int_\BR U(u)\nonumber\\
    &\times\left|\sum_{n_0|q}\sum_n\lambda(n,n_0)\left(\frac{n_0}{n}\right)^{1/3}\varphi\left(\frac{n_0^2n}{N_2}\right)S\left(\overline{h},\eta_2 n;\frac{q}{n_0}\right) W_{\eta_2}^\pm(n_0^2n,q,h,u)\right|^2du.
\end{align*}

By the Duality principle (Lemma \ref{dualitylemma}) and Lemma \ref{Ram bound}, we have the following theorem.

\begin{Theorem}\label{S1S2Thm}
For any positive numbers $\sqrt{t}\ll M\ll N$, $\frac{N}{t^{1-\varepsilon}}\ll H\ll\frac{Nt^\varepsilon}{M}$, $Q\gg \frac{N}{t^{1-\varepsilon}}$, $C\ll Q$, we have 
\begin{align*}
    S_{M,H,C}^\pm(N)\ll t^{-99}
\end{align*}
unless $C\gg\frac{N}{t^{1+\varepsilon}}$. When $C\gg\frac{N}{t^{1+\varepsilon}}$, we have 
\begin{align*}
    S_{M,H,C}^\pm(N)\ll& t^\varepsilon\sup_{|u|\ll1}\sup_{\eta_1,\eta_2=\pm}\sup_{\substack{\frac{CQt^\varepsilon}{N}\leq |X| \leq t^\varepsilon\\ \text{or } X=0}}\sup_{N_1\ll\frac{N^2t^\varepsilon}{Q^3}}\sup_{N_2\ll\frac{N^2t^\varepsilon}{Q^3}}\sup_{H'\sim\frac{Ct}{N}}\left(S_{1,X}(N)S_{2}(N)\right)^{1/2}+t^{-99},
\end{align*}
where 
\begin{align}\label{S1AfterDuality}
        S_{1,X}(N):=&S_{1,M,H,H',C,X}^{\pm,\eta_1}(N,N_1)\nonumber\\
        =&\frac{MN_1^{1/3}N^{7/3}}{Qt^{1-\varepsilon}}\sup_{\|\beta\|_2=1}\sum_{m_0}\frac{1}{m_0}\sum_m\varphi\left(\frac{m_0^2m}{N_1}\right)\left|\sum_{\frac{C}{m_0}\leq q< (1+10^{-10})\frac{C}{m_0}}\frac{1}{q^{3/2}}\sum_{\substack{H'\leq h<(1+10^{-10})H'\\(h,qm_0)=1}}\right.\nonumber\\
        &\times S\left(\overline{h} ,\eta_1 m;q\right)\int_\BR U(u) \beta(m_0q,h,u)\int_\BR \varphi_X(x)g(m_0q,x)\phi_{1}\left(u,m_0q,h,x\right)\nonumber\\
        &\times \left.U_{\eta_1} \left(\frac{m}{q^3m_0},m_0q,x\right)e\left(\frac{N^2xu}{m_0qQHt^{1-\varepsilon}}+\frac{tx}{2\pi hQ}\right)dxdu\right|^2,
    \end{align}
    and \begin{align}\label{S2AfterDuality}
        S_{2}(N):=&S_{2,M,H,H',C,u}^{\pm,\eta_2}(N,N_2)\nonumber\\
        =&\frac{MN_2^{1/3}N^{7/3}}{Qt^{1-\varepsilon}}\sup_{\|\beta\|_2=1}\sum_{n_0}\frac{1}{n_0}\sum_n\varphi\left(\frac{n_0^2n}{N_2}\right)\left|\sum_{\frac{C}{n_0}\leq q< (1+10^{-10})\frac{C}{n_0}}\frac{1}{q^{3/2}}\sum_{\substack{H'\leq h<(1+10^{-10})H'\\(h,qn_0)=1}}S\left(\overline{h},\eta_2 n;q\right)\right.\nonumber\\
        &\times 
        \left.\int_\BR U(u)\beta(n_0q,h,u) W_{\eta_2}^\pm(n_0^2n,n_0q,h,u)du\right|^2,
    \end{align}
    with $U_{\eta_1}, W_{\eta_2}^\pm$ as defined in (\ref{U1Def}) and (\ref{W1Def}) respectively.
\end{Theorem}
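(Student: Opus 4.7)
The plan is to follow the sequence of manipulations already laid out in the preceding subsections and culminate with an application of the duality principle. First I would apply the DFI delta method (Lemma \ref{DFILemma}) to detect $m=n+h$ and perform a dyadic subdivision of the modulus $q$ at level $C\ll Q$ as in \eqref{qDyadicSub}. Next, Poisson summation on the $h$-sum modulo $q$ collapses the sum over $\alpha$ and forces $\alpha\equiv -h\bmod q$; repeated integration by parts in the resulting $y$-integral shows that the dual frequency is essentially $qt/N$, so that arbitrary saving holds unless $h\neq 0$ and $h\sim qt/N\sim Ct/N$. Since $h\geq 1$, this immediately yields the first assertion $C\gg N/t^{1+\varepsilon}$.

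Then I would apply the $\mathrm{GL}(3)$ Voronoi summation formula (Lemma \ref{gl3voronoi}) first to the $m$-sum and then to the $n$-sum. Each application produces a dual variable ($m$, $n$), a pair of Kloosterman sums $S(\overline h,\eta_1 m;q/m_0)$ and $S(\overline h,\eta_2 n;q/n_0)$, and an integral transform whose stationary phase analysis (via repeated integration by parts) localizes $m_0^2m\ll N^2 t^\varepsilon/Q^3$ and, under the constraint $Q^2\ll MN/t^{1-\varepsilon}$, also $n_0^2n\ll N^2 t^\varepsilon/Q^3$. The combined $y,w$-integral is further processed by the change of variables $w=\tfrac{qt}{2\pi hN}+\tfrac{Nu}{Ht^{1-\varepsilon}}$, which recenters the stationary point and, after another integration by parts in $y$, localizes $u$ to an interval of size $1$. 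This produces the $t^\varepsilon$-inert weight $\phi_1$ of \eqref{U1Def} and the factor $W_{1,\eta_2}^\pm$ of \eqref{W1Def}.

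A dyadic subdivision in the variables $m_0^2m\sim N_1$, $n_0^2n\sim N_2$, $h\sim H'$, together with the partition of the $x$-integral into a small piece of size $CQt^\varepsilon/N$ (the case $X=0$) and dyadic pieces $|x|\sim X$ with $X\leq t^\varepsilon$, puts the expression into a form where Cauchy–Schwarz can be applied cleanly. I would pull the $q,h,u$ variables outside the square and split the inner sum into an $m,m_0$ factor (containing the $x$-integral) and an $n,n_0$ factor, giving the two quantities $\tilde S_{1,X}$ and $\tilde S_2$ described in the text.

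Finally I would apply the duality principle (Lemma \ref{dualitylemma}) separately to each of $\tilde S_{1,X}$ and $\tilde S_2$, exchanging the roles of the outer $(q,h,u)$-variables and the inner summation variable $m$ (respectively $n$); the dual sequence is precisely the $\beta(m_0q,h,u)$ appearing in \eqref{S1AfterDuality} and \eqref{S2AfterDuality}. Invoking the Ramanujan bound on average (Lemma \ref{Ram bound}) to control $\sum|\lambda(m,m_0)|^2 m^{-2/3}\ll N_1^{1/3}t^\varepsilon$ (and analogously for $n$) converts the factors $(m_0/m)^{1/3}$, $(n_0/n)^{1/3}$ into the normalizing constants $N_1^{1/3}$, $N_2^{1/3}$ that appear in the statement. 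The main obstacle is bookkeeping: verifying that all stationary-phase steps genuinely yield $t^\varepsilon$-inert weights under the standing hypotheses $\sqrt t\ll M$, $Q^2\ll MN/t^{1-\varepsilon}$, $MQ>t^{1+\varepsilon}$, and correctly tracking the combinatorial exponent $MN_j^{1/3}N^{7/3}/(Qt^{1-\varepsilon})$ through the Cauchy–Schwarz and duality steps.
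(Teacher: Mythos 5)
Your plan reproduces the paper's own derivation step for step: DFI delta method with dyadic $q$-subdivision, Poisson summation in $h$ forcing $h\sim Ct/N$ (whence $C\gg N/t^{1+\varepsilon}$), two $\rm GL(3)$ Voronoi applications with the truncations $m_0^2m, n_0^2n\ll N^2t^\varepsilon/Q^3$ under $Q^2\ll MN/t^{1-\varepsilon}$, the $y,w$-integral analysis producing $\phi_1$ and $W_{1,\eta_2}^\pm$, the dyadic/partition-of-unity decompositions in $N_1,N_2,H',X$, Cauchy--Schwarz separating the $(q,h,u)$-block from the $m$- and $n$-blocks, and finally the duality principle with the Ramanujan bound on average supplying the $N_j^{1/3}$ normalizations. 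This is essentially the same approach as the paper, and the remaining work is indeed only the bookkeeping you identify.
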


\section{Opening the square and Poisson summation to \texorpdfstring{$S_{1,X}(N)$}{S1X(N)}}

In this section we focus on the analysis of $S_{1,X}(N)$. We will then apply similar treatment to $S_{2}(N)$ in the next section.

\subsection{Opening the square and Poisson summation}

Opening the square in (\ref{S1AfterDuality}), we get \begin{align*}
    S_{1,X}(N)=&\frac{MN_1^{1/3}N^{7/3}}{Qt^{1-\varepsilon}}\sup_{\|\beta\|_2=1}\sum_{m_0}\frac{1}{m_0}\sum_m\varphi\left(\frac{m_0^2m}{N_1}\right)\mathop{\sum\sum}_{\frac{C}{m_0}\leq q_1,q_2< (1+10^{-10})\frac{C}{m_0}}\frac{1}{(q_1q_2)^{3/2}}\nonumber\\
    &\times \mathop{\sum\sum}_{\substack{H'\leq h_1, h_2<(1+10^{-10})H'\\(h_1,q_1m_0)=1\\(h_2,q_2m_0)=1}}S\left(\overline{h_1} ,\eta_1 m;q_1\right)S\left(\overline{h_2} ,\eta_1 m;q_2\right)\nonumber\\
    &\times \int_\BR U(u_1)\beta(m_0q_1,h_1,u_1)\int_\BR \varphi_X(x_1)g(m_0q_1,x_1)\phi_{1}\left(u_1,m_0q_1,h_1,x_1\right) \nonumber\\
    &\times U_{\eta_1} \left(\frac{m}{q_1^3m_0},m_0q_1,x_1\right)e\left(\frac{N^2x_1u_1}{m_0q_1QHt^{1-\varepsilon}}+\frac{tx_1}{2\pi h_1Q}\right)dx_1du_1\nonumber\\
    &\times \int_\BR U(u_2)\overline{\beta(m_0q_2,h_2,u_2)}\int_\BR \varphi_X(x_2)\hspace{0.1cm} \overline{g(m_0q_2,x_2)\phi_{1}\left(u_2,m_0q_2,h_2,x_2\right)} \nonumber\\
    &\times \overline{U_{\eta_1} \left(\frac{m}{q_2^3m_0},m_0q_2,x_2\right)}e\left(-\frac{N^2x_2u_2}{m_0q_2QHt^{1-\varepsilon}}-\frac{tx_2}{2\pi h_2Q}\right)dx_2du_2.
\end{align*}
Applying Poisson summation to the $m$-sum, we have 
\begin{align*}
    &\sum_m\varphi\left(\frac{m_0^2m}{N_1}\right)S\left(\overline{h_1} ,\eta_1 m;q_1\right)S\left(\overline{h_2} ,\eta_1 m;q_2\right) U_{\eta_1} \left(\frac{m}{q_1^3m_0},m_0q_1,x_1\right)\overline{U_{\eta_1} \left(\frac{m}{q_2^3m_0},m_0q_2,x_2\right)}\nonumber\\
    =&\frac{N_1}{m_0^2q_1q_2}\sum_m\mathcal{C}_{\eta_1}(m,h_1,h_2,q_1q_2)\mathcal{J}_0(m,m_0,q_1,q_2,x_1,x_2),
\end{align*}
where 
\begin{align}\label{CharDef}
    \mathcal{C}_{\eta_1}(m,h_1,h_2,q_1q_2)=\sum_{\gamma\bmod q_1q_2}S\left(\overline{h_1} ,\eta_1 \gamma;q_1\right)S\left(\overline{h_2} ,\eta_1 \gamma;q_2\right)e\left(\frac{m\gamma}{q_1q_2}\right)
\end{align}
and \begin{align}\label{J11Def}
    &\mathcal{J}_0(m,m_0,q_1,q_2,x_1,x_2)\nonumber\\
    =&\int_0^\infty \varphi(z)U_{\eta_1} \left(\frac{N_1z}{(m_0q_1)^3},m_0q_1,x_1\right)\overline{U_{\eta_1} \left(\frac{N_1z}{(m_0q_2)^3},m_0q_2,x_2\right)}e\left(-\frac{mN_1z}{m_0^2q_1q_2}\right)dz.
\end{align}

Now applying stationary phase analysis with the calculations shown in appendix \ref{sect.J0IntegralAnalysis} gives us
\begin{align}\label{S10}
    S_{1,0}(N)\ll&\frac{QMN_1^{4/3}N^{1/3}}{Ct^{1-\varepsilon}}\delta\left(N_1\ll \frac{C^3t^\varepsilon}{N}\right)\sup_{\|\beta\|_2=1}\sum_{m_0}\mathop{\sum\sum}_{\frac{C}{m_0}\leq q_1,q_2< (1+10^{-10})\frac{C}{m_0}}\frac{1}{q_1q_2}\mathop{\sum\sum}_{\substack{H'\leq h_1, h_2<(1+10^{-10})H'\\(h_1,q_1m_0)=1\\(h_2,q_2m_0)=1}}\nonumber\\
    &\times \int_{|u_1|\ll1}\int_{|u_2|\ll1}|\beta(m_0q_1,h_1,u_1)\beta(m_0q_2,h_2,u_2)|du_2du_1\sum_{|m|\ll\frac{C^2t^\varepsilon}{N_1}}|\mathcal{C}_{\eta_1}(m,h_1,h_2,q_1q_2)|+t^{-99},
\end{align}
and for $X\neq0$,  \begin{align}\label{S1XAfterFirstCauchy}
    S_{1,X}(N)\ll&\frac{CMN_1N^2}{Qt^{1-\varepsilon}}\delta(\eta_1X>0)\sup_{\|\beta\|_2=1}\sum_{m_0}m_0^{-3}\mathop{\sum\sum}_{\frac{C}{m_0}\leq q_1,q_2< (1+10^{-10})\frac{C}{m_0}}(q_1q_2)^{-5/2}\nonumber\\
    &\times \mathop{\sum\sum}_{\substack{H'\leq h_1, h_2<(1+10^{-10})H'\\(h_1,q_1m_0)=1\\(h_2,q_2m_0)=1}}\sum_{|m|\ll\frac{CQ^2}{NX^2}t^\varepsilon}\mathcal{C}_{\eta_1}(m,h_1,h_2,q_1q_2)\nonumber\\
    &\times \int_\BR\int_\BR U(u_1)U(u_2)\beta(m_0q_1,h_1,u_1)\overline{\beta(m_0q_2,h_2,u_2)}\mathcal{J}(q_1,q_2,\vec{v}\,)du_2du_1+t^{-99},
\end{align}
where for brevity of notation, we write $\vec{v}=(n,n_0,h_1,h_2,u_1,u_2)$,  \begin{align}\label{JDef}
    \mathcal{J}(q_1,q_2,\vec{v}\,)=&\int_\BR \varphi_X(x_1)g(m_0q_1,x_1)\phi_{1}\left(u_1,m_0q_1,h_1,x_1\right)\nonumber\\
    &\times \int_\BR \varphi_X(x_2)\overline{g(m_0q_2,x_2)\phi_{1}\left(u_2,m_0q_2,h_2,x_2\right)}\nonumber\\
    &\times \int_0^\infty \varphi(z)U_3 \left(\frac{N_1Q^3z}{N^2|x_1|^3},m_0q_1,x_1\right)\overline{U_3 \left(\frac{N_1Q^3z}{N^2|x_2|^3},m_0q_2,x_2\right)}\nonumber\\
    &\times e\left(2\sqrt{\frac{N_1Qz}{m_0^2q_1^2|x_1|}}-2\sqrt{\frac{N_1Qz}{m_0^2q_2^2|x_2|}}-\frac{mN_1z}{m_0^2q_1q_2}\right)dz\nonumber\\
    &\times e\left(\frac{N^2x_1u_1}{m_0q_1QHt^{1-\varepsilon}}-\frac{N^2x_2u_2}{m_0q_2QHt^{1-\varepsilon}}+\frac{tx_1}{2\pi h_1Q}-\frac{tx_2}{2\pi h_2Q}\right)dx_2dx_1,
\end{align}
and $U_3$ is some $t^\varepsilon$-inert function supported on $[1/4,25/4]\times\R\times\R$.

For $X\neq0$, applying stationary phase analysis on the $x_1,x_2$-integrals, calculations in appendix \ref{sect.JIntegralAnalysis} implies that there exists a $t^\varepsilon$-inert function $\Phi$ supported on $[1/4,25/4]\times\BR^4$ such that \begin{align}\label{S1XAfterIntegral}
    S_{1,X}(N)=&\frac{C^2M\sqrt{N_1}N^2X^{5/2}}{Q^{3/2}t^{1-\varepsilon}}\delta(\eta_1X>0)\sup_{\|\beta\|_2=1}\sum_{m_0}m_0^{-3}\mathop{\sum\sum}_{\frac{C}{m_0}\leq q_1,q_2< (1+10^{-10})\frac{C}{m_0}}(q_1q_2)^{-5/2}\mathop{\sum\sum}_{\substack{H'\leq h_1, h_2<(1+10^{-10})H'\\(h_1,q_1m_0)=1\\(h_2,q_2m_0)=1}}\nonumber\\
    &\times \sum_{|m|\ll\frac{CQ^2}{NX^2}t^\varepsilon}\mathcal{C}_{\eta_1}(m,h_1,h_2,q_1q_2)\int_\BR\int_\BR U(u_1)U(u_2) \beta(m_0q_1,h_1,u_1)\overline{\beta(m_0q_2,h_2,u_2)}\nonumber\\
    &\times \int_0^\infty \varphi(z)\Phi \left(\frac{N_1Q^3z}{N^2X^3},m_0q_1,m_0q_2,u_1,u_2\right)g(m_0q_1,Xx_{1,0})g(m_0q_2,Xx_{2,0})\nonumber\\
    &\times e\left(3\left(\frac{N_1tz}{2\pi h_1(m_0q_1)^2}\left(1+\frac{2\pi h_1N^2u_1}{m_0q_1Ht^{2-\varepsilon}}\right)\right)^{1/3}\right)\nonumber\\
    &\times e\left(-3\left(\frac{N_1tz}{2\pi h_2(m_0q_2)^2}\left(1+\frac{2\pi h_2N^2u_2}{m_0q_2Ht^{2-\varepsilon}}\right)\right)^{1/3}-\frac{mN_1z}{m_0^2q_1q_2}\right)dzdu_2du_1+O\left(t^{-99}\right),
\end{align}
and for $j=1,2$, \begin{align}\label{xj0Def}
    x_{j,0}=\left(\frac{2\pi h_j}{m_0q_jt}\right)^{2/3}\frac{Q(N_1z)^{1/3}}{X}\left(1+\frac{2\pi h_jN^2u_j}{m_0q_jHt^{2-\varepsilon}}\right)^{-2/3}.
\end{align}

\subsection{Bounding \texorpdfstring{$S_{1,0}(N)$}{S10(N)}}

To bound $S_{1,0}(N)$, we are left to bound the character sum $\mathcal{C}_{\eta_1}(m,h_1,h_2,q_1q_2)$ in (\ref{S10}). Recalling the definition in (\ref{CharDef}), we get for $m=0$,  
\begin{align}\label{CharSum0}
    \mathcal{C}_{\eta_1}(0,h_1,h_2,q_1q_2)=&\sum_{\gamma\bmod q_1q_2}S\left(\overline{h_1} ,\eta_1 \gamma;q_1\right)S\left(\overline{h_2} ,\eta_1 \gamma;q_2\right)\nonumber\\
    =&\sumx_{\alpha_1\bmod q_1}e\left(\frac{\alpha_1\overline{h_1}}{q_1}\right)\sumx_{\alpha_2\bmod q_2}e\left(\frac{\alpha_2\overline{h_2}}{q_2}\right)\sum_{\gamma\bmod q_1q_2}e\left(\eta_1\frac{(\overline{\alpha_1}q_2+\overline{\alpha_2}q_1)\gamma}{q_1q_2}\right)\nonumber\\
    =&q_1q_2\sumx_{\alpha_1\bmod q_1}e\left(\frac{\alpha_1\overline{h_1}}{q_1}\right)\sumx_{\alpha_2\bmod q_2}e\left(\frac{\alpha_2\overline{h_2}}{q_2}\right)\delta\left(\overline{\alpha_1}q_2\equiv-\overline{\alpha_2}q_1\bmod q_1q_2\right)\nonumber\\
    =&q_1^2\sumx_{\alpha\bmod q_1}e\left(\frac{\alpha(\overline{h_1}-\overline{h_2})}{q_1}\right)\delta(q_1=q_2)\nonumber\\
    =&q_1^2\sum_{q_1'|q_1}q_1'\mu\left(\frac{q}{q_1'}\right)\delta\left(h_1\equiv h_2\bmod q_1', q_1=q_2\right).
\end{align}
Hence the contribution of $m=0$ to $S_{1,0}(N)$ is bounded by \begin{align*}
    \ll&\frac{QMN_1^{4/3}N^{1/3}}{Ct^{1-\varepsilon}}\delta\left(N_1\ll \frac{C^3t^\varepsilon}{N}\right)\sup_{\|\beta\|_2=1}\sum_{m_0}\sum_{\frac{C}{m_0}\leq q< (1+10^{-10})\frac{C}{m_0}}\sum_{q'|q}q'\mathop{\sum\sum}_{\substack{H'\leq h_1, h_2<(1+10^{-10})H'\\(h_1,q_1m_0)=1\\(h_2,q_2m_0)=1\\ h_1\equiv h_2\bmod{q'}}}\\
    &\times \int_{|u_1|\ll1}\int_{|u_2|\ll1}|\beta(m_0q,h_1,u_1)\beta(m_0q,h_2,u_2)|du_2du_1.
\end{align*}
Applying the AM-GM inequality \begin{align*}
    |\beta(m_0q,h_1,u_1)\beta(m_0q,h_2,u_2)|\ll |\beta(m_0q,h_1,u_1)|^2+|\beta(m_0q,h_2,u_2)|^2,
\end{align*}
we get the above is bounded by \begin{align*}
    \ll& \frac{QMN_1^{4/3}N^{1/3}}{Ct^{1-\varepsilon}}\delta\left(N_1\ll\frac{C^3t^\varepsilon}{N}\right)\sup_{\|\beta\|_2=1}(|\mathfrak{s}_{0,1}|+|\mathfrak{s}_{0,2}|),
\end{align*}
where for brevity of notation, we have temporarily defined \begin{align*}
    \mathfrak{s}_{0,\nu}:=\sum_{m_0}\sum_{\frac{C}{m_0}\leq q< (1+10^{-10})\frac{C}{m_0}}\sum_{q'|q}q'\mathop{\sum\sum}_{\substack{H'\leq h_1, h_2<(1+10^{-10})H'\\h_1\equiv h_2\bmod{q'}}}\int_{|u_1|\ll1}\int_{|u_2|\ll1}|\beta(m_0q,h_\nu,u_\nu)|^2du_2du_1,
\end{align*}
for $\nu=1,2$. Hence the contribution of $m=0$ to $S_{1,0}(N)$ is bounded by
\begin{align}\label{S100Bound}
    \ll&\frac{QMN_1^{4/3}N^{1/3}}{t^{1-\varepsilon}}\left(1+\frac{t}{N}\right)\delta\left(N_1\ll\frac{C^3t^\varepsilon}{N}\right).
\end{align}

For the case $m\neq 0$, the character sum is \begin{align}\label{CharSumNon0}
    \mathcal{C}_{\eta_1}(m,h_1,h_2,q_1q_2)=&\sum_{\gamma\bmod q_1q_2}S\left(\overline{h_1} ,\eta_1 \gamma;q_1\right)S\left(\overline{h_2} ,\eta_1 \gamma;q_2\right)e\left(\frac{m\gamma}{q_1q_2}\right)\nonumber\\
    =&\sumx_{\alpha_1\bmod{q_1}}e\left(\frac{\overline{\alpha_1h_1}}{q_1}\right)\sumx_{\alpha_2\bmod{q_2}}e\left(\frac{\overline{\alpha_2h_2}}{q_2}\right)\sum_{\gamma\bmod{q_1q_2}}e\left(\frac{(\eta_1(\alpha_1q_2+\alpha_2q_1)+m)\gamma}{q_1q_2}\right)\nonumber\\
    \ll&q_1q_2\sumx_{\alpha_1\bmod{q_1}}\sumx_{\alpha_2\bmod{q_2}}\delta\left(-\eta_1 m\equiv \alpha_1q_2+\alpha_2q_1\bmod{q_1q_2}\right).
\end{align}
Hence (\ref{S10}) gives us the contribution of $m\neq0$ to $S_{1,0}(N)$ is bounded by \begin{align*}
    \ll& \frac{QMN_1^{4/3}N^{1/3}}{Ct^{1-\varepsilon}}\delta\left(N_1\ll \frac{C^3t^\varepsilon}{N}\right)\sup_{\|\beta\|_2=1}\sum_{m_0}\mathop{\sum\sum}_{\frac{C}{m_0}\leq q_1,q_2< (1+10^{-10})\frac{C}{m_0}}\mathop{\sum\sum}_{\substack{H'\leq h_1, h_2<(1+10^{-10})H'\\(h_1,q_1m_0)=1\\(h_2,q_2m_0)=1}}\\
    &\times \int_{|u_1|\ll1}\int_{|u_2|\ll1}|\beta(m_0q_1,h_1,u_1)\beta(m_0q_2,h_2,u_2)|du_2du_1\nonumber\\
    &\times \sum_{|m|\ll\frac{C^2t^\varepsilon}{N_1}}\sumx_{\alpha_1\bmod{q_1}}\sumx_{\alpha_2\bmod{q_1}}\delta\left(-\eta_1 m\equiv \alpha_1q_2+\alpha_2q_1\bmod{q_1q_2}\right)+t^{-99}.
\end{align*}
Performing the AM-GM inequality as the $m=0$ case, we get the above is bounded by \begin{align*}
    \ll& \frac{QMN_1^{4/3}N^{1/3}}{Ct^{1-\varepsilon}}\delta\left(N_1\ll\frac{C^3t^\varepsilon}{N}\right)\sup_{\|\beta\|_2=1}(|\mathfrak{s}_{1,1}|+|\mathfrak{s}_{1,2}|),
\end{align*}
where for brevity of notation, we have temporarily defined
\begin{align*}
    \mathfrak{s}_{1,\nu}:=&\sum_{m_0}\mathop{\sum\sum}_{\frac{C}{m_0}\leq q_1,q_2< (1+10^{-10})\frac{C}{m_0}}\mathop{\sum\sum}_{H'\leq h_1, h_2<(1+10^{-10})H'}\sum_{|m|\ll\frac{C^2t^\varepsilon}{N_1}}\nonumber\\
    &\times \sumx_{\alpha_1\bmod{q_1}}\sumx_{\alpha_2\bmod{q_2}}\delta\left(-\eta_1 m\equiv \alpha_1q_2+\alpha_2q_1\bmod{q_1q_2}\right)\int_{|u_\nu|\ll1}|\beta(m_0q_\nu,h_\nu,u_\nu)|^2du_\nu.
\end{align*}
To analyse the congruence condition, we let $q_0=(q_1,q_2)$, $q_1=q_0q_{1,0}q_1'$, $q_2=q_0q_{2,0}q_2'$ such that $q_{1,0},q_{2,0}|q_0^\infty$ and $(q_0,q_1')=(q_0,q_2')=1$. Then the congruence condition decompose into $q_0|m$ and \begin{align}\label{CongruenceAnalysis}
    -\eta_1\frac{m}{q_0}\equiv& \alpha_1q_{2,0}q_2'+\alpha_2q_{1,0}q_1'\bmod{q_0q_{1,0}q_{2,0}}\nonumber\\
    \alpha_1\equiv& -\eta_1\frac{m\overline{q_{2,0}q_2'}}{q_0}\bmod{q_1'}\nonumber\\
    \alpha_2\equiv& -\eta_1\frac{m\overline{q_{1,0}q_1'}}{q_0}\bmod{q_2'}.
\end{align}
This leads to \begin{align}\label{DetailedCountingComputation}
    \mathfrak{s}_{1,\nu}\ll &\mathop{\sum_{m_0}\sum_{q_0}\sum_{q_{1,0},q_{2,0}|q_0^\infty}\sum_{\substack{(q_1',q_2')=1\\(q_1'q_2',q_0)=1}}}_{m_0q_0q_{1,0}q_1',m_0q_0q_{2,0}q_2'\sim C}\mathop{\sum\sum}_{h_1, h_2\sim H'}\sum_{|m'|\ll\frac{C^2t^\varepsilon}{q_0N_1}}\mathop{\sumx_{\alpha_1\bmod{q_0q_{1,0}}}\sumx_{\alpha_2\bmod{q_0q_{2,0}}}}_{-\eta_1m'\equiv\alpha_1q_{2,0}q_2'+\alpha_2q_{1,0}q_1'\bmod{q_0q_{1,0}q_{2,0}}}\nonumber\\
    &\times \int_{|u_\nu|\ll1}|\beta(m_0q_0q_{\nu,0}q_\nu',h_\nu,u_\nu)|^2du_\nu\nonumber\\
    \ll & \frac{C^2T^\varepsilon}{N_1}H'\mathop{\sum_{m_0}\sum_{q_0}\sum_{q_{1,0},q_{2,0}|q_0^\infty}\sum_{\substack{(q_1',q_2')=1\\(q_1'q_2',q_0)=1}}}_{m_0q_0q_{1,0}q_1',m_0q_0q_{2,0}q_2'\sim C}\mathop{\sum}_{h_\nu\sim H'}\int_{|u_\nu|\ll1}|\beta(m_0q_0q_{\nu,0}q_\nu',h_\nu,u_\nu)|^2du_\nu.
\end{align}
Here we used for any $m_1', q_0, q_{1,0}, q_{2,0}, q_1', q_2'$, \begin{align*}
    \mathop{\sumx_{\alpha_1\bmod{q_0q_{1,0}}}\sumx_{\alpha_2\bmod{q_0q_{2,0}}}}_{-\eta_1m'\equiv\alpha_1q_{2,0}q_2'+\alpha_2q_{1,0}q_1'\bmod{q_0q_{1,0}q_{2,0}}}\ll q_0, 
\end{align*}
which holds as the congruence condition determines $\alpha_1$ mod $q_{1,0}$, and $\alpha_2$ is completely determined once $\alpha_1$ is chosen. Since $\|\beta\|_2=1$ and $H'\sim \frac{Ct}{N}$, we have 
\begin{align*}
    \mathfrak{s}_{1,\nu}\ll \frac{C^4t^{1+\varepsilon}}{N_1N}
\end{align*}
for $\nu=1,2$ and hence the contribution of $m\neq0$ to $S_{1,0}(N)$ is bounded by \begin{align}\label{S10non0Bound}
    \ll \frac{QMN_1^{4/3}N^{1/3}}{Ct^{1-\varepsilon}}\delta\left(N_1\ll\frac{C^3t^\varepsilon}{N}\right)\frac{C^4t}{N_1N}\ll \frac{C^3QMN_1^{1/3}t^\varepsilon}{N^{2/3}}\delta\left(N_1\ll\frac{C^3t^\varepsilon}{N}\right).
\end{align}

We remark that the point counting computation done in \eqref{CongruenceAnalysis} and \eqref{DetailedCountingComputation} will be repeated several times in a similar fashion throughout the paper. Since such computations are uninspiring and tedious, we will simply refer to these lines whenever we apply a similar point counting bound.

Combining (\ref{S100Bound}) and (\ref{S10non0Bound}), we get \begin{align}\label{S10Bound}
    S_{1,0}(N)\ll \frac{C^3QMN_1^{1/3}t^\varepsilon}{N^{2/3}}\delta\left(N_1\ll\frac{C^3t^\varepsilon}{N}\right).
\end{align}

\section{Opening the square and Poisson summation to \texorpdfstring{$S_{2}(N)$}{S2(N)}}

In this section, we apply similar treatment to $S_{2}(N)$ as done in the previous section. Recall that in (\ref{S2AfterDuality}) we have \begin{align*}
    S_{2}(N)=&\frac{MN_2^{1/3}N^{7/3}}{Qt^{1-\varepsilon}}\sup_{\|\beta\|_2=1}\sum_{n_0}\frac{1}{n_0}\sum_n\varphi\left(\frac{n_0^2n}{N_2}\right)\left|\sum_{\frac{C}{n_0}\leq q< (1+10^{-10})\frac{C}{n_0}}\frac{1}{q^{3/2}}\sum_{\substack{H'\leq h<(1+10^{-10})H'\\(h,qn_0)=1}}S\left(\overline{h},\eta_2 n;q\right)\right.\nonumber\\
    &\times \left.\int_\BR U(u)\beta(n_0q,h,u)W_{\eta_2}^\pm(n_0^2n,n_0q,h,u)du\right|^2.
\end{align*}
Opening the square, we get \begin{align}\label{S2AfterOpenSquare}
    S_{2}(N)=&\frac{MN_2^{1/3}N^{7/3}}{Qt^{1-\varepsilon}}\sup_{\|\beta\|_2=1}\sum_{n_0}\frac{1}{n_0}\sum_n\varphi\left(\frac{n_0^2n}{N_2}\right)\mathop{\sum\sum}_{\frac{C}{n_0}\leq q_1,q_2< (1+10^{-10})\frac{C}{n_0}}\frac{1}{(q_1q_2)^{3/2}}\mathop{\sum\sum}_{\substack{H'\leq h_1, h_2<(1+10^{-10})H'\\(h_1,q_1n_0)=1\\(h_2,q_2n_0)=1}}\nonumber\\
    &\times S\left(\overline{h_1} ,\eta_2 n;q_1\right)S\left(\overline{h_2} ,\eta_2 n;q_2\right)\int_\BR\int_\BR U(u_1)U(u_2)\beta(n_0q_1,h_1,u_1)\overline{\beta(n_0q_2,h_2,u_2)}\nonumber\\
    &\times W_{\eta_2}^\pm \left(n_0^2n,n_0q_1,h_1,u_1\right)\overline{W_{\eta_2}^\pm \left(n_0^2n,n_0q_2,h_2,u_2\right)}du_2du_1.
\end{align}
Applying Poisson summation to the $n$-sum, we have 
\begin{align*}
    &\sum_n \varphi\left(\frac{n_0^2n}{N_2}\right)S\left(\overline{h_1} ,\eta_2 n;q_1\right)S\left(\overline{h_2} ,\eta_2 n;q_2\right) W_{\eta_2}^\pm \left(n_0^2n,n_0q_1,h_1,u_1\right)\overline{W_{\eta_2}^\pm \left(n_0^2n,n_0q_2,h_2,u_2\right)}\nonumber\\
    =&\frac{N_2}{n_0^2q_1q_2}\sum_n\mathcal{C}_{\eta_2}(n,h_1,h_2,q_1q_2)\mathcal{J}_2(q_1,q_2,\vec{v}),
\end{align*}
where 
\begin{align*}
    \mathcal{C}_{\eta_2}(n,h_1,h_2,q_1q_2)=\sum_{\gamma\bmod q_1q_2}S\left(\overline{h_1} ,\eta_2 \gamma;q_1\right)S\left(\overline{h_2} ,\eta_2 \gamma;q_2\right)e\left(\frac{n\gamma}{q_1q_2}\right)
\end{align*}
is the same character sum defined in (\ref{CharDef}) and 
\begin{align*}
    &\mathcal{J}_2(q_1,q_2,\vec{v})\\
    =&\int_0^\infty \varphi(z)W_{\eta_2}^\pm \left(N_2z,n_0q_1,h_1,u_1\right)\overline{W_{\eta_2}^\pm \left(N_2z,n_0q_2,h_2,u_2\right)}e\left(-\frac{nN_2z}{n_0^2q_1q_2}\right)dz
\end{align*}
with $\vec{v}=(n,n_0,h_1,h_2,u_1,u_2)$ as before. Repeated integration by parts in the $z$-integral gives us arbitrary savings unless \begin{align*}
    |n|\ll\frac{CN^{1/3}t^\varepsilon}{N_2^{2/3}}.
\end{align*}
Inserting this back into (\ref{S2AfterOpenSquare}), we get \begin{align}\label{S2After1stCauchy}
    S_{2}(N)=&\frac{MN_2^{4/3}N^{7/3}}{Qt^{1-\varepsilon}}\sup_{\|\beta\|_2=1}\sum_{n_0}n_0^{-3}\sum_{|n|\ll\frac{CN^{1/3}t^\varepsilon}{N_2^{2/3}}}\mathop{\sum\sum}_{\frac{C}{n_0}\leq q_1,q_2< (1+10^{-10})\frac{C}{n_0}}(q_1q_2)^{-5/2}\nonumber\\
    &\times \mathop{\sum\sum}_{\substack{H'\leq h_1, h_2<(1+10^{-10})H'\\(h_1,q_1n_0)=1\\(h_2,q_2n_0)=1}}\mathcal{C}_{\eta_2}(n,h_1,h_2,q_1q_2)\int_\BR\int_\BR U(u_1)U(u_2)\beta(n_0q_1,h_1,u_1)\overline{\beta(n_0q_2,h_2,u_2)} \nonumber\\
    &\times \mathcal{J}_2(q_1,q_2,\vec{v}\,)du_2du_1+O\left(t^{-99}\right).
\end{align}

\section{Analysis of \texorpdfstring{$S_{1,X}(N)$}{S1X(N)} and \texorpdfstring{$S_{2}(N)$}{S2(N)}}

Recall that $N_2\ll \frac{N^2t^\varepsilon}{Q^3}$. Using $N_1\sim\frac{N^2X^3}{Q^3}$, splitting the $n$-sum into $n\geq0$ and $n<0$, and collecting the above analysis (\ref{W1Def}),  (\ref{S1XAfterIntegral}), (\ref{S10Bound}) and (\ref{S2After1stCauchy}), we have the following Lemma.

\begin{Lemma}\label{LemmaAfterFirstStep}
    For $0\leq |X|\ll t^\epsilon$, we have \begin{align*}
        S_{1,X}(N)\ll&\frac{C^2QMN_1^{4/3}N^{1/3}}{t^{1-\varepsilon}}\delta\left(\eta_1 X\geq 0, N_1\sim\frac{N^2X^3}{Q^3}\right)\\
        &\times \left(\frac{Ct}{N_1N}\delta\left(N_1\ll\frac{C^3t^\varepsilon}{N}\right)+\sup_\pm|\mathcal{R}_1^\pm(N)|\right)+O\left(t^{-99}\right)
    \end{align*}
    and \begin{align*}
        S_2(N)\ll\frac{MN_2^{4/3}N^{7/3}}{Qt^{1-\varepsilon}}\sup_\pm|\mathcal{R}_2^\pm(N)|+O\left(t^{-99}\right),
    \end{align*}
    where for $j=1,2$, \begin{align*}
        \mathcal{R}_j(N)^\pm=&\sup_{\|\beta\|_2=1}\sum_{n_0}n_0^{-3}\sum_{0\leq n\ll\frac{CN^{1/3}t^\varepsilon}{N_j^{2/3}}}\mathop{\sum\sum}_{\frac{C}{n_0}\leq q_1,q_2< (1+10^{-10})\frac{C}{n_0}}(q_1q_2)^{-5/2}\mathop{\sum\sum}_{\substack{H'\leq h_1, h_2<(1+10^{-10})H'\\(h_1,q_1n_0)=1\\(h_2,q_2n_0)=1}}\mathcal{C}_{\eta_j}(\pm n,h_1,h_2,q_1q_2)\nonumber\\
        &\times \int_\BR\int_\BR U(u_1)U(u_2)\beta(n_0q_1,h_1,u_1)\overline{\beta(n_0q_2,h_2,u_2)} \mathcal{J}_j(q_1,q_2,\vec{v}\,)du_2du_1,
    \end{align*}
    and \begin{align*}
        &\mathcal{J}_j(q_1,q_2,\vec{v}\,)\\
        =&\int_0^\infty \varphi(z)\Phi_{j,\eta_j}\left(n_0q_1,n_0q_2,u_1,u_2,z\right)(g(n_0q_1,Xx_{1,0})g(n_0q_2,Xx_{2,0}))^{2-j}\nonumber\\
        &\times e\left(-\eta_j'3\left(\frac{N_jNz}{(n_0q_1)^3}\left(\frac{n_0q_1t}{2\pi h_1N}+\frac{Nu_1}{Ht^{1-\varepsilon}}\right)\right)^\frac{1}{3}+\eta_j'3\left(\frac{N_jNz}{(n_0q_2)^3}\left(\frac{n_0q_2t}{2\pi h_2N}+\frac{Nu_2}{Ht^{1-\varepsilon}}\right)\right)^\frac{1}{3}\mp\frac{nN_jz}{n_0^2q_1q_2}\right)dz,
    \end{align*}
    with $\eta_1'=1, \eta_2'=\eta_2$, $x_{1,0},x_{2,0}$ are defined in (\ref{xj0Def}) and $\Phi_{j,\eta_j}$ is a $t^\varepsilon$-inert function. Moreover, $x_{1,0},x_{2,0}$ are flat with respect to $q_1,q_2,z$.
\end{Lemma}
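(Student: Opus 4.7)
The plan is to consolidate the bounds derived in the two preceding sections into the unified form stated in the lemma. Three ingredients must be combined: the bound for $S_{1,0}(N)$ from (\ref{S10Bound}), the bound for $S_{1,X}(N)$ with $X\neq 0$ from (\ref{S1XAfterIntegral}), and the bound for $S_2(N)$ from (\ref{S2After1stCauchy}).

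First I would handle $S_2(N)$, which is the most direct. Equation (\ref{S2After1stCauchy}) already has exactly the shape $\frac{MN_2^{4/3}N^{7/3}}{Qt^{1-\varepsilon}}\mathcal{R}_2(N)$ with $\mathcal{R}_2$ as written in the lemma. Here the weight $\Phi_{2,\eta_2}(n_0q_1,n_0q_2,u_1,u_2)$ is identified with the product
\[
U_{0,\eta_2}\!\left(\tfrac{N_2z}{(n_0q_1)^3},\tfrac{n_0q_1t}{2\pi h_1}+\tfrac{N^2u_1}{Ht^{1-\varepsilon}}\right)\overline{U_{0,\eta_2}\!\left(\tfrac{N_2z}{(n_0q_2)^3},\tfrac{n_0q_2t}{2\pi h_2}+\tfrac{N^2u_2}{Ht^{1-\varepsilon}}\right)}
\]
read off from (\ref{W1Def}). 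Since $U_0$ is a flat weight built from (\ref{Voronoi-weight-fn}) whose arguments vary mildly, this product is $t^\varepsilon$-inert. The factor $(g\cdot g)^{2-j}$ is absent here because Poisson in $n$ does not reintroduce $g$, which is recorded cleanly by the exponent $2-j$ vanishing at $j=2$.

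Next I would handle $S_{1,X}(N)$ with $X\neq 0$, starting from (\ref{S1XAfterIntegral}). Using the support constraint $N_1\sim N^2X^3/Q^3$ furnished by the inert function $U_3$, the prefactor becomes
\[
\frac{C^2M\sqrt{N_1}\,N^2X^{5/2}}{Q^{3/2}t^{1-\varepsilon}}\;\asymp\;\frac{C^2QMN_1^{4/3}N^{1/3}}{t^{1-\varepsilon}},
\]
matching the lemma. The weights $\Phi,\Phi'$ produced by Lemma \ref{statlemma} are $t^\varepsilon$-inert, so their product defines $\Phi_{1,\eta_1}$, and the surviving factors $g(n_0q_j,Xx_{j,0})$ realize the exponent $2-j=1$ in $\mathcal{J}_j$. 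The $X=0$ contribution requires no new work: the bound (\ref{S10Bound}) equals $\frac{C^2QMN_1^{4/3}N^{1/3}}{t^{1-\varepsilon}}\cdot\frac{Ct}{N_1N}\,\delta(N_1\ll C^3t^\varepsilon/N)$, which is exactly the diagonal contribution in the lemma (with the convention that the outer factor $\delta(\eta_1X>0,N_1\sim N^2X^3/Q^3)$ is enforced only on the $\mathcal{R}_1(N)$ summand when $X\neq 0$).

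The main bookkeeping step, which is where one must be careful, is to justify the flatness of $x_{j,0}$ in $q_j$ and $z$ and the inertness of $g(n_0q_j,Xx_{j,0})$. Flatness is immediate from (\ref{xj0Def}): since $h_j\sim q_jt/N$ within the dyadic range, the prefactor $(2\pi h_j/(m_0q_jt))^{2/3}$ is flat in $q_j$, the $z$-dependence is $z^{1/3}$, and the outer factor $(1+2\pi h_jN^2u_j/(m_0q_jHt^{2-\varepsilon}))^{-2/3}$ has bounded logarithmic derivatives in $q_j$. Inertness of $g$ then follows from (\ref{gDerProp}) together with this flatness, so each $q_j$- or $z$-derivative of $g(n_0q_j,Xx_{j,0})$ costs at most $t^{\varepsilon/10}$. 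Once these inert-function properties are checked, the lemma follows by direct substitution of the three ingredients above.
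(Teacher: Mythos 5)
Your proposal is correct and is essentially the paper's own argument: the lemma is obtained there by direct substitution of (\ref{S10Bound}), (\ref{S1XAfterIntegral}) and (\ref{S2After1stCauchy}) together with the support condition $N_1\sim N^2X^3/Q^3$, which is exactly the consolidation you carry out, including the prefactor conversion $\frac{C^2M\sqrt{N_1}N^2X^{5/2}}{Q^{3/2}t^{1-\varepsilon}}\asymp\frac{C^2QMN_1^{4/3}N^{1/3}}{t^{1-\varepsilon}}$ and the identification of the inert weights and of the diagonal ($X=0$) term. Your reading of the placement of the factor $\delta(\eta_1X>0,\,N_1\sim N^2X^3/Q^3)$ matches how the paper actually uses the lemma in the final section ("including the case $X=0$"), so no gap remains.
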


With the above lemma, we are left to bound $\mathcal{R}_j^\pm(N)$. Since the treatment for $\pm=+$ and $\pm=-$ is the same, we will WLOG assume $\pm=+$ and denote $\mathcal{R}_j(N)=\mathcal{R}_j^+(N)$ henceforth. We separate the analysis into 4 cases depending on the size of $n$.

\subsection{Diagonal Contribution}

We first deal with the case $n=0$. Its contribution to $\mathcal{R}_j(N)$ is equal to \begin{align*}
    &\sup_{\|\beta\|_2=1}\sum_{n_0}n_0^{-3}\mathop{\sum\sum}_{\frac{C}{n_0}\leq q_1,q_2< (1+10^{-10})\frac{C}{n_0}}(q_1q_2)^{-5/2}\mathop{\sum\sum}_{\substack{H'\leq h_1, h_2<(1+10^{-10})H'\\(h_1,q_1n_0)=1\\(h_2,q_2n_0)=1}}\mathcal{C}_{\eta_j}(0,h_1,h_2,q_1q_2)\nonumber\\
    &\times \int_\BR\int_\BR U(u_1)U(u_2)\beta(n_0q_1,h_1,u_1)\overline{\beta(n_0q_2,h_2,u_2)}\mathcal{J}_j(q_1,q_2,\vec{v}_0)du_2du_1,
\end{align*}
where $\vec{v}_0=(0,n_0,h_1,h_2,u_1,u_2)$. Recall that (\ref{CharSum0}) gives us \begin{align*}
    \mathcal{C}_{\eta_j}(0,h_1,h_2,q_1q_2)=q_1^2\sum_{q_1'|q_1}q_1'\mu\left(\frac{q}{q_1'}\right)\delta\left(h_1\equiv h_2\bmod q_1', q_1=q_2\right).
\end{align*}
Using (\ref{gDerProp}) and $x_{j,0}$ is flat with respect to $z$, repeated integration by parts on the $z$-integral in $\mathcal{J}_j$ with $q_1=q_2=q$ gives us arbitrary savings unless \begin{align*}
    &\left|\left(\frac{N_jN}{(n_0q)^3}\left(\frac{n_0qt}{2\pi h_1N}+\frac{Nu_1}{Ht^{1-\varepsilon}}\right)\right)^\frac{1}{3}-\left(\frac{N_jN}{(n_0q)^3}\left(\frac{n_0qt}{2\pi h_2N}+\frac{Nu_2}{Ht^{1-\varepsilon}}\right)\right)^\frac{1}{3}\right|\ll t^\varepsilon\\
    \Longrightarrow & \left|\frac{n_0qt}{2\pi N}(h_1^{-1}-h_2^{-1})+\frac{N}{Ht^{1-\varepsilon}}(u_1-u_2)\right|\ll\frac{Ct^\varepsilon}{(N_jN)^{1/3}}\\
    \Longrightarrow & 0\neq |h_2-h_1|\ll\frac{C^2t^{1+\varepsilon}}{N_j^{1/3}N^{4/3}}+\frac{Ct^\varepsilon}{H} \text{ or } \left( h_1=h_2 \text{ and } |u_2-u_1|\ll\frac{CHt^{1+\varepsilon}}{N_j^{1/3}N^{4/3}} \right).
\end{align*}
Note that (\ref{gqxBound}) gives us $\mathcal{J}_j(q_1,q_2,\vec{v}_0)\ll t^\varepsilon$ for both $j=1,2$.

Applying the AM-GM inequality \begin{align*}
    |\beta(n_0q_1,h_1,u_1)\beta(n_0q_2,h_2,u_2)|\ll |\beta(n_0q_1,h_1,u_1)|^2+|\beta(n_0q_2,h_2,u_2)|^2
\end{align*} gives us the contribution of $n=0$ to $\mathcal{R}_j(N)$ is bounded by \begin{align*}
    \ll & \frac{t^\varepsilon}{C^3}\sup_{\|\beta\|_2=1}\left(|\mathfrak{s}_{4,1}|+|\mathfrak{s}_{4,2}|\right),
\end{align*}    
where for brevity of notation, we have temporarily defined
\begin{align*}
    \mathfrak{s}_{4,\nu}:=&\sum_{n_0}\sum_{\frac{C}{n_0}\leq q< (1+10^{-10})\frac{C}{n_0}}\sum_{q'|q}q'\mathop{\sum\sum}_{\substack{H'\leq h_1, h_2<(1+10^{-10})H'\\h_1\equiv h_2\bmod q'\\|h_2-h_1|\ll\frac{C^2t^{1+\varepsilon}}{N_j^{1/3}N^{4/3}}+\frac{Ct^\varepsilon}{H}}}\int_{|u_1|\ll1}\int_{|u_2|\ll1}|\beta(n_0q,h_\nu,u_\nu)|^2\nonumber\\
    &\times \left(\delta(h_1\neq h_2)+\delta\left(h_1=h_2,|u_2-u_1|\ll\frac{CHt^{1+\varepsilon}}{N_j^{1/3}N^{4/3}}\right)\right)du_2du_1
\end{align*}
for $\nu=1,2$. Applying a similar point counting argument as in \eqref{DetailedCountingComputation}, the contribution of $n=0$ to $\mathcal{R}_j(N)$ is bounded by
\begin{align}\label{DiagonalBound}
    \ll& \frac{t^\varepsilon}{C^2H}+\frac{Ht^{1+\varepsilon}}{CN_j^{1/3}N^{4/3}}.
\end{align}

\subsection{Contribution when \texorpdfstring{$n$}{n} is small}

Now we deal with the case $0\neq nN_j\ll C^2t^\varepsilon$. In such a case, $e\left(-\frac{nN_jz}{n_0^2q_1q_2}\right)$ is $t^\varepsilon$-inert. Using (\ref{gDerProp}) and $x_{j,0}$ is flat with respect to $z$, repeated integration by parts in the $z$-integral in $\mathcal{J}_j$ gives us arbitrary savings unless \begin{align*}
    &\left|\left(\frac{N_jN}{(n_0q_1)^3}\left(\frac{n_0q_1t}{2\pi h_1N}+\frac{Nu_1}{Ht^{1-\varepsilon}}\right)\right)^\frac{1}{3}-\left(\frac{N_jN}{(n_0q_2)^3}\left(\frac{n_0q_2t}{2\pi h_2N}+\frac{Nu_2}{Ht^{1-\varepsilon}}\right)\right)^\frac{1}{3}\right|\ll t^\varepsilon\\
    \Longrightarrow & \left|\frac{t}{2\pi n_0^2N}\left(\frac{1}{q_1^2h_1}-\frac{1}{q_2^2h_2}\right)+\frac{N}{n_0^3Ht^{1-\varepsilon}}\left(\frac{u_1}{q_1^3}-\frac{u_2}{q_2^3}\right)\right|\ll\frac{t^\varepsilon}{C^2(N_jN)^{1/3}}\\
    \Longrightarrow & \left|q_2-\sqrt{\frac{h_1}{h_2}}q_1\right|\ll\frac{C^2t^\varepsilon}{n_0(N_jN)^{1/3}}+\frac{CN}{n_0Ht^{1-\varepsilon}}.
\end{align*}
Together with (\ref{CharSumNon0}) giving us \begin{align*}
    \mathcal{C}_{\eta_j}(n,h_1,h_2,q_1,q_2)\ll q_1q_2\sumx_{\alpha_1\bmod{q_1}}\sumx_{\alpha_2\bmod{q_2}}\delta\left(n\equiv \alpha_1q_2+\alpha_2q_1\bmod{q_1q_2}\right),
\end{align*}
the contribution of $0\neq nN_j\ll C^2t^\varepsilon$ to $\mathcal{R}_j(N)$ is bounded by \begin{align*}
    \ll&t^\varepsilon\sup_{\|\beta\|_2=1}\sum_{n_0}n_0^{-3}\sum_{0\neq n\ll \frac{C^2t^\varepsilon}{N_j}}\mathop{\sum\sum}_{H'\leq h_1, h_2<(1+10^{-10})H'}\mathop{\sum\sum}_{\substack{\frac{C}{n_0}\leq q_1,q_2< (1+10^{-10})\frac{C}{n_0}\\\left|q_2-\sqrt{\frac{h_1}{h_2}}q_1\right|\ll\frac{C^2t^\varepsilon}{n_0(N_jN)^{1/3}}+\frac{CN}{n_0Ht^{1-\varepsilon}}}}(q_1q_2)^{-3/2}\nonumber\\
    &\times \sumx_{\alpha_1\bmod{q_1}}\sumx_{\alpha_2\bmod{q_2}}\int_{|u_1|\ll1}\int_{|u_2|\ll1} |\beta(n_0q_1,h_1,u_1)||\beta(n_0q_2,h_2,u_2)|du_2du_1\nonumber\\
    &\times \left(g(n_0q_1,Xx_{1,0})g(n_0q_2,Xx_{2,0})\right)^{2-j}\delta\left(n\equiv \alpha_1q_2+\alpha_2q_1\bmod{q_1q_2}\right).
\end{align*}
Applying the AM-GM inequality \begin{align*}
    |\beta(n_0q_1,h_1,u_1)\beta(n_0q_2,h_2,u_2)|\ll |\beta(n_0q_1,h_1,u_1)|^2+|\beta(n_0q_2,h_2,u_2)|^2
\end{align*}
with (\ref{gqxBound}) giving us $g(m_0q_j,Xx_{j,0})\ll t^\varepsilon$, the same point counting analysis as in $S_{1,0}(N)$ using (\ref{CongruenceAnalysis}) and \eqref{DetailedCountingComputation} implies that the contribution of $0\neq nN_j\ll C^2t^\varepsilon$ to $\mathcal{R}_j(N)$ is bounded by
\begin{align}\label{SmallnBound}
    \ll \frac{t^\varepsilon}{C^3}\frac{C^2}{N_j}\frac{Ct}{N}\left(1+\frac{C^2}{(N_jN)^{1/3}}+\frac{CN}{Ht}\right)\ll\frac{t^{1+\varepsilon}}{N_jN}\left(1+\frac{C^2}{(N_jN)^{1/3}}+\frac{CN}{Ht}\right).
\end{align}
Notice we have this contribution only when $N_j\ll C^2t^\varepsilon$.

\subsection{Integral Analysis}

Now we continue with the analysis when $nN_j\gg C^2t^\varepsilon$. Recalling the definition of $\mathcal{J}_j(q_1,q_2,\vec{v}\,)$ in Lemma \ref{LemmaAfterFirstStep}, applying stationary phase analysis with the calculations shown in appendix \ref{sect.JjIntegralAnalysis} yields
\begin{align}\label{Jjasymp}
    &\mathcal{J}_j(q_1,q_2,\vec{v}\,)\nonumber\\
    =&\frac{C}{\sqrt{nN_j}}(g(n_0q_1,Xx_1)g(n_0q_2,Xx_2))^{2-j} \Phi_2(z_0(n_0q_1,n_0q_2),q_1,q_2; \nu)e\left((z_0(n_0q_1,n_0q_2))\right)+O\left(t^{-A}\right),
\end{align}
for any $A>0$, with some flat function $\Phi_2$ supported on $[1/2,5/2]\times\BR^7$, and 
\begin{align*}
    x_j:=x_j(z_0(n_0q_1,n_0q_2))=\left(\frac{2\pi h_j}{n_0q_jt}\right)^{2/3}\frac{Q(N_1z_0(n_0q_1,n_0q_2))^{1/3}}{X}\left(1+\frac{2\pi h_jN^2u}{n_0q_jHt^{2-\varepsilon}}\right)^{-2/3},
\end{align*}
for $j=1,2$. Inserting this back into (\ref{S2After1stCauchy}) yields the contribution of $nN_j\gg C^2t^\varepsilon$ to $\mathcal{R}_j(N)$ is equal to \begin{align*}
    &\frac{1}{C^2\sqrt{N_j}}\sup_{\|\beta\|_2=1}\sum_{n_0}\sum_{\frac{C^2t^\varepsilon}{N_j}\ll n\ll \frac{CN^{1/3}t^\varepsilon}{N_j^{2/3}}}\frac{1}{\sqrt{n}}\mathop{\sum\sum}_{\frac{C}{n_0}\leq q_1,q_2< (1+10^{-10})\frac{C}{n_0}}\frac{1}{q_1q_2}\mathop{\sum\sum}_{\substack{H'\leq h_1, h_2<(1+10^{-10})H'\\(h_1,q_1n_0)=1\\(h_2,q_2n_0)=1}}\nonumber\\
    &\times \mathcal{C}_{\eta_j}(n,h_1,h_2,q_1q_2)\int_\BR\int_\BR U(u_1)U(u_2)\beta(n_0q_1,h_1,u_1)\overline{\beta(n_0q_2,h_2,u_2)}\nonumber\\
    &\times  \Phi(z_0(n_0q_1,n_0q_2),q_1,q_2;\nu)e\left(F_3(z_0(n_0q_1,n_0q_2))\right)du_2du_1+O\left(t^{-999}\right),
\end{align*}
with 
\begin{align*}
    \Phi(z_0(n_0q_1,n_0q_2),q_1,q_2;\nu):=&\frac{C^3}{(n_0^2q_1q_2)^{3/2}}\Phi_2(z_0(n_0q_1,n_0q_2)q_1,q_2; \nu)(g(n_0q_1,Xx_1)g(n_0q_2,Xx_2))^{2-j}.
\end{align*}
Using (\ref{gqxBound}) and (\ref{gDerProp}), $\Phi$ is a $t^\varepsilon$-inert function supported on $[1/2,5/2]\times\BR^7$ up to an arbitrarily small error.

\subsection{Contribution when \texorpdfstring{$n$}{n} is of intermediate size}

For a technical reason, we deal with the case \begin{align*}
    \frac{C^2t^\varepsilon}{N_j}\ll n\ll\frac{CN^{4/3}}{N_j^{2/3}Ht^{1-\varepsilon}}
\end{align*}
before proceeding further. Note that the restriction of $z_0(n_0q_1,n_0q_2)\sim1$ implies \begin{align*}
    &\left|n_0q_2\left(\frac{n_0q_1t}{2\pi h_1N}+\frac{Nu_1}{Ht^{1-\varepsilon}}\right)^{1/3}-n_0q_1\left(\frac{n_0q_2t}{2\pi h_2N}+\frac{Nu_2}{Ht^{1-\varepsilon}}\right)^{1/3}\right|\sim \frac{nN_j^{2/3}}{N^{1/3}}\\
    \Longrightarrow & \left|n_0q_2\left(\frac{n_0q_1t}{h_1N}\right)^{1/3}-n_0q_1\left(\frac{n_0q_2t}{h_2N}\right)^{1/3}\right|\ll\frac{nN_j^{2/3}}{N^{1/3}}+\frac{CN}{Ht^{1-\varepsilon}}\ll\frac{CN}{Ht^{1-\varepsilon}}\\
    \Longrightarrow& \left|q_2-\sqrt{\frac{h_1}{h_2}}q_1\right|\ll\frac{CN}{n_0Ht^{1-\varepsilon}}.
\end{align*}
Performing similar procedure as the case $0\neq nN_j\ll C^2t^\varepsilon$, the AM-GM inequality \begin{align*}
    |\beta(n_0q_1,h_1,u_1)\beta(n_0q_2,h_2,u_2)|\ll |\beta(n_0q_1,h_1,u_1)|^2+|\beta(n_0q_2,h_2,u_2)|^2
\end{align*} and (\ref{CharSumNon0}) gives us the contribution of $\frac{C^2t^\varepsilon}{N_j}\ll n\ll\frac{CN^{4/3}}{N_j^{2/3}Ht^{1-\varepsilon}}$ to $\mathcal{R}_j(N)$ is bounded by \begin{align*}
    \ll& \frac{t^\varepsilon}{C^2\sqrt{N_j}}\sup_{\|\beta\|_2=1}(|\mathfrak{s}_{3,1}|+|\mathfrak{s}_{3,2}|),
\end{align*}
where for brevity of notation, we have temporarily defined \begin{align*}
    \mathfrak{s}_{3,\nu}:=&\sum_{0\neq n\ll \frac{CN^{4/3}}{N_j^{2/3}Ht^{1-\varepsilon}}}\frac{1}{\sqrt{n}}\sum_{n_0}\mathop{\sum\sum}_{H'\leq h_1, h_2<(1+10^{-10})H'}\mathop{\sum\sum}_{\substack{\frac{C}{n_0}\leq q_1,q_2< (1+10^{-10})\frac{C}{n_0}\\\left|q_2-\sqrt{\frac{h_1}{h_2}}q_1\right|\ll\frac{CN}{n_0Ht^{1-\varepsilon}}}}\nonumber\\
    &\times \sumx_{\alpha_1\bmod{q_1}}\sumx_{\alpha_2\bmod{q_2}}\int_{|u_1|\ll1} |\beta(n_0q_\nu,h_\nu,u_\nu)|^2du_1\delta\left(n\equiv \alpha_1q_2+\alpha_2q_1\bmod{q_1q_2}\right)
\end{align*}
for $\nu=1,2$. Applying the same analysis as in $S_{1,0}(N)$ using (\ref{CongruenceAnalysis}), the contribution of $\frac{C^2t^\varepsilon}{N_j}\ll n\ll\frac{CN^{4/3}}{N_j^{2/3}Ht^{1-\varepsilon}}$ to $\mathcal{R}_j(N)$ is bounded by 
\begin{align}\label{MidnBound}
    \ll \frac{t^\varepsilon}{C^2\sqrt{N_j}}\frac{\sqrt{C}N^{2/3}}{N_j^{1/3}\sqrt{Ht}}\frac{Ct}{N}\left(1+\frac{CN}{Ht}\right)=\frac{t^{1/2+\varepsilon}}{\sqrt{CH}N_j^{5/6}N^{1/3}}\left(1+\frac{CN}{Ht}\right).
\end{align}
Notice we have this contribution only when $N_j\ll\frac{C^{3/2}N^2}{(Ht)^{3/2}}t^\varepsilon$.

\subsection{Contribution when \texorpdfstring{$n$}{n} is big}

Denote \begin{align}\label{NSNBDef}
    N_S:=\left(\frac{C^2}{N_j}+\frac{CN^{4/3}}{N_j^{2/3}Ht}\right)t^\varepsilon \quad \text{ and } \quad N_B:=\frac{CN^{1/3}t^\varepsilon}{N_j^{2/3}}.
\end{align}
Denote by $\mathcal{R}_j^*(N)$ the contribution of $n\gg N_S$ to $\mathcal{R}_j(N)$ in Lemma \ref{LemmaAfterFirstStep}, i.e. \begin{align*}
    \mathcal{R}_j^*(N)=&\sup_{\|\beta\|_2=1}\sum_{n_0}n_0^{-3}\sum_{N_S\ll n\ll N_B}\mathop{\sum\sum}_{\frac{C}{n_0}\leq q_1,q_2< (1+10^{-10})\frac{C}{n_0}}(q_1q_2)^{-5/2}\nonumber\\
    &\times \mathop{\sum\sum}_{\substack{H'\leq h_1, h_2<(1+10^{-10})H'\\(h_1,q_1n_0)=1\\(h_2,q_2n_0)=1}}\mathcal{C}_{\eta_j}(n,h_1,h_2,q_1q_2)\nonumber\\
    &\times \int_\BR\int_\BR U(u_1)U(u_2)\beta(n_0q_1,h_1,u_1)\overline{\beta(n_0q_2,h_2,u_2)} \mathcal{J}_j(q_1,q_2,\vec{v}\,)du_2du_1.
\end{align*}

\subsubsection{Character Sum Analysis}

Recall that (\ref{CharSumNon0}) gives us \begin{align*}
    \mathcal{C}_{\eta_j}(n,h_1,h_2,q_1q_2)=q_1q_2\sumx_{\alpha_1\bmod{q_1}}e\left(\frac{\overline{\alpha_1h_1}}{q_1}\right)\sumx_{\alpha_2\bmod{q_2}}e\left(\frac{\overline{\alpha_2h_2}}{q_2}\right)\delta\left(-\eta_j n\equiv \alpha_1q_2+\alpha_2q_1\bmod{q_1q_2}\right).
\end{align*}
We apply the same analysis as in (\ref{CongruenceAnalysis}): Write $(q_1,q_2)=q_0$, let $q_{1,0},q_1',q_{2,0},q_2'$ be unique positive integers such that $q_{1,0},q_{2,0}|q_0^\infty$, $(q_1'q_2',q_0)=1$ with $q_1=q_0q_{1,0}q_1'$ and $q_2=q_0q_{2,0}q_2'$. Then the congruence condition above breaks down into $q_0|n$ and \begin{align*}
    -\eta_j\frac{n}{q_0}\equiv& \alpha_1q_{2,0}q_2'+\alpha_2q_{1,0}q_1'\bmod{q_0q_{1,0}q_{2,0}}\\
    \alpha_1\equiv& -\eta_j\frac{n}{q_0}\overline{q_{2,0}q_2'}\bmod{q_1'}\\
    \alpha_2\equiv& -\eta_j\frac{n}{q_0}\overline{q_{1,0}q_1'}\bmod{q_2'}.
\end{align*}
Then the Chinese Remainder Theorem  yields \begin{align*}
    \mathcal{C}_{\eta_j}(n,h_1,h_2,q_1q_2)=&q_1q_2e\left(-\eta_j\frac{\overline{nh_1q_{1,0}}q_{2,0}q_2'}{q_1'}-\eta_j\frac{\overline{nh_2q_{2,0}}q_{1,0}q_1'}{q_2'}\right)\\
    &\times \sumx_{\beta_1\bmod{q_0q_{1,0}}}\sumx_{\beta_2\bmod{q_0q_{2,0}}}e\left(\frac{\beta_1\overline{h_1q_1'}}{q_0q_{1,0}}+\frac{\beta_2\overline{h_2q_2'}}{q_0q_{2,0}}\right)\\
    &\times \delta\left(q_0|n, -\eta_j\frac{n}{q_0}\equiv \overline{\beta_1}q_{2,0}q_2'+\overline{\beta_2}q_{1,0}q_1'\bmod{q_0q_{1,0}q_{2,0}}\right).
    \end{align*}
Applying reciprocity together with a change of variable,
\begin{align*}
    \mathcal{C}_{\eta_j}(n,h_1,h_2,q_1q_2)=&q_1q_2e\left(\eta_j\frac{\overline{q_1'}q_{2,0}q_2'}{nh_1q_{1,0}}+\eta_j\frac{\overline{q_2'}q_{1,0}q_1'}{nh_2q_{2,0}}-\eta_j\frac{q_{2,0}q_2'}{nh_1q_{1,0}q_1'}-\eta_j\frac{q_{1,0}q_1'}{nh_2q_{2,0}q_2'}\right)\\
    &\times \sumx_{\beta_1\bmod{q_0q_{1,0}}}\sumx_{\beta_2\bmod{q_0q_{2,0}}}e\left(\frac{\beta_1}{q_0q_{1,0}}+\frac{\beta_2}{q_0q_{2,0}}\right)\\
    &\times \delta\left(q_0|n, -\eta_j\frac{n}{q_0}\equiv \overline{\beta_1h_1q_1'}q_{2,0}q_2'+\overline{\beta_2h_2q_2'}q_{1,0}q_1'\bmod{q_0q_{1,0}q_{2,0}}\right).
\end{align*}

Before we proceed further, we first remove the coprimality condition $(q_1',q_2')=1$ by M\"{o}bius inversion, i.e. for any function $F$, \begin{align*}
    \sum_{(q_1',q_2')=1}F(q_1',q_2')=\sum_r\mu(r)\sum_{q_1',q_2'}F(q_1'r,q_2'r),
\end{align*}
this yields \begin{align*}
    \mathcal{R}_j^*(N)=&\sup_{\|\beta\|_2=1}\sum_{n_0}n_0^{-3}\sum_{N_S\ll n \ll N_B}\mathop{\sum\sum\sum\sum\sum\sum}_{\substack{q_1=q_0q_{1,0}q_1'r, q_2=q_0q_{2,0}q_2'r\\ q_{1,0}q_{2,0}|q_0^\infty, (q_1'q_2'r,q_0)=1 \\ \frac{C}{n_0}\leq q_1,q_2< (1+10^{-10})\frac{C}{n_0}}}\mu(r)(q_1q_2)^{-5/2}\nonumber\\
    &\times \mathop{\sum\sum}_{\substack{H'\leq h_1, h_2<(1+10^{-10})H'\\(h_1,q_1n_0)=1\\(h_2,q_2n_0)=1}}\mathcal{C}_{\eta_j}(n,h_1,h_2,q_1q_2)\nonumber\\
    &\times \int_\BR\int_\BR U(u_1)U(u_2)\beta(n_0q_1,h_1,u_1)\overline{\beta(n_0q_2,h_2,u_2)} \mathcal{J}_j(q_1,q_2,\vec{v}\,)du_2du_1.
\end{align*}

To prepare ourselves for a character sum analysis in later steps, we pull out $(n,q_0^\infty)$, $(h_2,(nh_1)^\infty)$ and the square-full part of $nh_1$. Write $n=n_1n_2n'$, $h_1=h_{1,0}h_1'$ and $h_2=h_{2,0}h_2'$ such that $n_1|q_0^\infty$, $(n_2h_{1,0}h_{2,0},n'h_1'h_2'q_0)=(n'h_1',h_2'q_0)=1$, $h_{2,0}|(n_2h_{1,0})^\infty$, $n'h_1'$ is square-free and $n_2h_{1,0}h_{2,0}$ is square full, i.e. for any prime $p|n_2h_{1,0}h_{2,0}$, we have $p^2|n_2h_{1,0}h_{2,0}$. With this notation, we then arrive at \begin{align*}
    \mathcal{R}_j^*(N)=&\sup_{\|\beta\|_2=1}\sum_{n_0}n_0^{-3}\sum_{q_0}\sum_{q_0|n_1|q_0^\infty}\mathop{\sum\sum\sum}_{\substack{n_2h_{1,0}h_{2,0} \square\text{-full}\\ h_{2,0}|(n_2h_{1,0})^\infty\\ (n_2,q_0)=1}}\mathop{\sum\sum\sum}_{\substack{N_S\ll n=n_1n_2n' \ll N_B\\ H'\leq h_1=h_{1,0}h_1', h_2=h_{2,0}h_2'<(1+10^{-10})H'\\ n'h_1' \square\text{-free}, (n_1'h_1',h_2')=1 \\(n'h_1'h_2',n_1h_{1,0}h_{2,0}q_0)=1}}\nonumber\\
    &\times \mathop{\sum\sum\sum\sum\sum}_{\substack{q_1=q_0q_{1,0}q_1'r, q_2=q_0q_{2,0}q_2'r\\ q_{1,0}q_{2,0}|q_0^\infty, (q_1'q_2'r,q_0)=1 \\ (q_1,h_1)=(q_2,h_2)=1\\ \frac{C}{n_0}\leq q_1,q_2< (1+10^{-10})\frac{C}{n_0}}}\mu(r)(q_1q_2)^{-5/2}\mathcal{C}_{\eta_j}(n,h_1,h_2,q_1q_2)\nonumber\\
    &\times \int_\BR\int_\BR U(u_1)U(u_2)\beta(n_0q_1,h_1,u_1)\overline{\beta(n_0q_2,h_2,u_2)} \mathcal{J}_j(q_1,q_2,\vec{v}\,)du_2du_1.
\end{align*}

\subsubsection{Second Cauchy-Schwarz Inequality and Poisson Summation}

Notice that by the same point counting argument as \eqref{DetailedCountingComputation}, we have
\begin{align*}
    &\sup_{\|\beta\|_2=1}\mathop{\sum\sum\sum}_{n_2h_{1,0}h_{2,0} \square\text{-full}}\frac{1}{\sqrt{n_2h_{2,0}}}\sum_{q_1\ll C}\mathop{\sum\sum\sum\sum}_{\substack{n_0q_0q_{1,0}r|q_1 \\q_{1,0}|q_0^\infty}}\sum_{\substack{q_{2,0}\ll C\\q_{2,0}|q_0^\infty}}\mathop{\sum\sum}_{\substack{0\neq n_1n'\ll N_B\\q_0|n_1|q_0^\infty}}\frac{1}{n'}\\
    &\times \mathop{\sum\sum}_{ h_1',h_2'\ll H'}\frac{1}{h_2'}\int_{|u_1|\ll1}|\beta(q_1,h_{1,0}h_1',u_1)|^2du_1\int_{|u_2|\ll1}du_2\ll t^\varepsilon,
\end{align*}
Let $\Upsilon\in C_c^\infty([1-10^{-9},1+10^{-9}])$ be a fixed function such that $\Upsilon(x)=1$ for $1\leq x<1+10^{-10}$. Now we apply Cauchy-Schwarz inequality to take out $n_0,n_1, n', q_0,q_{1,0},q_{2,0},q_1',r, h_{1,0}, h_1', h_{2,0}, h_2'$-sums and the $u_1,u_2$-integrals. Together with (\ref{Jjasymp}) and the character sum analysis above, we get 
\begin{align}\label{S2After2ndCauchy}
    \mathcal{R}_j^*(N)\ll &\frac{t^{\varepsilon}}{C^2\sqrt{N_j}} \sup_{\|\beta\|_2=1} \sup_{|u_1|\ll1}\Bigg\{\int_{|u_2|\ll1}\sum_{n_0}\sum_{q_0}\sum_{q_0|n_1|q_0^\infty}\sum_r\mathop{\sum\sum\sum}_{\substack{n_2h_{1,0}h_{2,0} \square\text{-full}\\ h_{2,0}|(n_2h_{1,0})^\infty\\ (n_2,q_0)=1}}\mathop{\sum\sum\sum}_{\substack{N_S\ll n=n_1n_2n' \ll N_B\\ H'\leq h_1=h_{1,0}h_1', h_2=h_{2,0}h_2'<(1+10^{-10})H'\\ n'h_1' \square\text{-free}, (n_1'h_1',h_2')=1 \\(n'h_1'h_2',n_1h_{1,0}h_{2,0}q_0)=1}} \nonumber\\
    &\times \mathop{\sum\sum}_{q_{1,0},q_{2,0}|q_0^\infty}\frac{\sqrt{h_{2,0}}h_2'}{n_1\sqrt{n_2}}\sum_{q_1'}\Upsilon\left(\frac{n_0q_0q_{1,0}q_1'r}{C}\right)\mathop{\sum\sum}_{\substack{C\leq q_2=n_0q_0q_{2,0}q_2'r<(1+10^{-10})C\\C\leq q_3=n_0q_0q_{2,0}q_3'r<(1+10^{-10})C}}\nonumber\\
    &\times \mathcal{D}_1(q_1')\overline{\beta(q_2,h_2,u_2)}\beta(q_3,h_2,u_2)\Psi(n_0q_0q_{1,0}q_1'r,q_2,q_3)\nonumber\\
    &\times e\left(F_3(z_0(n_0q_0q_{1,0}q_1'r,q_2))-F_3(z_0(n_0q_0q_{1,0}q_1'r,q_3))\right)du_2\Bigg\}^{1/2}+t^{-999},
\end{align}
where \begin{align}\label{D1Def}
    \mathcal{D}_1(q_1')=&e\left(\eta_j\frac{\overline{q_1'}q_{2,0}(q_2'-q_3')}{nh_1q_{1,0}}+\eta_j\frac{(\overline{q_2'}-\overline{q_3'})q_{1,0}q_1'}{nh_2q_{2,0}}\right)\nonumber\\
    &\times \sumx_{\beta_1\bmod{q_0q_{1,0}}}\sumx_{\beta_1'\bmod{q_0q_{1,0}}}\sumx_{\beta_2\bmod{q_0q_{2,0}}}\sumx_{\beta_2'\bmod{q_0q_{2,0}}}e\left(\frac{\beta_1-\beta_1'}{q_0q_{1,0}}+\frac{\beta_2-\beta_2'}{q_0q_{2,0}}\right)\nonumber\\
    &\times \delta\left(-\eta_j\frac{n}{q_0}\equiv \overline{\beta_1h_1q_1'}q_{2,0}q_2'+\overline{\beta_2h_2q_2'}q_{1,0}q_1'\bmod{q_0q_{1,0}q_{2,0}},\right.\nonumber\\
    &\left. -\eta_j\frac{n}{q_0}\equiv \overline{\beta_1'h_1q_1'}q_{2,0}q_3'+\overline{\beta_2'h_2q_3'}q_{1,0}q_1'\bmod{q_0q_{1,0}q_{2,0}}\right),
\end{align}
\begin{align}\label{PsiDef}
    \Psi(n_0q_0q_{1,0}q_1'r,q_2,q_3)=&\Phi(z_0(n_0q_0q_{1,0}q_1'r,q_2),n_0q_0q_{1,0}q_1'r,q_2;\nu)\overline{\Phi(z_0(n_0q_0q_{1,0}q_1'r,q_3),n_0q_0q_{1,0}q_1'r,q_3;\nu)}\nonumber\\
    &\times \frac{C^3}{(q_2q_3)^{3/2}}e\left(-\eta_j\frac{q_{2,0}(q_2'-q_3')}{nh_1q_{1,0}q_1'}-\eta_j\frac{q_{1,0}q_1'}{nh_2q_{2,0}q_2'}+\eta_j\frac{q_{1,0}q_1'}{nh_2q_{2,0}q_3'}\right),
\end{align}
with \begin{align*}
    z_0(x,q)=\frac{\sqrt{N}}{n^{3/2}N_j}\left(-\eta_j' q\left(\frac{xt}{2\pi h_1N}+\frac{Nu_1}{Ht^{1-\varepsilon}}\right)^{1/3}+\eta_j'x\left(\frac{qt}{2\pi h_2N}+\frac{Nu_2}{Ht^{1-\varepsilon}}\right)^{1/3}\right)^{3/2},
\end{align*}
and 
\begin{align*}
    F_3(z_0(x,q))=&\frac{2nN_j}{xq}z_0(x,q)
\end{align*}
as before. Here coprimalities between variables are assumed such that $D_1(q_1')$ is well-defined. In particular, the condition $(q_1',nh_1q_0)=(q_2'q_3',nh_2q_0)=1$ is assumed.

Consider the $q_1'$-sum. Notice that $\mathcal{D}_1(q_1')$ is determined by $q_1'$ mod $nh_1h_2q_0q_{1,0}q_{2,0}$, hence applying Poisson summation gives us \begin{align*}
    &\sum_{q_1'}\Upsilon\left(\frac{n_0q_0q_{1,0}q_1'r}{C}\right)\mathcal{D}_1(q_1')\varphi\left(\frac{n_0q_0q_{1,0}q_1'r}{C}\right)\Psi(n_0q_0q_{1,0}q_1'r,q_2,q_3)\nonumber\\
    &\times e\left(F_3(z_0(n_0q_0q_{1,0}q_1'r,q_2))-F_3(z_0(n_0q_0q_{1,0}q_1'r,q_3))\right)\nonumber\\
    =&\frac{C}{n_0nh_1h_2q_0^2q_{1,0}^2q_{2,0}r}\sum_{q_1'}\sum_{\gamma\bmod{nh_1h_2q_0q_{1,0}q_{2,0}}}\mathcal{D}_1(\gamma)e\left(\frac{q_1'\gamma}{nh_1h_2q_0q_{1,0}q_{2,0}}\right)I_2(q_1',q_2,q_3),
\end{align*}
with \begin{align}\label{I2Def}
    I_2(q_1',q_2,q_3)=\int_0^\infty\Upsilon(y)\Psi(Cy,q_2,q_3)e\left(F_3(z_0(Cy,q_2))-F_3(z_0(Cy,q_3))-\frac{q_1'Cy}{n_0nh_1h_2q_0^2q_{1,0}^2q_{2,0}r}\right)dy.
\end{align}
Differentiating gives us
\begin{multline}\label{z_0Diff}
    \frac{d}{dy}z_0(Cy,q)\\
    = z_0(Cy,q)^{1/3}\bigg[ \frac{-\eta_j'qCt}{4\pi nh_1N^\frac23 N_j^\frac23}\left(\frac{Cyt}{2\pi h_1N}+\frac{Nu_1}{Ht^{1-\varepsilon}}\right)^{-2/3}
    +\frac{3\eta_j'N^{1/3}C}{2nN_j^{2/3}}\left(\frac{qt}{2\pi h_2N}+\frac{Nu_2}{Ht^{1-\varepsilon}}\right)^{1/3}\bigg],
\end{multline}

In order to bound the higher derivatives, we temporarily introduce the following notation.

\begin{align*}
G & := \frac{N}{n^3N_j^2}q^3\frac{Ct}{2\pi h_1N} \sim \frac{N}{n^3N_j^2}q^3,\\
K & := \frac{N}{n^3N_j^2}q^3 \frac{Nu_1}{Ht^{1-\epsilon}} \ll Gt^{-\epsilon/2},\\
J & := \frac{CN^{1/3}}{nN_j^{2/3}}\bigg( \frac{qt}{2\pi h_2N} + \frac{Nu_2}{Ht^{1-\epsilon
}} \bigg)^{1/3} \sim G^{1/3}, 
\end{align*}
where we have used the conditions $H\gg\frac{N}{t^{1-\varepsilon}}$ and $q\sim C$. Then we may write
$$ z_0(Cy, q) = J^{3/2}\bigg(-\eta_j'\bigg(\frac{G}{J^3}y + \frac{K}{J^3}\bigg)^{\frac13} + \eta_j'y\bigg)^{\frac32}, $$
so that the coefficients of $y$ are of size $O(1)$ and one can see that $\frac{d^k}{dy^k}z^{2/3}\ll J$ for any $k\geq0$. By the Fa\`{a} di Bruno's formula, we have
\begin{align}\label{z_0DiffBound}
    \left|\frac{d^k}{dy^k}z_0\left(Cy,q\right)\right|, \left|\frac{d^k}{dy^k}\Psi(Cy,q_2,q_3)\right|\ll_k  \left(\frac{CN^{1/3}}{nN_j^{2/3}}\right)^k \text{ and } \left|\frac{d^k}{dy^k}F_3\left(z_0\left(Cy,q\right)\right)\right|\ll_k  \left(\frac{(N_jN)^{1/3}}{C}\right)^{k}
\end{align}
for any $k\geq0$ and $q\sim C$. Hence repeated integration by parts gives us arbitrary savings unless
\begin{align*}
    |q_1'|\ll& \frac{n_0nh_1h_2q_0^2q_{1,0}^2q_{2,0}r}{C}\left(1+\frac{CN^{1/3}}{nN_j^{2/3}}+\frac{(N_jN)^{1/3}}{C}\right)t^\varepsilon\ll Q_1Q_B,
\end{align*}
where 
\begin{align}\label{Q1QBDef}
    Q_1:=n_1n_2h_{1,0}h_{2,0}q_0q_{1,0}^2 \quad \text{ and }\quad Q_B:=\frac{n_0n'q_0q_{2,0}rN_j^{1/3}t^{2+\varepsilon}}{h_{1,0}h_{2,0}N^{5/3}}.
\end{align}
Here we used $\frac{C^2t^\varepsilon}{N_j}=N_S\ll n$ and $h_1, h_2\sim H'\sim\frac{Ct}{N}$. Inserting the above back into (\ref{S2After2ndCauchy}) yields \begin{align}\label{S2After2ndPoisson}
    \mathcal{R}_j^*(N)\ll &\frac{t^{\varepsilon}}{C^{3/2}\sqrt{N_j}} \sup_{\|\beta\|_2=1} \sup_{|u_1|\ll1}\Bigg\{\int_{|u_2|\ll1}\sum_{n_0}\sum_{q_0}\sum_{q_0|n_1|q_0^\infty}\sum_r\mathop{\sum\sum\sum}_{\substack{n_2h_{1,0}h_{2,0} \square\text{-full}\\ h_{2,0}|(n_2h_{1,0})^\infty\\ (n_2,q_0)=1}}\mathop{\sum\sum\sum}_{\substack{N_S\ll n=n_1n_2n' \ll N_B\\ H'\leq h_1=h_{1,0}h_1', h_2=h_{2,0}h_2'<(1+10^{-10})H'\\ n'h_1' \square\text{-free}, (n_1'h_1',h_2')=1 \\(n'h_1'h_2',n_1h_{1,0}h_{2,0}q_0)=1}} \nonumber\\
    &\times \mathop{\sum\sum}_{q_{1,0},q_{2,0}|q_0^\infty}\mathop{\sum\sum}_{\substack{C\leq q_2=n_0q_0q_{2,0}q_2'r<(1+10^{-10})C\\C\leq q_3=n_0q_0q_{2,0}q_3'r<(1+10^{-10})C}}\frac{\overline{\beta(q_2,h_2,u_2)}\beta(q_3,h_2,u_2)}{n_0n_1^2n_2^{3/2}n'h_1\sqrt{h_{2,0}}q_0^2q_{1,0}^2q_{2,0}r}\nonumber\\
    &\times \sum_{|q_1'|\ll Q_1Q_B}\sum_{\gamma\bmod{nh_1h_2q_0q_{1,0}q_{2,0}}}\mathcal{D}_1(\gamma)e\left(\frac{q_1'\gamma}{nh_1h_2q_0q_{1,0}q_{2,0}}\right)I_2(q_1',q_2,q_3)du_2\Bigg\}^{1/2}+t^{-999}.
\end{align}

We first deal with the character sum \begin{align*}
    \sum_{\gamma\bmod{nh_1h_2q_0q_{1,0}q_{2,0}}}\mathcal{D}_1(\gamma)e\left(\frac{q_1'\gamma}{nh_1h_2q_0q_{1,0}q_{2,0}}\right).
\end{align*}
Recalling the definition of $\mathcal{D}_1$ in (\ref{D1Def}), the Chinese Remainder Theorem yields \begin{align*}
    &\sum_{\gamma\bmod{nh_1h_2q_0q_{1,0}q_{2,0}}}\mathcal{D}_1(\gamma)e\left(\frac{q_1'\gamma}{nh_1h_2q_0q_{1,0}q_{2,0}}\right)\nonumber\\
    =&h_2'S(q_1'\overline{n_1n_2h_{1,0}h_2q_0q_{1,0}q_{2,0}}+\eta_j(\overline{q_2'}-\overline{q_3'})q_{1,0}\overline{n_1n_2h_2q_{2,0}},\eta_j (q_2'-q_3')q_{2,0}\overline{n_1n_2h_{1,0}q_{1,0}};n'h_1')\nonumber\\
    &\times \delta\left(q_1'\equiv -\eta_j q_0q_{1,0}^2h_1(\overline{q_2'}-\overline{q_3'})\bmod{h_2'}\right)\mathcal{D}_0(q_2',q_3'),
\end{align*}
where with $M_0=n_1n_2h_{1,0}h_{2,0}q_0q_{1,0}q_{2,0}$,
\begin{align}\label{D0Def}
    \mathcal{D}_0(q_2',q_3')=&\sum_{\gamma\bmod{M_0}}e\left(\frac{q_1'\gamma\overline{n'h_1'h_2'}}{M_0}+\eta_j\frac{q_{2,0}(q_2'-q_3')\overline{\gamma n'h_1'}}{n_1n_2h_{1,0}q_{1,0}}+\eta_j\frac{(\overline{q_2'}-\overline{q_3'})q_{1,0}\gamma\overline{n'h_2'}}{n_1n_2h_{2,0}q_{2,0}}\right)\nonumber\\
    &\times \sumx_{\beta_1\bmod{q_0q_{1,0}}}\sumx_{\beta_1'\bmod{q_0q_{1,0}}}\sumx_{\beta_2\bmod{q_0q_{2,0}}}\sumx_{\beta_2'\bmod{q_0q_{2,0}}}e\left(\frac{\beta_1-\beta_1'}{q_0q_{1,0}}+\frac{\beta_2-\beta_2'}{q_0q_{2,0}}\right)\nonumber\\
    &\times \delta\left(-\eta_j\frac{n}{q_0}\equiv \overline{\beta_1h_1\gamma}q_{2,0}q_2'+\overline{\beta_2h_2q_2'}q_{1,0}\gamma \bmod{q_0q_{1,0}q_{2,0}},\right.\nonumber\\
    &\left. -\eta_j\frac{n}{q_0}\equiv \overline{\beta_1'h_1\gamma}q_{2,0}q_3'+\overline{\beta_2'h_2q_3'}q_{1,0}\gamma \bmod{q_0q_{1,0}q_{2,0}}\right)\ll n_1n_2h_{1,0}h_{2,0}q_0^3q_{1,0}q_{2,0}.
\end{align}
Hence the Weil bound for Kloosterman sums gives us 
\begin{align}\label{D1Bound}
    &\sum_{\gamma\bmod{nh_1h_2q_0q_{1,0}q_{2,0}}}\mathcal{D}_1(\gamma)e\left(\frac{q_1'\gamma}{nh_1h_2q_0q_{1,0}q_{2,0}}\right)\nonumber\\
    \ll & n_1n_2h_{1,0}h_2q_0^3q_{1,0}q_{2,0}t^\varepsilon\sqrt{n'h_1'(q_1',q_2'-q_3',n'h_1')}\delta\left(q_1'\equiv -\eta_j q_0q_{1,0}^2h_1(\overline{q_2'}-\overline{q_3'})\bmod{h_2'}\right).
\end{align}

Next we analyse the $y$-integral with the phase function being \begin{align*}
    &F_3(z_0(Cy,q_2))-F_3(z_0(Cy,q_3))-\frac{q_1'Cy}{n_0nh_1h_2q_0^2q_{1,0}^2q_{2,0}r}\\
    =&\frac{2nN_j}{Cy}\left(\frac{z_0(Cy,q_2)}{q_2}-\frac{z_0(Cy,q_3)}{q_3}\right)-\frac{q_1'Cy}{n_0nh_1h_2q_0^2q_{1,0}^2q_{2,0}r}.
\end{align*}
Denote \begin{align}\label{GDef}
    G\left(y,z\right)=&G_{h_1,h_2}(y,z)=F_3(z_0(Cy,Cz))\nonumber\\
    =&\sqrt{\frac{2t}{\pi n}}\left(-\eta_j' \left(\frac{z}{h_1y}+\frac{2\pi N^2u_1z}{CHt^{2-\varepsilon}y^2}\right)^{1/3}+\eta_j'\left(\frac{y}{h_2z}+\frac{2\pi N^2u_2y}{CHt^{2-\varepsilon}z^2}\right)^{1/3}\right)^{3/2}.
\end{align}
We start with analysing the case when $|q_1'|\ll Q_1Q_S$ (including $q_1'=0$), where $Q_1$ is defined in \eqref{Q1QBDef} and 
\begin{align}\label{QsDef}
    Q_S:=\frac{n_0q_0q_{2,0}rC^3t^{2+\varepsilon}}{n_1n_2nh_{1,0}h_{2,0}(N_jN)^{4/3}}.
\end{align}

\begin{Remark}
    This threshold of $Q_1Q_S$ is chosen such that when $|q_1'|\gg Q_1Q_S$, the phase $\frac{q_1'Cy}{n_0nh_1h_2q_0^2q_{1,0}^2q_{2,0}r}$ has magnitude $\gg \left(\frac{CN^{1/3}}{nN_j^{2/3}}\right)^2t^\varepsilon$. Since the weight function $\Psi(Cy,q_2,q_3)$ consists of an inert function with $z_0$ as an argument, we can apply Lemma \ref{statlemma} (2) and (3) in the range $|q_1'|\gg Q_1Q_S$. This condition is ultimately used in Lemma \ref{InuBound}.
\end{Remark}

Applying stationary phase analysis with the calculations presented in appendix \ref{sect.yIntegralAnalysis} implies that the $y$-integral is bounded by \begin{align*}
    \ll \frac{nN_j^{2/3}t^\varepsilon}{CN^{1/3}}\delta\left(|q_2-q_3|\ll \frac{C^3t^\varepsilon}{nN_j}\right)+t^{-999}.
\end{align*}
Together with (\ref{D1Bound}), this gives us the contribution of $|q_1'|\ll Q_1Q_S$ to $S_{2}(N)^*$ in (\ref{S2After2ndPoisson}) is bounded by 

\begin{align*}
    \ll &\frac{t^{\varepsilon}}{C^{3/2}\sqrt{N_j}} \sup_{\|\beta\|_2=1} \sup_{|u_1|\ll1}\Bigg\{\int_{|u_2|\ll1}\sum_{n_0}\sum_{q_0}\sum_{q_0|n_1|q_0^\infty}\sum_r\mathop{\sum\sum\sum}_{\substack{n_2h_{1,0}h_{2,0} \square\text{-full}\\ h_{2,0}|(n_2h_{1,0})^\infty\\ (n_2,q_0)=1}}\mathop{\sum\sum\sum}_{\substack{N_S\ll n=n_1n_2n' \ll N_B\\ H'\leq h_1=h_{1,0}h_1', h_2=h_{2,0}h_2'<(1+10^{-10})H'\\ n'h_1' \square\text{-free}, (n_1'h_1',h_2')=1 \\(n'h_1'h_2',n_1h_{1,0}h_{2,0}q_0)=1}} \nonumber\\
    &\times \mathop{\sum\sum}_{q_{1,0},q_{2,0}|q_0^\infty}\mathop{\sum\sum}_{\substack{C\leq q_2=n_0q_0q_{2,0}q_2'r<(1+10^{-10})C\\C\leq q_3=n_0q_0q_{2,0}q_3'r<(1+10^{-10})C\\|q_2-q_3|\ll\frac{C^3t^\varepsilon}{nN_j}}}\sum_{\substack{|q_1'|\ll Q_1Q_S\\q_1'\equiv -\eta_j q_0h_1(\overline{q_2'}-\overline{q_3'})\bmod{h_2'}}}|\beta(q_2,h_2,u_2)\beta(q_3,h_2,u_2)|du_2\nonumber\\
    &\times \frac{N_j^{2/3}}{CN^{1/3}}\frac{\sqrt{n_2n'h_{2,0}}h_2'q_0}{n_0\sqrt{h_1'}q_{1,0}r}\sqrt{(q_1',q_2'-q_3',n'h_1')}\Bigg\}^{1/2}.
\end{align*}
Recall the definition of $N_S$ and $N_B$ in \eqref{NSNBDef}. Applying the AM-GM inequality \begin{align*}
    |\beta(q_2,h_2,u_2)\beta(q_3,h_2,u_2)|\ll |\beta(q_2,h_2,u_2)|^2+|\beta(q_3,h_2,u_2)|^2,
\end{align*}
the same point counting argument as \eqref{DetailedCountingComputation} implies that the contribution of $|q_1'|\ll Q_1Q_S$ to $S_{2}(N)^*$ in (\ref{S2After2ndPoisson}) is bounded by
\begin{align}\label{B1Def}
    \ll &\frac{t^{\varepsilon}}{C^{3/2}\sqrt{N_j}} \Bigg\{\underbrace{\frac{H'^2N_B^2N_j^{2/3}}{CN^{1/3}}}_{q_1'=0, q_2=q_3}+ \underbrace{\frac{C^2\sqrt{H'N_B}}{(N_jN)^{1/3}}}_{q_1'=0, q_2\neq q_3}+\underbrace{\frac{C^2\sqrt{H'N_B}t^2}{N_j^{2/3}N^{5/3}}}_{q_1'\neq0, q_2=q_3}+\underbrace{\frac{C^5\sqrt{H'}t^2}{\sqrt{N_S}(N_jN)^{5/3}}}_{q_1'\neq0, q_2\neq q_3}\Bigg\}^{1/2}\nonumber\\
    \ll & t^\varepsilon\left(\frac{t}{(N_jN)^{5/6}}+\frac{t^{1/4}}{N_j^{5/6}N^{1/3}}+ \frac{t^{5/4}}{N_jN}+ \frac{C^{3/4}t^{5/4}}{(N_jN)^{13/12}}\right) =: \mathcal{B}_1.
\end{align}
Inserting the above analysis back into (\ref{S2After2ndPoisson}) yields
\begin{align*}
    \mathcal{R}_j^*(N)\ll &\frac{t^{\varepsilon}}{C^{3/2}\sqrt{N_j}} \sup_{\|\beta\|_2=1} \sup_{|u_1|\ll1}\Bigg\{\int_{|u_2|\ll1}\sum_{n_0}\sum_{q_0}\sum_{q_0|n_1|q_0^\infty}\sum_r\mathop{\sum\sum\sum}_{\substack{n_2h_{1,0}h_{2,0} \square\text{-full}\\ h_{2,0}|(n_2h_{1,0})^\infty\\ (n_2,q_0)=1}}\mathop{\sum\sum\sum}_{\substack{N_S\ll n=n_1n_2n' \ll N_B\\ H'\leq h_1=h_{1,0}h_1', h_2=h_{2,0}h_2'<(1+10^{-10})H'\\ n'h_1' \square\text{-free}, (n_1'h_1',h_2')=1 \\(n'h_1'h_2',n_1h_{1,0}h_{2,0}q_0)=1}} \nonumber\\
    &\times \mathop{\sum\sum}_{q_{1,0},q_{2,0}|q_0^\infty}\mathop{\sum\sum}_{\substack{C\leq q_2=n_0q_0q_{2,0}q_2'r<(1+10^{-10})C\\C\leq q_3=n_0q_0q_{2,0}q_3'r<(1+10^{-10})C}}\frac{h_2'\overline{\beta(q_2,h_2,u_2)}\beta(q_3,h_2,u_2)}{n_0n_1^2n_2^{3/2}n'h_1\sqrt{h_{2,0}}q_0^2q_{1,0}^2q_{2,0}r}\nonumber\\
    &\times \sum_{Q_1Q_S\ll |q_1'|\ll Q_1Q_B}\mathcal{D}_2(q_2',q_3')I_2(q_1',q_2,q_3)du_2\Bigg\}^{1/2}+\mathcal{B}_1,
\end{align*}
with 
\begin{align}\label{D2Def}
    \mathcal{D}_2(q_2',q_3')=&S(q_1'\overline{n_1n_2h_{1,0}h_2q_0q_{1,0}q_{2,0}}+\eta_j(\overline{q_2'}-\overline{q_3'})q_{1,0}\overline{n_1n_2h_2q_{2,0}},\eta_j (q_2'-q_3')q_{2,0}\overline{n_1n_2h_{1,0}q_{1,0}};n'h_1')\nonumber\\
    &\times \delta\left(q_1'\equiv -\eta_j q_0q_{1,0}^2h_1(\overline{q_2'}-\overline{q_3'})\bmod{h_2'}\right)\mathcal{D}_0(q_2',q_3')
\end{align}
and $\mathcal{D}_0(q_2',q_3')$ as in (\ref{D0Def}).

\subsubsection{Third Cauchy Schwarz Inequality and Poisson Summation}

Now we consider the remaining case $|q_1'|\gg Q_1Q_S$ by iterating the above process.

Notice that by the same counting argument as in \eqref{DetailedCountingComputation}, 
\begin{align*}
    &\sup_{\|\beta\|_2=1}\sum_{q_2\ll C}\mathop{\sum\sum\sum\sum}_{\substack{n_0q_0q_{2,0}q_2'r=q_2\\q_{2,0}|q_0^\infty}}\sum_{\substack{q_{1,0}\ll C\\q_{1,0}|q_0^\infty}}\sum_{q_0|n_1|q_0^\infty}\mathop{\sum\sum\sum}_{\substack{n_2h_{1,0}h_{2,0} \square\text{-full}\\ h_{2,0}|(n_2h_{1,0})^\infty}}\sum_{n'\ll N_B}\mathop{\sum\sum}_{h_1',h_2'\ll H'}\sum_{0<|q_1'|\ll Q_1Q_B}\\
    &\times \frac{1}{n_1n_2^{3/2}n'h_{1,0}^{3/2}h_{2,0}h_1'q_0q_{1,0}q_{2,0}|q_1'|}\int_{|u_2|\ll1}|\beta(q_2,h_{2,0}h_2',u_2)|^2du_2\sum_{\gamma\bmod{n_1n_2h_{1,0}h_{2,0}q_0q_{1,0}q_{2,0}}}\\
    &\times \left|\sumstar_{\beta_1\bmod{q_0q_{1,0}}}\sumstar_{\beta_2\bmod{q_0q_{2,0}}}\delta\left(-\eta_j\frac{n_1n_2n'}{q_0}\equiv \overline{\beta_1h_{1,0}h_1'\gamma}q_{2,0}q_2'+\overline{\beta_2h_{2,0}h_2'q_2'}q_{1,0}\gamma \bmod{q_0q_{1,0}q_{2,0}}\right)\right|^2\ll t^\varepsilon.
\end{align*}
Recall the definition of $\mathcal{D}_2$ in (\ref{D2Def}), applying Cauchy-Schwarz inequality to take out $n_0,n_1,n_2,n',q_0,q_{1,0},q_{2,0},q_1',q_2', h_{1,0},h_1', h_{2,0}, h_2', \gamma$-sums, we get for the same fixed $\Upsilon\in C_c^\infty([1-10^{-9},1+10^{-9}])$ as before,
\begin{align}\label{S2After3rdCS}
    \mathcal{R}_j^*(N)\ll &\frac{t^{\varepsilon}}{C^{3/2}\sqrt{N_j}} \sup_{\|\beta\|_2=1} \sup_{|u_1|\ll1}\Bigg\{\int_{|u_2|\ll1}\sum_{n_0}\sum_{q_0}\mathop{\sum\sum}_{q_{1,0},q_{2,0}|q_0^\infty}\sum_{q_0|n_1|q_0^\infty}\sum_r \nonumber\\
    &\times \mathop{\sum\sum\sum}_{\substack{n_2h_{1,0}h_{2,0} \square\text{-full}\\ h_{2,0}|(n_2h_{1,0})^\infty\\ (n_2,q_0)=1}}\mathop{\sum\sum\sum}_{\substack{N_S\ll n=n_1n_2n' \ll N_B\\ H'\leq h_1=h_{1,0}h_1', h_2=h_{2,0}h_2'<(1+10^{-10})H'\\ n'h_1' \square\text{-free}, (n_1'h_1',h_2')=1 \\(n'h_1'h_2',n_1h_{1,0}h_{2,0}q_0)=1}}\mathop{\sum\sum}_{\substack{C\leq q_3=n_0q_0q_{2,0}q_3'r<(1+10^{-10})C\\C\leq q_4=n_0q_0q_{2,0}q_4'r<(1+10^{-10})C}}\nonumber\\
    &\times \sum_{Q_1Q_S\ll |q_1'|\ll Q_1Q_B}\sum_{q_2=n_0q_0q_{2,0}q_2'r}\Upsilon\left(\frac{q_2}{C}\right)\frac{h_2'^2|q_1'|\beta(q_3,h_2,u_2)\overline{\beta(q_4,h_2,u_2)}}{n_0^2n_1^3n_2^{3/2}n'\sqrt{h_{1,0}}h_1'q_0^3q_{1,0}^3q_{2,0}r^2}\nonumber\\
    &\times \mathcal{D}_2'(q_2',q_3')\mathcal{D}_2'(q_2',q_4')\overline{\mathcal{D}_0(q_3',q_4')}I_2(q_1',q_2,q_3)\overline{I_2(q_1',q_2,q_4)}du_2\Bigg\}^{1/4}+\mathcal{B}_1,
\end{align}
where \begin{align*}
    \mathcal{D}_2'(q_2',q')=&S(q_1'\overline{n_1n_2h_{1,0}h_2q_0q_{1,0}q_{2,0}}+\eta_j(\overline{q_2'}-\overline{q'})q_{1,0}\overline{n_1n_2h_2q_{2,0}},\eta_j (q_2'-q')q_{2,0}\overline{n_1n_2h_{1,0}q_{1,0}};n'h_1')\nonumber\\
    &\times \delta\left(q_1'\equiv -\eta_j q_0q_{1,0}^2h_1(\overline{q_2'}-\overline{q'})\bmod{h_2'}\right),
\end{align*}
with $\mathcal{D}_0(q_3',q_4')$ as in (\ref{D0Def}). Notice that $\mathcal{D}_2'$ is defined mod $n'h_1'h_2'$. Now we apply Poisson summation on the $q_2'$-sum to get \begin{align*}
    &\sum_{q_2'}\Upsilon\left(\frac{n_0q_0q_{2,0}q_2'r}{C}\right)\mathcal{D}_2'(q_2',q_3')\overline{\mathcal{D}_2'(q_2',q_4')}I_2(q_1',q_2,q_3)\overline{I_2(q_1',q_2,q_4)}\\
    =&\frac{C}{n_0n'h_1'h_2'q_0q_{2,0}r}\sum_{q_2'}\sum_{\gamma \bmod{n'h_1'h_2'}}\mathcal{D}_2'(\gamma,q_3')\overline{\mathcal{D}_2'(\gamma,q_4')}e\left(\frac{\gamma q_2'}{n'h_1'h_2'}\right)\\
    &\times \int_0^\infty \Upsilon(y_3)I_2(q_1',Cy_3,q_3)\overline{I_2(q_1',Cy_3,q_4)}e\left(-\frac{q_2'Cy_3}{n_0n'h_1'h_2'q_0q_{2,0}r}\right)dy_3.
\end{align*}
Applying a similar computation as the computation on $\mathcal{D}_1$ in the previous section with the Chinese remainder theorem, the character sum simplifies to 
\begin{align*}
    &\sum_{\gamma \bmod{n'h_1'h_2'}}\mathcal{D}_2'(\gamma,q_3')\overline{\mathcal{D}_2'(\gamma,q_4')}e\left(\frac{\gamma q_2'}{n'h_1'h_2'}\right)\nonumber\\
    =&\delta\left(q_3'\equiv q_4'\bmod{h_2'}\right)e\left(\frac{h_{1,0}q_0q_{1,0}^2q_2'\overline{n'(h_1q_0q_{1,0}^2\overline{q_3'}-\eta_jq_1')}}{h_2'}\right)\mathcal{D}_3(q_3',q_4'),
\end{align*}
where 
\begin{align}\label{D3Def}
    \mathcal{D}_3(q_3',q_4')=&\sumstar_{\gamma \bmod{n'h_1'}}e\left(\frac{\gamma q_2'\overline{h_2'}}{n'h_1'}\right)\nonumber\\
    &\times S(q_1'\overline{n_1n_2h_{1,0}h_2q_0q_{1,0}q_{2,0}}+\eta_j(\overline{\gamma}-\overline{q_3'})q_{1,0}\overline{n_1n_2h_2q_{2,0}},\eta_j (\gamma-q_3')q_{2,0}\overline{n_1n_2h_{1,0}q_{1,0}};n'h_1')\nonumber\\
    &\times S(q_1'\overline{n_1n_2h_{1,0}h_2q_0q_{1,0}q_{2,0}}+\eta_j(\overline{\gamma}-\overline{q_4'})q_{1,0}\overline{n_1n_2h_2q_{2,0}},\eta_j (\gamma-q_4')q_{2,0}\overline{n_1n_2h_{1,0}q_{1,0}};n'h_1').
\end{align}
Applying Lemma \ref{CharSumLemmaFirstCase}, we have \begin{align}\label{D3Bound}
    \mathcal{D}_3(q_3',q_4')\ll &(n'h_1')^2t^\varepsilon\delta\left(*_0\right)+(n'h_1')^{3/2}t^\varepsilon,
\end{align}
where $*_0$ is the condition 
\begin{align*}
    *_0=\Bigg\{& q_3'\equiv q_4'\bmod n'h_1'\nonumber\\
    &\text{or } q_1'\overline{h_{1,0}q_0q_{1,0}}\pm q_{1,0}(\overline{q_3'}-\overline{q_4'})\equiv 0 \bmod n'h_1'\nonumber\\
    &\text{or } q_1'\overline{h_{1,0}q_0q_{1,0}}-\eta_jq_{1,0}\overline{q_3'}\equiv 0\bmod n'h_1'\nonumber\\
    &\text{or } q_1'\overline{h_{1,0}q_0q_{1,0}}-\eta_jq_{1,0}\overline{q_4'}\equiv 0\bmod n'h_1'\Bigg\}.
\end{align*}

Using (\ref{z_0DiffBound}) as before, repeated integration by parts now gives us arbitrary savings unless \begin{align*}
    |q_2'|\ll& \frac{n_0n'h_1'h_2'q_0q_{2,0}r}{C}\left(1+\frac{CN^{1/3}}{nN_j^{2/3}}+\frac{(N_jN)^{1/3}}{C}\right)t^\varepsilon\ll Q_B,
\end{align*}
with $Q_B$ defined in (\ref{Q1QBDef}). Inserting this back into (\ref{S2After3rdCS}), we get \begin{align}\label{S2After3rdPoisson}
    \mathcal{R}_j^*(N)\ll &\frac{t^{\varepsilon}}{C^{3/2}\sqrt{N_j}} \sup_{\|\beta\|_2=1} \sup_{|u_1|\ll1}\Bigg\{C\int_{|u_2|\ll1}\sum_{n_0}\sum_{q_0}\mathop{\sum\sum}_{q_{1,0},q_{2,0}|q_0^\infty}\sum_{q_0|n_1|q_0^\infty}\sum_r \nonumber\\
    &\times \mathop{\sum\sum\sum}_{\substack{n_2h_{1,0}h_{2,0} \square\text{-full}\\ h_{2,0}|(n_2h_{1,0})^\infty\\ (n_2,q_0)=1}}\mathop{\sum\sum\sum}_{\substack{N_S\ll n=n_1n_2n' \ll N_B\\ H'\leq h_1=h_{1,0}h_1', h_2=h_{2,0}h_2'<(1+10^{-10})H'\\ n'h_1' \square\text{-free}, (n_1'h_1',h_2')=1 \\(n'h_1'h_2',n_1h_{1,0}h_{2,0}q_0)=1}}\mathop{\sum\sum}_{\substack{C\leq q_3=n_0q_0q_{2,0}q_3'r<(1+10^{-10})C\\C\leq q_4=n_0q_0q_{2,0}q_4'r<(1+10^{-10})C\\ q_3'\equiv q_4'\bmod{h_2'}}}\nonumber\\
    &\times \sum_{Q_1Q_S\ll |q_1'|\ll Q_1Q_B}\sum_{|q_2'|\ll Q_B}\frac{h_2'|q_1'|\beta(q_3,h_2,u_2)\overline{\beta(q_4,h_2,u_2)}}{n_0^3n_1^3n_2^{3/2}n'^2\sqrt{h_{1,0}}h_1'^2q_0^4q_{1,0}^3q_{2,0}^2r^3}\nonumber\\
    &\times \mathcal{D}_3(q_3',q_4')\mathcal{D}_{3,0}(q_3',q_4')I_3(q_1',q_2',q_3,q_4)du_2\Bigg\}^{1/4}+\mathcal{B}_1,
\end{align}
where \begin{align}\label{D30Def}
    \mathcal{D}_{3,0}(q_3',q_4')=e\left(\frac{h_{1,0}q_0q_{1,0}^2q_2'\overline{n'(h_1q_0q_{1,0}^2\overline{q_3'}-\eta_jq_1')}}{h_2'}\right)\overline{\mathcal{D}_0(q_3',q_4')}\ll n_1n_2h_{1,0}h_{2,0}q_0^3q_{1,0}q_{2,0}
\end{align}
by (\ref{D0Def}), and \begin{align}\label{I3Def}
    I_3(q_1',q_2',q_3,q_4)=&\int_0^\infty \Upsilon(y_3)I_2\left(q_1',Cy_3,q_3\right)\overline{I_2\left(q_1',Cy_3,q_4\right)}e\left(-\frac{q_2'Cy_3}{n_0n'h_1'h_2'q_0q_{2,0}r}\right)dy_3.
\end{align}

Now we want to again separate the contribution when $|q_2'|$ is small, applying similar treatment as the case $|q_1'|\ll Q_1Q_S$. Precisely, we first deal with the case \begin{align*}
    |q_2'|\ll Q_S=\frac{n_0q_0q_{2,0}rC^3t^{2+\varepsilon}}{n_1n_2nh_{1,0}h_{2,0}(N_jN)^{4/3}}
\end{align*}
including the case $q_2'=0$. Applying (1) in Lemma \ref{statlemma} on the $y_3$-integral, we get arbitrary savings unless there exists $y_3'\in [1/2,5/2]$ such that \begin{align*}
    \left|\frac{d}{dy_3}\bigg|_{y=y_3'}\left(G(y_1,y_3)-G(y_2,y_3)\right)\right|\ll \frac{C^2N^{2/3}t^\varepsilon}{n^2N_j^{4/3}}.
\end{align*}
Applying the same analysis as in the previous subsection (to get (\ref{q2q30close})) with Lemma \ref{OGfDer}, (\ref{z0condition}) on $z_0(Cy_1,y_3),z_0(Cy_2,y_3)\sim1$ in $\Psi$, the above bound for the derivative yields 
\begin{align}\label{y_1y_20close}
    |y_1-y_2|\ll \frac{C^2t^\varepsilon}{nN_j}.
\end{align}
Inserting these conditions into $I_3(q_1',q_2',q_3,q_4)$, writing $y_2=y_1+v$ and bounding the $y_3$-integral trivially, we have for $|q_2'|\ll Q_S$, \begin{align*}
    &I_3(q_1',q_2',q_3,q_4)\\
    \ll& t^\varepsilon\sup_{y_3\sim 1}\Bigg| \int_{|v|\ll\frac{C^2t^\varepsilon}{nN_j}}\int_0^\infty\Upsilon(y_1)\Upsilon(y_1+v)\Psi(Cy_1,Cy_3,q_3)\overline{\Psi(C(y_1+v),Cy_3,q_4)}\nonumber\\
    &\times e\left(G(y_1,y_3)-G\left(y_1,\frac{q_3}{C}\right)-G(y_1+v,y_3)+G\left(y_1+v,\frac{q_4}{C}\right)+\frac{q_1'Cv}{n_0nh_1h_2q_0^2q_{1,0}^2q_{2,0}r}\right)dy_1dv\Bigg|+ t^{-A}
\end{align*}
for any $A>0$. Applying Taylor expansion on the functions involving $1+\frac{v}{y_1}$ and $1+\frac{v}{y_1}$, using $H\gg\frac{N}{t^{1-\varepsilon}}$, and bounding the $v_1$-integral trivially, there exists some $\frac{CN}{n^{3/2}N_j\sqrt{t}}t^\varepsilon$-inert function $\varphi$ such that \begin{align*}
    I_3(q_1',q_2',q_3,q_4)\ll& \frac{C^2t^\varepsilon}{nN_j}\sup_{|v_1|\ll \frac{C^2t^\varepsilon}{nN_j}}\left|\int_0^\infty\Upsilon(y)\Psi_2(Cy,q_3,q_4) e\left(-G\left(y,\frac{q_3}{C}\right)+G\left(y,\frac{q_4}{C}\right)\right)dy\right|+t^{-A},
\end{align*}
with 
\begin{align*}
    \Psi_2(Cy,q_3,q_4)=\Psi(Cy,Cy_3,q_3)\overline{\Psi(C(y+v),Cy_3,q_4)}\varphi(y,y_3,v,q_3,q_4),
\end{align*}
being a $\left(\frac{CN^{1/3}}{nN_j^{2/3}}+\frac{CN}{n^{3/2}N_j\sqrt{t}}\right)t^\varepsilon$-inert function. Applying the same repeated integration by parts process on the $y$-integral using \eqref{z_0DiffBound}, we get 
\begin{align*}
    I_3(q_1',q_2',q_3,q_4)\ll& \frac{C^2t^\varepsilon}{nN_j}\delta\left(|q_3-q_4|\ll \left(\frac{C^2}{(N_jN)^{1/3}}+\frac{C^2N^{1/3}}{\sqrt{nt}N_j^{2/3}}\right)t^\varepsilon\right)+t^{-A},
\end{align*}
for any $A>0$.

Inserting (\ref{D30Def}) and the above bound for $I_3$ into (\ref{S2After3rdPoisson}), we get the contribution of $|q_2'|\ll Q_S$ is bounded by \begin{align*}
    \ll &\frac{t^{\varepsilon}}{C^{3/2}\sqrt{N_j}} \sup_{\|\beta\|_2=1} \sup_{|u_1|\ll1}\Bigg\{C\int_{|u_2|\ll1}\sum_{n_0}\sum_{q_0}\mathop{\sum\sum}_{q_{1,0},q_{2,0}|q_0^\infty}\sum_{q_0|n_1|q_0^\infty}\sum_r \nonumber\\
    &\times \mathop{\sum\sum\sum}_{\substack{n_2h_{1,0}h_{2,0} \square\text{-full}\\ h_{2,0}|(n_2h_{1,0})^\infty\\ (n_2,q_0)=1}}\mathop{\sum\sum\sum}_{\substack{N_S\ll n=n_1n_2n' \ll N_B\\ H'\leq h_1=h_{1,0}h_1', h_2=h_{2,0}h_2'<(1+10^{-10})H'\\ n'h_1' \square\text{-free}, (n_1'h_1',h_2')=1 \\(n'h_1'h_2',n_1h_{1,0}h_{2,0}q_0)=1}}\mathop{\sum\sum}_{\substack{C\leq q_3=n_0q_0q_{2,0}q_3'r<(1+10^{-10})C\\C\leq q_4=n_0q_0q_{2,0}q_4'r<(1+10^{-10})C\\ |q_3-q_4|\ll \left(\frac{C^2}{(N_jN)^{1/3}}+\frac{C^2N^{1/3}}{\sqrt{nt}N_j^{2/3}}\right)t^\varepsilon\\ q_3'\equiv q_4'\bmod{h_2'}}}\nonumber\\
    &\times \sum_{Q_1Q_S\ll |q_1'|\ll Q_1Q_B}\sum_{|q_2'|\ll Q_S}\frac{\sqrt{h_{1,0}}h_2|q_1'||\beta(q_3,h_2,u_2)\beta(q_4,h_2,u_2)|}{n_0^3n_1^2\sqrt{n_2}n'^2h_1'^2q_0q_{1,0}^2q_{2,0}r^3}\frac{C^2}{nN_j}|\mathcal{D}_3(q_3',q_4')|\mathcal{D}_{3,0}(q_3',q_4')du_2\Bigg\}^{1/4}.
\end{align*}
Recall that $Q_S, Q_B$ are defined in (\ref{QsDef}) and (\ref{Q1QBDef}) respectively. Applying the AM-GM inequality \begin{align*}
    |\beta(q_2,h_2,u_2)\beta(q_3,h_2,u_2)|\ll |\beta(q_2,h_2,u_2)|^2+|\beta(q_3,h_2,u_2)|^2,
\end{align*}
using (\ref{D3Bound}) and \eqref{D30Def} with the same counting argument as in \eqref{DetailedCountingComputation}, the above is bounded by \begin{align*}
    \ll &\frac{t^\varepsilon}{C^{3/2}\sqrt{N_j}}\Bigg\{\sum_{n\ll \frac{CN^{1/3}t^\varepsilon}{N_j^{2/3}}}\frac{C^3}{nN_j}\left(\frac{nN_j^{1/3}t^2}{N^{5/3}}\right)^2\left(1+\frac{C^3t^2}{n(N_jN)^{4/3}}\right)\frac{1}{n^2}\left(\frac{Ct}{N}\right)^{-1}\\
    &\times\left(\underbrace{\frac{C^2}{(N_jN)^{1/3}}+\frac{C^2N^{1/3}}{\sqrt{nt}N_j^{2/3}}}_{q_3'\neq q_4'}+\underbrace{\frac{Ct}{N}}_{q_3'=q_4'}\right)\left(\frac{nCt}{N}\right)^{3/2}\Bigg\}^{1/4}\\
    \ll&\frac{t^{9/8+\varepsilon}}{C^{1/4}(N_jN)^{5/6}} \left(1+\frac{\sqrt{Ct}}{N_j^{1/6}N^{5/12}}\right)\left(\frac{C^2}{(N_jN)^{1/3}}+\frac{Ct}{N}\right)^{1/4}.
\end{align*}

Indeed, with the same process we have the following technical Lemma that we will apply repeatedly later.

\begin{Lemma}\label{TechnicalLemmaForDiagonal}
    Let $q_3,q_4\sim C$. Let $B\ll\frac{C^2N^{2/3}t^\varepsilon}{n^2N_j^{4/3}}$ and let $\eta=0$ or $1$. Let $W$ be a $\left(\frac{CN^{1/3}}{nN_j^{2/3}}+\eta \frac{CN}{n^{3/2}N_j\sqrt{t}}\right)t^\varepsilon$-inert function and let \begin{align*}
        J=&\int_0^\infty\Upsilon(y)\int_0^\infty\Upsilon(y_1)\int_0^\infty\Upsilon(y_2)\Psi(Cy_1,Cy,q_3)\overline{\Psi(Cy_2,Cy,q_4)}W(y,y_1,y_2,q_3,q_4)\nonumber\\
        &\times e\left(G(y_1,y)-G\left(y_1,\frac{q_3}{C}\right)-G(y_2,y)+G\left(y_2,\frac{q_4}{C}\right)+(1-\eta)By\right)dy_2dy_1dy.
    \end{align*}
    Then there exists some $\left(\frac{CN^{1/3}}{nN_j^{2/3}}+\frac{CN}{n^{3/2}N_j\sqrt{t}}\right)t^\varepsilon$-inert function $\tilde{W}$ and some function $F$ such that \begin{align*}
        J=&\int_{1/2}^{5/2}\int_{|v|\ll \left((1-\eta)\frac{C^2}{nN_j}+\eta \left(\frac{C}{(N_jN)^{1/3}}+\frac{CN^{1/3}}{\sqrt{nt}N_j^{2/3}}\right)\right)t^\varepsilon} F(u,v)\int_0^\infty \Upsilon (y)\Upsilon(y+v)\Psi(Cy,Cu,q_3)\\
        &\times \overline{\Psi(C(y+v),Cu,q_4)}\tilde{W}(u,y,y+v,q_3,q_4)e\left(-G\left(y,\frac{q_3}{C}\right)+G\left(y,\frac{q_4}{C}\right)\right)dydvdu.
    \end{align*}
    Moreover, we have \begin{align*}
        J\ll& \left((1-\eta)\frac{C^2}{nN_j}+\eta\left(\frac{C}{(N_jN)^{1/3}}+ \frac{CN^{1/3}}{\sqrt{nt}N_j^{2/3}}\right)\right)t^\varepsilon\\
        &\times \delta\left(|q_3-q_4|\ll \left(\frac{C^2}{(N_jN)^{1/3}}+\frac{C^2N^{1/3}}{\sqrt{nt}N_j^{2/3}}\right)t^\varepsilon\right)+t^{-A}
    \end{align*}
    for any $A>0$.
\end{Lemma}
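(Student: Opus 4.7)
The plan is to mimic the analysis that was just carried out for the contribution of $|q_2'|\ll Q_S$, but phrased as a single self-contained assertion about the integral $J$. First I would treat the innermost integral in $y$, whose phase function is $\Phi(y):=G(y_1,y)-G(y_2,y)+(1-\eta)By$. Using the differentiation formula (\ref{z_0Diff}) and the derivative bounds (\ref{z_0DiffBound}), together with Lemma \ref{OGfDer} to express $\partial_y G(y_1,y)-\partial_y G(y_2,y)$ as a $C^1$ function of $v:=y_2-y_1$ that vanishes at $v=0$, one sees $\Phi'(y)$ is essentially comparable to $v$ times a factor of size $(|n|N_j/C^2)\cdot(\textup{main scale})$. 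Applying part (1) of Lemma \ref{statlemma} to the $y$-integral against the $(CN^{1/3}/(|n|N_j^{2/3})+\eta CN/(|n|^{3/2}N_j\sqrt t))t^\varepsilon$-inert weight $W$ yields arbitrary saving unless $|v|\ll((1-\eta)C^2/(nN_j)+\eta(C/(N_jN)^{1/3}+CN^{1/3}/\sqrt{|n|t}N_j^{2/3}))t^\varepsilon$, the extra term in the $\eta=1$ case coming from the stronger inert parameter of $W$.

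Next I would perform the change of variables $y_2=y_1+v$ and rename $y_1\mapsto y$, $y\mapsto u$. On the truncated range of $v$ obtained above, Taylor expanding $G(y+v,u)$, $G(y+v,q_4/C)$, $\Psi(C(y+v),Cu,q_4)$ and $W(u,y,y+v,q_3,q_4)$ in the variable $v$ absorbs all the $v$-dependent oscillation and $u$-dependent oscillation into a single $\left(CN^{1/3}/(|n|N_j^{2/3})+CN/(|n|^{3/2}N_j\sqrt t)\right)t^\varepsilon$-inert function $\tilde W(u,y,y+v,q_3,q_4)$ and an amplitude function $F(u,v)$ (which collects the residual oscillatory factors in $u,v$ after this expansion, using $H\gg N/t^{1-\varepsilon}$ to control the secondary terms). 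This produces exactly the claimed representation for $J$.

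To obtain the pointwise bound, I would then bound the $u$- and $v$-integrals trivially by their lengths, which costs $1\cdot((1-\eta)C^2/(nN_j)+\eta(C/(N_jN)^{1/3}+CN^{1/3}/\sqrt{|n|t}N_j^{2/3}))t^\varepsilon$, and apply Lemma \ref{statlemma}(1) once more to the remaining $y$-integral against the inert weight $\tilde W\cdot\Psi\cdot\overline{\Psi}$. The phase is now $-G(y,q_3/C)+G(y,q_4/C)$, and by the same derivative computation (essentially a difference $\partial_y G(y,q_3/C)-\partial_y G(y,q_4/C)$ linearly comparable to $(q_3-q_4)$ with amplitude of the form appearing in the preceding analysis) one gets arbitrary saving unless $|q_3-q_4|\ll (C^2/(N_jN)^{1/3}+C^2N^{1/3}/\sqrt{|n|t}N_j^{2/3})t^\varepsilon$. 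Combining the two steps gives the claimed bound.

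The main obstacle is the bookkeeping of the inert parameter across the Taylor expansion step: one must verify that after absorbing the factors $e(\partial_v(\cdots)v+\ldots)$ from $G(y+v,u)-G(y,u)$ and $G(y+v,q_4/C)$ into $\tilde W$, the resulting weight is indeed inert with the stated parameter. This uses that on the restricted range of $v$ all higher $v$-derivatives of the phase contribute $O(t^\varepsilon)$, and that the $u$-dependence inherited from $\partial_y G(\cdot,u)$ behaves as a function of $z_0$, which by (\ref{z_0DiffBound}) is inert in $u$ with parameter $CN^{1/3}/(|n|N_j^{2/3})$; the $\eta=1$ case additionally uses the $CN/(|n|^{3/2}N_j\sqrt t)$ scale inherited from $W$. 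Once this is checked, both conclusions follow uniformly in the parameters listed.
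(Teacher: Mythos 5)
Your proposal is correct and follows essentially the same route as the paper: a first derivative test (Lemma \ref{statlemma}(1)) in the $y$-variable, using the two-sided size of the mixed derivative of $G$ coming from Lemma \ref{OGfDer} and the support condition \eqref{z0condition}, to truncate $|v|=|y_2-y_1|$ (at the larger scale for $\eta=0$ because of the admissible size of $B$, at the inert scale of $W$ for $\eta=1$), then the substitution $y_2=y_1+v$ with a Taylor expansion in $v$ (using $H\gg N/t^{1-\varepsilon}$) to absorb the residual oscillation into an inert weight, trivial bounds on the $u,v$-integrals, and a second derivative test in $y$ against the phase $G(y,q_4/C)-G(y,q_3/C)$ to force $|q_3-q_4|$ small. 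The only slight discrepancy is attributional: in the paper the term $CN/(|n|^{3/2}N_j\sqrt{t})$ in the inert parameter of $\tilde{W}$ is generated by the Taylor-expanded phase factor itself (the function $\varphi$), in both cases $\eta=0,1$, rather than being inherited from $W$ only when $\eta=1$; otherwise the bookkeeping you describe is the paper's.
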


All together we have \begin{align*}
    \mathcal{R}_j^*(N)\ll &\frac{t^{\varepsilon}}{C^{3/2}\sqrt{N_j}} \sup_{\|\beta\|_2=1} \sup_{|u_1|\ll1}\Bigg\{C\int_{|u_2|\ll1}\sum_{n_0}\sum_{q_0}\mathop{\sum\sum}_{q_{1,0},q_{2,0}|q_0^\infty}\sum_{q_0|n_1|q_0^\infty}\sum_r \nonumber\\
    &\times \mathop{\sum\sum\sum}_{\substack{n_2h_{1,0}h_{2,0} \square\text{-full}\\ h_{2,0}|(n_2h_{1,0})^\infty\\ (n_2,q_0)=1}}\mathop{\sum\sum\sum}_{\substack{N_S\ll n=n_1n_2n' \ll N_B\\ H'\leq h_1=h_{1,0}h_1', h_2=h_{2,0}h_2'<(1+10^{-10})H'\\ n'h_1' \square\text{-free}, (n_1'h_1',h_2')=1 \\(n'h_1'h_2',n_1h_{1,0}h_{2,0}q_0)=1}}\mathop{\sum\sum}_{\substack{C\leq q_3=n_0q_0q_{2,0}q_3'r<(1+10^{-10})C\\C\leq q_4=n_0q_0q_{2,0}q_4'r<(1+10^{-10})C\\ q_3'\equiv q_4'\bmod{h_2'}}}\nonumber\\
    &\times \sum_{Q_1Q_S\ll |q_1'|\ll Q_1Q_B}\sum_{Q_S\ll |q_2'|\ll Q_B}\frac{h_2'|q_1'|\beta(q_3,h_2,u_2)\overline{\beta(q_4,h_2,u_2)}}{n_0^3n_1^3n_2^{3/2}n'^2\sqrt{h_{1,0}}h_1'^2q_0^4q_{1,0}^3q_{2,0}^2r^3}\nonumber\\
    &\times \mathcal{D}_3(q_3',q_4')\mathcal{D}_{3,0}(q_3',q_4')I_3(q_1',q_2',q_3,q_4)du_2\Bigg\}^{1/4}+\mathcal{B}_1+\mathcal{B}_2,
\end{align*}
where $\mathcal{B}_1$ is defined in (\ref{B1Def}) and \begin{align}\label{B2Def}
    \mathcal{B}_2:=&\frac{t^{9/8+\varepsilon}}{C^{1/4}(N_jN)^{5/6}} \left(1+\frac{\sqrt{Ct}}{N_j^{1/6}N^{5/12}}\right)\left(\frac{C^2}{(N_jN)^{1/3}}+\frac{Ct}{N}\right)^{1/4}.
\end{align}

\subsubsection{Iteration of the above Process}

Iterating the above process, we obtain the following lemma.

\begin{Lemma}\label{IteraionLemma}
    For any positive integer $\nu\geq2$, we have \begin{align*}
        \mathcal{R}_j^*(N)\ll |\mathcal{A}_\nu|+\sum_{j=1}^\nu\mathcal{B}_j,
    \end{align*}
    where \begin{align*}
    \mathcal{A}_\nu=&\frac{t^{\varepsilon}}{C^{3/2}\sqrt{N_j}} \sup_{\|\beta\|_2=1} \sup_{|u_1|\ll1}\Bigg\{\int_{|u_2|\ll1}\sum_{n_0}\sum_{q_0}\mathop{\sum\sum}_{q_{1,0},q_{2,0}|q_0^\infty}\sum_{q_0|n_1|q_0^\infty}\sum_r \sum_{Q_1Q_S\ll |q_1'|\ll Q_1Q_B}\mathop{\sum\cdots\sum}_{Q_S\ll |q_2'|,...,|q_\nu'|\ll Q_B}\nonumber\\
    &\times \mathop{\sum\sum\sum}_{\substack{n_2h_{1,0}h_{2,0} \square\text{-full}\\ h_{2,0}|(n_2h_{1,0})^\infty\\ (n_2,q_0)=1}}\mathop{\sum\sum\sum}_{\substack{N_S\ll n=n_1n_2n' \ll N_B\\ H'\leq h_1=h_{1,0}h_1', h_2=h_{2,0}h_2'<(1+10^{-10})H'\\ n'h_1' \square\text{-free}, (n_1'h_1',h_2')=1 \\(n'h_1'h_2',n_1h_{1,0}h_{2,0}q_0)=1}}\mathop{\sum\sum}_{\substack{C\leq q_{\nu+1}=n_0q_0q_{2,0}q_{\nu+1}'r<(1+10^{-10})C\\C\leq q_{\nu+2}=n_0q_0q_{2,0}q_{\nu+2}'r<(1+10^{-10})C\\ q_{\nu+1}'\equiv q_{\nu+2}'\bmod{h_2'}}}\nonumber\\
    &\times \left(\frac{\sqrt{h_{1,0}h_{2,0}}C}{n_0^2n_1n'h_1'q_0^2q_{1,0}q_{2,0}r^2}\right)^{2^{\nu-1}}\frac{n_0h_2'r}{n_1n_2^{3/2}h_{1,0}^{3/2}h_{2,0}q_{1,0}C}\prod_{j=1}^{\nu-1}|q_j|^{2^{\nu-j}-1}D(\nu)\nonumber\\
    &\times \beta'(q_{\nu+1},h_2,u_2)\overline{\beta'(q_{\nu+2},h_2,u_2)}\mathcal{D}_{\nu+1}(q_{\nu+1}',q_{\nu+2}')\mathcal{D}_0'(q_{\nu+1}',q_{\nu+2}')I_{\nu+1}(q_1',...,q_\nu',q_{\nu+1},q_{\nu+2})du_2\Bigg\}^{2^{-\nu}},
\end{align*}
    with the notations $\beta'=\beta$ or $\overline{\beta}$, $\mathcal{D}_0'=\mathcal{D}_0$ or $\overline{\mathcal{D}_0}$ depending on $\nu$, with $\mathcal{B}_1$, $\mathcal{D}_0$, $\mathcal{D}_3$, $I_3$ as defined in (\ref{B1Def}), (\ref{D0Def}), (\ref{D3Def}) and (\ref{I3Def}) respectively, \begin{align*}
        D(\nu)=\begin{cases}
            e\left(\frac{h_{1,0}q_0q_{1,0}^2q_2'\overline{n'(h_1q_0q_{1,0}^2\overline{q_3'}-\eta_jq_1')}}{h_2'}\right) & \nu=2\\
            e\left(\frac{q_{\nu}'q_{\nu+1}'\overline{n'h_1'}}{h_2'}\right) & \nu>2.
        \end{cases}
    \end{align*}
    The character sum and integral $\mathcal{D}_\nu(q_{\nu+1}',q_{\nu+2}')$ and $I_\nu(q_1',...,q_\nu',q_{\nu+1},q_{\nu+2})$ are defined recursively by \begin{align*}
        \mathcal{D}_{\nu+1}(q_{\nu+1}',q_{\nu+2}')=\sum_{\gamma\bmod{n'h_1'}}\mathcal{D}_\nu(\gamma,q_{\nu+1}')\overline{\mathcal{D}_\nu(\gamma,q_{\nu+2}')}e\left(\frac{\gamma q_\nu'\overline{h_2'}}{n'h_1'}\right)
    \end{align*}
    and
    \begin{align*}
        I_{\nu+1}(q_1',...,q_\nu',q_{\nu+1},q_{\nu+2})=&\int_0^\infty\Upsilon(y_{2^\nu-1})I_{\nu}(q_1',...,q_{\nu-1}',Cy_{2^\nu-1},q_{\nu+1})\\
        &\times\overline{I_{\nu}(q_1',...,q_{\nu-1}',Cy_{2^\nu-1},q_{\nu+2})}e\left(-\frac{q_{\nu}'Cy_{2^\nu-1}}{n_0n'h_1h_2q_0q_{2,0}r}\right)dy_{2^\nu-1}
    \end{align*}
    respectively, and for $\ell\geq2$, \begin{align*}
        \mathcal{B}_\ell=&\frac{\sqrt{C}t^{2+\varepsilon}}{N_jN^{3/2}} \Bigg\{\frac{N_j^{2/3}N^{8/3}}{C^3t^{7/2}}\left(1+\frac{C^2t^2}{N_j^{2/3}N^{5/3}}\right)\left(\frac{Ct}{N}+\frac{C^2}{(N_jN)^{1/3}}\right)\Bigg\}^{2^{-\ell}}.
    \end{align*}
    Moreover, we have the bound \begin{align*}
        \mathcal{D}_{\nu+1}(q_{\nu+1}',q_{\nu+2}')\ll (n'h_1')^{2^{\nu-1}-1/2}t^\varepsilon\left(\sqrt{n'h_1'}\delta(*_{\nu-2})+1\right),
    \end{align*}
    where $*_{\nu-2}$ is the condition \begin{align*}
        *_{\nu-2}=\Bigg\{& q_{\nu+1}'\equiv q_{\nu+2}'\bmod n'h_1'\nonumber\\
        &\text{or } q_1'\overline{q_0q_{1,0}}\pm h_1q_{1,0}(\overline{q_{\nu+1}'}-\overline{q_{\nu+2}'})\equiv 0 \bmod n'h_1'\nonumber\\
        &\text{or } q_1'\overline{q_0q_{1,0}}-\eta_jh_1q_{1,0}\overline{q_{\nu+1}'}\equiv 0\bmod n'h_1'\nonumber\\
        &\text{or } q_1'\overline{q_0q_{1,0}}-\eta_jh_1q_{1,0}\overline{q_{\nu+2}'}\equiv 0\bmod n'h_1'\Bigg\}.
    \end{align*}
\end{Lemma}

\begin{proof}
    Proof is presented in appendix \ref{sect.IterationLemmaProof}.
\end{proof}

With the above lemma and the bound for $\mathcal{D}_0$ in \eqref{D0Def}, we are left to bound $I_{\nu+1}(q_1',...,q_\nu',q_{\nu+1},q_{\nu+2})$. We do so with following lemma.

\begin{Lemma}\label{InuBound}
    Let $q_1',...,q_\nu',q_{\nu+1},q_{\nu+2}$ be integers such that $|q_1'|\gg Q_1Q_S$, $|q_2'|,...,|q_\nu'|\gg Q_S$ and $q_{\nu+1},q_{\nu+2}\sim C$. With $I_3$ defined in (\ref{I3Def}) and $I_\nu$ defined in Lemma \ref{IteraionLemma}, we have the following bound. \begin{align*}
        I_{\nu+1}(q_1',...,q_\nu',q_{\nu+1},q_{\nu+2})\ll_\nu&\prod_{j=2}^\nu\left(1+\frac{(N_jN)^{1/3}n_0n'h_1h_2q_0q_{2,0}r}{\sqrt{\min\left\{q_u'':1\leq u\leq j-1\right\}q_j''}C^2}\right)^{2^{\nu-j}}\\
        &\times \frac{(n_1n_2h_{1,0}h_{2,0}q_0q_{1,0}^2)^{2^{\nu-2}}(n_0n'h_1h_2q_0q_{2,0}r)^{2^{\nu-1}-1/2}}{q_1'^{2^{\nu-2}}q_2'^{2^{\nu-3}}\cdots q_\nu'^{2^{-1}}C^{2^{\nu-1}-1/2}}t^\varepsilon
    \end{align*}
    for $\nu\geq2$.
\end{Lemma}
\begin{proof}
    The proof is presented in appendix \ref{sect.InuBoundProof} via a careful iterative stationary phase analysis. Indeed, we have to use (3) instead of (2) in Lemma \ref{statlemma}, and the parameter $U$ for each stationary phase step has to be chosen carefully to yield a recursive structure.
\end{proof}

Inserting the above bound into the bound of $\mathcal{R}_j^*(N)$ in Lemma \ref{IteraionLemma}, together with $\mathcal{D}(k+1)\ll1$, $\mathcal{D}_0\ll n_1n_2h_{1,0}h_{2,0}q_0^3q_{1,0}q_{2,0}$ as in (\ref{D0Def}) and the last statement of Lemma \ref{IteraionLemma} on $\mathcal{D}_{\nu+1}(q_{\nu+1}',q_{\nu+2}')$, we get for any $\nu\geq2$,

\begin{align*}
    \mathcal{R}_j^*(N)\ll &\frac{t^{\varepsilon}}{C^{3/2}\sqrt{N_j}} \sup_{\|\beta\|_2=1} \sup_{|u_1|\ll1}\Bigg\{\int_{|u_2|\ll1}\sum_{n_0}\sum_{q_0}\mathop{\sum\sum}_{q_{1,0},q_{2,0}|q_0^\infty}\sum_{q_0|n_1|q_0^\infty}\sum_r \sum_{Q_1Q_S\ll |q_1'|\ll Q_1Q_B}\mathop{\sum\cdots\sum}_{Q_S\ll |q_2'|,...,|q_\nu'|\ll Q_B}\nonumber\\
    &\times \mathop{\sum\sum\sum}_{\substack{n_2h_{1,0}h_{2,0} \square\text{-full}\\ h_{2,0}|(n_2h_{1,0})^\infty\\ (n_2,q_0)=1}}\mathop{\sum\sum\sum}_{\substack{N_S\ll n=n_1n_2n' \ll N_B\\ H'\leq h_1=h_{1,0}h_1', h_2=h_{2,0}h_2'<(1+10^{-10})H'\\ n'h_1' \square\text{-free}, (n_1'h_1',h_2')=1 \\(n'h_1'h_2',n_1h_{1,0}h_{2,0}q_0)=1}}\mathop{\sum\sum}_{\substack{C\leq q_{\nu+1}=n_0q_0q_{2,0}q_{\nu+1}'r<(1+10^{-10})C\\C\leq q_{\nu+2}=n_0q_0q_{2,0}q_{\nu+2}'r<(1+10^{-10})C\\ q_{\nu+1}'\equiv q_{\nu+2}'\bmod{h_2'}}}\nonumber\\
    &\times \left(\frac{\sqrt{h_{1,0}h_{2,0}}C}{n_0^2n_1n'h_1'q_0^2q_{1,0}q_{2,0}r^2}\right)^{2^{\nu-1}}\frac{n_0h_2'r}{n_1n_2^{3/2}h_{1,0}^{3/2}h_{2,0}q_{1,0}C}\prod_{j=1}^{\nu-1}|q_j|^{2^{\nu-j}-1}|\beta(q_{\nu+1},h_2,u_2)|\nonumber\\
    &\times |\beta(q_{\nu+2},h_2,u_2)|du_2(n'h_1')^{2^{\nu-1}-1/2}\left(\sqrt{n'h_1'}\delta(*_{\nu-2})+1\right)n_1n_2h_{1,0}h_{2,0}q_0^3q_{1,0}q_{2,0}\nonumber\\
    &\times \prod_{j=2}^\nu\left(1+\frac{(N_jN)^{1/3}n_0n'h_1h_2q_0q_{2,0}r}{\sqrt{\min\left\{q_u'':1\leq u\leq j-1\right\}q_j''}C^2}\right)^{2^{\nu-j}}\\
    &\times \frac{(n_1n_2h_{1,0}h_{2,0}q_0q_{1,0}^2)^{2^{\nu-2}}(n_0n'h_1h_2q_0q_{2,0}r)^{2^{\nu-1}-1/2}}{q_1'^{2^{\nu-2}}q_2'^{2^{\nu-3}}\cdots q_\nu'^{2^{-1}}C^{2^{\nu-1}-1/2}}\Bigg\}^{2^{-\nu}}+\sum_{\ell=1}^\nu\mathcal{B}_\ell,
\end{align*}
with $(*_{\nu-2})$ as defined in Lemma \ref{IteraionLemma}. Recall the definition of $Q_B$ in (\ref{Q1QBDef}). Applying the AM-GM inequality \begin{align*}
    |\beta(q_{\nu+1},h_2,u_2)\beta(q_{\nu+2},h_2,u_2)|\ll |\beta(q_{\nu+1},h_2,u_2)|^2+|\beta(q_{\nu+2},h_2,u_2)|^2,
\end{align*}
the same counting argument as in \eqref{DetailedCountingComputation} implies that the above is bounded by \begin{align}\label{S2BignBound}
    \mathcal{R}_j^*(N)\ll &\frac{t^{\varepsilon}}{C^{3/2}\sqrt{N_j}}\Bigg\{\sum_{n\ll\frac{CN^{1/3}t^\varepsilon}{N_j^{2/3}}}\frac{n^{2^{\nu-1}-1/2}}{\sqrt{C}}\left(\frac{Ct}{N}\right)^{2^\nu}\left(\frac{nN_j^{1/3}t^2}{N^{5/3}}\right)^{2^{\nu-1}-3/2}\left(\frac{Ct}{N}+C\right)\Bigg\}^{2^{-\nu}}+\nu\sup_{1\leq \ell\leq \nu}\mathcal{B}_\ell\nonumber\\
    \ll& \frac{\sqrt{C}t^{2+\varepsilon+99/2^\nu}}{N_jN^{3/2}}+\nu\sup_{1\leq \ell\leq \nu}\mathcal{B}_\ell
\end{align}
for $N\gg t^{1-\varepsilon}$.

\begin{Remark}
    To get the above bound in the first line, first observe that the worst case scenario for $q_j'$ is when $q_2',...,q_\nu'$ are as big as possible, and $|q_1'|$ is either 1 or as big as possible.
\end{Remark}

\subsection{Bound for \texorpdfstring{$\mathcal{R}_j(N)$}{Rj(N)}}

Recall the definition of $\mathcal{R}_j(N)$ in Lemma \ref{LemmaAfterFirstStep}, the definition of $\mathcal{B_\ell}$ in (\ref{B1Def}) and Lemma \ref{IteraionLemma}. Taking $\nu=t^\varepsilon$ in (\ref{S2BignBound}) and combining it with the bounds (\ref{DiagonalBound}), (\ref{SmallnBound}) and (\ref{MidnBound}), we get for $N\gg t^{1-\varepsilon}$, \begin{align}\label{RjBound}
    \mathcal{R}_j(N)\ll& t^\varepsilon\Bigg\{\frac{1}{C^2H}+\frac{Ht}{CN_j^{1/3}N^{4/3}}+ \frac{t}{N_jN}\left(1+\frac{C^2}{(N_jN)^{1/3}}+\frac{CN}{Ht}\right)\delta\left(N_j\ll C^2t^\varepsilon\right)\nonumber\\
    &+\frac{\sqrt{t}}{\sqrt{CH}N_j^{5/6}N^{1/3}}\left(1+\frac{CN}{Ht}\right)\delta\left(N_j\ll\frac{C^{3/2}N^2}{(Ht)^{3/2}}t^\varepsilon\right)+\frac{\sqrt{C}t^{2}}{N_jN^{3/2}}+\frac{t}{(N_jN)^{5/6}}+\frac{t^{1/4}}{N_j^{5/6}N^{1/3}}\nonumber\\
    &+ \frac{t^{5/4}}{N_jN}+ \frac{C^{3/4}t^{5/4}}{(N_jN)^{13/12}}+\sup_{2\leq \ell\leq t^\varepsilon}\frac{\sqrt{C}t^{2}}{N_jN^{3/2}}\Bigg\{\frac{N_j^{2/3}N^{8/3}}{C^3t^{7/2}}\left(1+\frac{C^2t^2}{N_j^{2/3}N^{5/3}}\right)\left(\frac{Ct}{N}+\frac{C^2}{(N_jN)^{1/3}}\right)\Bigg\}^{2^{-\ell}}.
\end{align}

\section{Final Bound}

We choose $Q$ such that $\frac{N}{t^{1-\varepsilon}}<Q<\frac{N}{\sqrt{Ht}}t^\varepsilon$. Inserting (\ref{RjBound}) into Lemma \ref{LemmaAfterFirstStep}, the conditions $\frac{N}{t^{1+\varepsilon}}\ll C\ll Qt^\varepsilon$, $t^{1/2+\varepsilon}\ll H\ll\frac{Nt^\varepsilon}{M}$, $t^{1/2+\varepsilon}\ll M\ll t^{1-\varepsilon}$, $N\ll t^{3/2+\varepsilon}$, $N_1\sim\frac{N^2X^3}{Q^3}$ and $N_2\ll \left(\frac{N^2}{Q^3}+\frac{(CHt)^3}{N^4}\right)t^\varepsilon\ll\frac{N^2t^\varepsilon}{Q^3}$ (by the choice of $Q$) implies \begin{align*}
    &\left(S_{1,X}(N)S_2(N)\right)^{1/2}\\
    \ll&\frac{CM(N_1N_2)^{2/3}N^{4/3}}{t^{1-\varepsilon}}\Bigg\{\left(\frac{Ct}{N_1N}\delta\left(N_1\ll\frac{C^3t^\varepsilon}{N}\right)+\mathcal{R}_1(N)\right)\mathcal{R}_2(N)\Bigg\}^{1/2}\\
    \ll&t^\varepsilon\Bigg\{Q^3M+\frac{MHN^2}{Q^3}+Q^{5/3}MN^{1/3}\left(1+\frac{Q^{4/3}}{N^{1/3}}+\frac{QN}{Ht}\right)+\frac{Q^{5/4}MN^2}{(Ht)^{5/4}}\left(1+\frac{QN}{Ht}\right)\\
    &+\frac{MN^{3/2}}{\sqrt{Q}}+QMN^{3/4}t^{1/4}+\sqrt{QN}Mt\left(1+\frac{N}{Q^{5/4}t^{7/8}}\left(1+\frac{Q\sqrt{t}}{N^{3/4}}\right)\left(\left(\frac{Qt}{N}\right)^{1/4}+\frac{Q^{3/4}}{N^{1/4}}\right)\right)\Bigg\},
\end{align*}
including the case $X=0$.

We choose $Q=\frac{M^{5/6}H^{1/3}N^{1/3}}{t^{3/4}}$, then $Q$ satisfies $\frac{N}{t^{1-\varepsilon}}<Q<\frac{N}{\sqrt{Ht}}t^\varepsilon$ when $H\gg \frac{N^2}{M^{5/2}t^{3/4-\varepsilon}}$. Inserting the bound into Theorem \ref{S1S2Thm} and then (\ref{qDyadicSub}), we get for $t^{1/2+\varepsilon}\ll M\ll t^{1-\varepsilon}$, $N\ll t^{3/2+\varepsilon}$, $\sqrt{t}+\frac{N^2}{M^{5/2}t^{3/4-\varepsilon}}\ll H\ll \frac{Nt^\varepsilon}{M}$, \begin{align*}
    \frac{S_{M,H}^\pm(N)}{N}\ll& t^\varepsilon\Bigg\{\frac{M^{5/2}}{t^{3/4}}+\frac{t^{9/4}}{M^{3/2}}+\frac{M^{29/9}}{H^{1/9}t^{7/6}}+\frac{M^{23/8}t^{3/16}}{H^{3/2}}+\frac{M^{7/12}t^{7/8}}{H^{1/6}}+M^{3/2}t^{1/8}+\frac{M^{5/4}t^{5/8}}{N^{1/6}}\Bigg\}.
\end{align*}

\begin{Remark}
    The optimal choice of $Q$ presented above is determined by minimizing $\frac{MHN^2}{Q^3}+\sqrt{QN}Mt$ restricted to $Q<\frac{Nt^\varepsilon}{\sqrt{Ht}}$.
\end{Remark}

Inserting the above bound together with the trivial bound (\ref{trivialBound}) into Lemma \ref{SMHLemma}, we get \begin{align*}
    \left|L\left(\frac{1}{2}+it,\pi\right)\right|^2\ll&\log t\left(1+\int_\BR U\left(\frac{v}{M}\right)\left|L\left(\frac{1}{2}+it+iv,\pi\right)\right|^2dv\right)\nonumber\\
    \ll& t^\varepsilon\Bigg(\frac{M^{5/2}}{t^{3/4}}+\frac{t^{9/4}}{M^{3/2}}+\sup_{H}\min\left\{\frac{M^{29/9}}{H^{1/9}t^{7/6}}+\frac{M^{23/8}t^{3/16}}{H^{3/2}}+\frac{M^{7/12}t^{7/8}}{H^{1/6}}, MH\right\}\nonumber\\
    &+M^{3/2}t^{1/8}+\sup_{N}\min\left\{\frac{M^{5/4}t^{5/8}}{N^{1/6}},N\right\}\Bigg)\nonumber\\
    \ll & t^\varepsilon\left(\frac{M^{5/2}}{t^{3/4}}+\frac{t^{9/4}}{M^{3/2}}+\frac{M^3}{t^{21/20}}+M^{7/4}t^{3/40}+M^{9/14}t^{3/4}+M^{3/2}t^{1/8}+M^{15/14}t^{15/28}\right)\nonumber\\
    \ll &t^\varepsilon\left(\frac{t^{9/4}}{M^{3/2}}+\frac{M^3}{t^{21/20}}+M^{7/4}t^{3/40}+M^{15/14}t^{15/28}\right).
\end{align*}
The optimal choice of $M$ is $M=t^{2/3}$, which yields 
\begin{align*}
    \left|L\left(\frac{1}{2}+it,\pi\right)\right|^2\ll\log t\left(1+\int_\BR U\left(\frac{v}{t^{2/3}}\right)\left|L\left(\frac{1}{2}+it+iv,\pi\right)\right|^2dv\right)\ll t^{5/4+\varepsilon}.
\end{align*}
This completes the proof of Theorem \ref{SecondMomentThm} and Theorem \ref{main theorem gl3}.

\appendix

\section{Integral Analysis}

\subsection{Stationary Phase Analysis of \texorpdfstring{$\mathcal{J}_0(m,m_0,q_1,q_2,x_1,x_2)$}{J0}}\label{sect.J0IntegralAnalysis}

Let $x_1,x_2$ be such that $\varphi_X(x_j)\neq0$ for $j=1,2$, with $\varphi_X$ as defined in (\ref{PhiXDef}). Recall from (\ref{J11Def}) that \begin{align*}
    &\mathcal{J}_0(m,m_0,q_1,q_2,x_1,x_2)\nonumber\\
    =&\int_0^\infty \varphi(z)U_{\eta_1} \left(\frac{N_1z}{(m_0q_1)^3},m_0q_1,x_1\right)\overline{U_{\eta_1} \left(\frac{N_1z}{(m_0q_2)^3},m_0q_2,x_2\right)}e\left(-\frac{mN_1z}{m_0^2q_1q_2}\right)dz.
\end{align*}
and \begin{align*}
    U_{\eta_1}\left(a,b,c\right)=\int_0^\infty U\left(w\right)\overline{U}_0\left(a,Nw\right)w^{-\frac{1}{3}}e\left(-\frac{cNw}{bQ}+\eta_13(aNw)^{1/3}\right)dw
\end{align*}
for $j=1,2$ with $U_0$ being a fixed flat function.

The oscillatory integral in $U_{\eta_1}$ has the phase function \begin{align*}
    f_1(w)=-\frac{Nwx_j}{m_0q_jQ}+\eta_13\left(\frac{N_1Nwz}{(m_0q_j)^3}\right)^\frac{1}{3}.
\end{align*}
Differentiating gives us \begin{align*}
    f_1'(w)=&-\frac{Nx_j}{m_0q_jQ}+\eta_1\left(\frac{N_1Nz}{(m_0q_j)^3w^2}\right)^\frac{1}{3}\\
    f_1''(w)=&-\eta_1\frac{2}{3}\left(\frac{N_1Nz}{(m_0q_j)^3w^5}\right)^\frac{1}{3}\\
    f_1^{(j)}(w)\ll&_j \frac{(N_1N)^\frac{1}{3}}{C}
\end{align*}
for any $j\geq2$.

For the case $X=0$, the support of $\varphi_0(x_j)$ gives us $\frac{Nx_j}{CQ}\ll t^\varepsilon$, and hence (1) in Lemma \ref{statlemma} gives us arbitrary savings unless \begin{align*}
    \left(\frac{N_1N}{C^3}\right)^\frac{1}{3}\ll t^\varepsilon \Longleftrightarrow N_1\ll \frac{C^3t^\varepsilon}{N}.
\end{align*}
In such a case, $U_{\eta_1}$ is $t^\varepsilon$-inert. Now by repeated integration by parts on the $z$-integral in $\mathcal{J}_0$, we get arbitrary savings unless \begin{align*}
    |m|\ll\frac{C^2t^\varepsilon}{N_1}.
\end{align*}
We have $\mathcal{J}_0(m,m_0,q_1,q_2,x_1,x_2)$ is $t^\varepsilon$-inert when $|m|\ll\frac{C^2t^\varepsilon}{N_1}$.

For the case $X\neq0$, the support of $\varphi_X(x_j)$ gives us $\frac{N|x_j|}{CQ}\sim\frac{N|X|}{CQ}\gg t^\varepsilon$, (1) in Lemma \ref{statlemma} gives us arbitrary savings unless \begin{align*}
    \eta_1X>0 \text{ and } \frac{(N_1N)^{1/3}}{C}\sim \frac{N|X|}{CQ}\gg t^\varepsilon.
\end{align*}
In such a case, we have $f_1''(w)\gg t^\varepsilon$. Computing the stationary phase point $f_1'(w_0)=0$, we get \begin{align*}
    w_0=\sqrt{\frac{N_1Q^3z}{N^2|x_j|^3}} \quad \text{ and } \quad f_1(w_0)=\frac{2Nx_jw_0}{m_0q_jQ}=2\sqrt{\frac{N_1Qz}{(m_0q_j)^2|x_j|}}.
\end{align*}
Hence for any $A>0$, (2) in Lemma \ref{statlemma} yields \begin{align*}
    U_{\eta_1}\left(\frac{N_1z}{(m_0q_j)^3},m_0q_j,x_j\right)=&\frac{1}{\sqrt{f_1''(w_0)}}\delta\left(\eta_1X>0\right)U_2\left(\frac{N_1Q^3z}{N^2|x_j|^3}\right)e\left(2\sqrt{\frac{N_1Qz}{(m_0q_j)^2|x_j|}}\right)+O\left(t^{-A}\right)
\end{align*}
for some $t^\varepsilon$-inert function $U_2\in C_c^\infty([1/4,25/4])$.

Writing \begin{align*}
    U_3\left(\frac{N_1Q^3z}{N^2|x_j|^3},m_0q_j,x_j\right)=\frac{(N_1N)^{1/6}}{\sqrt{C}}\frac{1}{\sqrt{f_1''(w_0)}}U_2\left(\frac{N_1Q^3z}{N^2|x_j|^3}\right),
\end{align*}
then $U_3$ is a $t^\varepsilon$-inert function supported on $[1/4,25/4]\times\R\times\R$ such that \begin{align*}
    U_{\eta_1}\left(\frac{N_1z}{(m_0q_j)^3},m_0q_j,x_j\right)=&\frac{\sqrt{C}}{(N_1N)^{1/6}}\delta\left(\eta_1X>0\right)U_3\left(\frac{N_1Q^3z}{N^2|x_j|^3},m_0q_j,x_j\right)\\
    &\times e\left(2\sqrt{\frac{N_1Qz}{(m_0q_j)^2|x_j|}}\right)+O\left(t^{-A}\right).
\end{align*}
Inserting this back into (\ref{J11Def}) yields \begin{align*}
    &\mathcal{J}_0(m,m_0,q_1,q_2,x_1,x_2)\\
    =&\frac{C}{(N_1N)^{1/3}}\delta\left(\eta_1X>0\right)\int_0^\infty \varphi(z)U_3 \left(\frac{N_1Q^3z}{N^2|x_1|^3},m_0q_1,x_1\right)\overline{U_3 \left(\frac{N_1Q^3z}{N^2|x_2|^3},m_0q_2,x_2\right)}\\
    &\times e\left(2\sqrt{\frac{N_1Qz}{(m_0q_1)^2|x_1|}}-2\sqrt{\frac{N_1Qz}{(m_0q_2)^2|x_2|}}-\frac{mN_1z}{m_0^2q_1q_2}\right)dz.
\end{align*}
Repeated integration by parts gives us arbitrary savings unless \begin{align*}
    \frac{|m|N_1}{m_0^2q_1q_2}\ll\sqrt{\frac{N_1Q}{C^2|X|}}\Longleftrightarrow |m|\ll\frac{C\sqrt{Q}}{\sqrt{N_1|X|}}t^\varepsilon\sim\frac{CQ^2}{NX^2}t^\varepsilon.
\end{align*}

\subsection{Stationary Phase Analysis of \texorpdfstring{$\mathcal{J}(q_1,q_2,\vec{v}\,)$}{J(q1,q2,v)}}\label{sect.JIntegralAnalysis}

Recall from (\ref{JDef}) that \begin{align*}
    \mathcal{J}(q_1,q_2,\vec{v}\,)=&\int_\BR \varphi_X(x_1)g(m_0q_1,x_1)\phi_{1}\left(u_1,m_0q_1,h_1,x_1\right)\nonumber\\
    &\times \int_\BR \varphi_X(x_2)\overline{g(m_0q_2,x_2)\phi_{1}\left(u_2,m_0q_2,h_2,x_2\right)}\nonumber\\
    &\times \int_0^\infty \varphi(z)U_3 \left(\frac{N_1Q^3z}{N^2|x_1|^3},m_0q_1,x_1\right)\overline{U_3 \left(\frac{N_1Q^3z}{N^2|x_2|^3},m_0q_2,x_2\right)}\nonumber\\
    &\times e\left(2\sqrt{\frac{N_1Qz}{m_0^2q_1^2|x_1|}}-2\sqrt{\frac{N_1Qz}{m_0^2q_2^2|x_2|}}-\frac{mN_1z}{m_0^2q_1q_2}\right)dz\nonumber\\
    &\times e\left(\frac{N^2x_1u_1}{m_0q_1QHt^{1-\varepsilon}}-\frac{N^2x_2u_2}{m_0q_2QHt^{1-\varepsilon}}+\frac{tx_1}{2\pi h_1Q}-\frac{tx_2}{2\pi h_2Q}\right)dx_2dx_1,
\end{align*}
and $U_3$ is some $t^\varepsilon$-inert function supported on $[1/4,25/4]\times\R\times\R$.

Consider the $x_1$-integral, \begin{align*}
    &\int_\BR\varphi_X(x_1)g(m_0q_1,x_1)\phi_{1}\left(u_1,m_0q_1,h_1,x_1\right)U_3 \left(\frac{N_1Q^3z}{N^2|x_1|^3},m_0q_1,x_1\right)\\
    &\times e\left(2\sqrt{\frac{N_1Qz}{m_0^2q_1^2|x_1|}}+\frac{N^2x_1u_1}{m_0q_1QHt^{1-\varepsilon}}+\frac{tx_1}{2\pi h_1Q}\right)dx_1\\
    =&X\int_\BR\varphi(x_1)g(m_0q_1,Xx_1)\phi_{1}\left(u_1,m_0q_1,h_1,Xx_1\right)U_3 \left(\frac{N_1Q^3z}{N^2X^3x_1^3},m_0q_1,Xx_1\right)e\left(F_1(x_1)\right)dx_1.
\end{align*}
The phase function is \begin{align*}
    F_1(x_1)=2\sqrt{\frac{N_1Qz}{m_0^2q_1^2|X|x_1}}+\frac{N^2Xx_1u_1}{m_0q_1QHt^{1-\varepsilon}}+\frac{tXx_1}{2\pi h_1Q}.
\end{align*}
Recall we have $N_1\sim\frac{N^2X^3}{Q^3}$ by the support of $U_3$. Differentiating, we get \begin{align*}
    F_1'(x_1)=&-\sqrt{\frac{N_1Qz}{m_0^2q_1^2|X|x_1^3}}+\frac{N^2Xu_1}{m_0q_1QHt^{1-\varepsilon}}+\frac{tX}{2\pi h_1Q}\\
    F_1''(x_1)=&\frac{3}{2}\sqrt{\frac{N_1Qz}{m_0^2q_1^2|X|x_1^5}}\sim \frac{\sqrt{N_1Q}}{C\sqrt{X}}\\
    F_1^{(j)}(x_1)\ll&_j \frac{\sqrt{N_1Q}}{C\sqrt{X}}
\end{align*}
for any $j\geq2$. The stationary point $x_{1,0}$ satisfying $F_1'(x_{1,0})=0$ is \begin{align*}
    x_{1,0}=\left(\frac{2\pi h_1}{m_0q_1t}\right)^{2/3}\frac{Q(N_1z)^{1/3}}{|X|}\left(1+\frac{2\pi h_1N^2u_1}{m_0q_1Ht^{2-\varepsilon}}\right)^{-2/3}
\end{align*}
and \begin{align*}
    F_1(x_{1,0})=3\sqrt{\frac{N_1Qz}{m_0^2q_1^2|X|x_{1,0}}}=3\left(\frac{N_1tz}{2\pi h_1(m_0q_1)^2}\left(1+\frac{2\pi h_1N^2u_1}{m_0q_1Ht^{2-\varepsilon}}\right)\right)^{1/3}.
\end{align*}
Since $X\gg\frac{CQt^\varepsilon}{N}$, we have $F_1''(x_1)\sim \frac{\sqrt{N_1Q}}{C\sqrt{|X|}}\sim\frac{N|X|}{CQ}\gg t^\varepsilon$. Together with (\ref{gDerProp}), (2) in Lemma \ref{statlemma} implies that there exists a $t^\varepsilon$-inert function $\tilde{\Phi}$ supported on $[1/4,25/4]\times\BR^2$ such that the $x_1$-integral is equal to
\begin{align*}
    &X\int_\BR\varphi(x_1)g(m_0q_1,Xx_1)\phi_{1}\left(u_1,m_0q_1,h_1,Xx_1\right)U_3 \left(\frac{N_1Q^3z}{N^2X^3x_1^3},m_0q_1,Xx_1\right)e\left(F_1(x_1)\right)dx_1\nonumber\\
    =&X\sqrt{\frac{C\sqrt{X}}{\sqrt{N_1Q}}}t^\varepsilon\tilde{\Phi}\left(\frac{N_1Q^3z}{N^2X^3},m_0q_1,u_1\right)g(m_0q_1,Xx_{1,0})e\left(3\left(\frac{N_1tz}{2\pi h_1(m_0q_1)^2}\left(1+\frac{2\pi h_1N^2u_1}{m_0q_1Ht^{2-\varepsilon}}\right)\right)^{1/3}\right)\nonumber\\
    &+O\left(t^{-A}\right)
\end{align*}
for any $A>0$. Applying the same treatment to the $x_2$-integral, there exists a $t^\varepsilon$-inert function $\Phi$ supported on $[1/4,25/4]\times\BR^4$ such that \begin{align*}
    \mathcal{J}(q_1,q_2,\vec{v}\,)=&\frac{CX^{5/2}t^\varepsilon}{\sqrt{N_1Q}}\int_0^\infty \varphi(z)\Phi \left(\frac{N_1Q^3z}{N^2X^3},m_0q_1,m_0q_2,u_1.u_2\right)g(m_0q_1,Xx_{1,0})g(m_0q_2,Xx_{2,0})\nonumber\\
    &\times e\left(3\left(\frac{N_1tz}{2\pi h_1(m_0q_1)^2}\left(1+\frac{2\pi h_1N^2u_1}{m_0q_1Ht^{2-\varepsilon}}\right)\right)^{1/3}\right)\nonumber\\
    &\times e\left(-3\left(\frac{N_1tz}{2\pi h_2(m_0q_2)^2}\left(1+\frac{2\pi h_2N^2u_2}{m_0q_2Ht^{2-\varepsilon}}\right)\right)^{1/3}-\frac{mN_1z}{m_0^2q_1q_2}\right)dz+O\left(t^{-A}\right)
\end{align*}
for any $A>0$, and \begin{align*}
    x_{j,0}=\left(\frac{2\pi h_j}{m_0q_jt}\right)^{2/3}\frac{Q(N_1z)^{1/3}}{X}\left(1+\frac{2\pi h_jN^2u}{m_0q_jHt^{2-\varepsilon}}\right)^{-2/3}
\end{align*}
for $j=1,2$. Note that $x_{j,0}$ is flat with respect to $z$.

\subsection{Stationary Phase Analysis of \texorpdfstring{$\mathcal{J}_j(q_1,q_2,\vec{v}\,)$}{Jj(q1,q2,v)}}\label{sect.JjIntegralAnalysis}

Let $nN_j\gg C^2t^\varepsilon$. Recall the definition of $\mathcal{J}_j(q_1,q_2,\vec{v}\,)$ in Lemma \ref{LemmaAfterFirstStep} that \begin{align*}
        &\mathcal{J}_j(q_1,q_2,\vec{v}\,)\\
        =&\int_0^\infty \varphi(z)\Phi_{j,\eta_j}\left(n_0q_1,n_0q_2,u_1,u_2\right)(g(n_0q_1,Xx_{1,0})g(n_0q_2,Xx_{2,0}))^{2-j}\nonumber\\
        &\times e\left(-\eta_j'3\left(\frac{N_jNz}{(n_0q_1)^3}\left(\frac{n_0q_1t}{2\pi h_1N}+\frac{Nu_1}{Ht^{1-\varepsilon}}\right)\right)^\frac{1}{3}+\eta_j'3\left(\frac{N_jNz}{(n_0q_2)^3}\left(\frac{n_0q_2t}{2\pi h_2N}+\frac{Nu_2}{Ht^{1-\varepsilon}}\right)\right)^\frac{1}{3}-\frac{nN_jz}{n_0^2q_1q_2}\right)dz.
    \end{align*}
    The phase function is \begin{align*}
    F_3(z):=-\eta_j'3\left(\frac{N_jNz}{(n_0q_1)^3}\left(\frac{n_0q_1t}{2\pi h_1N}+\frac{Nu_1}{Ht^{1-\varepsilon}}\right)\right)^\frac{1}{3}+\eta_j'3\left(\frac{N_jNz}{(n_0q_2)^3}\left(\frac{n_0q_2t}{2\pi h_2N}+\frac{Nu_2}{Ht^{1-\varepsilon}}\right)\right)^\frac{1}{3}-\frac{nN_jz}{n_0^2q_1q_2}.
\end{align*}
Differentiating, we get \begin{align*}
    F'_3(z)=&-\eta_j'\left(\frac{N_jN}{(n_0q_1)^3z^2}\left(\frac{n_0q_1t}{2\pi h_1N}+\frac{Nu_1}{Ht^{1-\varepsilon}}\right)\right)^\frac{1}{3}+\eta_j'\left(\frac{N_jN}{(n_0q_2)^3z^2}\left(\frac{n_0q_2t}{2\pi h_2N}+\frac{Nu_2}{Ht^{1-\varepsilon}}\right)\right)^\frac{1}{3}-\frac{nN_j}{n_0^2q_1q_2}\\
    F''_3(z)=&\eta_j'\frac{2}{3}\left(\frac{N_jN}{(n_0q_1)^3z^5}\left(\frac{n_0q_1t}{2\pi h_1N}+\frac{Nu_1}{Ht^{1-\varepsilon}}\right)\right)^\frac{1}{3}-\eta_j'\frac{2}{3}\left(\frac{N_jN}{(n_0q_2)^3z^5}\left(\frac{n_0q_2t}{2\pi h_2N}+\frac{Nu_2}{Ht^{1-\varepsilon}}\right)\right)^\frac{1}{3}\\
    F_3^{(j)}(z)\ll&_j (N_jN)^{1/3}\left(\frac{1}{n_0q_1}\left(\frac{n_0q_1t}{2\pi h_1N}+\frac{Nu_1}{Ht^{1-\varepsilon}}\right)^\frac{1}{3}-\frac{1}{n_0q_2}\left(\frac{n_0q_2t}{2\pi h_2N}+\frac{Nu_2}{Ht^{1-\varepsilon}}\right)^\frac{1}{3}\right)
\end{align*}
for any $j\geq2$. Computing the stationary phase point $F_3'(z_0(q_1,q_2))=0$, we get \begin{align*}
    z_0(n_0q_1,n_0q_2)=\frac{\sqrt{N}}{n^{3/2}N_j}\left(-\eta_j' n_0q_2\left(\frac{n_0q_1t}{2\pi h_1N}+\frac{Nu_1}{Ht^{1-\varepsilon}}\right)^{1/3}+\eta_j'n_0q_1\left(\frac{n_0q_2t}{2\pi h_2N}+\frac{Nu_2}{Ht^{1-\varepsilon}}\right)^{1/3}\right)^{3/2}
\end{align*}
and \begin{align*}
    &F_3(z_0(n_0q_1,n_0q_2))=\frac{2nN_jz_0}{n_0^2q_1q_2}\nonumber\\
    =&2\sqrt{\frac{N}{n}}\left(-\eta_j'\left(\frac{q_2t}{2\pi h_1q_1N}\left(1+\frac{2\pi h_1N^2u_1}{n_0q_1Ht^{2-\varepsilon}}\right)\right)^{1/3}+\eta_j'\left(\frac{q_1t}{2\pi q_2h_2N}\left(1+\frac{2\pi h_2N^2u_2}{n_0q_2Ht^{2-\varepsilon}}\right)\right)^{1/3}\right)^{3/2}.
\end{align*}
Using (\ref{gDerProp}) and $x_{j,0}$ is flat with respect to $z$ and the fact that $nN_j\gg C^2t^\varepsilon$, (1) in Lemma \ref{statlemma} gives us arbitrary savings unless \begin{align*}
    F_3''(z)\sim \frac{nN_j}{C^2}\gg t^\varepsilon.
\end{align*}
Hence (2) in Lemma \ref{statlemma} gives us 
\begin{align*}
    &\mathcal{J}_j(q_1,q_2,\vec{v}\,)\nonumber\\
    =&\frac{C}{\sqrt{nN_j}}(g(n_0q_1,Xx_1)g(n_0q_2,Xx_2))^{2-j} \Phi_2(z_0(n_0q_1,n_0q_2),q_1,q_2; \nu)e\left(F_3(z_0(n_0q_1,n_0q_2))\right)+O\left(t^{-A}\right),
\end{align*}
for any $A>0$, with some flat function $\Phi_2$ supported on $[1/2,5/2]\times\BR^7$, and 
\begin{align*}
    x_j:=x_j(z_0(n_0q_1,n_0q_2))=\left(\frac{2\pi h_j}{n_0q_jt}\right)^{2/3}\frac{Q(N_1z_0(n_0q_1,n_0q_2))^{1/3}}{X}\left(1+\frac{2\pi h_jN^2u}{n_0q_jHt^{2-\varepsilon}}\right)^{-2/3},
\end{align*}
for $j=1,2$.

\subsection{Stationary Phase Analysis in (\ref{S2After2ndPoisson})}\label{sect.yIntegralAnalysis}

Here we analyse the $y$-integral \begin{align*}
    \int_0^\infty\Upsilon(y)\Psi\left(Cy,q_2,q_3\right)e\left(F_3(z_0(Cy,q_2))-F_3(z_0(Cy,q_3))-\frac{q_1'Cy}{n_0nh_1h_2q_0^2q_{1,0}^2q_{2,0}r}\right)dy
\end{align*}
in (\ref{S2After2ndPoisson}), when $|q_1'|\ll Q_1Q_S$ with  
\begin{align*}
    Q_S=\frac{n_0q_0q_{2,0}rC^3t^{2+\varepsilon}}{n_1n(N_jN)^{4/3}}
\end{align*} as defined in (\ref{QsDef}). Recall that \begin{align*}
    G\left(y,z\right)=&G_{h_1,h_2}(y,z)=F_3(z_0(Cy,Cz))\nonumber\\
    =&\sqrt{\frac{2t}{\pi n}}\left(-\eta_j' \left(\frac{z}{h_1y}+\frac{2\pi N^2u_1z}{CHt^{2-\varepsilon}y^2}\right)^{1/3}+\eta_j'\left(\frac{y}{h_2z}+\frac{2\pi N^2u_2y}{CHt^{2-\varepsilon}z^2}\right)^{1/3}\right)^{3/2}.
\end{align*}
Lemma \ref{OGfDer} gives us \begin{align*}
    \frac{d}{dy}G\left(y,\frac{q}{C}\right)=&\eta_j'\sqrt{\frac{t}{2\pi n}}\left(-\eta_j' \left(\frac{q}{h_1Cy}+\frac{2\pi qN^2u_1}{C^2Ht^{2-\varepsilon}y^2}\right)^{1/3}+\eta_j'\left(\frac{Cy}{h_2q}+\frac{2\pi CN^2u_2y}{q^2Ht^{2-\varepsilon}}\right)^{1/3}\right)^{1/2}\\
    &\times \Bigg\{\left(\frac{q}{h_1Cy^4}+\frac{2\pi qN^2u_1}{C^2Ht^{2-\varepsilon}y^5}\right)^{1/3}+\left(\frac{C}{h_2qy^2}+\frac{2\pi CN^2u_2}{q^2Ht^{2-\varepsilon}y^2}\right)^{1/3}\\
    &+\frac{2\pi qN^2u_1}{C^2Ht^{2-\varepsilon}y^3}\left(\frac{q}{h_1Cy}+\frac{2\pi qN^2u_1}{C^2Ht^{2-\varepsilon}y^2}\right)^{-2/3}\Bigg\}.
\end{align*}
Using (\ref{z_0DiffBound}) to bound the derivatives of $\Psi$ (see (\ref{PsiDef}) for the definition of $\Psi$), applying (1) in Lemma \ref{statlemma} on the $y$-integral gives us arbitrary savings unless there exists $y'\in[1-10^{-9},1+10^{-9}]$ such that $z_0(Cy',q_j)\sim1$ for $j=1,2$ and \begin{align*}
    \frac{d}{dy}\Bigg|_{y=y'}\left(G\left(y,\frac{q_2}{C}\right)-G\left(y,\frac{q_3}{C}\right)\right)\ll \frac{C^2N^{2/3}t^\varepsilon}{n^2N_j^{4/3}}.
\end{align*}
Note that for any $y$ such that $z_0(Cy,q)\sim1$, together with the condition $n\gg N_S\gg \frac{CN^{4/3}}{N_j^{2/3}Ht^{1-\varepsilon}}$, we have \begin{align}\label{z0condition}
    &\left|-\eta_j' q\left(\frac{Cty}{2\pi h_1N}+\frac{Nu_1}{Ht^{1-\varepsilon}}\right)^{1/3}+\eta_j'Cy\left(\frac{qt}{2\pi h_2N}+\frac{Nu_2}{Ht^{1-\varepsilon}}\right)^{1/3}\right|\sim\frac{nN_j^{2/3}}{N^{1/3}}\nonumber\\
    \Longrightarrow & \left|\left(\frac{q}{h_1Cy}+\frac{2\pi qN^2u_1}{C^2Ht^{2-\varepsilon}y^2}\right)^{1/3}-\left(\frac{Cy}{h_2q}+\frac{2\pi CN^2u_2y}{q^2Ht^{2-\varepsilon}}\right)^{1/3}\right|\sim \frac{nN_j^{2/3}}{C^{4/3}t^{1/3}}.
\end{align}
Hence in the case where such $y'$ exists, the bound for the derivative above implies \begin{align*}
    &\left|\left(\left(\frac{q_2}{h_1Cy'}+\frac{2\pi q_2N^2u_1}{C^2Ht^{2-\varepsilon}y'^2}\right)^{1/3}-\left(\frac{Cy'}{h_2q_2}+\frac{2\pi CN^2u_2y'}{q_2^2Ht^{2-\varepsilon}}\right)^{1/3}\right)\left(\left(\frac{q_2}{h_1Cy'}+\frac{2\pi q_2N^2u_1}{C^2Ht^{2-\varepsilon}y'^2}\right)^{1/3}\right.\right.\nonumber\\
    &\left.+\left(\frac{Cy'}{h_2q_2}+\frac{2\pi CN^2u_2y'}{q_2^2Ht^{2-\varepsilon}}\right)^{1/3}+\frac{2\pi q_2N^2u_1}{C^2Ht^{2-\varepsilon}y'^2}\left(\frac{q_2}{h_1Cy'}+\frac{2\pi q_2N^2u_1}{C^2Ht^{2-\varepsilon}y'^2}\right)^{-2/3}\right)^2\\
    &\left.-\left(\left(\frac{q_3}{h_1Cy'}+\frac{2\pi q_3N^2u_1}{C^2Ht^{2-\varepsilon}y'^2}\right)^{1/3}-\left(\frac{Cy'}{h_2q_3}+\frac{2\pi CN^2u_2y'}{q_3^2Ht^{2-\varepsilon}}\right)^{1/3}\right)\left(\left(\frac{q_3}{h_1Cy'}+\frac{2\pi q_3N^2u_1}{C^2Ht^{2-\varepsilon}y'^2}\right)^{1/3}\right.\right.\nonumber\\
    &\left.\left.+\left(\frac{Cy'}{h_2q_3}+\frac{2\pi CN^2u_2y'}{q_3^2Ht^{2-\varepsilon}}\right)^{1/3}+\frac{2\pi q_3N^2u_1}{C^2Ht^{2-\varepsilon}y'^2}\left(\frac{q_3}{h_1Cy'}+\frac{2\pi q_3N^2u_1}{C^2Ht^{2-\varepsilon}y'^2}\right)^{-2/3}\right)^2\right|\\
    &\ \ll \frac{C^2N^{2/3}}{n^2N_j^{4/3}}\frac{n(N_jN)^{1/3}}{Ct^{1-\varepsilon}}=\frac{CN}{nN_jt^{1-\varepsilon}}.
\end{align*}
To simplify this condition, observe that the expression inside the absolute value can be expressed as $\mathcal{T}_2-\mathcal{T}_3$, where for some suitable $A, B, C, D$, \begin{align*}
    \mathcal{T}_j:=&\left(q_j^{1/3}A^{1/3}-q_j^{-1/3}\left(B+\frac{C}{q_j}\right)^{1/3}\right)\left(q_j^{1/3}A^{1/3}+q_j^{-1/3}\left(B+\frac{C}{q_j}\right)^{1/3}+q_j^{1/3}D\right)^2\\
    =&q_j(A+2A^{2/3}D+A^{1/3}D^2)+q_j^{1/3}(A^{2/3}-D^2)\left(B+\frac{C}{q_j}\right)^{1/3}\\
    &-\frac{1}{q_j^{1/3}}(A^{1/3}+2D)\left(B+\frac{C}{q_j}\right)^{2/3}-\frac{1}{q_j}\left(B+\frac{C}{q_j}\right).
\end{align*}
Applying the Mean Value theorem, we can factor out $q_2-q_3$ from each pair of the difference from each term in $\mathcal{T}_2-\mathcal{T}_3$, and simplifying gives \begin{align}\label{q2q30close}
    |q_2-q_3|\ll \frac{C^3t^\varepsilon}{nN_j}.
\end{align}

Moreover, (\ref{z0condition}) gives us \begin{align*}
    \left|y^{2/3}-\left(\frac{h_2q_2^2}{h_1C^2}\right)^{1/3}\right|\sim\frac{nN_j^{2/3}}{CN^{1/3}}+O\left(\frac{N}{Ht^{1-\varepsilon}}\right)\Longrightarrow\left|y-\frac{\sqrt{h_2}q_2}{\sqrt{h_1}C}\right|\sim \frac{nN_j^{2/3}}{CN^{1/3}}.
\end{align*}
Here we used $|n|\gg\frac{CN^{4/3}}{N_j^{2/3}Ht^{1-\varepsilon}}$. Hence the $y$-integral is bounded by \begin{align*}
    \ll \frac{nN_j^{2/3}t^\varepsilon}{CN^{1/3}}\delta\left(|q_2-q_3|\ll \frac{C^3t^\varepsilon}{nN_j}\right)+t^{-A}
\end{align*}
for any $A>0$.

\subsection{Integral Analysis involving \texorpdfstring{$\mathrm{GL}(3)$}{GL(3)} Bessel Functions}

\begin{Lemma}\label{OGfDer}
    Let $\eta=\pm1$ and let $h_1,h_2,A_1,A_2,x,y$ be such that $\eta\left(\frac{x}{h_2y}\right)^{1/3}\geq \eta\left(\frac{y}{h_1x}\right)^{1/3}$. Let $g$ be \begin{align*}
        g\left(x,y,h_1,h_2,A_1,A_2\right)=&\left(-\eta \left(\frac{y}{h_1x}+\frac{A_1y}{x^2}\right)^{1/3}+\eta\left(\frac{x}{h_2y}+\frac{A_2x}{y^2}\right)^{1/3}\right)^{3/2},
    \end{align*}
    we have \begin{align*}
        g(x,y,h_1,h_2,A_1,A_2)=(-1)^{3/2}g(y,x,h_2,h_1,A_2,A_1).
    \end{align*}
    Differentiating yields \begin{align*}
        &g_x(x,y,h_1,h_2,A_1,A_2)=(-1)^{3/2}g_y(y,x,h_2,h_1,A_2,A_1)\\
        =&\frac{\eta}{2x}\left(-\eta \left(\frac{y}{h_1x}+\frac{A_1y}{x^2}\right)^{1/3}+\eta\left(\frac{x}{h_2y}+\frac{A_2x}{y^2}\right)^{1/3}\right)^{1/2}\\
        &\times \left( \left(\frac{y}{h_1x}+\frac{A_1y}{x^2}\right)^{1/3}+\left(\frac{x}{h_2y}+\frac{A_2x}{y^2}\right)^{1/3}+\frac{A_1y}{x^2}\left(\frac{y}{h_1x}+\frac{A_1y}{x^2}\right)^{-2/3}\right).
    \end{align*}
\end{Lemma}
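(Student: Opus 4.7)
The lemma is a pair of identities for the function $g$, and both parts reduce to straightforward calculation; I outline the bookkeeping.

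For the symmetry statement, I will substitute $(x,y,h_1,h_2,A_1,A_2)\mapsto(y,x,h_2,h_1,A_2,A_1)$ directly into the defining formula. Under this swap the two cube-root terms simply exchange roles, so the expression inside the $3/2$-power becomes
\[
-\eta\Bigl(\tfrac{x}{h_2 y}+\tfrac{A_2 x}{y^2}\Bigr)^{1/3}+\eta\Bigl(\tfrac{y}{h_1 x}+\tfrac{A_1 y}{x^2}\Bigr)^{1/3},
\]
which is the negative of the original base. Pulling out the factor $(-1)^{3/2}$ from the $3/2$-power yields the claimed identity. (Note that the hypothesis $\eta(x/(h_2y))^{1/3}\ge \eta(y/(h_1x))^{1/3}$ ensures the original base is non-negative so the $3/2$-power is taken in its principal branch.)

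For the derivative identity, set
\[
a=\tfrac{y}{h_1 x}+\tfrac{A_1 y}{x^2},\qquad b=\tfrac{x}{h_2 y}+\tfrac{A_2 x}{y^2},\qquad u=-\eta a^{1/3}+\eta b^{1/3},
\]
so $g=u^{3/2}$. A direct computation gives $\partial_x b = b/x$, while
\[
\partial_x a = -\tfrac{y}{h_1 x^2}-\tfrac{2A_1 y}{x^3} = -\tfrac{a}{x}-\tfrac{A_1 y}{x^3},
\]
after regrouping one copy of $A_1 y/x^2$ back into $a$. Chain rule then gives
\[
u_x = \tfrac{\eta}{3}a^{-2/3}\!\left(\tfrac{a}{x}+\tfrac{A_1 y}{x^3}\right)+\tfrac{\eta}{3}b^{-2/3}\cdot\tfrac{b}{x} = \tfrac{\eta}{3x}\Bigl(a^{1/3}+b^{1/3}+\tfrac{A_1 y}{x^2}a^{-2/3}\Bigr),
\]
and multiplying by $\tfrac{3}{2}u^{1/2}$ produces the displayed formula for $g_x$. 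The cross-relation $g_x(x,y,h_1,h_2,A_1,A_2)=(-1)^{3/2}g_y(y,x,h_2,h_1,A_2,A_1)$ is then obtained immediately by differentiating the symmetry identity in the first part with respect to $x$.

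There is no genuine obstacle here; the only mildly delicate point is the regrouping $\partial_x a = -a/x - A_1 y/x^3$, which is what produces the extra third term $\tfrac{A_1 y}{x^2}a^{-2/3}$ in the derivative and which explains the asymmetry between the $a$-side and the $b$-side of the final expression (the $b$-side has no analogous correction because $\partial_x b = b/x$ exactly).
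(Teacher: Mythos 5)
Your computation is correct and is exactly the direct chain-rule and symmetry verification the paper intends; the lemma is stated in the appendix without proof precisely because it amounts to this routine calculation. Your regrouping $\partial_x a=-a/x-A_1y/x^3$ moreover shows that the factor $\left(\frac{y}{h_1x}+\frac{A_1y}{x}\right)^{1/3}$ in the paper's displayed formula should read $\left(\frac{y}{h_1x}+\frac{A_1y}{x^2}\right)^{1/3}$, i.e.\ your version of that term is the correct one.
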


\begin{Lemma}\label{fDer}
    Let $\eta=\pm1$ and let $h_1,h_2,x,y$ be such that $\eta\left(\frac{x}{h_2y}\right)^{1/3}\geq \eta\left(\frac{y}{h_1x}\right)^{1/3}$. Let $f$ be \begin{align*}
        f\left(x,y,h_1,h_2\right)=&\left(-\eta \left(\frac{y}{h_1x}\right)^{1/3}+\eta\left(\frac{x}{h_2y}\right)^{1/3}\right)^{3/2},
    \end{align*}
    we have \begin{align*}
        f(x,y,h_1,h_2)=(-1)^{3/2}f(y,x,h_2,h_1).
    \end{align*}
    Differentiating yields \begin{align*}
        f_x(x,y,h_1,h_2)=&(-1)^{3/2}f_y(y,x,h_2,h_1)\\
        =&\frac{\eta}{2}\left(-\eta \left(\frac{y}{h_1x}\right)^{1/3}+\eta\left(\frac{x}{h_2y}\right)^{1/3}\right)^{1/2}\left( \left(\frac{y}{h_1x^4}\right)^{1/3}+\frac{1}{(h_2x^2y)^{1/3}}\right)
    \end{align*}
    \begin{align*}
        f_{xx}(x,y,h_1,h_2)=&(-1)^{3/2}f_{yy}(y,x,h_2,h_1)\\
        =&\frac{1}{12}\left(-\eta\left(\frac{y}{h_1x}\right)^{1/3}+\eta\left(\frac{x}{h_2y}\right)^{1/3}\right)^{-1/2}\left(\frac{9y^{2/3}}{h_1^{2/3}x^{8/3}}-\frac{2}{(h_1h_2)^{1/3}x^2}-\frac{3}{h_2^{2/3}x^{4/3}y^{2/3}}\right),
    \end{align*}
    \begin{align*}
        f_{xy}(x,y,h_1,h_2)=-\frac{1}{12}\left(-\eta\left(\frac{y}{h_1x}\right)^{1/3}+\eta\left(\frac{x}{h_2y}\right)^{1/3}\right)^{-1/2}\left(\frac{3}{(h_1^2x^5y)^{1/3}}+\frac{3}{(h_2^2xy^5)^{1/3}}-\frac{2}{(h_1h_2)^{1/3}xy}\right)
    \end{align*}
    and \begin{align*}
        f_{xxy}(x,y,h_1,h_2)=&\frac{\eta}{72}\left(-\eta\left(\frac{y}{h_1x}\right)^{1/3}+\eta\left(\frac{x}{h_2y}\right)^{1/3}\right)^{-3/2}\\
        &\times\left(\frac{9}{h_2xy^2}+\frac{43}{h_1^{2/3}h_2^{1/3}x^{7/3}y^{2/3}}-\frac{17}{h_1^{1/3}h_2^{2/3}x^{5/3}y^{4/3}}-\frac{27}{h_1x^3}\right).
    \end{align*}
\end{Lemma}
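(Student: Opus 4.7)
\textbf{Proof proposal for Lemma \ref{fDer}.}

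The key observation is that $f = u^{3/2}$ where
\[
u(x,y,h_1,h_2) := -\eta\left(\tfrac{y}{h_1x}\right)^{1/3}+\eta\left(\tfrac{x}{h_2y}\right)^{1/3},
\]
so every statement reduces to computing partial derivatives of $u$ (which are explicit monomials in fractional powers of $x,y,h_1,h_2$) and then feeding them through the chain rule.

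For the symmetry, swap $(x,h_1)\leftrightarrow(y,h_2)$: the two summands in $u$ interchange up to an overall sign, so $u(y,x,h_2,h_1) = -u(x,y,h_1,h_2)$, whence $f(y,x,h_2,h_1) = (-1)^{3/2} f(x,y,h_1,h_2)$. Differentiating this identity in $x$ on the right (which corresponds to the second slot on the left) yields the claimed identity $f_x(x,y,h_1,h_2)=(-1)^{3/2}f_y(y,x,h_2,h_1)$, and similarly for $f_{xx}/f_{yy}$.

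For the explicit derivatives I plan to compute once and for all
\[
u_x = \tfrac{\eta}{3}\!\left[\left(\tfrac{y}{h_1x^4}\right)^{1/3}+\left(\tfrac{1}{h_2x^2y}\right)^{1/3}\right],\qquad
u_{xx} = -\tfrac{\eta}{9}\!\left[4\left(\tfrac{y}{h_1x^7}\right)^{1/3}+2\left(\tfrac{1}{h_2x^5y}\right)^{1/3}\right],
\]
and analogously $u_y$, $u_{xy}$, $u_{xxy}$ by direct differentiation of the two fractional-power monomials. Then I will expand
\[
f_x = \tfrac{3}{2}u^{1/2}u_x,\quad f_{xx} = \tfrac{3}{4}u^{-1/2}u_x^2+\tfrac{3}{2}u^{1/2}u_{xx} = u^{-1/2}\bigl[\tfrac{3}{4}u_x^2+\tfrac{3}{2}u\,u_{xx}\bigr],
\]
and similarly $f_{xy}$ and $f_{xxy}$. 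The formula for $f_x$ drops out immediately. For $f_{xx}$, the cross-term $u\,u_{xx}$ produces $4/(h_1h_2)^{1/3}x^2$ and $-4/(h_1h_2)^{1/3}x^2$ terms that collapse together with the middle term of $u_x^2$ to give the coefficient $-2$ in front of $1/((h_1h_2)^{1/3}x^2)$, and the remaining monomials assemble into the stated $9y^{2/3}/(h_1^{2/3}x^{8/3}) - 3/(h_2^{2/3}x^{4/3}y^{2/3})$ pattern with prefactor $1/12$; this can be checked as a short bookkeeping exercise. The formula for $f_{xy}$ is completely parallel.

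The main obstacle, and the only step that is not purely mechanical, is $f_{xxy}$: writing $f_{xxy} = \partial_y(u^{-1/2}[\tfrac{3}{4}u_x^2+\tfrac{3}{2}u u_{xx}])$ one must track
\[
-\tfrac{1}{2}u^{-3/2}u_y\bigl[\tfrac{3}{4}u_x^2+\tfrac{3}{2}u u_{xx}\bigr]
\;+\;u^{-1/2}\bigl[\tfrac{3}{2}u_x u_{xy}+\tfrac{3}{2}u_y u_{xx}+\tfrac{3}{2}u\,u_{xxy}\bigr],
\]
and after multiplying through by $u^{3/2}u^{-3/2}=1$ one must show that the entire expression collapses onto a single $u^{-3/2}$ factor times a four-term polynomial in the monomials $1/(h_2xy^2)$, $1/(h_1^{2/3}h_2^{1/3}x^{7/3}y^{3/2})$, $1/(h_1^{1/3}h_2^{2/3}x^{5/3}y^{4/3})$, $1/(h_1x^3)$ with the stated integer coefficients $9,43,-17,-27$ and overall prefactor $\eta/72$. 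The cleanest route is to clear the $u^{-3/2}$ and verify the resulting polynomial identity by expanding each of the five summands above into its constituent monomials (each becomes a small sum of terms of the form $c\cdot h_1^{-a}h_2^{-b}x^{-c}y^{-d}$ with rational exponents) and then collecting by monomial type. No combinatorial magic is needed; the only risk is arithmetic error in tracking the numerical coefficients, and I will double-check by evaluating both sides at a convenient numerical point $(x,y,h_1,h_2,\eta)=(1,1,1,1,+1)$ as a consistency test, and at a second non-symmetric point to guard against accidental coincidences from the $x=y$, $h_1=h_2$ symmetry.
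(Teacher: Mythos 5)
Your proposal is correct and is essentially the paper's (implicit) argument: the lemma is a routine direct-differentiation statement, and writing $f=u^{3/2}$ with $u=\eta\bigl((x/h_2y)^{1/3}-(y/h_1x)^{1/3}\bigr)$, computing $u_x,u_y,u_{xx},u_{xy},u_{xxy}$ and applying the chain rule reproduces all the stated formulas, including the coefficients $9,43,-17,-27$ with prefactor $\eta/72$ in $f_{xxy}$. One remark your planned numerical check will surface: the second term of $f_{xxy}$ as printed, $43/(h_1^{2/3}h_2^{1/3}x^{7/3}y^{3/2})$, is dimensionally inconsistent and should read $43/(h_1^{2/3}h_2^{1/3}x^{7/3}y^{2/3})$ (it arises as $43A^2B/(72x^2y)$ with $A=(y/h_1x)^{1/3}$, $B=(x/h_2y)^{1/3}$), so the discrepancy you would find there is a typo in the statement, not an error in your computation.
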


Define \begin{align*}
    G(x,y)=G_{h_1,h_2}(x,y)=\sqrt{\frac{2t}{\pi n}}f(x,y,h_1,h_2)
\end{align*}
as in the main sections. Then we have the following integral analysis lemmas.

\begin{Lemma}\label{S2IntegralLemma}
    Let $1\leq a,b,\frac{h_1}{H'}, \frac{h_2}{H'}<1+10^{-10}$. Let $\Upsilon\in C_c^\infty([1-10^{-9},1+10^{-9}])$ be fixed and let $W$ be a $X$-inert function such that $W(y,a,b)$ contains the restriction \begin{align}\label{abRestriction}
        \left|\left(\frac{a}{h_1y}\right)^{1/3}-\left(\frac{y}{h_2a}\right)^{1/3}\right|\sim \left|\left(\frac{b}{h_1y}\right)^{1/3}-\left(\frac{y}{h_2b}\right)^{1/3}\right|\sim \frac{nN_j^{2/3}}{C^{4/3}t^{1/3}}.
    \end{align}
    Let $Z>0$ and let $|q'|\gg \frac{X^2}{Z}t^\varepsilon$. Let \begin{align*}
        J=\int_0^\infty\Upsilon(y)W(y,a,b)e\left(G(y,a)-G(y,b)-q'Zy\right)dy
    \end{align*}
    and let $y_0$ be such that \begin{align*}
        G_x(y_0,a)-G_x(y_0,b)=q'Z.
    \end{align*}
    Let $U\gg \sqrt{\frac{nN_j^{2/3}}{CN^{1/3}q'Z}}t^\varepsilon$ and let $W_0\in C_c^\infty([-2,2])$ such that $W_0(x)=1$ for $|x|\leq1$, then we have \begin{align*}
        J=&\int_0^\infty\Upsilon(y)W(y,a,b)W_0\left(\frac{y-y_0}{U}\right)e\left(G(y,a)-G(y,b)-q'Zy\right)dy+O\left(t^{-A}\right)
    \end{align*}
    for any $A>0$. Moreover, we have \begin{align*}
        \frac{d^jy_0}{da^j}, \frac{d^jy_0}{db^j}\ll_j \left(\frac{nN_j^{2/3}}{CN^{1/3}}+\frac{(N_jN)^{1/3}}{Cq'Z}\right)^j.
    \end{align*}
    for any $j\geq1$.
\end{Lemma}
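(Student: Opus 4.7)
\smallskip
\noindent\textbf{Proof plan.} The plan is to treat this as a standard stationary phase localisation problem, with the key inputs coming from Lemma \ref{fDer} and the size restriction (\ref{abRestriction}) that is built into $W(y,a,b)$. Write the phase as
\begin{align*}
\phi(y) = G(y,a) - G(y,b) - q'Zy,
\end{align*}
so that the stationary point $y_0$ solves $\phi'(y_0) = G_x(y_0,a) - G_x(y_0,b) = q'Z$. The first step is to compute $\phi''(y)$ from the formulas for $f_{xx}$ in Lemma \ref{fDer}. Using the normalisations $a,b \sim 1$ and $h_1,h_2 \sim H'$, together with the constraint (\ref{abRestriction}) which forces the bracket $\left(-\eta(y/(h_1 x))^{1/3}+\eta(x/(h_2 y))^{1/3}\right)$ at $x = a$ and $x = b$ to be comparable to $nN_j^{2/3}/(C^{4/3} t^{1/3})$, a direct estimate gives $|\phi''(y)| \gg CN^{1/3} q' Z/(nN_j^{2/3})$ on the support of $\Upsilon(y) W(y,a,b)$; here the lower bound on $|q'|$ ensures that $\phi''$ dominates its competitors coming from the $G_{xx}$ difference alone. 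This identifies the parameter $R$ and inert size $X$ needed to invoke Lemma \ref{statlemma}.

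Second, I would apply part (3) of Lemma \ref{statlemma} with the choice $U \gg (\phi''(y_0))^{-1/2} t^\varepsilon \sim \sqrt{nN_j^{2/3}/(CN^{1/3} q' Z)}\, t^\varepsilon$, which is exactly the hypothesis of the lemma. Since $W(y,a,b)$ is $X$-inert and the compact cut-off $\Upsilon$ is flat, the product is still $X$-inert, so the hypothesis that $Y/X^2 \geq R$ (for suitable $Y, R$) can be verified directly from the ratio of $\phi''$ to the square of the inert parameter. The conclusion is exactly the claimed localisation
\begin{align*}
J = \int_0^\infty \Upsilon(y) W(y,a,b) W_0\!\left(\frac{y-y_0}{U}\right) e\!\left(G(y,a)-G(y,b)-q'Zy\right) dy + O(t^{-A}).
\end{align*}

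The third step is to bound the $a$- and $b$-derivatives of $y_0$. I would use the implicit function theorem applied to $\psi(y,a,b) = G_x(y,a) - G_x(y,b) - q'Z = 0$: differentiating once yields $\partial_a y_0 = -G_{xy}(y_0,a)/\phi''(y_0)$, and higher derivatives follow by Faà di Bruno/induction. Plugging in the Lemma \ref{fDer} formulas together with the restriction (\ref{abRestriction}) gives $|G_{xy}(y_0,a)| \ll \sqrt{t/n}\,(N_jN)^{1/3}/C^{4/3}$ (after cancellations in the bracket), and combining with the lower bound on $\phi''(y_0)$ produces the two competing scales $nN_j^{2/3}/(CN^{1/3})$ and $(N_jN)^{1/3}/(Cq'Z)$ that appear in the claimed bound. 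The higher-order case proceeds by induction: each additional differentiation either hits $G_{xy}$ (gaining a factor bounded in the same way) or pulls an extra $1/\phi''(y_0)$ through the chain rule, yielding the $j$-th power.

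The main obstacle, and the step that needs the most care, is verifying the lower bound $|\phi''(y)| \gg CN^{1/3} q'Z/(nN_j^{2/3})$ uniformly on the support of $W$: the formula for $f_{xx}$ has three terms with different signs, so one must check that the restriction (\ref{abRestriction}) prevents unexpected cancellation between $G_{xx}(y,a)$ and $G_{xx}(y,b)$, and that the contribution from $q'Z$ is what ultimately forces non-degeneracy when $|q'| \gg X^2/Z \cdot t^\varepsilon$. Once this is established, both the stationary phase localisation and the implicit function theorem bounds become routine, and the implied constants in the inert-function bounds for $W_0((y-y_0)/U)$ depend only on $j$ as required.
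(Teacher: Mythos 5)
Your overall strategy (localisation via part (3) of Lemma \ref{statlemma} plus implicit differentiation of the stationarity equation for $y_0$) is the same as the paper's, but there is a genuine gap at the crucial quantitative step, namely the lower bound $|\phi''(y)|\gg \frac{CN^{1/3}}{nN_j^{2/3}}q'Z$. Since the term $q'Zy$ is linear in $y$, it contributes nothing to $\phi''$: one has $\phi''(y)=G_{xx}(y,a)-G_{xx}(y,b)$, which contains no $q'Z$ at all and degenerates as $a\to b$ (it vanishes identically when $a=b$), a configuration that the restriction \eqref{abRestriction} does not exclude. So no ``direct estimate'' from \eqref{abRestriction} and the hypothesis $|q'|\gg X^2 t^\varepsilon/Z$ can produce this bound, and your proposed resolution of the ``main obstacle'' (checking that the three terms of $f_{xx}$ and the difference $G_{xx}(y,a)-G_{xx}(y,b)$ do not cancel, with ``the contribution from $q'Z$'' forcing non-degeneracy) cannot work for the same reason. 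The missing mechanism, which is what the paper actually does, is a two-step argument: first, part (1) of Lemma \ref{statlemma} (using $|q'|\gg X^2t^\varepsilon/Z$) reduces to the regime where $G_x(y',a)-G_x(y',b)\sim q'Z$ for some $y'$ in the support, and the mean value theorem in the \emph{second} argument (with $G_{xy}$ of size $N^{2/3}/(nN_j^{1/3})$, controlled via \eqref{abRestriction}) forces the separation $|a-b|\sim \frac{nN_j^{1/3}}{N^{2/3}}q'Z$, exactly as in \eqref{y_1y_20close}; second, a further mean value theorem applied to $x\mapsto G_{xx}(y,x)$, together with a \emph{lower} bound on the third derivative $G_{xxy}$ (here \eqref{abRestriction} is used to control the bracket raised to the power $-3/2$ and to check that the accompanying rational factor does not vanish), converts this separation into $\phi''\gg \frac{CN}{n^2N_j}|a-b|\sim\frac{CN^{1/3}}{nN_j^{2/3}}q'Z$.

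The same omission undermines your third step: the bound on $d^jy_0/da^j$ also requires the relation $|a-b|\sim\frac{nN_j^{1/3}}{N^{2/3}}q'Z$, since in the identity $\partial_a y_0=G_{xy}(y_0,a)/\bigl(G_{xx}(y_0,b)-G_{xx}(y_0,a)\bigr)$ the denominator is again estimated from below by the mean value theorem through $G_{xxy}$ times $|a-b|$, and the two competing scales $\frac{nN_j^{2/3}}{CN^{1/3}}$ and $\frac{(N_jN)^{1/3}}{Cq'Z}$ emerge from that computation (your stated upper bound for $G_{xy}$ also does not match the size $N^{2/3}/(nN_j^{1/3})$ that the explicit formulas of Lemma \ref{fDer} give under \eqref{abRestriction}). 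Once the $|a-b|\sim q'Z$ relation and the mean-value-theorem step are supplied, your application of Lemma \ref{statlemma}(3) with $U\gg(\phi''(y_0))^{-1/2}t^\varepsilon$ and the induction for higher derivatives go through as you describe.
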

\begin{proof}
    Since $W$ is $X$-inert and $|q'|\gg \frac{X^2}{Z}t^\varepsilon$, (1) in Lemma \ref{statlemma} gives us arbitrary savings unless there exists $1\leq y'<1+10^{-10}$ such that \begin{align*}
        \frac{d}{dy}\Bigg|_{y=y'}\left(G(y,a)-G(y,b)\right)=G_x(y',a)-G_x(y',b)\sim q'Z.
    \end{align*}
    Applying the same analysis that concludes (\ref{y_1y_20close}) yields \begin{align}\label{abclose}
        |a-b|\sim\frac{nN_j^{1/3}}{N^{2/3}} q'Z.
    \end{align}
    Differentiating once more with Lemma \ref{fDer} yields \begin{align*}
        \frac{d^2}{dy^2}\left(G(y,a)-G(y,b)\right)=\sqrt{\frac{t}{72\pi n}}(F(a)-F(b)),
    \end{align*}
    where \begin{align*}
        F(x)=G_{xx}(y,x)=\left(-\eta\left(\frac{x}{h_1y}\right)^{1/3}+\eta\left(\frac{y}{h_2x}\right)^{1/3}\right)^{-1/2}\left(\frac{9x^{2/3}}{h_1^{2/3}y^{8/3}}-\frac{2}{(h_1h_2)^{1/3}y^2}-\frac{3}{h_2^{2/3}x^{2/3}y^{4/3}}\right).
    \end{align*}
    We want to compute the bounds for the derivatives of $G$ in order to apply stationary phase analysis in Lemma \ref{statlemma}. Notice that \begin{align*}
    F'(x)=&\frac{\eta}{6}\left(-\eta\left(\frac{x}{h_1y}\right)^{1/3}+\eta\left(\frac{y}{h_2x}\right)^{1/3}\right)^{-3/2}\\
    &\times \left(\frac{43}{(h_1^2h_2x^2y^7)^{1/3}}+\frac{9}{h_2x^2y}-\frac{27}{h_1y^3}-\frac{17}{(h_1h_2^2x^4y^5)^{1/3}}\right).
\end{align*}
Moreover, $\left(\frac{x}{h_1y}\right)^{1/3}-\left(\frac{y}{h_2x}\right)^{1/3}$ is increasing w.r.t. $x$ for $x,y>0$ together with (\ref{abRestriction}) gives us \begin{align*}
    \min_{x=a,b}\left\{\left|\left(\frac{x}{h_1y}\right)^{1/3}-\left(\frac{y}{h_2x}\right)^{1/3}\right|\right\}\leq\left|\left(\frac{x}{h_1y}\right)^{1/3}-\left(\frac{y}{h_2x}\right)^{1/3}\right|\leq \max_{x=a,b}\left\{\left|\left(\frac{x}{h_1y}\right)^{1/3}-\left(\frac{y}{h_2x}\right)^{1/3}\right|\right\}
\end{align*}
\begin{align}\label{temp2}
    \Longrightarrow \left|\left(\frac{x}{h_1y}\right)^{1/3}-\left(\frac{y}{h_2x}\right)^{1/3}\right|\sim \frac{nN_j^{2/3}}{C^{4/3}t^{1/3}}
\end{align}
for any $x$ in between $a,b$. Together with $1\leq a,b,\frac{h_1}{H'}, \frac{h_2}{H'}<1+10^{-10}, 1-10^{-9}\leq y\leq 1+10^{-9}$ by the assumption and the support of $\Upsilon$, we get \begin{align*}
    F'(x)\gg \left(\frac{nN_j^{2/3}}{C^{4/3}t^{1/3}}\right)^{-3/2}\left(\frac{Ct}{N}\right)^{-1}=\frac{CN}{n^{3/2}N_j\sqrt{t}}
\end{align*}
for $x$ in between $a,b$. Hence the mean value theorem yields \begin{align*}
    \frac{d^2}{dy^2}\left(G(y,a)-G(y,b)\right)=&\sqrt{\frac{t}{72\pi n}}\left(F(a)-F(b)\right)\gg\min_{\min\{a,b\}\leq x\leq \max\{a,b\}}\sqrt{\frac{t}{72\pi n}}|F'(x)||a-b|\\
    \gg& \frac{CN}{n^2N_j}|a-b|\sim \frac{CN^{1/3}}{nN_j^{2/3}}q'Z.
\end{align*}
With similar analysis we obtain \begin{align*}
    \frac{d^j}{dy^j}\left(G(y,a)-G(y,b)\right)\ll_j\left(\frac{CN^{1/3}}{nN_j^{2/3}}\right)^{j-1}q'Z
\end{align*}
for any $j\geq2$. Let $y_0$ be as in the statement, i.e. \begin{align}\label{temp}
    \frac{d}{dy}\Bigg|_{y=y_0}\left(G(y,a)-G(y,b)\right)=G_x(y_0,a)-G_x(y_0,b)=q'Z.
\end{align}
Then (3) in Lemma \ref{statlemma} yields the first statement.

For the second statement about the derivatives of $y_0$, notice that differentiating (\ref{temp}) yields, \begin{align*}
    \frac{dy_0}{da}=&\frac{G_{xy}(y_0,a)}{G_{xx}(y_0,b)-G_{xx}(y_0,a)}.
\end{align*}
Mean value theorem then implies that there exists some $x'$ in between $a$ and $b$ such that \begin{align*}
    G_{xx}(y_0,b)-G_{xx}(y_0,a)=G_{xxy}(y_0,x')(b-a),
\end{align*}
with $G_{xxy}(y_0,x')$ computed with Lemma \ref{fDer} as $F$ above. Then the restriction (\ref{abclose}) and (\ref{temp2}) together with $1\leq x',a,b,\frac{h_1}{H'},\frac{h_2}{H'}<1+10^{-10}, 1-10^{-9}\leq y_0\leq 1+10^{-9}$ gives us \begin{align*}
    \frac{dy_0}{da}=&\frac{G_{xy}(y_0,a)}{(b-a)G_{xxy}(y_0,x')}\\
    =&\frac{6\eta}{a-b}\left(-\eta\left(\frac{a}{h_1y_0}\right)^{1/3}+\eta\left(\frac{y_0}{h_2a}\right)^{1/3}\right)^{-1/2}\left(\frac{3}{(h_1^2ay_0^5)^{1/3}}+\frac{3}{(h_2^2a^5y_0)^{1/3}}-\frac{2}{(h_1h_2)^{1/3}ay_0}\right)\\
    &\times \left(-\eta\left(\frac{x'}{h_1y_0}\right)^{1/3}+\eta\left(\frac{y_0}{h_2x'}\right)^{1/3}\right)^{3/2}\left(\frac{43}{(h_1^2h_2x'^2y^7)^{1/3}}+\frac{9}{h_2x'^2y_0}-\frac{27}{h_1y_0^3}-\frac{17}{(h_1h_2^2x'^4y_0^5)^{1/3}}\right)^{-1}\\
    \sim &\frac{N^{2/3}}{nN_j^{1/3}q'Z} \frac{nN_j^{2/3}}{C^{4/3}t^{1/3}}\left(\frac{Ct}{N}\right)^{1/3}=\frac{(N_jN)^{1/3}}{Cq'Z}.
\end{align*}
Performing similar analysis yields \begin{align*}
    \frac{d^jy_0}{da^j}, \frac{d^jy_0}{db^j}\ll_j \left(\frac{nN_j^{2/3}}{CN^{1/3}}\right)^j\left(1+\frac{N^{2/3}}{nN_j^{1/3}q'Z}\right)^j=\left(\frac{nN_j^{2/3}}{CN^{1/3}}+\frac{(N_jN)^{1/3}}{Cq'Z}\right)^j.
\end{align*}
\end{proof}

\section{Infinite Cauchy-Schwarz}

\subsection{Character Sum Analysis}

Let $a,b_1,b_2,b_3,c,q_1,q_2$ be integers such that $(q_1q_2,c)=1$ and $c\nmid b_2, b_3$ with $c$ being square-free. Let \begin{align*}
    \mathcal{C}(a,b_1,b_2,b_3,c,q_1,q_2)=\sumstar_{\gamma\bmod c}e\left(\frac{a\gamma}{c}\right)S(b_1-\overline{q_1}b_2+\overline{\gamma}b_2,b_3(\gamma-q_1);c)S(b_1-\overline{q_2}b_2+\overline{\gamma}b_2,b_3(\gamma-q_2);c).
\end{align*}

\begin{Lemma}\label{CharSumLemmaFirstCase}
    If $q_1\equiv q_2\bmod c$ or $b_1\pm b_2(\overline{q_1}-\overline{q_2})\equiv 0 \bmod c$ or $b_1-\overline{q_1}b_2\equiv 0\bmod c$ or $b_1-\overline{q_2}b_2\equiv 0\bmod c$, then we have \begin{align*}
        \mathcal{C}(a,b_1,b_2,b_3,c,q_1,q_2)\ll c^{2+\varepsilon}.
    \end{align*}
    Otherwise, we have \begin{align*}
        \mathcal{C}(a,b_1,b_2,b_3,c,q_1,q_2)\ll c^{3/2+\varepsilon}.
    \end{align*}
\end{Lemma}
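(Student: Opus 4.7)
The plan is to recognize $\mathcal{C}$ as a complete three-variable exponential sum modulo $c$ and extract square-root cancellation in all three variables via Deligne--Weil bounds. By the twisted multiplicativity of Kloosterman sums together with the Chinese remainder theorem, it suffices to prove the lemma when $c=p$ is prime; the prime-power case follows by standard modifications. For $c=p$, opening both Kloosterman factors and reordering the summations gives
\begin{equation*}
\mathcal{C} = \sumstar_{y, x_1, x_2 \bmod p} e\!\left(\frac{f(y,x_1,x_2)}{p}\right),
\end{equation*}
where
\begin{equation*}
f(y,x_1,x_2) = ay + C_1 x_1 + C_2 x_2 + b_2\overline{y}(x_1+x_2) + b_3 y(\overline{x_1}+\overline{x_2}) - b_3 q_1 \overline{x_1} - b_3 q_2 \overline{x_2},
\end{equation*}
and $C_i = b_1 - \overline{q_i} b_2$. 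This is a complete three-variable sum on $(\BF_p^\times)^3$ with Laurent polynomial phase; its Newton polytope $\Delta$ at infinity is the convex hull of the nine monomial exponents $(1,0,0)$, $(0,1,0)$, $(0,0,1)$, $(-1,1,0)$, $(-1,0,1)$, $(1,-1,0)$, $(1,0,-1)$, $(0,-1,0)$, $(0,0,-1)$, which contains the origin in its interior.

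In the generic case, the Adolphson--Sperber theorem (used elsewhere in the paper and to be invoked also for the analogous Lemma \ref{CharSumLemmaGeneral}) yields $|\mathcal{C}| \ll_\varepsilon p^{3/2+\varepsilon}$, provided $f$ is non-degenerate with respect to $\Delta$ in the sense that for every face $\tau$ of $\Delta$ not containing the origin, the face polynomial $f_\tau$ has no singular critical point on $(\overline{\BF}_p^{\,\times})^3$. The claimed $c^{3/2+\varepsilon}$ bound then follows by twisted multiplicativity and CRT.

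The main obstacle, and the most delicate part of the argument, is to determine precisely when non-degeneracy fails and to verify that these failures coincide with the four stated conditions; in each such case a separate argument must produce the weaker bound $c^{2+\varepsilon}$. The case $q_1 \equiv q_2 \bmod c$ is immediate: the two Kloosterman sums coincide, so the Weil bound and a trivial $y$-summation give $|\mathcal{C}| \leq c \cdot (2\sqrt{c})^2 \ll c^{2+\varepsilon}$. The conditions $b_1 \equiv \overline{q_i} b_2 \bmod c$ make the vertex $C_i x_i$ disappear from $\Delta$, so that an associated face polynomial acquires a singular critical point; a change of variable reduces the corresponding Kloosterman factor to a Ramanujan or Gauss sum, and the remaining two-variable sum is estimated trivially to yield $c^{2+\varepsilon}$. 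Finally, the conditions $b_1 \pm b_2(\overline{q_1} - \overline{q_2}) \equiv 0 \bmod c$ reflect a resonance under which the first arguments of the two Kloosterman factors become proportional on a particular face; the substitution $x_2 = \lambda x_1$, with $\lambda$ chosen from the resonance relation, collapses the phase to a genuinely two-variable complete sum bounded by $c$ via Weil, after which summation in the remaining variable yields the desired bound. Carrying out the face-by-face non-degeneracy analysis on the nine-vertex polytope $\Delta$, and verifying that no further hidden degeneracies occur, is what I anticipate as the principal technical difficulty.
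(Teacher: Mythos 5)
Your proposal follows essentially the same route as the paper's own proof: opening both Kloosterman sums produces exactly the three-variable Laurent phase the authors write down, whose nine-vertex Newton polyhedron (the paper's Figure 1) is fed into the Adolphson--Sperber non-degeneracy criterion to obtain $c^{3/2+\varepsilon}$ in the generic case, while the listed degenerate conditions are disposed of by the Weil bound to get $c^{2+\varepsilon}$ (the paper does this uniformly via a gcd sum over $\gamma$ rather than your case-by-case discussion, and it leaves the face-by-face verification at the same ``one can check'' level of detail that you flag as the remaining technical work). The only cosmetic difference is your explicit CRT reduction to prime modulus, which the paper leaves implicit.
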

\begin{proof}
    Write $d_j(\gamma)=gcd(b_1-\overline{q_j}b_2+\overline{\gamma}b_2,b_3(\gamma-q_j),c)$ for $j=3,4$. For the first statement, applying Weil bound on the two Kloosterman sums yields \begin{align*}
        \mathcal{C}(a,b_1,b_2,b_3,c,q_1,q_2)\ll c^{1+\varepsilon}\sumstar_{\gamma\bmod c}\sqrt{(d_3(\gamma)d_4(\gamma))}\ll c^{2+\varepsilon}.
    \end{align*}
    
    For the second statement, we apply the method in \cite{Adolphson-Sperber}. Consider the Newton polyhedron $\Delta(f)$ of \begin{align*}
        f(x,y,z)=az+x(b_1-\overline{q_1}b_2)+xz^{-1}b_2+x^{-1}zb_3-x^{-1}b_3q_1+y(b_1-\overline{q_2}b_2)+yz^{-1}b_2+y^{-1}zb_3-y^{-1}b_3q_2.
    \end{align*}
    We have to separate into two cases.
    \begin{enumerate}
        \item $a\equiv 0\bmod c$, other coefficients of $f$ are not $0\bmod c$
        \item No coefficients of $f$ is $0\bmod c$.
    \end{enumerate}
    The Newton polyhedron $\Delta(f)$ for case (2) is illustrated in Figure \ref{fig1}. One can check that the locus of $\partial f_{\sigma} / \partial x=\partial f_{\sigma} / \partial y=\partial f_{\sigma} / \partial z=0$ is empty in $\left(\left(\mathbb{Z}/c\mathbb{Z}\right)^{\times}\right)^{3}$ for any face $\sigma$ in the Newton polyhedron $\Delta(f)$ that corresponds to any of the three cases unless $q_1\equiv q_2\bmod c$ or $b_1\pm b_2(\overline{q_1}-\overline{q_2})\equiv 0 \bmod c$.
\end{proof}

In general, we can extend the above lemma to the following setting. Let $\nu\geq0$ be an integer and let $I_\nu\subset \mathcal{P}(\{1,2,...,2^\nu\})$ be the set defined by \begin{align*}
    I_\nu=&\left\{\{2j+1,2j+2\}: 0\leq j\leq 2^{\nu-1}-1\right\}\\
    &\bigcup \left\{\{2^k(1+4j), 2^k(3+4j):j, k\geq 0, 2^k(3+4j)\leq 2^\nu\right\}\backslash\left\{\{2^{\nu-1},2^\nu\}\right\}.
\end{align*}
Let $J_\nu=\{(j,S):1\leq j\leq 2^{\nu}, S\in I_\nu \text{ and } j\in S\}$. Let $a_1,...,a_{2^{\nu}},a_S$ $(S\in I_\nu)$, $b_1, b_2, b_3, c, q_1, q_2$ be integers. Write $a_{I_\nu}=\{a_1,...,a_{2^{\nu}},a_S:S\in I_\nu\}$ and define \begin{align*}
    &\mathcal{C}_{\nu}(a_{I_\nu}, b_1, b_2, b_3, c, q_1, q_2)\\
    =&\mathop{\sumx\cdots\sumx}_{\{\alpha_{j,S}: (j,S)\in J_\nu\}, \beta_1,\beta_2,\gamma_1,...,\gamma_{2^{\nu}},\{\gamma_S: S\in I_\nu\}\bmod c}e\left(\frac{f_\nu(\alpha_{J_\nu},\beta_1,\beta_2,\gamma_{I_\nu})}{c}\right)
\end{align*}
with \begin{align*}
    f_\nu(\alpha_{J_\nu},\beta_1,\beta_2,\gamma_{I_\nu})=&\sum_{j=1}^{2^{\nu}}a_j\gamma_j +\sum_{S\in I_\nu}a_S\gamma_S+\sum_{(j,S)\in J_\nu}\left(\alpha_{j,S}(b_1-\overline{\gamma_S}b_2+\overline{\gamma_j}b_2)+\overline{\alpha_{j,S}}(\gamma_j-\gamma_S)b_3)\right)\\
    &+\beta_1(b_1-\overline{q_1}b_2+\overline{\gamma_{2^{\nu-1}}}b_2)+\overline{\beta_1}(\gamma_{2^{\nu-1}}-q_1)b_3)\\
    &+\beta_2(b_1-\overline{q_2}b_2+\overline{\gamma_{2^{\nu}}}b_2)+\overline{\beta_2}(\gamma_{2^{\nu}}-q_2)b_3),
\end{align*}
and the notation $\alpha_{J_\nu}=\{\alpha_{j,S}:(j,S)\in J_\nu\}$ and $\gamma_{I_\nu} = \{\gamma_1,..,\gamma_{2^{\nu}},\gamma_S:S\in I_\nu\}$.

\begin{Lemma}\label{CharSumLemmaGeneral}
    Notations as above. Let $a_1,...,a_{2^{\nu}},a_S$ $(S\in I_\nu)$, $b_1, b_2, b_3, c, q_1, q_2$ be integers such that $(q_1q_2,c)=1$ and $c\nmid b_2,b_3$. If $q_1\equiv q_2\bmod c$ or $b_1\pm b_2(\overline{q_1}-\overline{q_2})\equiv 0 \bmod c$ or $b_1-\overline{q_1}b_2\equiv 0\bmod c$ or $b_1-\overline{q_2}b_2\equiv 0\bmod c$, then we have \begin{align*}
        \mathcal{C}_\nu(a_{I_\nu}, b_1, b_2, b_3, c, q_1, q_2)\ll c^{2^{\nu+1}+\varepsilon}.
    \end{align*}
    Otherwise, we have \begin{align*}
        \mathcal{C}_\nu(a_{I_\nu}, b_1, b_2, b_3, c, q_1, q_2)\ll c^{2^{\nu+1}-1/2+\varepsilon}.
    \end{align*}
\end{Lemma}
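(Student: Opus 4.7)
My plan is to mirror the two-track approach of Lemma~\ref{CharSumLemmaFirstCase}: a Weil-based estimate handles the four degenerate congruences, while the Adolphson--Sperber machinery from \cite{Adolphson-Sperber} handles the generic case via a Newton polyhedron analysis.

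For the trivial-type bound $c^{2^{\nu+1}+\varepsilon}$ under any of the four degenerate congruences, I would apply the Weil bound to each of the $2|I_\nu|+2$ inner Kloosterman-type sums --- one for every $(j,S)\in J_\nu$, plus those over $\beta_1,\beta_2$ --- and then peel off the remaining $\gamma_j$ and $\gamma_S$ summations one at a time, in the order given by the pairing tree of $I_\nu$, absorbing the resulting $\sqrt{\gcd(\cdot,c)}$ factors via the standard divisor estimate $\sum_{\gamma\bmod c}\sqrt{\gcd(L(\gamma),c)}\ll c^{1+\varepsilon}$ at each stage. The only place the degenerate congruences enter is to identify which linear forms $L(\gamma)$ appearing as Kloosterman parameters vanish identically modulo $c$ at an intermediate step; careful accounting through the recursion then yields the stated exponent.

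For the improved bound $c^{2^{\nu+1}-1/2+\varepsilon}$ in the non-degenerate case, the strategy is to open every Kloosterman sum as a two-variable exponential sum over $(\alpha,\overline{\alpha})$, rewriting $\mathcal{C}_\nu$ as a single multi-variable character sum $\sum_{v\in(\mathbb{F}_c^\times)^N}e(F_\nu(v)/c)$ for an explicit Laurent polynomial $F_\nu$ in $N$ variables. Invoking the theorem of Adolphson--Sperber \cite{Adolphson-Sperber}, under non-degeneracy with respect to the Newton polyhedron $\Delta(F_\nu)$, gives exactly the square-root saving needed over the trivial bound. The step I expect to be the main obstacle is the verification of non-degeneracy itself: for every face $\sigma$ of $\Delta(F_\nu)$ not containing the origin, one must show that the truncated polynomial $(F_\nu)_\sigma$ has no simultaneous critical point in $(\overline{\mathbb{F}_c}^\times)^N$. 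Here $\Delta(F_\nu)$ is built from the base polyhedron of Figure~\ref{fig:my_label} by iterated gluing corresponding to the Cauchy--Schwarz pairings encoded in $I_\nu$, and my plan is to proceed by induction on $\nu$: the critical-point equations on each face should force equalities between the $\gamma_j$ (or $\gamma_S$) that propagate along the pairing graph and ultimately reduce to the base-case obstruction of Lemma~\ref{CharSumLemmaFirstCase}, namely one of the four excluded congruences on $b_1,b_2,q_1,q_2$. The bulk of the work will lie in stratifying the faces of $\Delta(F_\nu)$ by their combinatorial type and verifying this propagation uniformly in $\nu$, so that no face of arbitrarily high dimension escapes the argument.
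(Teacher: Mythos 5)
Your plan for the non-degenerate bound $c^{2^{\nu+1}-1/2+\varepsilon}$ is essentially the paper's argument: open every Kloosterman sum, regard $\mathcal{C}_\nu$ as a complete exponential sum in $2^{\nu+2}-1$ variables attached to a Laurent polynomial $f_\nu$, and invoke Adolphson--Sperber after verifying that no face polynomial of the Newton polyhedron has a critical point in the torus unless one of the four excluded congruences holds; the paper does exactly this (a case analysis according to which of the $a_j,a_S$ vanish mod $c$), so that half of your proposal is on track.

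The degenerate case, however, contains a genuine gap. Count the variables: $\mathcal{C}_\nu$ has $2^{\nu+1}$ Kloosterman-type variables (the $\alpha_{j,S}$ together with $\beta_1,\beta_2$) and $2^{\nu+1}-1$ variables of $\gamma$-type. Applying Weil to every inner Kloosterman sum and then summing the $\gamma$'s with absolute values --- which is all your ``peeling'' step does, since $\sum_{\gamma\bmod c}\sqrt{\gcd(L(\gamma),c)}\ll c^{1+\varepsilon}$ still costs a full factor $c$ per $\gamma$-variable --- yields at best $c^{2^{\nu+1}-1}\cdot c^{2^{\nu}}=c^{3\cdot 2^{\nu}-1+\varepsilon}$, which overshoots the claimed bound $c^{2^{\nu+1}+\varepsilon}$ by $c^{2^{\nu}-1}$ for every $\nu\ge 1$. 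The point is that $c^{2^{\nu+1}}$ is only a single factor $\sqrt{c}$ above square-root cancellation in \emph{all} $2^{\nu+2}-1$ variables, so it cannot be reached by any argument that treats the $\gamma$-sums trivially; your method only matches the target in the base case $\nu=0$, i.e.\ Lemma \ref{CharSumLemmaFirstCase}. The paper instead proves the first bound by induction on $\nu$, using the factorization
\begin{align*}
\mathcal{C}_\nu(a_{I_\nu},b_1,b_2,b_3,c,q_1,q_2)=\sumx_{\gamma\bmod c}e\left(\frac{a\gamma}{c}\right)\mathcal{C}_{\nu-1}(\ldots,\gamma,q_1)\,\mathcal{C}_{\nu-1}(\ldots,\gamma,q_2),
\end{align*}
and feeding in \emph{both} bounds of the induction hypothesis: the strong Adolphson--Sperber bound $c^{2^{\nu}-1/2+\varepsilon}$ for the inner sums at the generic $\gamma$ where the degeneracy conditions $(*_j)$ fail, and the weaker bound only on the sparse set of $\gamma$ where they hold. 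In particular the two statements are intertwined (the degenerate bound at level $\nu$ requires the non-degenerate bound at level $\nu-1$), whereas your proposal treats them as independent arguments. To repair it you would need to adopt this recursive structure, with Lemma \ref{CharSumLemmaFirstCase} as the base case, rather than a one-shot Weil-plus-divisor estimate.
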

\begin{proof}
    We apply the same method as the proof of Lemma \ref{CharSumLemmaFirstCase}.
    
    For the second statement, we apply the method in \cite{Adolphson-Sperber}. Studying all the cases whether each $a_j,a_S$ for $1\leq j\leq 2^\nu, S\in I_\nu$ can be $0\bmod c$ or not, one can check that the locus of $\partial {f_\nu}_{\sigma} / \partial x_j=0$ for all $1\leq j\leq 2^{\nu+2}-1$ is empty in $\left(\left(\mathbb{Z}/c\mathbb{Z}\right)^{\times}\right)^{2^{\nu+2}-1}$ for any face $\sigma$ in the Newton polyhedron $\Delta(f_\nu)$ that corresponds to any of the three cases unless $q_1\equiv q_2\bmod c$ or $b_1\pm b_2(\overline{q_1}-\overline{q_2})\equiv 0 \bmod c$.
    
    For the first statement, we prove them by induction. Note that $\nu=0$ is precisely the statement of Lemma \ref{CharSumLemmaFirstCase}. Notice that \begin{align*}
        \mathcal{C}_\nu(a_{I_\nu}, b_1, b_2, b_3, c, q_1, q_2)=&\sumx_{\gamma_{2^{\nu-2},3(2^{\nu-2})}}e\left(\frac{a_{\{2^{\nu-2},3(2^{\nu-2})\}}\gamma_{\{2^{\nu-2},3(2^{\nu-2})\}}}{c}\right)\\
        &\times \mathcal{C}_{\nu-1}(a_{I_{\nu-1}}, b_1, b_2, b_3, c, \gamma_{2^{\nu-2},3(2^{\nu-2})}, q_1)\\
        &\times \mathcal{C}_{\nu-1}(a_{I_{\nu-1}}', b_1, b_2, b_3, c, \gamma_{2^{\nu-2},3(2^{\nu-2})}, q_2),
    \end{align*}
    with appropriate choices of $a_{I_{\nu-1}}$ and $a_{I_{\nu-1}}'$ given $a_{I_\nu}$. Hence we have \begin{align*}
        \mathcal{C}_\nu(a_{I_\nu}, b_1, b_2, b_3, c, q_1, q_2)\ll&\sumx_{\gamma_{2^{\nu-2},3(2^{\nu-2})}\bmod c}\left|\mathcal{C}_{\nu-1}(a_{I_{\nu-1}}, b_1, b_2, b_3, c, \gamma_{2^{\nu-2},3(2^{\nu-2})}, q_1)\right|\\
        &\times \left|\mathcal{C}_{\nu-1}(a_{I_{\nu-1}}', b_1, b_2, b_3, c, \gamma_{2^{\nu-2},3(2^{\nu-2})}, q_2)\right|.
    \end{align*}
    For $j=1,2$, write $(*_j)$ as the condition $\Big\{\gamma_{2^{\nu-2},3(2^{\nu-2})}\equiv q_j\bmod c$ or $b_1\pm b_2(\overline{\gamma_{2^{\nu-2},3(2^{\nu-2})}}-\overline{q_j})\equiv 0 \bmod c$ or $b_1-\overline{\gamma_{2^{\nu-2},3(2^{\nu-2})}}b_2\equiv 0\bmod c$ or $b_1-\overline{q_j}b_2\equiv 0\bmod c\Big\}$.
    By induction hypothesis, we have \begin{align*}
        &\left|\mathcal{C}_{\nu-1}(a_{I_{\nu-1}}, b_1, b_2, b_3, c, \gamma_{2^{\nu-2},3(2^{\nu-2})}, q_j)\right|\\
        \ll & c^{2^\nu-1/2+\varepsilon}\left(\sqrt{c}\delta(*_j)+1\right).
    \end{align*}
    Hence we have \begin{align*}
        \mathcal{C}_\nu(a_{I_\nu}, b_1, b_2, b_3, c, q_1, q_2)\ll\sumx_{\gamma_{2^{\nu-2},3(2^{\nu-2})}\bmod c}c^{2^{\nu+1}-1+\varepsilon}\prod_{j=1,2}\left(\sqrt{c}\delta(*_j)+1\right),
    \end{align*}
    which yields the desired result.
\end{proof}

\subsection{Proof of Lemma \ref{IteraionLemma}}\label{sect.IterationLemmaProof}

We prove this by induction. Note that $\mathcal{B}_2$ coincides with the definition in (\ref{B2Def}) and the statement is proved in the previous subsection for $\nu=2$. Assume the statement is true for $\nu=k$, then we have \begin{align*}
    S_{2}(N)^*\ll |\mathcal{A}_k|+\sum_{j=1}^k\mathcal{B}_j,
\end{align*}
with $\mathcal{A}_k$, $\mathcal{B}_1$, ..., $\mathcal{B}_k$ defined as in Lemma \ref{IteraionLemma}.
Notice that by the same counting argument as in \eqref{DetailedCountingComputation},
\begin{align*}
    &\sup_{\|\beta\|_2=1}\sum_{q_{k+1}\ll C}\mathop{\sum\sum\sum\sum}_{\substack{n_0q_0q_{2,0}q_{k+1}'r=q_{k+1}\\q_{2,0}|q_0^\infty}}\sum_{\substack{q_{1,0}\ll C\\q_{1,0}|q_0^\infty}}\sum_{q_0|n_1|q_0^\infty}\mathop{\sum\sum\sum}_{\substack{n_2h_{1,0}h_{2,0} \square\text{-full}\\ h_{2,0}|(n_2h_{1,0})^\infty}}\sum_{n'\ll N_B}\mathop{\sum\sum}_{h_1',h_2'\ll H'}\sum_{0<|q_1'|\ll Q_1Q_B}\mathop{\sum\cdots\sum}_{0<|q_2'|,...,|q_k'|\ll Q_B}\\
    &\times \frac{1}{n_1n_2^{3/2}n'h_{1,0}^{3/2}h_{2,0}h_1'q_0q_{1,0}q_{2,0}|q_1'\cdots q_k'|}\int_{|u_2|\ll1}|\beta(q_{k+1},h_2,u_2)|^2du_2\sum_{\gamma\bmod{n_1n_2h_{1,0}h_{2,0}q_0q_{1,0}q_{2,0}}}\\
    &\times \left|D(k)\sumstar_{\beta_1\bmod{q_0q_{1,0}}}\sumstar_{\beta_2\bmod{q_0q_{2,0}}}\delta\left(-\eta_j\frac{n}{q_0}\equiv \beta_1h_1q_{2,0}\gamma q_{k+1}'+\beta_2h_2q_{1,0}\gamma q_{k+1}'\bmod{q_0q_{1,0}q_{2,0}}\right)\right|^2\ll t^\varepsilon.
\end{align*}
Recall the definition of $\mathcal{D}_0$ in (\ref{D0Def}), applying Cauchy-Schwarz inequality to take out $n,n_0,q_0,q_{1,0},q_{2,0},q_1',q_2',h_1,h_2$-sums, $\gamma$-sum in $\mathcal{D}_0$ and the $u_2$-integral, we get for the same fixed $\Upsilon\in C_c^\infty([1-10^{-9},1+10^{-9}])$ as before,
\begin{align*}
    \mathcal{A}_k\ll&\frac{t^{\varepsilon}}{C^{3/2}\sqrt{N_j}} \sup_{\|\beta\|_2=1} \sup_{|u_1|\ll1}\Bigg\{\int_{|u_2|\ll1}\sum_{n_0}\sum_{q_0}\mathop{\sum\sum}_{q_{1,0},q_{2,0}|q_0^\infty}\sum_{q_0|n_1|q_0^\infty}\sum_r \sum_{Q_1Q_S\ll |q_1'|\ll Q_1Q_B}\mathop{\sum\cdots\sum}_{Q_S\ll |q_2'|,...,|q_\nu'|\ll Q_B}\nonumber\\
    &\times \mathop{\sum\sum\sum}_{\substack{n_2h_{1,0}h_{2,0} \square\text{-full}\\ h_{2,0}|(n_2h_{1,0})^\infty\\ (n_2,q_0)=1}}\mathop{\sum\sum\sum}_{\substack{N_S\ll n=n_1n_2n' \ll N_B\\ H'\leq h_1=h_{1,0}h_1', h_2=h_{2,0}h_2'<(1+10^{-10})H'\\ n'h_1' \square\text{-free}, (n_1'h_1',h_2')=1 \\(n'h_1'h_2',n_1h_{1,0}h_{2,0}q_0)=1}}\sum_{q_{k+1}=n_0q_0q_{2,0}q_{k+1}'r}\Psi\left(\frac{q_{k+1}}{C}\right)\nonumber\\
    &\times \mathop{\sum\sum}_{\substack{C\leq q_{k+2}=n_0q_0q_{2,0}q_{k+2}'r<(1+10^{-10})C\\C\leq q_{k+3}=n_0q_0q_{2,0}q_{k+3}'r<(1+10^{-10})C\\ q_{k+1}'\equiv q_{k+2}'\equiv q_{k+1}'\bmod{h_2'}}}\left(\frac{\sqrt{h_{1,0}h_{2,0}}C}{n_0^2n_1n'h_1'q_0^2q_{1,0}q_{2,0}r^2}\right)^{2^{k}}\left(\frac{n_0h_2'r\prod_{j=1}^{k-1}|q_j|^{2^{k-j}-1}}{n_1n_2^{3/2}h_{1,0}^{3/2}h_{2,0}q_{1,0}C}\right)^2\nonumber\\
    &\times n_1n_2^{3/2}n'h_{1,0}^{3/2}h_{2,0}h_1'q_0q_{1,0}q_{2,0}|q_1'\cdots q_k'|\beta'(q_{k+2},h_2,u_2)\overline{\beta'(q_{k+3},h_2,u_2)}\mathcal{D}_{k+1}(q_{k+1}',q_{k+2}')\nonumber\\
    &\times \overline{\mathcal{D}_{k+1}(q_{k+1}',q_{k+3}')}\overline{\mathcal{D}_0'(q_{k+2}',q_{k+3}')}I_{k+1}(q_1',...,q_k',q_{k+1},q_{k+2})\overline{I_{k+1}(q_1',...,q_k',q_{k+1},q_{k+3})}du_2\Bigg\}^{2^{-k-1}}.
\end{align*}

Note that $\mathcal{D}_k$ is defined $\bmod{n'h_1'}$. Applying Poisson summation on the $q_{k+1}'$-sum, we get \begin{align*}
    &\sum_{q_{k+1}'}\Upsilon\left(\frac{n_0q_0q_{2,0}q_{k+1}'r}{C}\right)\mathcal{D}_{k+1}(q_{k+1}',q_{k+2}')\overline{\mathcal{D}_{k+1}(q_{k+1}',q_{k+3}')}\delta(q_{k+1}'\equiv q_{k+2}'\equiv q_{k+3}'\bmod{h_2'})\\
    &\times I_{k+1}(q_1',...,q_k',q_{k+1},q_{k+2})\overline{I_{k+1}(q_1',...,q_k',q_{k+1},q_{k+3})}\\
    =&\frac{C}{n_0n'h_1'h_2'q_0q_{2,0}r}\sum_{q_{k+1}'}\sum_{\gamma \bmod{n'h_1'h_2'}}\mathcal{D}_{k+1}(\gamma,q_{k+2}')\overline{\mathcal{D}_{k+1}(\gamma,q_{k+3}')}\delta(\gamma\equiv q_{k+2}'\equiv q_{k+3}'\bmod{h_2'})e\left(\frac{\gamma q_{k+1}'}{n'h_1'h_2'}\right)\\
    &\times \int_0^\infty \Upsilon(y_{2^{k+1}-1})I_{k+1}(q_1',...,q_k',Cy_{2^{k+1}-1},q_{k+2})\\
    &\times \overline{I_{k+1}(q_1',...,q_k',Cy_{2^{k+1}-1},q_{k+3})}e\left(-\frac{q_{k+1}'Cy_{2^{k+1}-1}}{n_0n'h_1'h_2'q_0q_{2,0}r}\right)dy_{2^{k+1}-1}.
\end{align*}
For the character sum, the Chinese remainder theorem yields \begin{align*}
    &\sum_{\gamma \bmod{n'h_1'h_2'}}\mathcal{D}_{k+1}(\gamma,q_{k+2}')\overline{\mathcal{D}_{k+1}(\gamma,q_{k+3}')}\delta(\gamma\equiv q_{k+2}'\equiv q_{k+3}'\bmod{h_2'})e\left(\frac{\gamma q_{k+1}'}{n'h_1'h_2'}\right)\\
    =&e\left(\frac{q_{k+1'}q_{k+2}'\overline{n'h_1'}}{h_2'}\right)\delta(q_{k+2}'\equiv q_{k+3}'\bmod{h_2'})\sum_{\gamma\bmod{n'h_1'}}\mathcal{D}_{k+1}(\gamma,q_{k+2}')\overline{\mathcal{D}_{k+1}(\gamma,q_{k+3}')}e\left(\frac{\gamma q_{k+1}'\overline{h_2'}}{n'h_1'}\right)\\
    =&D(k+1)\mathcal{D}_{k+2}(q_{k+2}',q_{k+3}')\delta(q_{k+2}'\equiv q_{k+3}'\bmod{h_2'}).
\end{align*}
For the integral, note that the phase function in the $y_{2^{k+1}-1}$-integral is \begin{align*}
    (-1)^{k+1}\left(G(y_{2^{k-1}-k+1},y_{2^{k+1}-1})-G(y_{2^k+2^{k-1}-k},y_{2^{k+1}-1})\right)
    -\frac{q_{k+1}'Cy_{2^{k+1}-1}}{n_0n'h_1'h_2'q_0q_{2,0}r}.
\end{align*}
Hence repeated integration by parts gives us arbitrary savings unless $|q_{k+1}'|\ll Q_B$.

Let $\mathcal{B}_{k+1}$ be the contribution when $|q_{k+1}'|\ll Q_S$ including the case $q_{k+1}'=0$. Then the above analysis yields \begin{align*}
    S_{2}(N)^*\ll |\mathcal{A}_{k+1}|+\sum_{j=1}^{k}\mathcal{B}_j+\tilde{\mathcal{B}}_{k+1},
\end{align*}
with $\mathcal{A}_{k+1}, \mathcal{B}_1,...,\mathcal{B}_k$ as defined in the statement and \begin{align*}
    \tilde{\mathcal{B}}_{k+1}:=&\frac{t^{\varepsilon}}{C^{3/2}\sqrt{N_j}} \sup_{\|\beta\|_2=1} \sup_{|u_1|\ll1}\Bigg\{\int_{|u_2|\ll1}\sum_{n_0}\sum_{q_0}\mathop{\sum\sum}_{q_{1,0},q_{2,0}|q_0^\infty}\sum_{q_0|n_1|q_0^\infty}\sum_r \sum_{Q_1Q_S\ll |q_1'|\ll Q_1Q_B}\mathop{\sum\cdots\sum}_{Q_S\ll |q_2'|,...,|q_k'|\ll Q_B}\nonumber\\
    &\times \sum_{|q_{k+1}'|\ll Q_S}\mathop{\sum\sum\sum}_{\substack{n_2h_{1,0}h_{2,0} \square\text{-full}\\ h_{2,0}|(n_2h_{1,0})^\infty\\ (n_2,q_0)=1}}\mathop{\sum\sum\sum}_{\substack{N_S\ll n=n_1n_2n' \ll N_B\\ H'\leq h_1=h_{1,0}h_1', h_2=h_{2,0}h_2'<(1+10^{-10})H'\\ n'h_1' \square\text{-free}, (n_1'h_1',h_2')=1 \\(n'h_1'h_2',n_1h_{1,0}h_{2,0}q_0)=1}}\mathop{\sum\sum}_{\substack{C\leq q_{k+2}=n_0q_0q_{2,0}q_{k+2}'r<(1+10^{-10})C\\C\leq q_{k+3}=n_0q_0q_{2,0}q_{k+3}'r<(1+10^{-10})C\\ q_{k+2}'\equiv q_{k+3}'\bmod{h_2'}}}\nonumber\\
    &\times \left(\frac{\sqrt{h_{1,0}h_{2,0}}C}{n_0^2n_1n'h_1'q_0^2q_{1,0}q_{2,0}r^2}\right)^{2^{k}}\frac{n_0h_2'r}{n_1n_2^{3/2}h_{1,0}^{3/2}h_{2,0}q_{1,0}C}\prod_{j=1}^{k}|q_j|^{2^{k+1-j}-1}D(k+1)\beta'(q_{k+2},h_2,u_2)\nonumber\\
    &\times \overline{\beta'(q_{k+3},h_2,u_2)}\mathcal{D}_{k+2}(q_{k+2}',q_{k+3}')\mathcal{D}_0'(q_{k+2}',q_{k+3}')I_{k+2}(q_1',...,q_{k+1}',q_{k+2},q_{k+3})du_2\Bigg\}^{2^{-k-1}}.
\end{align*}
Applying Lemma \ref{TechnicalLemmaForDiagonal} with $\eta=0$ on the $y_{2^{k+1}-1}$-integral, then $\eta=1$ for $(2^k-2)$-times on the remaining integrals, followed by the final statement on the single integral left after the process, we get for $|q_{k+1}'|\ll Q_S$, \begin{align*}
    &I_{k+2}(q_1',...,q_{k+1}',q_{k+2},q_{k+3})\\
    \ll& \frac{C^2t^\varepsilon}{nN_j}\left(\frac{C}{(N_jN)^{1/3}}+ \frac{CN^{1/3}}{\sqrt{nt}N_j^{2/3}}\right)^{2^k-2} \delta\left(|q_{k+2}-q_{k+3}|\ll \left(\frac{C^2}{(N_jN)^{1/3}}+\frac{C^2N^{1/3}}{\sqrt{nt}N_j^{2/3}}\right)t^\varepsilon\right)+t^{-A}
\end{align*}
for any $A>0$. The fact that every use of Lemma \ref{TechnicalLemmaForDiagonal} will truncate the difference between two variables of integration and finally truncate the difference between $q_{k+2}', q_{k+3}'$ can be seen by noticing that every integral $y_i$ appears in some $G(\cdot,\cdot)$ in the phase function of the $(2^{k+1}-1)$-fold integral twice and no two of them appears in the same $G(\cdot,\cdot)$. Also, the way $q_{k+2},q_{k+3}$ is constructed through iterations of Cauchy-Schwarz inequality shows that the truncation between them occurs at last.

On the other hand, notice that the character sum $\mathcal{D}_{k+2}$ is exactly of the form defined before Lemma \ref{CharSumLemmaGeneral}. Indeed, we have \begin{align*}
    &\mathcal{D}_{k+2}(q_{k+2}',q_{k+3}')\\
    =\, &\mathcal{C}_{k-1}(a_{I_{k-1}},q_1'\overline{n_1n_2h_{1,0}h_2q_0q_{1,0}q_{2,0}},\eta_jq_{1,0}\overline{n_1n_2h_2q_{2,0}},\eta_jq_{2,0}\overline{n_1n_2h_{1,0}q_{1,0}},n'h_1',q_{k+2}',q_{k+3}'),
\end{align*}
with $a_{I_{k-1}}$ being an appropriate sequence of numbers consisting of $q_j'\overline{h_2'}$.
Hence Lemma \ref{CharSumLemmaGeneral} gives us \begin{align*}
    \mathcal{D}_{k+2}(q_{k+2}',q_{k+3}')\ll (n'h_1')^{2^k-1/2}t^\varepsilon\left(\sqrt{n'h_1'}\delta(*_{k-1})+1\right),
\end{align*}
where $*_{k-1}$ is the condition \begin{align*}
    *_{k-1}=\Bigg\{& q_{k+2}'\equiv q_{k+3}'\bmod n'h_1'\nonumber\\
    &\text{or } q_1'\overline{q_0q_{1,0}}\pm h_1q_{1,0}(\overline{q_{k+2}'}-\overline{q_{k+3}'})\equiv 0 \bmod n'h_1'\nonumber\\
    &\text{or } q_1'\overline{q_0q_{1,0}}-\eta_jh_1q_{1,0}\overline{q_{k+2}'}\equiv 0\bmod n'h_1'\nonumber\\
    &\text{or } q_1'\overline{q_0q_{1,0}}-\eta_jh_1q_{1,0}\overline{q_{k+3}'}\equiv 0\bmod n'h_1'\Bigg\}.
\end{align*}
This proves the last statement of the Lemma.

Together with $\mathcal{D}(k+1)\ll1$ and $\mathcal{D}_0\ll n_1n_2h_{1,0}h_{2,0}q_0^3q_{1,0}q_{2,0}$ as in (\ref{D0Def}), this yields \begin{align*}
    \tilde{\mathcal{B}}_{k+1}\ll &\frac{t^{\varepsilon}}{C^{3/2}\sqrt{N_j}} \sup_{\|\beta\|_2=1} \sup_{|u_1|\ll1}\Bigg\{\int_{|u_2|\ll1}\sum_{n_0}\sum_{q_0}\mathop{\sum\sum}_{q_{1,0},q_{2,0}|q_0^\infty}\sum_{q_0|n_1|q_0^\infty}\sum_r \sum_{Q_1Q_S\ll |q_1'|\ll Q_1Q_B}\mathop{\sum\cdots\sum}_{Q_S\ll |q_2'|,...,|q_k'|\ll Q_B}\nonumber\\
    &\times \sum_{|q_{k+1}'|\ll Q_S}\mathop{\sum\sum\sum}_{\substack{n_2h_{1,0}h_{2,0} \square\text{-full}\\ h_{2,0}|(n_2h_{1,0})^\infty\\ (n_2,q_0)=1}}\mathop{\sum\sum\sum}_{\substack{N_S\ll n=n_1n_2n' \ll N_B\\ H'\leq h_1=h_{1,0}h_1', h_2=h_{2,0}h_2'<(1+10^{-10})H'\\ n'h_1' \square\text{-free}, (n_1'h_1',h_2')=1 \\(n'h_1'h_2',n_1h_{1,0}h_{2,0}q_0)=1}}\mathop{\sum\sum}_{\substack{C\leq q_{k+2}=n_0q_0q_{2,0}q_{k+2}'r<(1+10^{-10})C\\C\leq q_{k+3}=n_0q_0q_{2,0}q_{k+3}'r<(1+10^{-10})C\\ q_{k+2}'\equiv q_{k+3}'\bmod{h_2'}\\|q_{k+2}-q_{k+3}|\ll\left(\frac{C^2}{(N_jN)^{1/3}}+\frac{C^2N^{1/3}}{\sqrt{nt}N_j^{2/3}}\right)t^\varepsilon}}\nonumber\\
    &\times \left(\frac{\sqrt{h_{1,0}h_{2,0}}C}{n_0^2n_1n'h_1'q_0^2q_{1,0}q_{2,0}r^2}\right)^{2^{k}}\frac{n_0h_2'r}{n_1n_2^{3/2}h_{1,0}^{3/2}h_{2,0}q_{1,0}C}\prod_{j=1}^{k}|q_j|^{2^{k+1-j}-1}|\beta(q_{k+2},h_2,u_2)|\nonumber\\
    &\times |\beta'(q_{k+3},h_2,u_2)|n_1n_2h_{1,0}h_{2,0}q_0^3q_{1,0}q_{2,0}(n'h_1')^{2^k-1/2}t^\varepsilon\left(\sqrt{n'h_1'}\delta(*_{k-1})+1\right)\nonumber\\
    &\times \frac{C^2}{nN_j}\left(\frac{C}{(N_jN)^{1/3}}+ \frac{CN^{1/3}}{\sqrt{nt}N_j^{2/3}}\right)^{2^k-2}du_2\Bigg\}^{2^{-k-1}}.
\end{align*}
Recall the definition of $Q_S, Q_B$ in (\ref{QsDef}) and (\ref{Q1QBDef}). Applying the AM-GM inequality \begin{align*}
    |\beta(q_{k+2},h_2,u_2)\beta(q_{k+3},h_2,u_2)|\ll |\beta(q_{k+2},h_2,u_2)|^2+|\beta(q_{k+3},h_2,u_2)|^2,
\end{align*}
the same counting argument as in \eqref{DetailedCountingComputation} yields \begin{align*}
    \tilde{\mathcal{B}}_{k+1}\ll& \frac{t^{\varepsilon}}{C^{3/2}\sqrt{N_j}}\Bigg\{ \frac{C^{2^k+1}}{N_j}\sum_{n\ll \frac{CN^{1/3}t^\varepsilon}{N_j^{2/3}}} n^{-3/2}\left(\frac{nN_j^{1/3}t^2}{N^{5/3}}\right)^{2^{k+1}-2} \left(1+\frac{C^3t^2}{n(N_jN)^{4/3}}\right)\sqrt{\frac{Ct}{N}}\\
    &\times \left(\underbrace{\frac{Ct}{N}}_{q_{k+2}=q_{k+3}}+\underbrace{\frac{C^2}{(N_jN)^{1/3}}+\frac{C^2N^{1/3}}{\sqrt{nt}N_j^{2/3}}}_{q_{k+2}\neq q_{k+3}}\right)\left(\frac{C}{(N_jN)^{1/3}}+ \frac{CN^{1/3}}{\sqrt{nt}N_j^{2/3}}\right)^{2^k-2}\Bigg\}^{2^{-k-1}}\\
    \ll&\frac{\sqrt{C}t^{2+\varepsilon}}{N_jN^{3/2}} \Bigg\{\frac{N_j^{2/3}N^{8/3}}{C^3t^{7/2}}\left(1+\frac{C^2t^2}{N_j^{2/3}N^{5/3}}\right)\left(\frac{Ct}{N}+\frac{C^2}{(N_jN)^{1/3}}\right)\Bigg\}^{2^{-k-1}}.
\end{align*}

\subsection{Proof of Lemma \ref{InuBound}}\label{sect.InuBoundProof}

Recall that such an integral is defined recursively by \begin{align*}
    I_{\nu+1}(q_1',...,q_\nu',q_{\nu+1},q_{\nu+2})=&\int_0^\infty\Upsilon(y_{2^\nu-1})I_{\nu}(q_1',...,q_{\nu-1}',Cy_{2^\nu-1},q_{\nu+1})\\
    &\times\overline{I_{\nu}(q_1',...,q_{\nu-1}',Cy_{2^\nu-1},q_{\nu+2})}e\left(-\frac{q_{\nu}'Cy_{2^\nu-1}}{n_0n'h_1h_2q_0q_{2,0}r}\right)dy_{2^\nu-1}
\end{align*}
with \begin{align*}
    &I_3(q_1',q_2',q_3,q_4)=\int_0^\infty \Upsilon(y_3)I_2\left(q_1',Cy_3,q_3,q_4\right)e\left(-\frac{q_2'Cy_3}{n_0n'h_1h_2q_0q_{2,0}r}\right)dy_3\\
    =&\int_0^\infty \Upsilon(y_3)\int_0^\infty\Upsilon(y_1)\Psi(Cy_1,Cy_3,q_3)e\left(G(y_1,y_3)-G\left(y_1,\frac{q_3}{C}\right)-\frac{q_1'Cy_1}{n_0nh_1h_2q_0^2q_{1,0}^2q_{2,0}r}\right)dy_1\nonumber\\
    &\times \int_0^\infty\Upsilon(y_2)\overline{\Psi(Cy_2,Cy_3,q_4)}e\left(-G(y_2,y_3)+G\left(y_2,\frac{q_4}{C}\right)+\frac{q_1'Cy_2}{n_0nh_1h_2q_0^2q_{1,0}^2q_{2,0}r}\right)dy_2\nonumber\\
    &\times e\left(-\frac{q_2'Cy_3}{n_0n'h_1h_2q_0q_{2,0}r}\right)dy_3.
\end{align*}
Recall the definition of $G$ in (\ref{GDef}). Applying Taylor expansion, we get 
\begin{align*}
    G(y,z)=\tilde{G}(y,z)+\theta(y,z),
\end{align*}
where \begin{align*}
    \tilde{G}(y,z)=\sqrt{\frac{2t}{\pi n}}\left(-\eta_j' \left(\frac{z}{h_1y}\right)^{1/3}+\eta_j'\left(\frac{y}{h_2z}\right)^{1/3}\right)^{3/2}.
\end{align*}
and
\begin{align*}
        \theta(y,z)=&\sqrt{\frac{2t}{\pi n}}\sum_{j=1}^\infty a_j \left(-\eta_j' \left(\frac{z}{h_1y}\right)^{1/3}+\eta_j'\left(\frac{y}{h_2z}\right)^{1/3}\right)^{3/2-j}\\
        &\times \left(-\eta_j' \left(\frac{z}{h_1y}\right)^{1/3}\sum_{\ell=1}^\infty b_{\ell}\left(\frac{2\pi h_1N^2u_1}{CHt^{2-\varepsilon}y}\right)^\ell+\eta_j'\left(\frac{y}{h_2z}\right)^{1/3}\sum_{\ell=1}^\infty b_\ell\left(\frac{2\pi h_2N^2u_2}{CHt^{2+\varepsilon}z}\right)^\ell\right)^j
    \end{align*}
for some constants $a_j,b_\ell\in\R$. Then $e\left(\theta(y,z)\right)$ is $\frac{N^{3/2}}{\sqrt{Cn}Ht^{1-\varepsilon}}$-inert and $I_3$ is simplified to \begin{align*}
    &I_3(q_1',q_2',q_3,q_4)\\
    =&\int_0^\infty \Upsilon(y_3)\int_0^\infty\Upsilon(y_1)\tilde{\Psi}(Cy_1,Cy_3,q_3)e\left(\tilde{G}(y_1,y_3)-\tilde{G}\left(y_1,\frac{q_3}{C}\right)-\frac{q_1'Cy_1}{n_0nh_1h_2q_0^2q_{1,0}^2q_{2,0}r}\right)dy_1\nonumber\\
    &\times \int_0^\infty\Upsilon(y_2)\overline{\tilde{\Psi}(Cy_2,Cy_3,q_4)}e\left(-\tilde{G}(y_2,y_3)+\tilde{G}\left(y_2,\frac{q_4}{C}\right)+\frac{q_1'Cy_2}{n_0nh_1h_2q_0^2q_{1,0}^2q_{2,0}r}\right)dy_2\\
    &\times e\left(-\frac{q_2'Cy_3}{n_0n'h_1h_2q_0q_{2,0}r}\right)dy_3,
\end{align*}
where \begin{align*}
    \tilde{\Psi}\left(x,y,z\right)=\Psi\left(x,y,z\right)e\left(\theta(x,y)-\theta(x,z)\right)
\end{align*}
is $\left(1+\frac{CN^{1/3}}{nN_j^{2/3}}+\frac{N^{3/2}}{\sqrt{Cn}Ht}\right)t^\varepsilon$-inert by (\ref{z_0DiffBound}) and above analysis on $\theta$.

Write $q_1''=\frac{q_1'}{Q_1}=\frac{q_1'}{n_1n_2h_{1,0}h_{2,0}q_0q_{1,0}^2}$ and $q_j''=q_j'$ for all $2\leq j\leq \nu$. Note that $n\gg N_S$ and $|q_j''|\gg Q_S$, this allows us to apply Lemma \ref{S2IntegralLemma} ($2^\nu-1$)-times to bound the integral. (The lower bounds for $q_j'$ and $n$ are needed so that the oscillation of $\tilde{\Psi}$ satisfies the condition in Lemma \ref{S2IntegralLemma}.) For $1\leq j\leq \nu$, we choose \begin{align*}
    U_j=&\left(1+\delta(j>1)\left(1+\frac{(N_jN)^{1/3}n_0n'h_1h_2q_0q_{2,0}r}{\sqrt{\min\left\{q_u'':1\leq u\leq j-1\right\}q_j''}C^2}\right)\right)\sqrt{\frac{n_0n'h_1h_2q_0q_{2,0}r}{q_j''C}}
\end{align*}
as the parameter $U$ in Lemma \ref{S2IntegralLemma} on the $2^{\nu-j}$ integrals with $q_j'$. We first apply $1$ time on the integral with $q_\nu'$ , then $2$-times on the $2$ integrals with $q_2'$ and go on until we apply $2^{\nu-1}$-times on the $2^{\nu-1}$ integrals with $q_1'$. Such choices of $U$ are made so that the conditions of Lemma \ref{S2IntegralLemma} is fulfilled in the next iteration.

\begin{Remark}
    Explanation on the choice of $U_j$: Suppose we have applied Lemma \ref{S2IntegralLemma} on all the integrals involving $q_\nu'',...,q_k'$. Then a number of functions of the form $W_0\left(\frac{y_u-y_0}{U_j}\right)$ is created, and they are $\frac{t^\varepsilon}{U_j}\left(1+\frac{(N_jN)^{1/3}}{Cq_j''Z}\right)$-inert by Lemma \ref{S2IntegralLemma}, where $Z=\frac{C}{n_0n'h_1h_2q_0q_{2,0}r}$. To apply Lemma \ref{S2IntegralLemma} on the integrals involving $q_{k-1}'$, we need to make sure $U_\nu,...,U_k$ are chosen such that \begin{align*}
        q_{k-1}''Z\gg \frac{t^\varepsilon}{U_j^2}\left(1+\frac{(N_jN)^{1/3}}{Cq_j''Z}\right)^2
    \end{align*}
    for all $k\leq j\leq \nu$.
\end{Remark}

Applying the above procedure and bounding the resulting integral trivially yields \begin{align*}
    I_{\nu+1}(q_1',...,q_\nu',q_{\nu+1},q_{\nu+2})\ll_\nu \,& U_1^{2^{\nu-1}}U_2^{2^{\nu-2}}\cdots U_\nu t^\varepsilon\\
    \ll_\nu \, &\prod_{j=2}^\nu\left(1+\frac{(N_jN)^{1/3}n_0n'h_1h_2q_0q_{2,0}r}{\sqrt{\min\left\{q_u'':1\leq u\leq j-1\right\}q_j''}C^2}\right)^{2^{\nu-j}}\\
    &\times \frac{(n_1q_0q_{1,0}^2)^{2^{\nu-2}}(n_0n'h_1h_2q_0q_{2,0}r)^{2^{\nu-1}-1/2}}{q_1'^{2^{\nu-2}}q_2'^{2^{\nu-3}}\cdots q_\nu'^{2^{-1}}C^{2^{\nu-1}-1/2}}t^\varepsilon
\end{align*}
for any $\nu\geq2$.

\bibliographystyle{abbrv}
\bibliography{ref}

\end{document}